\numberwithin{equation}{section}
\newtheorem{theorem}{Théoreme}[section]
\newtheorem{definition}[theorem]{Définition}
\newtheorem{lemma}[theorem]{Lemme}
\newtheorem{coro}[theorem]{Corollaire}
\newtheorem{prop}[theorem]{Proposition}
\newtheorem{Conjecture}[theorem]{Conjecture}
\newtheorem{IntroTheorem}[theorem]{Théorème}
\theoremstyle{definition}
\newtheorem{example}[theorem]{Exemple}
\newtheorem{remark}[theorem]{Remarque}
\newcommand{\Gr}{\mathrm{Gr}} 
\newcommand{\OK}{\mathcal{O}} 
\newcommand{\Z}{\mathbb{Z}} 
\newcommand{\Q}{\mathbb{Q}} 
\newcommand{\spec}{\mathrm{Spec}} 
\newcommand{\Split}{M^{\mathrm{PR}}}
\newcommand{\Naif}{M^{\mathrm{PEL}}}
\newcommand{\Splitt}{\overline{M}^{\mathrm{PR}}}
\newcommand{\Naiff}{\overline{M}^{\mathrm{PEL}}}
\newcommand{\kt}{[\![u]\!]}
\newcommand{\kT}{(\!(u)\!)}
\newcommand{\Ltilde}{\Lambda_0}
\newcommand{\SSplit}{\mathcal{S}h^{\mathrm{PR}}}
\newcommand{\NNaif}{\mathcal{S}h^{\mathrm{PEL}}}
\newcommand{\NNNaif}{\mathcal{S}h^{\mathrm{PEL},\square}}
\newcommand{\Criss}{\mathrm{Cris}(S/\Sigma)}
\newcommand{\pT}{^{(p)}}
\title{Stratification des variétés de Hilbert en présence de ramification}
\author{Diego Berger}
\date{}
\begin{document}
\maketitle
\begin{abstract}
Dans ce papier on étudie la géométrie de la fibre spéciale des modèles de Pappas-Rapoport des variétés de Shimura dans le cas Hilbert. Plus précisément on prouve que la stratification induite par le polygone de Hodge est une bonne stratification, ce qui est faux dans le cas Hilbert-Siegel pour $g\geq 2$. Ensuite on utilise des résultats connus sur le produit de convolution de grassmanniennes affines pour décrire la dimension des strates et obtenir que le morphisme d'oubli vers le modèle PEL de Kottwitz est plat en restriction aux strates de Hodge.
\end{abstract}
\tableofcontents

\section{Introduction}

\subsection{Motivations}
L'étude de la géométrie de la réduction modulo $p$ des variétés de Shimura est un vaste sujet qui a eu de nombreuses conséquences arithmétiques. Dans cet article nous nous concentrons sur les variétés modulaires de Hilbert. Ce sont des variétés sur  $\Q_p$ qui peuvent être vues comme des espaces de modules de variétés abéliennes munies de certaines données.  On dispose de plusieurs modèles sur $\Z_p$ de ces variétés. Le modèle de Kottwitz, noté $\NNaif$, est défini comme un prolongement sur $\Z_p$ du problème de module initial. Plus précisément $\NNaif$ est défini comme l'espace de module des quadruplés $(A,\lambda,\iota,\kappa)$ où $A$ est une variété abélienne de dimension $d>0$, $\lambda:A\rightarrow A^t$ est une $\Z_{(p)}^{\times}$-polarisation, $\iota:\OK_F\rightarrow \mathrm{End}(A)$ une action d'un anneau  d'entiers d'un corps de nombre totalement réel $F/\Q$ sur $A$ tel que $[F:\Q]=d$, et $\kappa$ une structure de niveau en dehors de $p$.  Lorsque l'extension $F/\Q$ est ramifiée, le modèle $\NNaif$ n'est plus lisse sur $\spec\, \Z_p$. Pappas et Rapoport ont défini dans \cite{PRII} un modèle lisse $\SSplit$ sur $\spec\, \OK_K$ où $K$ est une extension contenant les clôtures galoisiennes de toutes les extensions $F_v$ où $v|p$. Ce modèle est défini via une résolution des singularités du modèle local. C'est un espace de module classifiant les $5$-uplés $(A,\lambda,\iota,\kappa,\mathrm{Fil}(\omega))$ où $(A,\lambda,\iota,\kappa)$ est un quadruplé de $\NNaif$ et $\mathrm{Fil}(\omega)$ est une filtration du faisceau conormal satisfaisant certaines propriétés (voir \ref{DefPRData} pour plus de détails). L'un des objectifs dans l'étude de la géométrie de la réduction modulo $p$ des variétés de Shimura  est de définir des stratifications, dont les strates possèdent de bonnes propriétés. On peut trouver dans la littérature de nombreux travaux sur les stratifications de la réduction modulo $p$ du modèle $\NNaif$  (voir par exemple \cite{ViehmannWedhorn}). Concernant le modèle $\SSplit$ plusieurs stratifications ont été définies dans \cite{DiamondKassei}, \cite{ReduzziXiao}. Dans cet article nous allons nous  intéresser à la stratification de Hodge de $\SSplit$. Celle-ci est définie via le polygone de Hodge du faisceau conormal. 

\subsection{Principaux résultats} 
Pour ne pas alourdir les notations nous allons énoncer les principaux résultats de cet article dans le cadre d'une extension $L/\Q_p$ totalement ramifiée (l'extension $L/\Q_p$ jouant le rôle de $F_v/\Q_p$). Mis à part le Théorème \ref{TheoremC}, les résultats qui suivent sont valables et sont démontrés dans le cadre générale d'une extension totalement réelle $F/\Q$ sans condition sur la ramification en $p$.\\

Soit $L/\Q_p$ une extension totalement ramifiée de degré $e$. On note $\NNaif$ la fibre spéciale du modèle de Kottwitz et $\SSplit$ celle du modèle de Pappas-Rapoport des variétés modulaires de Hilbert (voir les Définitions \ref{SPEL}, \ref{SPR}). L'oubli de la filtration induit un morphisme :
\begin{equation*}
\pi :\SSplit\longrightarrow \NNaif
\end{equation*}
On dispose d'une stratification appelée stratification de Kottwitz-Rapoport (KR) du modèle PEL :
\begin{equation*}
\NNaif=\coprod_{\lambda\in\mathrm{Adm}(\mu)_K}\NNaif_{\lambda}
\end{equation*}
où $\mathrm{Adm}(\mu)_K$ désigne l'ensemble  $\mu$-admissible (voir la Remarque \ref{Admissible}). Dans le cas Hilbert la stratification (KR) de $\NNaif$ coïncide avec la stratification par le polygone de Hodge (voir Remarque \ref{Hodge=KR}). Il est naturel de se demander si cette stratification induit une (bonne) stratification (voir Définition \ref{bonnestratification}) de $\SSplit $. On prouve dans cet article que la réponse est oui :
\begin{IntroTheorem}\
\begin{enumerate}
\item Les strates $(\SSplit_{\lambda})_{\lambda\in\mathrm{Adm}(\mu)_K}$ forment une bonne stratification de $\SSplit$. Autrement dit pour tout $\lambda\in\mathrm{Adm}(\mu)_K$ on a la relation d'adhérence
\begin{equation*}
\overline{\SSplit_{\lambda}}=\bigcup_{\lambda'\leq\lambda}\SSplit_{\lambda'}
\end{equation*}
\item Pour tout $\lambda\in\mathrm{Adm}(\mu)_K$ la strate $\SSplit_{\lambda}$ est quasi-projective lisse de dimension $\langle \rho,|\mu_{\bullet}|+\lambda\rangle$.
\end{enumerate}
\end{IntroTheorem}
Ce théorème est surprenant car il est faux dans des cas plus généraux que le cas Hilbert (voir \cite{BijHer2}). La preuve du point $(2)$ repose sur les travaux de Haines (\cite{Haines}) sur la géométrie du morphisme de convolution dans le cadre des Grassmanniennes affines. Ses travaux nous permettent également de prouver le théorème suivant qui concerne la géométrie du morphisme $\pi$ :
\begin{theorem}
Pour tout $\lambda\in\mathrm{Adm}(\mu)_K$ la restriction
\begin{equation*}
\pi:\SSplit_{\lambda}\rightarrow\NNaif_{\lambda}
\end{equation*}
est un morphisme plat. 
\end{theorem} 
Les travaux de Haines n'ayant pas de restriction sur le groupe réductif, le théorème ci-dessus est en fait valable dans le cas Hilbert-Siegel plus général.\\

Dans \cite{ReduzziXiao} les auteurs ont défini des invariants de Hasse partiels $(m_i)_{1\leq i\leq e}$ et dans \cite{DiamondKassei} il est prouvé que les sous schémas localement fermés définis comme les lieux d'annulation de ces sections définissent une bonne stratification de $\SSplit$ :
\begin{equation*}
\SSplit=\coprod_{T\subset\{1,\dots,e\}} \SSplit_T
\end{equation*}
où $\SSplit_T$ est le sous schéma localement fermé défini par l'annulation des $(m_i)_{i\in T}$ et l'inversibilité des $(m_i)_{i\notin T}$. Il est alors naturel d'étudier l’interaction entre la stratification par le polygone de Hodge, et celle définit par ces invariants de Hasse partiels. Pour tout $\lambda\in\mathrm{Adm}(\mu)$ et tout $T\subset\{1,\dots,e\}$ on note $\SSplit_{(\lambda,T)}$ l'intersection de la strate $\SSplit_{\lambda}$ et de la strate $\SSplit_T$. On montre le résultat suivant dans le cas $e=4$ :
\begin{theorem}
\label{TheoremC}
Pour $L/\Q_p$ totalement ramifiée de degré $e=4$ la stratification :
\begin{equation*}
\SSplit=\coprod_{(\lambda,T)\in\mathscr{A}_{\mu_{\bullet}}}\SSplit_{(\lambda,T)}
\end{equation*}
est une bonne stratification où les relations d'adhérences sont données par la relation d'ordre naïve  sur $\mathscr{A}_{\mu_{\bullet}}$.
\end{theorem}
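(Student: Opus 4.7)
L'approche consiste à combiner les relations d'adhérence déjà connues pour chacune des deux stratifications. D'une part le Théorème 1 ci-dessus donne $\overline{\SSplit_\lambda} = \bigsqcup_{\lambda' \leq \lambda} \SSplit_{\lambda'}$ ; d'autre part \cite{DiamondKassei} donne $\overline{\SSplit_T} = \bigsqcup_{T' \supseteq T} \SSplit_{T'}$. On définit alors $\mathscr{A}_{\mu_\bullet}$ comme l'ensemble des couples $(\lambda,T) \in \mathrm{Adm}(\mu) \times \mathcal{P}(\{1,\dots,4\})$ pour lesquels $\SSplit_{(\lambda,T)}$ est non vide, muni de la relation d'ordre naïve $(\lambda',T') \leq (\lambda,T)$ si et seulement si $\lambda' \leq \lambda$ et $T' \supseteq T$. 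Chaque $\SSplit_{(\lambda,T)}$ est manifestement localement fermée comme intersection de deux sous-schémas localement fermés, et en intersectant les deux adhérences ci-dessus on obtient sans effort
\begin{equation*}
\overline{\SSplit_{(\lambda,T)}} \subseteq \overline{\SSplit_\lambda} \cap \overline{\SSplit_T} = \bigcup_{(\lambda',T') \leq (\lambda,T)} \SSplit_{(\lambda',T')}.
\end{equation*}

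\textbf{Inclusion réciproque.} L'étape décisive consiste à montrer que pour tout $(\lambda',T') \leq (\lambda,T)$ dans $\mathscr{A}_{\mu_\bullet}$, la strate $\SSplit_{(\lambda',T')}$ est effectivement contenue dans $\overline{\SSplit_{(\lambda,T)}}$. La stratégie s'appuie sur le Théorème 2 : puisque $\pi : \SSplit_\lambda \to \NNaif_\lambda$ est plat, avec fibres décrites via le produit de convolution de grassmanniennes affines de \cite{Haines}, on peut traduire l'annulation des invariants de Hasse partiels $m_i$ en conditions combinatoires explicites le long des fibres de $\pi$. Partant d'un point $x \in \SSplit_{(\lambda',T')}$ on procède en deux temps : d'abord on dégénère $\pi(x)$ vers un point générique de $\NNaif_\lambda$ en utilisant l'adhérence déjà connue des strates KR du modèle PEL ; puis, grâce à la platitude de $\pi$ et à la description explicite des fibres, on relève cette dégénérescence à $\SSplit_\lambda$ de façon à ajuster précisément l'ensemble des indices $i$ pour lesquels $m_i$ s'annule, réalisant ainsi un point de $\SSplit_{(\lambda,T)}$ dans un voisinage arbitrairement petit de $x$.

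\textbf{Obstacle principal et restriction à $e=4$.} La difficulté centrale est que l'interaction entre les invariants de Hasse $m_i$ et la stratification KR n'est pas uniforme en $e$ : ni la non-vacuité des $\SSplit_{(\lambda,T)}$ pour tout couple prescrit, ni la compatibilité des adhérences avec l'ordre produit ne sont automatiques. Pour $e=4$, l'ensemble $\mathrm{Adm}(\mu)$ et les parties de $\{1,2,3,4\}$ restent de taille raisonnable, et la vérification cas par cas devient praticable : le coeur technique de la preuve consistera donc en une énumération explicite des éléments de $\mathscr{A}_{\mu_\bullet}$, suivie pour chaque couple d'une analyse directe des dégénérescences au moyen de la structure locale fournie par \cite{Haines}. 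Pour $e \geq 5$ on s'attend au contraire à voir apparaître des contraintes combinatoires supplémentaires empêchant un traitement uniforme, ce qui justifie la restriction de l'énoncé.
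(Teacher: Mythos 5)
Votre mise en place (définition de $\mathscr{A}_{\mu_{\bullet}}$, locale fermeture des $\SSplit_{(\lambda,T)}$, inclusion facile $\overline{\SSplit_{(\lambda,T)}}\subset\bigcup_{(\lambda',T')\leq(\lambda,T)}\SSplit_{(\lambda',T')}$ par intersection des deux relations d'adhérence connues) est correcte, mais l'étape décisive — montrer que chaque strate non vide plus petite est effectivement dans l'adhérence, et déterminer lesquelles sont non vides — n'est qu'annoncée, et le mécanisme que vous proposez ne peut pas la fournir. La platitude de $\pi:\SSplit_{\lambda}\to\NNaif_{\lambda}$ garantit qu'une générisation de $\pi(x)$ se relève en une générisation de $x$, mais ne donne aucun contrôle sur la strate de Hasse dans laquelle tombe ce relèvement ; et la trivialité locale de Haines (Théorème \ref{TheoHaines}) ne décrit que les positions relatives de la filtration de Pappas--Rapoport dans les fibres, ce qui ne dit pas comment réaliser une spécialisation prescrite $(\lambda,T)\rightsquigarrow(\lambda',T')$. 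La preuve du papier procède tout autrement : (a) on calcule d'abord, par algèbre linéaire via le Lemme \ref{LemmaHodge}, la liste des strates vides, ce qui permet, combiné au Théorème \ref{TheoDK} et au Lemme \ref{Lemma1}, de forcer l'invariant de Hodge de la générisation obtenue en « inversant » un $m_i$ ; (b) pour les deux relations qui échappent à ce raisonnement ($X_{(2,2)}^{(m_2,m_3,m_4)}\subset\overline{X_{(2,2)}^{(m_3)}}$ et $X_{(2,2)}^{(m_3)}\subset\overline{X_{(3,1)}^{(m_3)}}$), on construit des déformations explicites de la filtration sur $k[\![t]\!]$. Rien dans votre plan ne produit ni la liste des strates vides ni ces déformations.

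Plus fondamentalement, votre stratégie ignore que $m_1=ha$ est un invariant $\sigma$-linéaire : il dépend du Frobenius du cristal (via $\mathscr{F}^{(1)}$), pas seulement de la filtration de Pappas--Rapoport, donc il est invisible sur le modèle local / la grassmannienne affine et ne peut pas être « ajusté » le long des fibres de $\pi$ comme vous le proposez. C'est précisément là que réside le vrai travail : la Proposition \ref{PropHA} (inverser $ha$ sans toucher aux autres invariants ni à l'invariant de Hodge) et la reprise des déformations linéaires en imposant $ha=0$ exigent la théorie de Grothendieck--Messing / des displays, en déformant pas à pas le long de $k[t]/(t^{n+1})\to k[t]/(t^n)$ et en utilisant de façon cruciale le Lemme \ref{LemmaRelevement} (le twist de Frobenius d'un relèvement de la filtration ne dépend pas du relèvement, donc $\mathscr{F}^{(1)}$ se relève canoniquement). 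Enfin, le calcul de $\mathscr{A}_{\mu_{\bullet}}$ n'est pas une simple énumération : la non-vacuité de $X_{(2,2)}^{(m_1,m_2,m_3,m_4)}$ requiert la forme normale explicite du module de Dieudonné de \cite{GorenAndreatta}. En l'état, votre proposition reporte tous ces points et ne constitue donc pas une démonstration.
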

Dans le théorème ci-dessus l'ensemble $\mathscr{A}_{\mu_{\bullet}}$ désigne le sous ensemble des couples $(\lambda,T)$ tels que la strate $\SSplit_{(\lambda,T)}$ soit non vide. Il semble difficile de calculer ce sous ensemble en général. \\

\ \\

\textbf{Remerciements.} Je tiens à remercier Stéphane Bijakowski de m'avoir encouragé à écrire cet article et de m'avoir expliqué les résultats de déformations de la première section. Je tiens également à remercier Benoit Stroh, Thibault Alexandre et Arnaud Eteve pour toutes les discussions qui m'ont aidé à écrire cet article. Enfin je tiens à remercier Xu Shen pour ses commentaires et remarques.

\newpage

\section{Préliminaires}
Dans tout ce qui suit on fixe un nombre premier $p>0$.\ \\

Cette première partie est composée essentiellement de rappels sur la théorie de Dieudonné cristalline.  Nous présentons tous les outils nécessaires pour pouvoir déformer un groupe $p$-divisible le long d'un morphisme $k[\![t]\!]\rightarrow k$ où $k$ est un corps de caractéristique $p$.  Nous aurons besoin de la théorie de Dieudonné cristalline \cite{BBM}, de la théorie des display de Zink et Lau \cite{Zink},  \cite{Lau2},  \cite{Lau_2014} et de Grothendieck-Messing \cite{Messing1972}. En fait, nous pourrions nous contenter de la théorie des display car elle englobe les anneaux locaux complet de corps résiduels parfait comme $k^{\mathrm{perf}}[\![t]\!]$ (où $k^{\mathrm{perf}}$ désigne la perfection de $k$), ce qui nous suffit amplement pour nos problèmes de déformations.

\subsection{Théorie de Dieudonné cristalline}

\subsubsection{}Soit $k$ un corps parfait de caractéristique $p$ et $S$ un schéma sur $k$. Soit $A\rightarrow S$ un schéma abélien. On note $G=A[p^{\infty}]$ le groupe $p$-divisible associé. On note $\Sigma=\spec\, (W(k))$, et $\underline{G}$ le faisceau sur le site cristallin $\Criss$ induit par $G$. En suivant les notations de \cite{BBM} on note :
\begin{equation*}
\mathscr{E}(G):=\mathscr{E}\mathrm{xt}^1_{S/\Sigma}(\underline{G},\mathcal{O}_{S/\Sigma})
\end{equation*}
le cristal de Dieudonné contravariant de $G$. C'est un $\mathcal{O}_{S/\Sigma}$-cristal localement libre de rang $h$ où $h$ désigne la hauteur de $G$ (\cite{BBM}, Corollaire 1.4.7).  Notez qu'avec cette convention, les applications :
\begin{equation*}
F:\mathscr{E}(G)^{(p)}\rightarrow\mathscr{E}(G),\ \ \ \ \ \ \  V:\mathscr{E}(G)\rightarrow\mathscr{E}(G)\pT
\end{equation*}
(où $(\,\cdot\, )^{(p)}$ désigne le twist par le Frobenius) sont induites respectivement par :
\begin{equation*}
F:G\rightarrow G\pT,\ \ \ \ \ \ \  V:G\pT\rightarrow G
\end{equation*}
Si l'on évalue ce cristal sur l'épaississement $(S\xrightarrow{\mathrm{id}} S)$ on obtient une filtration de $\mathcal{O}_S$-modules localement libres (\cite{BBM}, Corollaire 3.3.5)
\begin{equation}
\label{EqHodgeFil}
0\longrightarrow \omega_{G}\longrightarrow\mathscr{E}(G)_{(S\xrightarrow{\mathrm{id}} S)}\longrightarrow \omega_{G^D}^\vee\longrightarrow 0
\end{equation}
appelée filtration de Hodge.  D'après \cite{BBM} Proposition 3.3.7 on dispose d'un isomorphisme 
\begin{equation*}
\mathscr{E}\mathrm{xt}^1_{S/\Sigma}(\underline{
A},\mathcal{O}_{S/\Sigma})\simeq\mathscr{E}\mathrm{xt}^1_{S/\Sigma}(\underline{G},\mathcal{O}_{S/\Sigma})
\end{equation*}
reliant le cristal de $A/S$ et celui de son groupe $p$-divisible, compatible aux filtrations de Hodge respectives. En combinant maintenant avec l'isomorphisme (\cite{BBM}, Proposition 2.5.8) :
\begin{equation*}
\mathscr{E}\mathrm{xt}^1_{S/\Sigma}(\underline{
A},\mathcal{O}_{S/\Sigma})_{(S\rightarrow S)}\simeq H^1_{\mathrm{dR}}(A/S)
\end{equation*}
on retrouve la filtration de Hodge induit par la suite spectrale de Hodge bien connue :
\begin{equation*}
0\longrightarrow \omega_A\longrightarrow H^1_{\mathrm{dR}}(A/S)\longrightarrow \omega_{A^t}^{\vee}\longrightarrow 0
\end{equation*}

\subsubsection{}
\label{omegaImV} Plaçons nous maintenant dans le cas où $S=\spec(k)$ est le spectre d'un corps $k$ parfait de caractéristique $p$. On note :
\begin{equation*}
\mathbb{D}(G):=\mathrm{Hom}_S(G,CW)
\end{equation*}
le module de Dieudonné contravariant au sens de Fontaine (\cite{Fontaine} ou \cite{BBM} Section 4.2). En évaluant notre cristal le long de l'épaississement $(W(k)\twoheadrightarrow k)$ on obtient un isomorphisme de $W(k)$ module compatible avec $F$ et $V$ des deux cotés :
\begin{equation*}
\mathscr{E}(G)_{(W(k)\twoheadrightarrow k)}\simeq\mathbb{D}(G)^{(p)}
\end{equation*}
(voir \cite{BBM} Théorème 4.2.14). La réduction modulo $p$ de cet  isomorphisme permet l'identification de la $p$-torsion :
\begin{equation*}
\mathscr{E}\mathrm{xt}^1_{S/\Sigma}(\underline{G[p]},\mathcal{O}_{S/\Sigma})\simeq\mathbb{D}(G[p])^{(p)}\simeq(\mathbb{D}(G)/p\mathbb{D}(G))^{(p)}
\end{equation*}
où $\mathscr{E}\mathrm{xt}^1_{S/\Sigma}(\underline{G[p]},\mathcal{O}_{S/\Sigma})$ est le cristal associé au groupe fini plat de $p$-torsion $G[p]$. Par définition $\mathscr{E}(G)$ étant un cristal, il commute aux changement de bases et par conséquent la réduction modulo $p$ ci dessus redonne l'identification bien connue entre le module de Dieudonné de la $p$-torsion et le premier groupe de cohomologie de De Rham : 
\begin{equation*}
\mathscr{E}(G)_{(W(k)\twoheadrightarrow k)}\otimes_{W(k)}k\simeq\mathscr{E}(G)_{(k\rightarrow k)}\simeq H^1_{\mathrm{dR}}(A/k)
\end{equation*}
Toujours d'après \cite{BBM} (Proposition 4.3.10) on dispose d'un isomorphisme :

\begin{equation*}
\omega_G^{(p)}\simeq \mathscr{E}(G)_{(S\rightarrow S)}/F(\mathscr{E}(G)_{(S\rightarrow S)}^{(p)})
\end{equation*}

\subsubsection{}
On s'intéresse au cas où $A/S$ est muni d'une action de $\mathcal{O}_L$ où $L/\Q_p$ est une extension de degré fini. D'après \cite{Fargues} on dispose du résultat suivant :

\begin{prop}[\cite{Fargues}, Lemme B.1]
\label{Fargues}
Soit $\mathscr{E}$ un $F$-cristal en $\mathcal{O}_{S/\Sigma}$-modules localement libre de rang fini sur $\mathrm{Cris}(S/\Sigma)$ muni d'une action de $\mathcal{O}_L$. Alors $\mathscr{E}$ est un $\mathcal{O}_{S/\Sigma}\otimes_{\Z_p}\mathcal{O}_{L}$-module localement libre sur $\mathrm{Cris}(S/\Sigma)$.
\end{prop}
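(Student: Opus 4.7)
La question étant locale sur $\mathrm{Cris}(S/\Sigma)$, on peut travailler sur un épaississement PD affine $(U \hookrightarrow T = \spec R)$. Notons $M := \mathscr{E}_T$ : c'est, par hypothèse, un $R$-module projectif de type fini muni d'une action $R$-linéaire de $\mathcal{O}_L$. Posons $A := R \otimes_{\Z_p} \mathcal{O}_L$, qui est une $R$-algèbre finie libre. On souhaite montrer que $M$ est localement libre comme $A$-module. Le critère de platitude fibre à fibre, appliqué au schéma $\spec A$ fini sur $\spec R$ et au $A$-module cohérent $M$ (déjà $R$-plat), ramène ceci à la vérification que, pour tout $y \in \spec R$, la fibre $M \otimes_R k(y)$ est plate, donc libre (l'anneau étant artinien), sur $A \otimes_R k(y) = k(y) \otimes_{\Z_p} \mathcal{O}_L$.

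Par descente fidèlement plate on peut supposer $k(y)$ algébriquement clos ; posons $k := \overline{k(y)}$. Si $k$ est de caractéristique $0$, $k \otimes_{\Z_p} \mathcal{O}_L \simeq k \otimes_{\Q_p} L$ est un produit de corps par séparabilité de $L/\Q_p$, et tout module de type fini y est automatiquement libre sur chaque facteur. On suppose dorénavant $k$ de caractéristique $p$. En évaluant le cristal $\bar{y}^*\mathscr{E}$ sur l'épaississement PD $W(k) \twoheadrightarrow k$, on obtient un $W(k)$-module libre de type fini $\mathbb{D}$ muni d'une action de $\mathcal{O}_L$, et la compatibilité du cristal aux changements de base identifie $\mathbb{D}/p$ à $M \otimes_R k$ en tant que $k \otimes_{\Z_p} \mathcal{O}_L$-module.

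Soit $L_0 \subset L$ la sous-extension maximale non ramifiée, de corps résiduel $\kappa$, et écrivons $\mathcal{O}_L = \mathcal{O}_{L_0}[X]/E(X)$ où $E$ est un polynôme d'Eisenstein de degré $e = [L:L_0]$. Puisque $k \supset \kappa$, la $\Z_p$-algèbre $\mathcal{O}_{L_0} \otimes_{\Z_p} W(k)$ se décompose en un produit $\prod_\tau W(k)$ indexé par les plongements $\tau : \kappa \hookrightarrow k$, et $\mathbb{D}$ se scinde en conséquence $\mathbb{D} = \bigoplus_\tau \mathbb{D}^\tau$. Chaque facteur $\mathbb{D}^\tau$ est un module de type fini sur l'anneau de valuation discrète complet $R^\tau := W(k)[X]/E^\tau(X)$, où $E^\tau$ (toujours d'Eisenstein) a pour racine l'uniformisante $\pi := X$. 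Comme $\mathbb{D}$ est $W(k)$-libre et que $\pi^e$ est un multiple unitaire de $p$ dans $R^\tau$, $\mathbb{D}^\tau$ est sans $\pi$-torsion, donc libre sur $R^\tau$. En réduisant modulo $p$, on a $R^\tau/p \simeq k[\pi]/(\pi^e)$ et $\mathbb{D}^\tau/p$ y est libre ; sommant sur $\tau$, on obtient $M \otimes_R k \simeq \bigoplus_\tau \mathbb{D}^\tau/p$ libre sur $\prod_\tau k[\pi]/(\pi^e) \simeq k \otimes_{\Z_p} \mathcal{O}_L$, ce qui conclut.

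\textbf{Principale difficulté.} La subtilité essentielle réside dans la compatibilité des différentes évaluations du cristal : il faut s'assurer que la décomposition de $\mathbb{D}$ issue de l'action de $\mathcal{O}_{L_0}$, et la filtration $\pi$-adique qui en résulte sur chaque $\mathbb{D}^\tau$, se transportent bien, via l'identification $\mathbb{D}/p \simeq M \otimes_R k$, en la décomposition attendue sous $k \otimes_{\Z_p} \mathcal{O}_L$. Une fois cette étape acquise (elle relève de la définition du cristal), tout se ramène à de l'algèbre commutative élémentaire sur le DVR $R^\tau$ ; la structure de $F$-cristal n'intervient finalement pas de façon essentielle dans l'argument, seule l'hypothèse de locale liberté du cristal avec action de $\mathcal{O}_L$ étant requise.
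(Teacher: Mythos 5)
La structure générale de votre réduction est correcte (critère de platitude fibre à fibre, passage aux points géométriques, évaluation sur $W(k)\twoheadrightarrow k$ ; notez au passage que $p$ étant localement nilpotent sur les épaississements de $\mathrm{Cris}(S/\Sigma)$, le cas de caractéristique nulle ne se présente pas). En revanche la conclusion comporte une lacune réelle : vous montrez que chaque composante $\mathbb{D}^\tau$ est libre sur le DVR $R^\tau$, disons de rang $r_\tau$, puis vous affirmez que $\bigoplus_\tau \mathbb{D}^\tau/p$ est libre sur $\prod_\tau k[\pi]/(\pi^e)\simeq k\otimes_{\Z_p}\mathcal{O}_L$ sans vérifier que les rangs $r_\tau$ coïncident. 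Or être localement libre sur $\mathcal{O}_{S/\Sigma}\otimes_{\Z_p}\mathcal{O}_L$ exige précisément cette égalité (les localisations disponibles sur le site cristallin ne se font que dans la direction de $S$, pas dans celle de $\mathcal{O}_L$), et c'est sous cette forme — faisceau libre de rang $h$ sur $\mathcal{O}_L\otimes_{\mathcal{O}_{L^{\mathrm{nr}}}}\mathcal{O}_S$, le même $h$ pour tout $\tau$ — que la proposition est utilisée dans l'article. Cette égalité est fausse sans la structure de $F$-cristal, contrairement à votre remarque finale : pour $S=\spec\, k$ avec $k\supset\F_{p^2}$ et $L=\Q_{p^2}$ non ramifiée, le cristal libre de rang $1$ donné par $W(k)$, muni de l'action de $\mathcal{O}_L$ via un plongement $\mathcal{O}_L\hookrightarrow W(k)$, a pour rangs $(1,0)$ sur les deux facteurs de $W(k)\otimes_{\Z_p}\mathcal{O}_L$ et n'est donc pas localement libre sur cet anneau.

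C'est exactement là que le Frobenius doit intervenir (l'article ne reproduit pas la preuve, qui est celle de Fargues, mais c'en est le point clé) : $F:\mathbb{D}^{(p)}\rightarrow\mathbb{D}$ est $\sigma$-semi-linéaire et commute à l'action de $\mathcal{O}_L$, donc envoie la composante $\tau$ dans la composante $\sigma\tau$ ; comme $FV=VF=p$, il devient un isomorphisme après inversion de $p$, d'où $r_{\sigma\tau}=r_\tau$ ; enfin $\sigma$ agit transitivement sur les plongements $\tau:\kappa\hookrightarrow k$ ($k$ étant algébriquement clos), donc tous les $r_\tau$ sont égaux et la fibre est bien libre sur $k\otimes_{\Z_p}\mathcal{O}_L$. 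Tel quel, votre argument n'est complet que dans le cas totalement ramifié ($f=1$, un seul $\tau$), alors que l'énoncé — et son usage dans l'article pour $F_v/\Q_p$ quelconque — porte sur une extension $L/\Q_p$ arbitraire.
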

\begin{remark}
D'après \cite{BBM} on dispose d'un isomorphisme :
\begin{equation*}
\omega_G\simeq \mathscr{E}\mathrm{xt}^1(\underline{G},\mathcal{J}_{S/\Sigma})_{(S\rightarrow S)}
\end{equation*}
où $\mathcal{J}_{S/\Sigma}$ est le faisceau d'idéal à puissances divisées. Or $\mathscr{E}\mathrm{xt}^1(\underline{G},\mathcal{J}_{S/\Sigma})$ n'est à priori pas un cristal et donc la proposition ci-dessus ne s'applique pas. 
\end{remark}

\subsubsection{}
\label{FactoVerFrob} Les morphismes $F$ et $V$ de $G$ induisent un diagramme commutatif aux lignes horizontales exactes : 

\[\begin{tikzcd}
	0 & {\omega_G} & {\mathscr{E}(G)_{(S\rightarrow S)}} & {\omega_{G^D}^{\vee}} & 0 \\
	0 & {\omega_G^{(p)}} & {\mathscr{E}(G)_{(S\rightarrow S)}^{(p)}} & {(\omega_{G^D}^{\vee})^{(p)}} & 0 \\
	0 & {\omega_G} & {\mathscr{E}(G)_{(S\rightarrow S)}} & {\omega_{G^D}^{\vee}} & 0
	\arrow[from=1-2, to=1-3]
	\arrow[from=1-3, to=1-4]
	\arrow[from=1-4, to=1-5]
	\arrow[from=1-1, to=1-2]
	\arrow[from=2-1, to=2-2]
	\arrow[from=2-2, to=2-3]
	\arrow[from=2-3, to=2-4]
	\arrow[from=2-4, to=2-5]
	\arrow["V"', from=1-2, to=2-2]
	\arrow["V"', from=1-3, to=2-3]
	\arrow["V"', from=1-4, to=2-4]
	\arrow[from=3-1, to=3-2]
	\arrow[from=3-2, to=3-3]
	\arrow[from=3-3, to=3-4]
	\arrow[from=3-4, to=3-5]
	\arrow["F"', from=2-2, to=3-2]
	\arrow["F"', from=2-3, to=3-3]
	\arrow["F"', from=2-4, to=3-4]
\end{tikzcd}\]
Les composées verticales sont nulles car égales à $p$. Puisque  $F$ est nul sur  $\omega_G^{(p)}$  le morphisme $F$ se factorise en :
\begin{equation*}
F:(\omega_{G^D}^{\vee})^{(p)}\longrightarrow \ \mathscr{E}(G)_{(S\rightarrow S)}
\end{equation*}
De même $V$ est nul sur $\omega_{G^D}^{\vee}$ (utiliser que $V_G=F_{G^D}^D$) donc on obtient une factorisation :
\begin{equation*}
V:\mathscr{E}(G)_{(S\rightarrow S)}\rightarrow \omega_{G}^{(p)}
\end{equation*}
Nous aurons besoin de cette factorisation pour définir l'invariant de Hasse primitif dans la section \ref{SectionHasse}. 

\subsection{Déformations}
\subsubsection{Grothendieck-Messing}
Nous allons maintenant rappeler quelques résultats sur les déformations des groupes de Barsotti-Tate. \\

 Soit $S_0\hookrightarrow S$ une immersion fermée nilpotente munie d'une structure de puissances divisées (PD-structure), avec $p$ localement nilpotent. On dispose d'un morphisme sur le site $\mathrm{Cris}(S_0/\Sigma)$ :
 \begin{equation*}
 (S_0\rightarrow S_0,\gamma_0)\longrightarrow (S_0\hookrightarrow S,\gamma)
 \end{equation*}
Si $\mathscr{F}$ est un cristal en $\OK_{S_0/\Sigma}$-module alors par définition on dispose d'un isomorphisme de $\OK_{S_0}$-modules canonique :
 \begin{equation*}
\mathscr{F}_{(S_0\hookrightarrow S )}\otimes_{\mathcal{O}_S}\mathcal{O}_{S_0}\simeq \mathscr{F}_{(S_0\rightarrow S_0)}
 \end{equation*}
 On note $\mathtt{BT}(S)$ la catégorie des groupes de Barsotti-Tate sur $S$.  Soit $G\in\mathtt{BT}(S)$ un tel groupe. On note $\mathscr{E}(G)$ le cristal associé et  $G|_{S_0}=G\times_{S}S_0$ le changement de base. On a alors un isomorphisme canonique de $\OK_S$-module :
 \begin{equation*}
 \mathscr{E}(G)_{(S\rightarrow S)}\simeq  \mathscr{E}(G|_{S_0})_{(S_0\hookrightarrow S)}
 \end{equation*}
 En effet d'un coté on dispose d'un isomorphisme :
 \begin{equation*}
 \mathscr{E}(G|_{S_0})_{(S_0\hookrightarrow S)}\simeq \mathscr{E}(G)_{(S_0\hookrightarrow S)}
 \end{equation*}
 provenant essentiellement de la définition du cristal $\mathscr{E}(G)$ (\cite{BBM1}, (2.3)). De l'autre coté puisque $\mathscr{E}(G)$ est un cristal, le morphisme $(S_0\rightarrow S)\rightarrow (S\rightarrow S)$ dans $\mathrm{Cris}(S/\Sigma)$ induit un isomorphisme :
 \begin{equation*}
  \mathscr{E}(G)_{(S_0\hookrightarrow S)}\otimes_{\OK_S}\OK_S=\mathscr{E}(G)_{(S\rightarrow S)}
 \end{equation*}
 et le résultat en découle. Cet isomorphisme est compatible aux filtrations de Hodge  :
 
 \[\begin{tikzcd}
	{\omega_G} & {\mathscr{E}(G)_{(S\rightarrow S)}} & {\mathscr{E}(G|_{S_0})_{(S_0\hookrightarrow S)}} \\
	{\omega_{G|_{S_0}}} & {\mathscr{E}(G|_{S_0})_{(S_0\rightarrow S_0)}} & {\mathscr{E}(G|_{S_0})_{(S_0\hookrightarrow S)}\otimes_{\mathcal{O}_S}\mathcal{O}_{S_0}}
	\arrow[from=1-1, to=2-1]
	\arrow[from=1-1, to=1-2]
	\arrow[from=2-1, to=2-2]
	\arrow[from=1-2, to=2-2]
	\arrow["\simeq", from=1-2, to=1-3]
	\arrow["\simeq", from=2-2, to=2-3]
	\arrow[from=1-3, to=2-3]
\end{tikzcd}\]
dans le sens où le diagramme est commutatif et la ligne du bas correspond à la réduction de la ligne du haut.

\begin{definition}
Soit $G_0\in\mathtt{BT}(S_0)$ un groupe de Barsotti-Tate sur $S_0$.  Une filtration  $\mathrm{Fil}^1\subset \mathscr{E}(G_0)_{(S_0\hookrightarrow S)}$ est dite admissible si c'est  un $\OK_S$-module localement facteur direct de $\mathscr{E}(G_0)_{(S_0\hookrightarrow S)}$ qui relève $\omega_{G_0}\subset  \mathscr{E}(G_0)_{(S_0\rightarrow S_0)}$. 
\end{definition}

D'après ce qui précède si $G\in\mathtt{BT}(S)$ relève $G_0\in\mathtt{BT}(S_0)$ alors $\omega_G\subset \mathscr{E}(G)_{(S\rightarrow S)}\simeq  \mathscr{E}(G|_{S_0})_{(S_0\hookrightarrow S)}$ est une filtration admissible.  On note $\mathtt{DefBT}(S_0\hookrightarrow S)$ la catégorie dont les objets sont les couples $(G_0,\mathrm{Fil}^1)$ où $G_0\in\mathtt{BT}(S_0)$, $\mathrm{Fil}^1\subset \mathscr{E}(G_0)_{(S_0\hookrightarrow S)}$ est une filtration admissible et où les morphismes $(G_0,\mathrm{Fil}^1)\rightarrow (G_0',\mathrm{Fil}^{1'})$ sont les morphismes $G_0\rightarrow G_0'$ compatibles avec les filtrations respectives. On peut maintenant énoncer le théorème de Grothendieck-Messing :

\begin{theorem}[\cite{Messing1972} V.1.6]
Le foncteur 
\begin{equation*}
\begin{array}{lrcl}
\mathtt{BT}(S)&\longrightarrow &\mathtt{DefBT}(S_0\hookrightarrow S)\\
G&\longmapsto &\big(G|_{S_0},\ \omega_G\subset \mathscr{E}(G|_{S_0})_{(S_0\hookrightarrow S)}\big)
\end{array}
\end{equation*}
est une équivalence de catégorie.
\end{theorem}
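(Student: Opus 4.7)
Le plan consiste en un dévissage classique suivi d'une comparaison de torseurs. En filtrant l'idéal $\mathcal{I} = \ker(\OK_S \twoheadrightarrow \OK_{S_0})$ par ses puissances divisées et en exploitant que les deux côtés de l'équivalence se comportent bien le long des extensions de carré nul successives, on se ramène au cas où $\mathcal{I}^2 = 0$ muni de la structure PD triviale, situation dans laquelle le problème devient linéaire.

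Pour l'essentielle surjectivité, étant donné un objet $(G_0, \mathrm{Fil}^1) \in \mathtt{DefBT}(S_0 \hookrightarrow S)$, je comparerais deux structures de torseurs. D'une part, l'ensemble des classes d'isomorphismes de relèvements de $G_0$ à $S$ forme un torseur sous le $\OK_{S_0}$-module $\mathrm{Hom}_{\OK_{S_0}}(\omega_{G_0^D}^{\vee}, \omega_{G_0}) \otimes_{\OK_{S_0}} \mathcal{I}$, fait provenant de la théorie de l'obstruction cristalline des déformations de groupes de Barsotti--Tate. D'autre part, l'ensemble des sous-$\OK_S$-modules de $\mathscr{E}(G_0)_{(S_0\hookrightarrow S)}$ qui sont des filtrations admissibles relevant $\omega_{G_0}$ forme, par un argument d'algèbre linéaire élémentaire à partir de la suite de Hodge tensorisée par $\mathcal{I}$, un torseur sous le même module. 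Le foncteur $G \mapsto (G|_{S_0}, \omega_G)$ transporte la première structure de torseur en la seconde, ce qui fournit simultanément l'existence d'un relèvement réalisant $\mathrm{Fil}^1$ et son unicité à isomorphisme près.

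Pour la fidélité et la plénitude, étant donnés des relèvements $G$, $G'$ ainsi qu'un morphisme $f_0 \colon G_0 \to G_0'$ dont l'application induite $\mathscr{E}(f_0)$ envoie $\omega_{G'}$ dans $\omega_G$, l'obstruction à relever $f_0$ en un morphisme $G \to G'$ appartient également à $\mathrm{Hom}(\omega_{G_0'^{D}}^{\vee}, \omega_{G_0}) \otimes \mathcal{I}$ et s'annule précisément grâce à l'hypothèse de compatibilité, d'où l'existence. L'unicité en découle car la différence de deux tels relèvements s'interprète comme un élément du stabilisateur trivial d'un torseur, en invoquant à nouveau le principe déjà utilisé.

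L'étape la plus délicate sera d'établir la première structure de torseur, c'est-à-dire l'identification de l'espace des déformations infinitésimales de $G_0$ avec $\mathrm{Hom}(\omega_{G_0^D}^{\vee}, \omega_{G_0}) \otimes \mathcal{I}$. C'est le véritable contenu de Grothendieck--Messing et cela nécessite d'analyser les extensions de groupes de Barsotti--Tate par des épaississements nilpotents via le site cristallin. Une fois cela acquis, combiné avec la propriété de cristal $\mathscr{E}(G)_{(S\to S)} \simeq \mathscr{E}(G|_{S_0})_{(S_0 \hookrightarrow S)}$ déjà rappelée, le reste de l'argument se réduit à une vérification formelle de compatibilité entre déformations et filtrations.
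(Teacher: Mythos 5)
Le papier ne démontre pas cet énoncé : il s'agit du théorème de Grothendieck--Messing, cité tel quel d'après \cite{Messing1972} (V.1.6), et utilisé ensuite comme boîte noire (notamment via le Lemme 2.10 de \cite{Lau2} pour les déformations le long de $k[\![t]\!]\rightarrow k$). Votre proposition suit bien la stratégie standard de la preuve de Messing (dévissage par les puissances divisées de l'idéal pour se ramener à une extension de carré nul, puis comparaison de deux structures de torseurs sous $\mathrm{Hom}_{\OK_{S_0}}(\omega_{G_0^D}^{\vee},\omega_{G_0})\otimes\mathcal{I}$), mais elle n'en constitue pas une démonstration : l'étape que vous qualifiez vous-même de \og véritable contenu\fg\ --- à savoir que les relèvements de $G_0$ le long d'un épaississement PD de carré nul forment un torseur sous ce module, et que l'évaluation du cristal identifie ce torseur à celui des filtrations admissibles relevant $\omega_{G_0}$ --- est précisément le c\oe ur du théorème. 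Chez Messing elle repose sur l'extension vectorielle universelle, l'application exponentielle pour les épaississements nilpotents à puissances divisées (d'où la nécessité de l'hypothèse $p$ localement nilpotent, que votre dévissage doit aussi invoquer pour munir les crans intermédiaires de PD-structures compatibles) et un théorème de rigidité ; rien de cela n'est établi ni même esquissé dans votre texte, qui renvoie à \og la théorie de l'obstruction cristalline\fg\ sans la construire. Dire que le foncteur \og transporte la première structure de torseur en la seconde\fg\ est exactement l'énoncé à prouver, pas une vérification formelle.

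Même remarque pour la pleine fidélité : l'affirmation que l'obstruction à relever $f_0$ vit dans $\mathrm{Hom}(\omega_{G_0'^{D}}^{\vee},\omega_{G_0})\otimes\mathcal{I}$ et s'annule exactement lorsque $\mathscr{E}(f_0)$ respecte les filtrations est de nouveau le contenu du théorème (via l'extension vectorielle universelle), et l'unicité du relèvement d'un morphisme relève de la rigidité des groupes de Barsotti--Tate (argument de multiplication par $p^n$ à la Drinfeld), pas d'un \og stabilisateur trivial d'un torseur\fg. En l'état, votre texte est un plan de preuve correct dans ses grandes lignes, mais circulaire sur les points essentiels ; pour le présent article, la bonne attitude est celle du papier : citer \cite{Messing1972}, ou bien développer réellement l'extension vectorielle universelle et l'exponentielle cristalline si vous voulez une preuve autonome.
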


\subsubsection{Applications}

 Dans cette section nous allons voir comment appliquer le théorème de déformation de Grothendieck-Messing le long de l'immersion fermée $k[\![t]\!]\rightarrow k$, qui n'est pas munie de PD-structure. Cela nous sera utile lorsque nous voudrons calculer des relations d'adhérences entres strates de nos variétés de Shimura (voir Proposition \ref{PropAdherence}).\\
 
 Soit $R$ un anneaux de caractéristique $p$ et $I$ un idéal tel que $I^2=0$. On peut munir $I$ d'une PD-structure faisant du morphisme $\spec\, R/I\rightarrow\spec\, R$ une immersion fermée nilpotente avec PD-structure. En effet il suffit de poser $\gamma_1(x)=x$ et $\gamma_n(x)=0$ pour tout $n\geq 2$ et tout $x\in I$. Nous allons appliquer cette remarque à la suite d'immersions fermées :
 \begin{equation*}
 \cdots\rightarrow k[t]/(t^{n+1})\rightarrow k[t]/(t^n)\rightarrow k[t]/(t^{n-1})\rightarrow \cdots \rightarrow  k[t]/(t^2)\rightarrow k
 \end{equation*}
 Pour tout $n\geq 1$ on note $R_n=k[t]/(t^n)$, $I_n=\mathrm{Ker}\big(k[t]/(t^n)\rightarrow k[t]/(t^{n-1})\big)$ et $S_n=\spec\, R_n$. Chaque immersion fermée $S_{n-1}\hookrightarrow S_{n}$ est définie par un idéal $I_n=(t^{n-1})/(t^n)$ satisfaisant $I_n^2=0$. D'après ce qui précède on peut donc munir chacune de ces immersions fermées d'une PD-structure. Le résultat suivant fonctionne pour tout anneau $R$ local complet de corps résiduel parfait.
 
 \begin{prop}[\cite{Lau2}, Lemme 2.10]
 Soit $G_1\in\mathtt{BT}(k)$ et $\mathrm{Fil}^1\subset\mathscr{E}(G_1)_{(k\rightarrow k)}\otimes_k k[\![t]\!]$ un relèvement de $\omega_{G_1}\subset \mathscr{E}(G_1)_{(k\rightarrow k)}$ qui est localement un facteur direct. Alors il existe $G\in \mathtt{BT}(k[\![t]\!])$   tel que :
 \begin{enumerate}
 \item $G$ est un relèvement de $G_1$ le long de $\spec\, k\rightarrow \spec\, k[\![t]\!]$
 \item $\mathscr{E}(G)_{(k[\![t]\!]\rightarrow k[\![t]\!])}\simeq \mathscr{E}(G)_{(k\rightarrow k)}\otimes_{k}k[\![t]\!]$
 \item $\omega_{G}\simeq \mathrm{Fil}^1$ (via l'identification ci dessus)
 \end{enumerate}
 \end{prop}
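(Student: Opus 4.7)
Le plan consiste à démontrer la proposition par récurrence le long de la tour d'immersions fermées $S_n\hookrightarrow S_{n+1}$ introduite juste avant l'énoncé, suivie d'une étape d'algébrisation. L'observation clé est que si l'immersion $\spec\, k\hookrightarrow \spec\, k[\![t]\!]$ ne porte aucune PD-structure naturelle en caractéristique $p$, chaque étape $S_n\hookrightarrow S_{n+1}$ est en revanche définie par un idéal de carré nul, et admet donc la PD-structure triviale, ce qui autorise une application itérée du théorème de Grothendieck-Messing.

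On commence par poser $G^{(1)}=G_1$ et l'on note $\mathrm{Fil}^1_n\subset \mathscr{E}(G_1)_{(k\rightarrow k)}\otimes_k R_n$ la réduction modulo $t^n$ de $\mathrm{Fil}^1$. Supposons construit $G^{(n)}\in\mathtt{BT}(R_n)$ relevant $G_1$ et muni d'une trivialisation canonique
\begin{equation*}
\mathscr{E}(G^{(n)})_{(S_n\rightarrow S_n)}\simeq \mathscr{E}(G_1)_{(k\rightarrow k)}\otimes_k R_n
\end{equation*}
identifiant $\omega_{G^{(n)}}$ à $\mathrm{Fil}^1_n$. La propriété de cristal, appliquée au PD-épaississement $S_n\hookrightarrow S_{n+1}$, fournit alors
\begin{equation*}
\mathscr{E}(G^{(n)})_{(S_n\hookrightarrow S_{n+1})}\simeq \mathscr{E}(G_1)_{(k\rightarrow k)}\otimes_k R_{n+1}.
\end{equation*}
Sous cette identification, $\mathrm{Fil}^1_{n+1}$ est une filtration admissible au sens de la définition rappelée plus haut, et Grothendieck-Messing produit un unique relèvement $G^{(n+1)}\in\mathtt{BT}(R_{n+1})$ dont la filtration de Hodge est exactement $\mathrm{Fil}^1_{n+1}$. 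La trivialisation au cran suivant est héritée automatiquement, ce qui clôt la récurrence.

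Il ne reste qu'à passer à la limite : le système $(G^{(n)})_n$ définit un objet de $\varprojlim_n \mathtt{BT}(R_n)$, et l'anneau $k[\![t]\!]$ étant noethérien local complet, le foncteur naturel $\mathtt{BT}(k[\![t]\!])\rightarrow\varprojlim_n\mathtt{BT}(R_n)$ est une équivalence (énoncé classique d'algébrisation des groupes $p$-divisibles, cf. \cite{Messing1972}), ce qui fournit un unique $G\in\mathtt{BT}(k[\![t]\!])$ tel que $G|_{S_n}\simeq G^{(n)}$ pour tout $n$. Le point (1) résulte alors de $G|_{\spec\, k}=G_1$, le point (2) du passage à la limite des trivialisations précédentes, et le point (3) de la compatibilité $\omega_{G^{(n)}}=\mathrm{Fil}^1_n$ à chaque niveau.

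Le principal obstacle me paraît être la mise en place cohérente, étape par étape, des trivialisations $\mathscr{E}(G^{(n)})_{(S_n\rightarrow S_n)}\simeq \mathscr{E}(G_1)_{(k\rightarrow k)}\otimes_k R_n$, puisque c'est précisément ce qui permet d'interpréter $\mathrm{Fil}^1$ comme donnée admissible à chaque cran et d'assurer la compatibilité des relèvements successifs produits par Grothendieck-Messing. L'algébrisation du système formel en un véritable groupe de Barsotti-Tate sur $\spec\, k[\![t]\!]$ (et non seulement sur $\mathrm{Spf}\, k[\![t]\!]$) constitue un deuxième ingrédient non trivial mais standard, utilisant la complétude $(t)$-adique de $k[\![t]\!]$.
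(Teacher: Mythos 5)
Votre schéma général (dévissage en extensions de carré nul munies de la PD-structure triviale, puis passage à la limite) est bien celui du papier, mais l'étape cruciale de votre récurrence est affirmée sans justification, et c'est précisément là que se situe toute la difficulté. Vous écrivez que \og la propriété de cristal\fg\ fournit l'identification $\mathscr{E}(G^{(n)})_{(S_n\hookrightarrow S_{n+1})}\simeq \mathscr{E}(G_1)_{(k\rightarrow k)}\otimes_k R_{n+1}$. Or la propriété de cristal ne donne que le contrôle de la réduction : $\mathscr{E}(G^{(n)})_{(S_n\hookrightarrow S_{n+1})}\otimes_{R_{n+1}}R_n\simeq \mathscr{E}(G^{(n)})_{(S_n\rightarrow S_n)}$. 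Elle ne fournit ni le $R_{n+1}$-module lui-même avec une trivialisation canonique, ni la compatibilité de ces trivialisations au fil de la récurrence. On ne peut pas non plus l'obtenir en évaluant le cristal de $G_1$ sur $(S_1\hookrightarrow S_{n+1})$ : l'idéal $(t)\subset k[t]/(t^{n+1})$ n'admet pas de PD-structure dès que $t^p\neq 0$ (tout élément d'un PD-idéal en caractéristique $p$ vérifie $x^p=0$), et il n'existe pas de rétraction $S_{n+1}\rightarrow S_n$ qui permettrait d'invoquer la connexion. Enfin, Grothendieck-Messing identifie $\mathscr{E}(G^{(n)})_{(S_n\hookrightarrow S_{n+1})}$ à la cohomologie de de Rham d'un relèvement \emph{quelconque} de $G^{(n)}$ ; comme c'est justement ce relèvement que vous cherchez à construire, dire que \og la trivialisation au cran suivant est héritée automatiquement\fg\ est circulaire. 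Vous signalez d'ailleurs vous-même ce point comme \og le principal obstacle\fg, mais vous ne le résolvez pas.

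C'est exactement ce trou que la preuve du papier comble à l'aide de la théorie des displays de Zink et Lau : via l'équivalence $\mathtt{BT}(S_n)\cong\mathtt{Disp}(R_n)$, la valeur du cristal $\mathbb{D}(\mathscr{P}_n)_{(R_{n+1}\twoheadrightarrow R_n)}$ se calcule à partir de n'importe quel display relevant $\mathscr{P}_n$, et en choisissant à chaque étape le relèvement de module constant $P_{n+1}=P_1\otimes_{\mathbb{W}(R_1)}\mathbb{W}(R_{n+1})$ (changement de base le long de la section $k\rightarrow R_{n+1}$, section qui existe pour les anneaux mais pas au niveau des épaississements PD), on obtient l'identification canonique $\mathbb{D}(\mathscr{P}_n)_{(R_{n+1}\twoheadrightarrow R_n)}\simeq \mathscr{E}(G_1)_{(k\rightarrow k)}\otimes_k R_{n+1}$ dont votre récurrence a besoin, compatible d'un cran à l'autre. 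Le reste de votre argument (PD-structure triviale sur un idéal de carré nul, recollement des $G^{(n)}$ en un groupe $p$-divisible sur $k[\![t]\!]$ — le papier cite le lemme 2.15 de Lau plutôt qu'un énoncé d'algébrisation de Messing, mais c'est le même ingrédient) coïncide avec le papier ; il vous manque donc l'idée qui fait fonctionner la récurrence, à savoir le calcul explicite de l'évaluation du cristal sur chaque épaississement, fourni par les displays.
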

 \begin{proof}
 Nous allons utiliser la théorie des display de Zink et Lau. Posons $R=k[\![t]\!]$ et $S=\spec\, R$. Notez que se donner un groupe $p$-divisible $G\in\mathtt{BT}(S)$  c'est se donner un système compatible de groupes $p$-divisibles $(G_n)_{n\geq 1}$ où $G_n\in\mathtt{BT}(S_n)$ (\cite{Lau_2014} Lemme 2.15). Nous allons donc construire pour tout $n\geq 2$ un groupe $p$-divisible $G_n\in\mathtt{BT}(S_n)$ satisfaisant les propriétés souhaitées. Puisque pour tout $n\geq 2$ l'anneau $R_n$ est un anneau local artinien sur lequel $p$ est nilpotent et de corps résiduel parfait, on dispose donc d'une équivalence de catégorie $\mathtt{BT}(S_n)\cong \mathtt{Disp}(R_n)$.  On note $\mathscr{P}_1=(P_1,Q_1,F_1,V^{-1}_1)$ le display associé à $G_1$ par Lau (\cite{Lau_2014}, Théorème A). Toujours d'après \textit{loc cit}, Théorème A, on dispose d'un isomorphisme canonique entre le cristal de Dieudonné de $G_1$ et le cristal associé à $\mathscr{P}_1$ par Zink (\cite{Zink} ou \cite{Lau_2014}) :
 \begin{equation*}
 \mathscr{E}(G_1)\simeq \mathbb{D}(\mathscr{P}_1)
 \end{equation*}
 Le cristal $\mathbb{D}(\mathscr{P}_1)$ étant défini sur le site $\mathrm{Cris}_{\mathrm{adm}}(R)\subset \mathrm{Cris}(R)$ des épaississements avec pd-structure $(B\rightarrow A,\delta)$ avec $A$ admissible (i.e. si le nilradical $\mathcal{N}_R$ est \og bounded nilpotent\fg\ et que $R_{\mathrm{red}}=R/\mathcal{N}_R$ est un anneau parfait de caractéristique $p$), l'isomorphisme ci-dessus est un isomorphisme de cristaux sur le site $\mathrm{Cris}_{\mathrm{adm}}(R)$ et on fait ici l'abus de notation d'également noter $\mathscr{E}(G_1)$ la restriction du cristal $\mathscr{E}(G_1)$ au site $\mathrm{Cris}_{\mathrm{adm}}(R)$. En particulier en évaluant sur l'épaississement tautologique $(S_1\rightarrow S_1)$ on obtient un isomorphisme canonique de $R_1$-modules :
 \begin{equation}
 \label{EQcristaux1}
 \mathscr{E}(G_1)_{(S_1\rightarrow S_1)}\simeq P_1\otimes_{\mathbb{W}(R_1)} R_1
 \end{equation}
    D'après la remarque précédente, $R_2\rightarrow R_1$ est muni d'une PD-structure nilpotente, et par conséquent se donner un relèvement $\mathscr{P}_2\in\mathtt{Disp}(R_2)$ de $\mathscr{P}_1$ c'est se donner un relèvement $\mathrm{Fil}^1_{(2)}\subset \mathbb{D}(\mathscr{P}_1)_{(R_2\twoheadrightarrow R_1)}$ (voir \cite{Lau_2014}, Corollaire 2.10, ou \cite{Lau2} Lemme 4.2). On pose $P_2=P_1\otimes_{\mathbb{W}(R_1)}\mathbb{W}(R_2)$ (notez qu'on dispose pour tout $n\geq 2$ d'un morphisme  $R_1\rightarrow R_n$). D'après \cite{Lau_2014}, Section 2.6, on a par construction du cristal $\mathbb{D}(\mathscr{P}_1)$ un isomorphisme :
 \begin{equation*}
\mathbb{D}(\mathscr{P}_1)_{(R_2\twoheadrightarrow R_1)} \simeq P_2\otimes_{\mathbb{W}(R_2)}R_2=P_1\otimes_{\mathbb{W}(R_1)}R_2
 \end{equation*}
 Via l'isomorphisme \eqref{EQcristaux1} on obtient :
 \begin{equation*}
 \mathscr{E}(G_1)_{(S_1\rightarrow S_1)}\otimes_{R_1} R_2\simeq   \mathbb{D}(\mathscr{P}_1)_{(R_2\twoheadrightarrow R_1)}
 \end{equation*}
 On définit le relèvement de la filtration de Hodge de $\mathscr{P}_1$ comme étant :
 \begin{equation*}
 \mathrm{Fil}_{(2)}^1:=\mathrm{Fil}^1\otimes_{R} R_2 \subset \mathscr{E}(G_1)_{(S_1\rightarrow S_1)}\otimes_{R_1} R_2\simeq  \mathbb{D}(\mathscr{P}_1)_{(R_2\twoheadrightarrow R_1)}
 \end{equation*}
 Ce relèvement  définit une display $\mathscr{P}_2\in\mathtt{Disp}(R_2)$ et donc un groupe $p$-divisible $G_2\in \mathtt{BT}(S_2)$. On dispose de nouveau d'une identification entre cristaux $ \mathscr{E}(G_2)\simeq \mathbb{D}(\mathscr{P}_2)$. Puisque $G_2$ est un relèvement de  $G_1$ le long de $S_1\hookrightarrow S_2$ on a :
 \begin{align*}
  \mathscr{E}(G_2)_{(S_2\rightarrow S_2)}&\simeq\mathscr{E}(G_1)_{(S_1\hookrightarrow S_2)}\\
 &\simeq  \mathbb{D}(\mathscr{P}_1)_{(R_2\twoheadrightarrow R_1)}\\
 &\simeq  \mathscr{E}(G_1)_{(S_1\rightarrow S_1)}\otimes_{R_1} R_2\\
 &\simeq  \mathscr{E}\otimes_R R_2
 \end{align*}
 où l'on a posé $\mathscr{E}:=\mathscr{E}(G_1)_{(S_1\rightarrow S_1)}\otimes_{R_1} R$. On continue le processus par induction en posant pour tout $n\geq 3$ :
 \begin{equation*}
 P_{n}:=P_1\otimes_{\mathbb{W}(R_1)}\mathbb{W}(R_{n}),\ \ \ \ \mathrm{Fil}_{(n)}^1:=\mathrm{Fil}^1\otimes_{R} R_{n}\subset \mathbb{D}(\mathscr{P}_{n-1})_{(R_{n}\twoheadrightarrow R_{n-1})}
 \end{equation*}
 ce qui nous fournit  à chaque étape un display $\mathscr{P}_{n}\in\mathtt{Disp}(R_{n})$ et un groupe $p$-divisible $G_{n}\in \mathtt{BT}(S_{n})$ satisfaisant les équations :
 \begin{equation*}
 \omega_{G_{n}}\simeq \mathrm{Fil}^1_{(n)},\ \ \ \ \mathscr{E}(G_{n})_{(S_n\rightarrow S_n)}\simeq \mathscr{E}\otimes_R R_n
 \end{equation*}
 Par passage à la limite on obtient un groupe $p$-divisible $G\in\mathtt{BT}(S)$ satisfaisant par construction :
 \begin{equation*}
 \mathscr{E}(G)_{(S\rightarrow S)}\simeq \lim_{\substack{\longleftarrow\\n}} \mathscr{E}\otimes_R R_n\simeq \mathscr{E}
 \end{equation*}
 \begin{equation*}
\omega_{G}\simeq \lim_{\substack{\longleftarrow\\n}} \mathrm{Fil}^1_{(n)}\simeq \lim_{\substack{\longleftarrow\\n}} \mathrm{Fil}^1\otimes_R R_n\simeq \mathrm{Fil}^1
 \end{equation*}
 ce qui démontre les points $(1),(2),(3)$.
 \end{proof}

\begin{remark}\ 

\begin{enumerate}
\item En fait la proposition ci-dessus est une reformulation en termes de cristaux de Dieudonné des lemmes 2.15 et 2.16 de \cite{Lau_2014}. Cette reformulation est possible grâce à l'identification entre le cristal de Dieudonné $\mathscr{E}(G)$ et le cristal associé à un display $\mathbb{D}(\mathscr{P})$. Nous aurions très bien pu nous passer de la théorie de Dieudonné cristalline et simplement utiliser la théorie des displays.
\item L'un des apports de la théorie des displays dans la preuve ci-dessus est qu'elle rend explicite le calcul du faisceau $\mathscr{E}(G_n)_{(R_{n+1}\twoheadrightarrow R_n)}$ : il suffit de prendre n'importe quel $\mathscr{P}_{n+1}$ qui relève le display $\mathscr{P}_n$ (voir \cite{Zink} Théorème 3). Cela découle du fait que le morphisme de \og frame\fg $(\mathscr{D}_{R_{n+1}/R_n}\rightarrow \mathscr{D}_{R_n})$ est cristallin (voir \cite{Lau_2014} Proposition 2.8). Dans la preuve ci-dessus à chaque étape nous avons défini la display $\mathscr{P}_{n+1}$ comme étant le changement de base de $\mathscr{P}_1$ le long de la section $R_1\rightarrow R_{n+1}$. La proposition ci-dessus n'est donc qu'un passage à la limite d'un fait bien connu.
\end{enumerate}
\end{remark}

Énonçons pour finir un lemme dont nous aurons besoin dans la section \ref{SectionHasse} : 

\begin{lemma}
\label{LemmaRelevement}
Soit $S_0=\spec\, R/I \hookrightarrow S=\spec\, R$ une immersion fermée telle que $I^2=0$. Soit $G_0\in\mathtt{BT}(S_0)$ un groupe $p$-divisible sur $S_0$. 
Soit $\mathrm{Fil}^1_{(1)}, \mathrm{Fil}^1_{(2)}\subset  \mathscr{E}(G_0)_{(S_0\hookrightarrow S)}$ deux  filtrations admissibles. Dans $\mathscr{E}(G_0)_{(S_0\hookrightarrow S)}^{(p)}$ on dispose de l'égalité :
\begin{equation*}
(\mathrm{Fil}^1_{(1)})^{(p)}= (\mathrm{Fil}^1_{(2)})^{(p)}
\end{equation*}
En particulier le faisceau $\omega_G^{(p)}$ ne dépend pas du relèvement $G\in\mathtt{BT}(S)$.
\end{lemma}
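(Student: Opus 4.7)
The plan is to exploit the elementary observation that when $I^2=0$ and $p\geq 2$, the absolute Frobenius $\varphi:R\to R$ kills $I$, because $\varphi(I)\subset I^p\subset I^2 = 0$. Under the Frobenius twist $M^{(p)} = M\otimes_{R,\varphi} R$, an element of the form $im$ with $i\in I$ therefore satisfies $im\otimes 1 = m\otimes i^p = 0$. The whole lemma reduces to this.

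More concretely, I would first reduce to a pointwise (or rather, local-section-level) statement: by symmetry it suffices to show that for every local section $x_1$ of $\mathrm{Fil}^1_{(1)}$, the image $x_1\otimes 1$ inside $\mathscr{E}(G_0)_{(S_0\hookrightarrow S)}^{(p)}$ lies in $(\mathrm{Fil}^1_{(2)})^{(p)}$. By definition both $\mathrm{Fil}^1_{(1)}$ and $\mathrm{Fil}^1_{(2)}$ reduce modulo $I$ to the same submodule $\omega_{G_0}\subset \mathscr{E}(G_0)_{(S_0\to S_0)}$, so there exists a local section $x_2$ of $\mathrm{Fil}^1_{(2)}$ with
\begin{equation*}
x_1-x_2 = \sum_j i_j m_j, \qquad i_j\in I,\ m_j\in \mathscr{E}(G_0)_{(S_0\hookrightarrow S)}.
\end{equation*}
Passing to the Frobenius twist and using the identification $rm\otimes 1 = m\otimes \varphi(r) = m\otimes r^p$ in $M^{(p)}$, I get
\begin{equation*}
(x_1-x_2)\otimes 1 \;=\; \sum_j m_j\otimes i_j^p \;=\; 0
\end{equation*}
since $i_j^p\in I^p\subset I^2 = 0$. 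Hence $x_1\otimes 1 = x_2\otimes 1$ lies in $(\mathrm{Fil}^1_{(2)})^{(p)}$, which gives one inclusion, and symmetry yields the other.

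For the "in particular" statement I would invoke Grothendieck--Messing: any lift $G\in\mathtt{BT}(S)$ of $G_0$ corresponds to an admissible filtration $\omega_G\subset \mathscr{E}(G_0)_{(S_0\hookrightarrow S)}$, so applying the first part to two such lifts shows that $\omega_G^{(p)}$ is independent of the chosen lift.

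There is essentially no obstacle; the only point requiring care is the bookkeeping in the Frobenius-twisted tensor product, namely that scalars act on the left factor via $\varphi$ so that the $i_j$'s can be pushed across the tensor where they are raised to the $p$-th power and vanish. Everything else is formal.
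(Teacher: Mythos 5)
Votre preuve est correcte et suit essentiellement le même chemin que celle du papier : l'argument central y est exactement le même fait d'algèbre commutative, à savoir que dans $M\otimes_{R,\sigma}R$ on a $im\otimes 1=m\otimes\sigma(i)=m\otimes i^p=0$ puisque $i^p\in I^2=0$, de sorte que deux sous-modules ayant même image modulo $IM$ ont même image dans le twist de Frobenius. Votre réduction au niveau des sections locales et l'appel à Grothendieck--Messing pour le \og en particulier\fg\ ne sont que des reformulations de la preuve du texte.
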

\begin{proof}
C'est un simple résultat d'algèbre commutative. On pose $M=\mathscr{E}(G_0)_{(S_0\hookrightarrow S)}$. C'est un $R$-module. Puisque $\mathscr{E}$ est un cristal on dispose de l'identification $\mathscr{E}(G_0)_{(S_0\rightarrow S_0)}=M/IM$. Soit donc $N_1,N_2\subset M$ tels que $N_1/IM=N_2/IM$. On veut montrer que $N_1^{(p)}=N_2^{(p)}\subset M\otimes_{R,\sigma} R$ où $\sigma:R\rightarrow R$ désigne le Frobenius. Mais c'est clair puisque $0=IM^{(p)}\subset M\otimes_{R,\sigma} R$ (voir que $im\otimes 1=m\otimes \sigma (i)$ et que $\sigma (i)=0$ car par hypothèse $I^2=0$).
\end{proof}

\subsection{Relation d'adhérence}

\begin{definition}
\label{bonnestratification}
Soit $S$ un espace topologique. Une stratification de $S$ par rapport à un ensemble partiellement ordonné $(I,\leq)$ est une décomposition :
\begin{equation*}
S=\coprod_{i\in I}S_i
\end{equation*}
telle que pour tout $i\in I$ on ait la relation d'adhérence :
\begin{equation*}
\overline{S_i}\subset\coprod_{j\leq i}S_j
\end{equation*}
Une stratification est appelée bonne stratification si de plus elle satisfait pour tout $i\in I$ :
\begin{equation*}
\overline{S_i}=\coprod_{j\leq i}S_j
\end{equation*}
\end{definition}

Dans cette partie nous allons énoncer un résultat dont nous aurons besoin par la suite lorsque nous nous intéresserons aux relations d'adhérences entre strates de nos variétés de Shimura. Ce résultat est certainement bien connu et est notamment utiliser dans \cite{BijHerAR} (Preuve du théorème 4.11) mais à défaut d'avoir trouvé une preuve, nous allons en proposer une ci-dessous. 

\begin{prop}
\label{PropAdherence}
Soit $X$ un un schéma  noethérien de caractéristique $p>0$ et $Y\subset X$ un sous schéma localement fermé. Les conditions suivantes sont équivalentes :

\begin{enumerate}
\item $x\in \overline{Y}$
\item Il existe $y\in Y$ tel que  $y\leadsto x$ 
\item Il existe une extension de corps $k/\kappa (x)$ et un morphisme de schémas $\spec\, k[\![t]\!]\rightarrow X$ qui envoi le point fermé sur $x$ et le point générique sur $y$.
\end{enumerate}
\end{prop}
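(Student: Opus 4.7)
L'équivalence $(1)\Leftrightarrow (2)$ repose sur l'hypothèse noethérienne. En effet, $Y$ admet un nombre fini de points génériques $y_1,\dots,y_r$ correspondant à ses composantes irréductibles, et l'on a $\overline{Y}=\bigcup_{i=1}^{r}\overline{\{y_i\}}$ dans $X$. Donc $x\in\overline{Y}$ si et seulement s'il existe $i$ tel que $y_i\leadsto x$, ce qui fournit un $y=y_i\in Y$ se spécialisant en $x$. L'implication $(3)\Rightarrow (2)$ est immédiate : l'image d'une spécialisation par un morphisme de schémas est une spécialisation.

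Le cœur de la preuve est l'implication $(2)\Rightarrow (3)$. Étant donnée une spécialisation $y\leadsto x$, le point $y$ correspond à un idéal premier $\mathfrak{p}\subset\mathcal{O}_{X,x}$. Le cas $y=x$ étant trivial (prendre un morphisme constant $\spec\, \kappa(x)[\![t]\!]\to X$), on peut supposer $\mathfrak{p}\subsetneq\mathfrak{m}_x$. Posons $A:=\mathcal{O}_{X,x}/\mathfrak{p}$ : c'est un anneau local noethérien intègre, de dimension strictement positive, de corps résiduel $\kappa(x)$ et de corps des fractions $\kappa(y)$. Il s'agit alors de construire un morphisme local injectif $A\rightarrow k[\![t]\!]$ pour une certaine extension $k/\kappa(x)$.

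Ma stratégie consiste à dominer $A$ par un anneau de valuation discrète, puis à passer au complété. Par un résultat classique, il existe un anneau de valuation discrète $R$ dominant l'anneau local noethérien intègre $A$ (non réduit à un corps). Par domination, le corps résiduel $k$ de $R$ est une extension de $\kappa(x)$. On considère ensuite le complété $\hat{R}$ : c'est un anneau de valuation discrète complet d'égale caractéristique $p$, de corps résiduel $k$. Par le théorème de structure de Cohen, valable en égale caractéristique \emph{sans} hypothèse de perfection sur $k$, l'anneau $\hat{R}$ contient un corps de coefficients, ce qui fournit un isomorphisme $\hat{R}\simeq k[\![t]\!]$. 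La composée
\begin{equation*}
\spec\, k[\![t]\!]\simeq \spec\, \hat{R}\longrightarrow \spec\, R\longrightarrow \spec\, A\longrightarrow \spec\, \mathcal{O}_{X,x}\longrightarrow X
\end{equation*}
envoie alors le point fermé sur $x$ (par compatibilité des corps résiduels assurée par la domination) et le point générique sur $y$ (car $R\rightarrow \hat{R}$ est fidèlement plat, donc l'image du point générique de $\spec\, \hat{R}$ est le point générique de $\spec\, R$, lui-même envoyé sur celui de $\spec\, A$, c'est-à-dire $y$).

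L'obstacle principal me semble être l'identification $\hat{R}\simeq k[\![t]\!]$ dans le cas où le corps résiduel $k$ n'est pas parfait : elle repose sur la version générale du théorème de structure de Cohen pour les anneaux locaux complets d'égale caractéristique, et non sur le cas plus courant d'un corps résiduel parfait. C'est précisément ce qui force à autoriser une extension de corps $k/\kappa(x)$ dans l'énoncé $(3)$ plutôt que de travailler directement avec $\kappa(x)$.
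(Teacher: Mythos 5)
Votre preuve est correcte et suit essentiellement la même démarche que celle du papier : on se ramène à un anneau de valuation discrète dont le spectre relie $y$ à $x$, on passe au complété, et on l'identifie à $k[\![t]\!]$ par le théorème de structure de Cohen en égale caractéristique (sans hypothèse de perfection sur $k$), le point fermé allant sur $x$ et le point générique sur $y$. La seule différence est que vous redémontrez à la main les deux lemmes que le papier cite du Stacks Project : l'argument des points génériques en nombre fini de $Y$ remplace le lemme sur l'adhérence d'une partie constructible d'un espace spectral, et la domination de $\mathcal{O}_{X,x}/\mathfrak{p}$ par un anneau de valuation discrète remplace le lemme d'existence d'un tel anneau pour une spécialisation — ce qui ne change rien au fond.
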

\begin{proof}
$(3)\Rightarrow (1)$ découle de la continuité du morphisme. $(1)\Rightarrow (2)$ découle du lemme \ref{Lemma1} ci dessous et du fait que l'espace topologique sous-jacent à un schéma soit sobre et qu'un espace topologique noethérien sobre soit spectral (dans un espace topologique noethérien tout sous-ensemble est quasi-compact). Montrons que $(2)\Rightarrow (3)$. D'après le lemme \ref{Lemma2} ci dessous, il existe un anneau de valuation discrète $R$ satisfaisant les propriétés du point $(3)$. Le théorème de structure de Cohen nous assure que $\widehat{R}\simeq k[\![t]\!]$ où $\widehat{R}$ désigne la complétion le long de l'idéal maximal $\mathfrak{m}\subset R$ et $k=R/\mathfrak{m}$ désigne le corps résiduel. Le morphisme $\spec\, \widehat{R}\rightarrow X$ satisfait les propriétés souhaitées.
\end{proof}

\begin{lemma} 
\label{Lemma1}
Soit $X$ un espace topologique spectral, et $Y\subset X$ un sous ensemble constructible. Alors 
\begin{equation*}
\overline{Y}=\bigcup_{y\in Y}\overline{\{y\}}
\end{equation*}
\end{lemma}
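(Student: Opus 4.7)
Mon plan est de séparer le lemme en deux inclusions. L'inclusion $\bigcup_{y\in Y}\overline{\{y\}}\subset\overline{Y}$ est triviale puisque $Y\subset \overline{Y}$ et que ce dernier est fermé, donc contient la clôture de chacun de ses points. Tout le travail porte sur la réciproque : partant de $x\in\overline{Y}$, il s'agit de produire $y\in Y$ qui se spécialise en $x$.

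L'idée centrale que je proposerais est d'exploiter la topologie constructible (topologie patch) sur $X$. Les hypothèses du lemme (espace spectral, sous-ensemble constructible) sont précisément celles qui permettent d'invoquer la théorie de Hochster : la topologie patch, engendrée par les ouverts quasi-compacts et leurs complémentaires, est compacte Hausdorff, et les sous-ensembles constructibles en sont exactement les ouverts-fermés. Je considérerais la famille
\begin{equation*}
\mathcal{F}=\{U\cap Y\ :\ U\subset X\ \text{ouvert quasi-compact},\ x\in U\}.
\end{equation*}
Chaque $U\cap Y$ est constructible (donc fermé pour la topologie patch), non vide car $x\in\overline{Y}$, et la famille est filtrante décroissante : l'intersection finie d'ouverts quasi-compacts contenant $x$ reste un ouvert quasi-compact contenant $x$, par définition d'espace spectral.

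L'étape clé consiste alors à appliquer la compacité de la topologie patch : toute famille filtrante de fermés non vides dans un compact a une intersection non vide. J'obtiendrais ainsi un point $y\in\bigcap_{U\in\mathcal{F}}(U\cap Y)$. Un tel $y$ appartient à $Y$ et à tout ouvert quasi-compact contenant $x$ ; ceux-ci formant une base de la topologie spectrale, $y$ appartient en fait à tout ouvert contenant $x$, ce qui équivaut à $x\in\overline{\{y\}}$.

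Le principal obstacle me paraît être d'identifier le bon cadre technique : une fois admises la compacité de la topologie patch et la caractérisation des constructibles comme ouverts-fermés pour celle-ci, le reste n'est qu'un argument élémentaire de propriété de l'intersection finie. Une alternative serait de travailler avec un ultrafiltre sur $Y$ convergeant vers $x$ dans la topologie spectrale, ou de raisonner par récurrence sur une présentation booléenne explicite de $Y$ en ouverts quasi-compacts, mais la formulation via la topologie patch me semble la plus directe et conceptuelle.
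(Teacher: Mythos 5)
Votre preuve est correcte. Le papier se contente de renvoyer au Stacks Project (Lemme 5.23.6) pour ce résultat, et votre argument --- compacité de la topologie constructible, sous-ensembles constructibles fermés pour celle-ci, propriété de l'intersection finie pour la famille des $U\cap Y$ avec $U$ ouvert quasi-compact contenant $x$ --- est précisément la démonstration standard de la référence citée ; c'est donc essentiellement la même approche.
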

\begin{proof}
Voir \cite{stacks-project} Lemme 5.23.6.
\end{proof}
\begin{lemma}
\label{Lemma2}
Soit $X$ un schéma noethérien et $y\leadsto x$ une spécialisation. Alors il existe un anneau de valuation discrète $R$ et un morphisme $\spec\, R\rightarrow X$ qui envoie le point fermé sur $x$ et le point générique sur $y$.
\end{lemma}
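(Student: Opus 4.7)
Le plan est de ramener l'énoncé à un résultat classique d'algèbre commutative : pour tout anneau local noethérien intègre $A$, il existe un morphisme d'anneaux local injectif $A\hookrightarrow R$ vers un anneau de valuation discrète $R$ (voir par exemple \cite{stacks-project}). Admettons ce résultat dans un premier temps.

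On commence par remplacer $X$ par $\spec\, \OK_{X,x}$, ce qui permet de supposer $X=\spec\, A$ avec $A$ anneau noethérien local de point fermé $x$. Le point $y$ correspond alors à un idéal premier $\mathfrak{p}\subset A$, et en quotientant par $\mathfrak{p}$ on peut supposer $A$ intègre avec $y$ égal au point générique de $\spec\, A$. La composée $\spec\, R\rightarrow \spec\, A\hookrightarrow X$ associée à une injection locale $A\hookrightarrow R$ comme ci-dessus envoie alors le point générique de $\spec\, R$ sur $y$ (par injectivité de l'application d'anneaux) et le point fermé sur $x$ (par localité), ce qui fournit le morphisme cherché.

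Reste à justifier l'existence d'un tel $R$. Si $\dim A=0$, alors $A$ est un corps, $x=y$, et $R=A[\![t]\!]$ convient. Si $\dim A=1$, on applique le théorème de Krull--Akizuki : la clôture intégrale $\overline{A}$ de $A$ dans $\mathrm{Frac}(A)$ est un anneau de Dedekind ; le théorème de montée appliqué à l'extension entière $A\subset \overline{A}$ fournit un idéal maximal $\mathfrak{M}\subset \overline{A}$ au-dessus de $\mathfrak{m}_A$, et $R:=\overline{A}_{\mathfrak{M}}$ est alors un anneau de valuation discrète dominant $A$. Le cas $\dim A\geq 2$ se traite par récurrence sur la dimension via un éclatement de $\spec\, A$ en son point fermé suivi d'une localisation en un point fermé convenablement choisi de la fibre exceptionnelle. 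Le principal obstacle technique est ce dernier point : assurer que le procédé d'éclatement itéré termine et produise effectivement un anneau local noethérien intègre de dimension $\leq 1$. Cela constitue le cœur de la référence citée ci-dessus, et est un résultat bien connu d'algèbre commutative.
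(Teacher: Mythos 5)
La preuve du papier pour ce lemme est une simple citation (\cite{stacks-project}, Lemme 28.5.10) ; votre proposition en reconstruit essentiellement la démonstration standard, et elle est correcte dans sa réduction et dans les cas qu'elle traite complètement. Le passage à $\spec\, \OK_{X,x}$ puis le quotient par l'idéal premier correspondant à $y$ ramènent bien l'énoncé à la domination d'un anneau local noethérien intègre $A$ par un anneau de valuation discrète via une injection locale (la localité contrôle l'image du point fermé, l'injectivité celle du point générique), et vos cas $\dim A=0$ (corps, $R=A[\![t]\!]$) et $\dim A=1$ (Krull--Akizuki, \og lying over\fg\ pour l'extension entière $A\subset\overline{A}$, puis localisation) sont corrects. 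Une précision sur le cas $\dim A\geq 2$, que vous présentez comme une récurrence par éclatements itérés dont la terminaison serait l'obstacle technique : ce n'est pas là que réside l'argument classique, et aucune récurrence n'est nécessaire. Un seul éclatement affine suffit : si $\mathfrak{m}_A=(x_1,\dots,x_r)$ et si $a=x_i$ est choisi convenablement, l'anneau $A'=A[\mathfrak{m}_A/a]\subset\mathrm{Frac}(A)$ est une $A$-algèbre de type fini (donc noethérienne) intègre vérifiant $\mathfrak{m}_A A'=aA'\neq A'$ ; un idéal premier minimal $\mathfrak{q}$ au-dessus de $aA'$ est de hauteur $1$ par le Hauptidealsatz de Krull, et $A'_{\mathfrak{q}}$ est alors un anneau local noethérien intègre de dimension $1$ dominant $A$, ce qui ramène en une seule étape à votre cas $\dim=1$ (puis Krull--Akizuki conclut). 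Le seul point non trivial est le choix de $i$ tel que $x_iA'$ soit propre (non-vacuité de la fibre exceptionnelle dans la carte correspondante). Comme vous renvoyez de toute façon à la même référence que le papier pour ce point, il n'y a pas de lacune réelle dans votre proposition, mais la description de ce que fait cette référence y est inexacte.
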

\begin{proof}
Voir \cite{stacks-project} Lemme 28.5.10.
\end{proof}

\section{Grassmanniennes Affines}

\subsection{Grassmanienne affine}

Le début de cette section est constitué essentiellement de rappels sur la géométrie des grassmanniennes affines que l'on peut retrouver dans \cite{XZhuIntro} par exemple. \\

Soit $G$ un groupe réductif connexe déployé lisse sur un corps $k$. On fixe un tore déployé $T\subset G$ sur $k$ et un borel $B$ le contenant On notera $\langle,\rangle :\mathbb{X}^{*}(T)\times \mathbb{X}_{*}(T)\rightarrow \mathbb{Z}$ le produit scalaire, $\mathbb{X}_{*}(T)^+$ l'ensemble des cocaractères $B$-dominants et $2\rho\in \mathbb{X}^{*}(T)$ la somme des racines positives. On note $\mathrm{Alg}_k$ la catégorie des $k$-algèbres. On définit le groupes de lacets $LG$ et le groupe d'arcs $L^+G$ comme les foncteurs $\mathrm{Alg}_k\rightarrow \mathrm{Sets}$ qui à une $k$ algèbre $R$ associent :
\begin{equation*}
LG(R)=G(R\kT ),\ \ \ \ L^+G(R)=G(R\kt )
\end{equation*}
On définit la grassmannienne affine pour le groupe $G$ comme le quotient (pour la topologie étale ou fppf puisque $G$ est lisse)
\begin{equation*}
\mathrm{Gr}_G=LG/L^+G
\end{equation*}
On montre que ce quotient est ind-représentable par un schéma propre sur $\spec\, k$. En utilisant le fait que $G$ soit supposé lisse on peut montrer que l'on a en fait une description explicite de ce quotient :
\begin{equation*}
\mathrm{Gr}_G(R)=\left\{(\mathcal{E},\beta)\  \bigg|\ \begin{array}{ll} \mathcal{E}\mathrm{\ est\ un\ }G\mathrm{\ torseur\ sur\ }\mathbb{D}_R,\\ \beta:\mathcal{E}|_{\mathbb{D}_R^*}\simeq \mathcal{E}^0|_{\mathbb{D}_R^*}\mathrm{\ est \ une\ trivialisation}
\end{array}\right\}
\end{equation*}
où $\mathbb{D}_R=\spec\,R\kt $ désigne le disque unité, $\mathbb{D}_R^*=\spec\,R\kT $  le disque unité épointé et $\mathcal{E}^0$ le $G$-torseur trivial sur $\mathbb{D}_{R}$. Dans la définition ci dessus $\mathcal{E}$ est un $G$-torseur pour la topologie fppf ou étale (encore une fois puisque $G$ est supposé lisse). Par la suite nous noterons $\mathcal{E}\dashrightarrow\mathcal{E}^{0}$ la trivialisation $\beta:\mathcal{E}|_{\mathbb{D}_R^*}\simeq \mathcal{E}^0|_{\mathbb{D}_R^*}$.\\

Par la suite nous aurons besoin du lemme facile suivant :
\begin{lemma}
\label{lemmeG1G2}
Soit $G_1, G_2$ deux groupes réductifs connexes déployés lisses sur  $k$. On dispose d'un isomorphisme canonique
\begin{equation*}
\Gr_{G_1\times G_2}\simeq\Gr_{G_1}\times\Gr_{G_2}
\end{equation*}
\end{lemma}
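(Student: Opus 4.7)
Le plan est d'utiliser la description modulaire de la grassmannienne affine rappelée juste avant l'énoncé, c'est-à-dire comme espace de modules de couples $(\mathcal{E},\beta)$ où $\mathcal{E}$ est un torseur sur le disque formel et $\beta$ une trivialisation à la bordure, et de vérifier que cette description est compatible au produit direct sur le groupe structural. L'ingrédient clé est que la donnée d'un $(G_1\times G_2)$-torseur (pour la topologie fppf ou étale) sur un schéma quelconque équivaut à la donnée d'un $G_1$-torseur et d'un $G_2$-torseur, et que cette équivalence est compatible aux changements de base et aux isomorphismes.

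D'abord je construirais l'application naturelle $\Gr_{G_1\times G_2}(R)\rightarrow \Gr_{G_1}(R)\times\Gr_{G_2}(R)$ : étant donné $(\mathcal{E},\beta)$ avec $\mathcal{E}$ un $(G_1\times G_2)$-torseur sur $\mathbb{D}_R$, on lui associe $(\mathcal{E}_1,\beta_1,\mathcal{E}_2,\beta_2)$ où $\mathcal{E}_i$ est la poussée en avant de $\mathcal{E}$ le long de la projection $G_1\times G_2\twoheadrightarrow G_i$, et $\beta_i$ est la trivialisation induite sur $\mathbb{D}_R^*$. Remarquer que le $(G_1\times G_2)$-torseur trivial $\mathcal{E}^0$ pousse bien sur les torseurs triviaux $\mathcal{E}_i^0$, ce qui rend la construction bien définie. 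L'inverse est tout aussi canonique : étant donné $(\mathcal{E}_1,\beta_1)$ et $(\mathcal{E}_2,\beta_2)$, on pose $\mathcal{E}:=\mathcal{E}_1\times_{\mathbb{D}_R}\mathcal{E}_2$ muni de l'action diagonale, et $\beta:=\beta_1\times\beta_2$, ce qui fournit bien un couple dans $\Gr_{G_1\times G_2}(R)$.

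Il reste à vérifier que ces deux constructions sont inverses l'une de l'autre et fonctorielles en $R$, ce qui se ramène à l'équivalence classique entre $(G_1\times G_2)$-torseurs et couples de $(G_i)$-torseurs, appliquée sur $\mathbb{D}_R$ d'une part et sur $\mathbb{D}_R^*$ d'autre part (ces deux équivalences étant compatibles à la restriction). La compatibilité aux trivialisations résulte du fait que les foncteurs d'image directe commutent au produit fibré. Aucune étape ne présente d'obstacle sérieux : l'énoncé est formel et découle de la définition modulaire de $\Gr_G$ une fois celle-ci acquise. La seule subtilité éventuelle, si l'on préférait travailler avec la définition quotient $LG/L^+G$, serait de noter que $L$ et $L^+$ sont des foncteurs qui commutent aux produits directs (car $(G_1\times G_2)(R\kt)=G_1(R\kt)\times G_2(R\kt)$ et de même sur $R\kT$), et que les quotients fppf commutent également aux produits, d'où l'isomorphisme annoncé.
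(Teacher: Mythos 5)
Votre preuve est correcte, mais votre route principale n'est pas celle du papier. Le papier se contente de l'observation fonctorielle que le foncteur de lacets commute au produit, $L(G_1\times G_2)(R)=(G_1\times G_2)(R\kT)=G_1(R\kT)\times G_2(R\kT)$ (et de même pour $L^+$ avec $R\kt$), l'isomorphisme $\Gr_{G_1\times G_2}\simeq\Gr_{G_1}\times\Gr_{G_2}$ s'en déduisant au niveau des quotients ; c'est exactement la \og subtilité éventuelle\fg\ que vous reléguez en fin de preuve comme variante. Votre argument principal passe au contraire par la description modulaire : vous identifiez un $(G_1\times G_2)$-torseur sur $\mathbb{D}_R$ muni d'une trivialisation sur $\mathbb{D}_R^*$ à un couple de $G_i$-torseurs trivialisés, via les poussées en avant le long des deux projections, l'inverse étant donné par le produit fibré muni de l'action diagonale. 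Les deux approches aboutissent : celle du papier est plus courte et purement formelle au niveau des foncteurs ; la vôtre a le mérite de rester dans le langage des torseurs qui sert ensuite (champ de Hecke, produit de convolution), au prix d'invoquer l'équivalence classique entre $(G_1\times G_2)$-torseurs et couples de torseurs ainsi que sa compatibilité à la restriction de $\mathbb{D}_R$ à $\mathbb{D}_R^*$, points standards mais qu'il faut bien citer, comme vous le faites.
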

\begin{proof}
Il suffit de voir que le résultat est vrai au niveau des foncteurs :
\begin{equation*}
L(G_1\times G_2)(R)=(G_1\times G_2)(R\kT)=G_1(R\kT)\times G_2(R\kT)
\end{equation*}
\end{proof}

On dispose d'une action à gauche :
\begin{equation*}
\begin{array}{lrcl}
L^+G\times \mathrm{Gr}_G&\longrightarrow &\mathrm{Gr}_G\\
(g,(\mathcal{E},\beta))&\longmapsto & (\mathcal{E},g\cdot\beta)
\end{array}
\end{equation*}
dont le quotient est appelé champs de Hecke
\begin{equation*}
\mathrm{Hecke}_G=[L^+G\backslash LG/L^+G]
\end{equation*}
Par construction il associe à une $k$ algèbre $R$
\begin{equation*}
\mathrm{Hecke}_G(R)=\left\{(\mathcal{E},\mathcal{E}'\beta)\  \bigg|\ \begin{array}{ll} \mathcal{E},\mathcal{E}'\mathrm{\ sont\ des\ }G\mathrm{\ torseurs\ sur\ }\mathbb{D}_R,\\ \beta:\mathcal{E}|_{\mathbb{D}_R^*}\simeq \mathcal{E}'|_{\mathbb{D}_R^*}\mathrm{\ est \ un\ isomorphisme}
\end{array}\right\}
\end{equation*}
Désormais nous ferons l'abus de notation de retirer l'indice $G$ et de noter $\mathrm{Hecke}$ et $\mathrm{Gr}$.
On rappelle que par la décomposition de Cartan, on peut associer à une modification $\beta:\mathcal{E}\dashrightarrow\mathcal{E}'$, sa position relative $\mathrm{Inv}(\beta)\in \mathbb{X}_{*}(T)^+$ via la bijection 
\begin{equation*}
\begin{array}{lrcll}
&G(k\kt)\backslash G(k\kT)/G(k\kt)&\longrightarrow &\mathbb{X}_{*}(T)^+\\
&\lbrack g\rbrack &\longmapsto & \mathrm{Inv}(g)\\
&\lbrack u^{\lambda}\rbrack& \longmapsfrom & \lambda
\end{array}
\end{equation*}
où $u^{\lambda}=\lambda (u)\in T(k\kT)$. Pour tout élément $\lambda\in \mathbb{X}_{*}(T)^+$ on définit
\begin{equation*}
\mathrm{Gr}_{\lambda}:=\left\{(\mathcal{E},\beta)\in\Gr\  |\ \mathrm{Inv}(\beta)=\lambda\ \right\},\ \ \mathrm{Gr}_{\leq\lambda}:=\left\{(\mathcal{E},\beta)\in\Gr\  |\ \mathrm{Inv}(\beta)\leq\lambda\ \right\}
\end{equation*}
De la même manière on définit $\mathrm{Hecke}_{\lambda}$ et $\mathrm{Hecke}_{\leq\lambda}$. La proposition suivante résume la plupart des propriétés de la décomposition de $\Gr$ en $L^+G$-orbites.
\begin{prop}
\label{PropOrbiteGR}
\begin{enumerate}
\item Pour tout $\lambda\in \mathbb{X}_{*}(T)^+$ on a $\mathrm{Gr}_{\lambda}=L^+G\cdot u^{\lambda}$ est une $L^+G$-orbite. 
\item $\mathrm{Gr}_{\lambda}$ est une variété quasi-projective lisse de dimension $\langle 2\rho , \lambda\rangle$
\item On dispose d'une décomposition de $\Gr$ en $L^+G$-orbites
\begin{equation*}
\mathrm{Gr}=\coprod_{\lambda\in \mathbb{X}_{*}(T)^+}\mathrm{Gr}_{\lambda}
\end{equation*}
\item Pour tout $\lambda\in \mathbb{X}_{*}(T)^+$ on a la relation d'adhérence 
\begin{equation*}
\overline{\mathrm{Gr}_{\lambda}}=\bigcup_{\lambda'\leq\lambda}\Gr_{\lambda'}
\end{equation*}
\item L'ouvert dense $\Gr_{\lambda}\subset \Gr_{\leq \lambda}$ coïncide avec le lieu lisse de $\Gr_{\leq \lambda}$.
\end{enumerate}
\end{prop}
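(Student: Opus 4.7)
Ces cinq points sont des résultats classiques sur les grassmanniennes affines, exposés par exemple dans \cite{XZhuIntro}; je décris ci-dessous ma stratégie pour les redémontrer.

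Les points (1) et (3) se déduisent essentiellement de la décomposition de Cartan rappelée juste avant l'énoncé. Le morphisme d'orbite $L^+G\rightarrow L^+G\cdot u^\lambda\subset \Gr$ identifie l'image ensembliste avec la double classe de $u^\lambda$, et la bijection entre les doubles classes et $\mathbb{X}_*(T)^+$ assure que les $\Gr_\lambda$ forment une partition au niveau des points géométriques. Pour passer à l'énoncé schématique, j'utiliserais que $L^+G$ est lisse et connexe et les théorèmes généraux sur les actions des groupes algébriques.

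Pour le point (2), j'identifierais $\Gr_\lambda$ avec le quotient $L^+G/\mathrm{Stab}(u^\lambda)$, où le stabilisateur est $L^+G\cap u^\lambda L^+G u^{-\lambda}$. La lissité est automatique par homogénéité, et la quasi-projectivité découle de ce que $\Gr_\lambda$ est ouvert dans le schéma projectif $\Gr_{\leq\lambda}$. Pour la dimension, j'utiliserais la décomposition d'Iwahori du groupe d'arcs en produit des sous-groupes $L^+U_\alpha$ et $L^+T$: un calcul direct sur chaque sous-groupe racine $U_\alpha$ donne que la codimension du stabilisateur dans $L^+G$ est $\sum_{\alpha>0}\langle\alpha,\lambda\rangle=\langle 2\rho,\lambda\rangle$, en utilisant la dominance de $\lambda$ pour assurer la positivité de chacun des termes.

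Pour le point (4), je commencerais par établir l'inclusion $\Gr_{\lambda'}\subset \overline{\Gr_\lambda}$ pour $\lambda'\leq\lambda$ par récurrence sur $\lambda-\lambda'$, en me ramenant au cas où la différence est une coracine simple $\alpha^\vee$. Dans ce cas, j'exhiberais un morphisme $\mathbb{A}^1\rightarrow \Gr_{\leq\lambda}$ via le sous-groupe $SL_2$ associé à $\alpha$, envoyant $0$ sur $u^{\lambda'}$ et le point générique dans $\Gr_\lambda$. L'inclusion inverse combine le fait que $\overline{\Gr_\lambda}$ est $L^+G$-invariant donc réunion finie de strates, avec le calcul de dimension du point (2) qui interdit la présence de strates de dimension trop grande.

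Pour le point (5), la lissité de $\Gr_\lambda$ est acquise par homogénéité. L'obstacle principal à mon sens est de montrer que tout point d'une strate de bord $\Gr_{\lambda'}$ avec $\lambda'<\lambda$ est un point singulier de $\Gr_{\leq\lambda}$. L'approche standard consiste à calculer l'espace tangent à $\Gr_{\leq\lambda}$ en un tel point et vérifier que sa dimension dépasse strictement $\langle 2\rho,\lambda\rangle=\dim \Gr_{\leq\lambda}$; ce calcul se ramène au cas $G=GL_n$ via un plongement et le Lemme \ref{lemmeG1G2}, puis s'effectue à l'aide des coordonnées de Plücker sur la grassmannienne affine de $GL_n$.
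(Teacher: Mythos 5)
Pour les points (1) à (4), votre esquisse suit pour l'essentiel la démonstration standard, c'est-à-dire celle de la référence que le texte se contente de citer (\cite{XZhuIntro}, Proposition 2.1.5) : décomposition de Cartan pour (1) et (3), calcul de la codimension du stabilisateur de $u^{\lambda}$ via les sous-groupes radiciels (en travaillant dans un groupe de jets de niveau fini) pour (2), puis pour (4) la combinaison d'une courbe fournie par un $\mathrm{SL}_2$ et de l'argument de dimension. Une retouche tout de même : dans la réduction pour (4), on ne peut pas toujours se ramener au cas où la différence est une coracine \emph{simple} ; le lemme combinatoire pertinent fournit une chaîne de copoids dominants dont les différences consécutives sont des coracines \emph{positives} (pas nécessairement simples, cf. le cas quasi-minuscule), et c'est le $\mathrm{SL}_2$ associé à cette racine positive qu'il faut utiliser.

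En revanche, votre stratégie pour le point (5) comporte une vraie lacune. D'une part, la réduction à $\mathrm{GL}_n$ par un plongement $G\hookrightarrow \mathrm{GL}_n$ ne peut pas fonctionner telle quelle : la singularité d'une variété ambiante n'entraîne rien sur celle d'une sous-variété fermée, puisque l'inclusion des espaces tangents va dans le mauvais sens — il vous faudrait une \emph{minoration} de l'espace tangent de $\Gr_{\leq\lambda}$ pour $G$ lui-même — et l'image de $\Gr_{\leq\lambda}$ dans la grassmannienne affine de $\mathrm{GL}_n$ n'est de toute façon pas une variété de Schubert de $\mathrm{GL}_n$ dont on contrôlerait les points singuliers. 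D'autre part, même pour $\mathrm{GL}_n$, le calcul des espaces tangents en un point arbitraire du bord d'une variété de Schubert affine n'a rien d'immédiat. Le point (5) est en réalité un théorème difficile de la littérature — c'est précisément l'objet de \cite{SmoothAffGr} (Corollary B), que le texte cite comme une boîte noire — et les preuves connues passent par l'analyse des dégénérescences minimales (singularités kleiniennes ou adhérences d'orbites nilpotentes minimales le long des strates maximales du bord) ou par des arguments de lissité rationnelle, et non par un calcul élémentaire d'espace tangent. Notez que pour l'usage qui en est fait dans l'article ($G=\mathrm{GL}_2$, $\Gr_{\leq(e,0)}$, description par réseaux), une vérification à la main est possible, mais elle ne donne pas l'énoncé général tel que vous vous proposez de le démontrer.
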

\begin{proof}
Les points $(1),(2),(3),(4)$ sont démontrés dans \cite{XZhuIntro} (Proposition. 2.1.5). Pour $(5)$ on pourra trouver une preuve dans \cite{SmoothAffGr} (Corollary B)
\end{proof}
\begin{remark}
La proposition précédente nous fournit une description de l'espace topologique sous-jacent au champs de Hecke borné :
\begin{equation*}
|\mathrm{Hecke}_{\leq\lambda}|\simeq \{\lambda'\in \mathbb{X}_{*}(T)^+\ |\ \lambda'\leq \lambda\}
\end{equation*}  
L'identification ci-dessus est un homéomorphisme où la topologie du membre de droite est celle induite par la relation d'ordre sur $\mathbb{X}_{*}(T)^+$
\end{remark}
\begin{remark}
Si $\mu\in \mathbb{X}_{*}(T)^+$ est minuscule alors on dispose d'une identification
\begin{equation*}
\mathrm{Gr}_{\mu}\simeq G/P_{\mu}
\end{equation*}
où $P_{\mu}$ désigne le sous groupe parabolique associé à $\mu$. En effet on dispose de deux morphismes dont on montre qu'ils sont réciproques l'un de l'autre :
\begin{equation*}
\begin{array}{lrcl}
&\Gr_{\mu}&\longrightarrow & G/P_{\mu}\\
&g\cdot u^{\mu} &\longmapsto &[\mathrm{ev}(g)]
\end{array},\ \ \ \ \begin{array}{lrcl}
&G/P_{\mu}&\longrightarrow & \Gr_{\mu}\\
&[g]&\longmapsto & g\cdot u^{\mu}
\end{array}
\end{equation*}
où $\mathrm{ev}:L^+G\rightarrow G,\ g\mapsto g\ (\mathrm{mod}\ u)$  et $G\hookrightarrow L^+G$ désigne le groupe des lacets \og constants\fg.
\end{remark}

\subsection{Produit de convolution}

On définit le produit de $n$-convolution $\mathrm{Gr}\tilde{\times}\dots\tilde{\times}\mathrm{Gr}$ comme étant le champ paramétrant les modifications  $(\beta_i:\mathcal{E}_i\dashrightarrow\mathcal{E}_{i-1})_{i=1,\dots n}$ :
\begin{equation*}
\mathcal{E}_n\dashrightarrow \mathcal{E}_{n-1} \dashrightarrow\dots\dashrightarrow \mathcal{E}_1\dashrightarrow \mathcal{E}^0
\end{equation*}
On dispose d'un morphisme :
\begin{equation*}
\begin{array}{lrcl}
&\mathrm{Gr}\tilde{\times}\dots\tilde{\times}\mathrm{Gr}&\longrightarrow &\Gr\\
&(\mathcal{E}_n\dashrightarrow \dots\dashrightarrow \mathcal{E}^0)&\longmapsto& (\mathcal{E}_n\dashrightarrow \mathcal{E}^0)
\end{array}
\end{equation*}
appelé morphisme de convolution. Ce procédé fournit également pour tout $1\leq i\leq n$ un morphisme :
\begin{equation*}
\begin{array}{lrcl}
&m_i:\mathrm{Gr}\tilde{\times}\dots\tilde{\times}\mathrm{Gr}&\longrightarrow &\Gr\\
&(\mathcal{E}_n\dashrightarrow \dots\dashrightarrow \mathcal{E}^0)&\longmapsto& (\mathcal{E}_i\dashrightarrow \mathcal{E}^0)
\end{array}
\end{equation*}
Ces morphismes mis ensemble nous donnent un isomorphisme :
\begin{equation*}
\prod_{i=1}^{n} m_i:\Gr\tilde{\times}\dots\tilde{\times}\Gr \simeq \Gr \times\dots\times\Gr
\end{equation*}
En particulier le produit de convolution $\Gr\tilde{\times}\dots\tilde{\times}\Gr$ est ind-représentable.
De la même manière que pour $\Gr$, on dispose d'une uniformisation du produit de convolution :
\begin{equation*}
\mathrm{Gr}\tilde{\times}\dots\tilde{\times}\mathrm{Gr}\simeq LG\times^{L^+G}\dots\times^{L^+G}LG\times^{L^+G}\Gr
\end{equation*}
Via cet isomorphisme le morphisme de convolution devient le morphisme de multiplication :
\begin{equation*}
\begin{array}{lrcl}
&LG\times^{L^+G}\dots\times^{L^+G}LG\times^{L^+G}\Gr&\longrightarrow &LG/L^+G\\
&(g_n,\dots,[g_1])&\longmapsto& [g_n\dots g_1]
\end{array}
\end{equation*}
\ \\

Par la suite pour alléger les notations et lorsque le nombre $n$ est explicite nous noterons le produit de $n$-convolution $\widetilde{\Gr}=\mathrm{Gr}\tilde{\times}\dots\tilde{\times}\mathrm{Gr}$. Pour tout $n$-uplet de cocaractères  $\lambda_{\bullet}=(\lambda_1,\dots,\lambda_n)$ on définit les sous schémas localement fermés du produit de convolution :
\begin{equation}
\label{ConvolutionRep}
\widetilde{\mathrm{Gr}}_{\lambda_{\bullet}}:=\left\{(\mathcal{E}_i,\beta_i)\in\widetilde{\Gr}\  |\ \mathrm{Inv}(\beta_i)=\lambda_i\ \right\},\ \ \widetilde{\mathrm{Gr}}_{\leq\lambda_{\bullet}}:=\left\{(\mathcal{E}_i,\beta_i)\in\widetilde{\Gr}\  |\ \mathrm{Inv}(\beta_i)\leq\lambda_i\ \right\}
\end{equation}
En particulier $\widetilde{\mathrm{Gr}}_{\lambda_{\bullet}}$ est représentable par un schéma.  Il n'est pas difficile de montrer qu'on dispose alors d'une bonne stratification pour tout $\mu_{\bullet}\in (\mathbb{X}_{*}(T)^+)^{n}$ :
\begin{equation*}
\widetilde{\mathrm{Gr}}_{\leq\mu_{\bullet}}=\bigcup_{\lambda_{\bullet}\leq \mu_{\bullet}}\widetilde{\mathrm{Gr}}_{\lambda_{\bullet}}
\end{equation*}
où $\lambda_{\bullet}\leq \mu_{\bullet}\Leftrightarrow \lambda_i\leq \mu_i\ \forall i=1,\dots n$. Notez que si chacun des $\mu_i$ est minuscule cette stratification est constituée d'une seule strate à savoir $\widetilde{\mathrm{Gr}}_{\leq\mu_{\bullet}}=\widetilde{\mathrm{Gr}}_{\mu_{\bullet}}$. Par la suite si  $\lambda_{\bullet}=(\lambda_1,\dots,\lambda_n)$ est un $n$-uplet de cocaractères on notera $|\lambda_{\bullet}|:=\lambda_1+\dots+\lambda_n$.

\begin{prop}
\label{LisseConv}
   $\widetilde{\Gr}_{\mu_\bullet}$ est lisse. En particulier si $\mu_i$ est minuscule pour tout $i=1,\dots n$, alors $\widetilde{\Gr}_{\leq\mu_\bullet}$ est lisse.
\end{prop}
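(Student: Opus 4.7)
Le plan est d'exploiter la structure itérée du produit de convolution pour écrire $\widetilde{\Gr}_{\mu_\bullet}$ comme un fibré itéré à fibres lisses. Pour tout $n\geq 2$, je considérerais le morphisme d'oubli
\begin{equation*}
q_n : \widetilde{\Gr}_{\mu_\bullet} \longrightarrow \widetilde{\Gr}_{\mu_1,\dots,\mu_{n-1}}, \qquad (\mathcal{E}_i\dashrightarrow\mathcal{E}_{i-1})_{i=1}^n \longmapsto (\mathcal{E}_i\dashrightarrow\mathcal{E}_{i-1})_{i=1}^{n-1}
\end{equation*}
qui oublie la dernière modification $\beta_n$.

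L'étape clé sera de montrer que $q_n$ est une fibration étale-localement triviale de fibre $\Gr_{\mu_n}$. En utilisant l'uniformisation $\widetilde{\Gr}\simeq LG\times^{L^+G}\cdots\times^{L^+G}\Gr$ et le fait que tout $G$-torseur sur $\mathbb{D}_R$ se trivialise étale-localement sur $\spec\, R$ (car $G$ est lisse), la fibre de $q_n$ au-dessus d'un point $(\mathcal{E}_{n-1}\dashrightarrow\cdots\dashrightarrow\mathcal{E}^0)$ s'identifie, après trivialisation étale-locale de $\mathcal{E}_{n-1}$, à l'espace des modifications $\beta_n:\mathcal{E}_n\dashrightarrow\mathcal{E}_{n-1}$ d'invariant $\mu_n$, c'est-à-dire à $\Gr_{\mu_n}$. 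D'après la Proposition \ref{PropOrbiteGR}(2), cette fibre est lisse de dimension $\langle 2\rho,\mu_n\rangle$, et $q_n$ sera donc un morphisme lisse.

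Une récurrence descendante sur $n$, dont le cas de base $n=1$ est exactement la Proposition \ref{PropOrbiteGR}(2), permettra d'en déduire que $\widetilde{\Gr}_{\mu_\bullet}$ est lisse de dimension $\langle 2\rho,|\mu_\bullet|\rangle$. Pour la seconde assertion, si chaque $\mu_i$ est minuscule, la remarque faite juste après \eqref{ConvolutionRep} donne $\widetilde{\Gr}_{\leq\mu_\bullet}=\widetilde{\Gr}_{\mu_\bullet}$, et la lissité de $\widetilde{\Gr}_{\leq\mu_\bullet}$ en découlera immédiatement.

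L'obstacle principal sera de justifier rigoureusement la trivialité étale-locale de $q_n$ : il faut manipuler proprement les torseurs sur le disque formel et vérifier que, modulo le choix d'une trivialisation étale-locale de $\mathcal{E}_{n-1}$, le changement de trivialisation agit par l'action standard de $L^+G$ sur $\Gr$, ce qui préserve la stratification en $L^+G$-orbites. Une fois ce point établi, tout le reste n'est qu'une induction formelle.
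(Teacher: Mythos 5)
Votre proposition est correcte et suit essentiellement la même démarche que le papier : la preuve du texte traite le cas $n=2$ en identifiant la projection $\widetilde{\Gr}_{(\mu_1,\mu_2)}\rightarrow\Gr_{\mu_1}$ (oubli de la dernière modification) à la projection $LG_{\mu_1}\times^{L^+G}\Gr_{\mu_2}\rightarrow\Gr_{\mu_1}$, fibration (localement triviale, identification non canonique) de fibre $\Gr_{\mu_2}$ lisse, puis conclut par récurrence — c'est exactement votre argument de fibré itéré à fibres $\Gr_{\mu_i}$, votre justification de la trivialité locale par trivialisation des torseurs étant simplement plus détaillée que le \og par définition\fg\ du texte. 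La seconde assertion s'obtient dans les deux cas via la remarque $\widetilde{\Gr}_{\leq\mu_\bullet}=\widetilde{\Gr}_{\mu_\bullet}$ pour des $\mu_i$ minuscules.
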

\begin{proof}
On commence par regarder le morphisme de projection
\begin{equation*}
\begin{array}{lrcl}
&\widetilde{\Gr}_{(\mu_1,\mu_2)}&\longrightarrow & \Gr_{\mu_1}\\
&(\mathcal{E}_2\dashrightarrow\mathcal{E}_1\dashrightarrow\mathcal{E}^0)&\longmapsto &(\mathcal{E}_1\dashrightarrow\mathcal{E}^0)
\end{array}
\end{equation*}
En termes de groupes de lacets, ce morphisme correspond à la projection sur la première coordonnée :
\begin{equation*}
LG_{\mu_1}\times^{L^+G}\Gr_{\mu_2}\rightarrow\Gr_{\mu_1}
\end{equation*}
où $LG_{\mu_1}=p^{-1}(\Gr_{\mu_1})\subset LG$ avec $p:LG\rightarrow \Gr$ la projection. Par définition ce morphisme est une fibration avec pour fibre $\Gr_{\mu_2}$  (identification  non canonique) et est donc lisse. Enfin puisque $\Gr_{\mu_1}$ est lisse, il s'en suit que $\widetilde{\Gr}_{(\mu_1,\mu_2)}$ l'est également. Le résultat s'en déduit par récurrence.
\end{proof}

\begin{remark}
En particulier si $\mu_{\bullet}=(\mu_1,\dots\mu_n)$ avec chacun des $\mu_i$ minuscule, alors le morphisme de convolution 
\begin{equation*}
m_{\mu_{\bullet}}:\widetilde{\Gr}_{\leq\mu_{\bullet}}\rightarrow \Gr_{\leq|\mu_{\bullet}|}
\end{equation*}
est une résolution des singularités. Elle est parfois appelé résolution de Demazure en référence à \cite{Demazure1974}. Dans le cas $G=\mathrm{GL}_2$ et $\mu_i=(1,0)$, la preuve précédente nous dit que le produit de convolution est obtenu par des fibrations successives en $\Gr_{\mu_i}=G/P_{\mu_i}\simeq\mathbb{P}^1_k$.
\end{remark}

Le théorème suivant décrit la géométrie du morphisme de convolution :

\begin{theorem}[T.Haines, \cite{Haines}]\ 
\label{TheoHaines}
Soit $\mu_{\bullet}=(\mu_1,\dots\mu_n)$ un $n$-uplet de cocaractères quelconques. Alors
\begin{enumerate}
\item Le morphisme $m_{\mu_{\bullet}}:\widetilde{\mathrm{Gr}}_{\leq\mu_{\bullet}}\rightarrow \mathrm{Gr}_{\leq|\mu_{\bullet}|}$  est localement trivial en restriction à $\Gr_{\lambda}\subset \mathrm{Gr}_{\leq|\mu_{\bullet}|}$ pour tout $\lambda\leq |\mu_{\bullet}|$. 
\item Si chacun des $\mu_i$ est minuscule alors pour tout $y\in\mathrm{Gr}_{\lambda}$ la fibre $m_{\mu_{\bullet}}^{-1}(y)$ est équidimensionnelle de dimension $\langle \rho,|\mu_{\bullet}|-\lambda\rangle $.
\end{enumerate}

\end{theorem}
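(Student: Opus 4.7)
Le plan consiste à démontrer les deux parties en exploitant l'équivariance sous l'action de $L^+G$ pour (1), et à procéder par récurrence sur $n$ pour (2), en factorisant le morphisme de convolution à travers une $2$-convolution.

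Pour le point (1), l'observation clé est que $m_{\mu_\bullet}$ est $L^+G$-équivariant : l'uniformisation $\widetilde{\Gr}_{\leq\mu_\bullet}\simeq LG\times^{L^+G}\cdots\times^{L^+G}\Gr_{\leq\mu_1}$ munit la source d'une action de $L^+G$ par multiplication à gauche sur le facteur le plus à gauche, et cette action est compatible avec l'action standard sur $\Gr_{\leq|\mu_\bullet|}$ via la formule $[hg_n,g_{n-1},\dots,g_1]\mapsto h\cdot[g_n\cdots g_1]$. D'après la Proposition \ref{PropOrbiteGR}(1), $\Gr_\lambda=L^+G\cdot u^\lambda$ est un espace homogène sous $L^+G$, et l'application orbitale $L^+G\to\Gr_\lambda$ est une fibration Zariski localement triviale de fibre $\mathrm{Stab}_{L^+G}(u^\lambda)$. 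En tirant en arrière $m_{\mu_\bullet}$ le long d'une trivialisation locale de cette fibration on obtient la trivialité locale annoncée, les trivialisations étant de fibre-type $m_{\mu_\bullet}^{-1}(u^\lambda)$.

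Pour le point (2), je procéderais par récurrence sur $n$. Le cas $n=1$ est trivial puisque $m_{\mu_1}=\mathrm{id}$ et $\langle\rho,\mu_1-\mu_1\rangle=0$. Pour $n\geq 2$, notons $\mu'_\bullet=(\mu_1,\dots,\mu_{n-1})$ et factorisons $m_{\mu_\bullet}$ en
\[
\widetilde{\Gr}_{\mu_\bullet}\;\xrightarrow{\;\alpha\;}\;\Gr_{\leq |\mu'_\bullet|}\tilde\times\Gr_{\mu_n}\;\xrightarrow{\;\beta\;}\;\Gr_{\leq |\mu_\bullet|}
\]
où $\alpha$ applique $m_{\mu'_\bullet}$ aux $n-1$ premières modifications tout en conservant $\beta_n$, et $\beta$ est un morphisme de $2$-convolution. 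Par hypothèse de récurrence combinée au point (1) déjà établi, les fibres de $\alpha$ au-dessus d'un point situé dans $\Gr_\nu\tilde\times\Gr_{\mu_n}$ sont équidimensionnelles de dimension $\langle\rho,|\mu'_\bullet|-\nu\rangle$. Il suffit alors de prouver l'assertion dans le cas $n=2$ avec un facteur minuscule : pour $\nu\leq|\mu'_\bullet|$ quelconque et $\mu_n$ minuscule, la fibre de $\beta$ restreinte à $\Gr_\nu\tilde\times\Gr_{\mu_n}$ au-dessus d'un point de $\Gr_\lambda$ doit être équidimensionnelle de dimension $\langle\rho,\nu+\mu_n-\lambda\rangle$. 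Ces deux dimensions s'additionnent alors télescopiquement en $\langle\rho,|\mu_\bullet|-\lambda\rangle$ indépendamment du $\nu$ intermédiaire, ce qui donne l'équidimensionalité de la fibre totale.

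L'obstacle principal est donc l'assertion de $2$-convolution avec facteur minuscule, qui est essentiellement la semi-petitesse du morphisme de convolution dans le cas minuscule et relève des estimations de Mirković--Vilonen. La majoration des dimensions des composantes irréductibles s'obtient en identifiant $\Gr_{\mu_n}\simeq G/P_{\mu_n}$ comme variété de drapeaux partiels (Remarque suivant la Proposition \ref{PropOrbiteGR}) et en analysant de manière combinatoire les positions relatives possibles entre les deux modifications via la décomposition de Bruhat ; la minoration (\emph{i.e.}\ l'absence de composantes de dimension strictement inférieure) est la partie délicate et se déduit, dans l'approche de Haines, du fait que la dimension totale de $\widetilde{\Gr}_{\mu_\bullet}$ donnée par la Proposition \ref{LisseConv} force l'égalité sur chaque composante irréductible en rétro-propageant depuis la strate ouverte $\Gr_{|\mu_\bullet|}$ (sur laquelle $m_{\mu_\bullet}$ est birationnelle) par la continuité de la dimension des fibres dans une famille plate stratifiée.
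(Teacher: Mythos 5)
Pour le point $(1)$, votre argument (équivariance du morphisme de convolution sous $L^+G$ et le fait que $\Gr_{\lambda}$ soit une orbite, donc un espace homogène sous $L^+G$) est correct et coïncide avec la remarque qui suit le théorème dans l'article. Notez cependant que l'article ne redémontre pas ce théorème : il renvoie simplement à Haines (le point $(1)$ est le Lemme 2.1 et le point $(2)$ le Théorème 1.1 de \cite{Haines}) ; vous tentez donc de reprouver un résultat cité, ce qui est légitime mais plus exigeant.

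Pour le point $(2)$, il y a une lacune réelle. Votre réduction par récurrence est structurellement raisonnable et l'arithmétique télescopique $\langle\rho,|\mu'_{\bullet}|-\nu\rangle+\langle\rho,\nu+\mu_n-\lambda\rangle=\langle\rho,|\mu_{\bullet}|-\lambda\rangle$ est juste, mais elle ramène tout à un énoncé de $2$-convolution $\Gr_{\nu}\tilde{\times}\Gr_{\mu_n}\rightarrow\Gr$ où $\nu\leq|\mu'_{\bullet}|$ est \emph{quelconque} (en général non minuscule), c'est-à-dire à l'équidimensionnalité des intersections du type $\Gr_{\nu}\cap y\cdot\Gr_{\mu_n^{*}}$ : c'est précisément là que réside tout le contenu du théorème, et vous ne le démontrez pas (il n'est même pas évident que l'énoncé mixte soit vrai sous cette seule hypothèse, de sorte que la réduction elle-même demanderait une justification). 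La majoration des dimensions est bien la semi-petitesse issue des estimations de Mirković--Vilonen, mais la minoration --- l'absence de composantes de dimension strictement inférieure dans \emph{chaque} fibre --- ne se déduit pas comme vous le proposez : la dimension des fibres n'est que semi-continue supérieurement, la trivialité locale du point $(1)$ ne compare que des fibres au-dessus d'une même strate et ne dit rien quand on passe d'une strate à l'autre, et le calcul de $\dim\widetilde{\Gr}_{\mu_{\bullet}}$ ne contrôle que la dimension globale de $m^{-1}(\Gr_{\lambda})$, sans exclure qu'une fibre individuelle possède une composante plus petite. La \og rétro-propagation depuis la strate ouverte par continuité de la dimension des fibres dans une famille plate stratifiée\fg\ n'est donc pas un argument valable ; c'est exactement le point pour lequel l'article s'appuie sur le Théorème 1.1 de Haines, dont la démonstration repose sur une analyse spécifique des copoids minuscules et non sur ces considérations générales de semi-continuité.
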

\begin{proof}
Le point $(1)$ correspond au Lemme 2.1 de \cite{Haines} et le point $(2)$ correspond au Théorème 1.1 de \textit{loc cit}. 
\end{proof}

\begin{remark}
Le point $(1)$ découle directement d'un fait plus général : si $p:X\rightarrow Y$ est un morphisme $G$-équivariant tel que $G$ agit transitivement sur $Y$, alors en choisissant un point de base $y_0\in Y$ on obtient un isomorphisme $G$-équivariant :
\begin{equation*}
G\times^{H}p^{-1}(y_0)\simeq X
\end{equation*}
où $H=\mathrm{Stab}_G(y_0)$. Dans notre situation le morphisme $m:m^{-1}(\mathrm{Gr}_{\lambda})\rightarrow \Gr_{\lambda}$ est bien $L^+G$ équivariant et $\Gr_{\lambda}$ est une $L^+G$-orbite par définition.
\end{remark}

\subsubsection{Exemples}

En utilisant le fait qu'un $\mathrm{GL}_n$-torseur $\mathcal{E}$ sur $\mathbb{D}_R$ (où $R$ est une $k$-algèbre) correspond à un $R\kt$-module $\Lambda$ localement libre de rang $n$, on obtient une description de la grassmannienne affine pour $\mathrm{GL}_n$ en termes de réseaux :

\begin{equation*}
\Gr(R)=\left\{\Lambda\subset\ R\kT^n\ \bigg|\begin{array}{lrcl}\Lambda\ R\kt\text{-}\mathrm{module\ localement\ libre},\\
\Lambda\otimes_{R\kt}R\kT\simeq R\kT^n
\end{array}\ \right\}
\end{equation*}
En prenant $T=\mathbb{G}_m^n$ le tore des matrices diagonales la décomposition de Cartan prend la forme :
\begin{equation*}
\begin{array}{lrcl}
&(\Z^n)^+&\longrightarrow & \mathrm{GL}_n(k[\![u]\!])\backslash \mathrm{GL}_n(k(\!(u)\!))/\mathrm{GL}_n(k[\![u]\!])\\
&\lambda=(\lambda_1,\dots,\lambda_n)&\longmapsto &u^{\lambda}=\mathrm{diag}(u^{\lambda_1},\dots u^{\lambda_n})
\end{array}
\end{equation*}
En termes de réseaux cela se traduit comme suit : pour tout réseau $\Lambda\subset k(\!(u)\!)^n$ il existe une base $(e_1,\dots, e_n)$ de $\Lambda_0=k\kt^n$ et $\lambda=(\lambda_1,\dots,\lambda_n)\in (\Z^n)^+$ tels que $(u^{\lambda_1}e_1,\dots,u^{\lambda_n}e_n)$ soit une base de $\Lambda$. Ici on a utilisé la notation
\begin{equation*}
(\Z^n)^+=\{(\lambda_1,\dots,\lambda_n)\ |\ \lambda_1\geq\dots\geq\lambda_n\ \}
\end{equation*}
Désormais nous noterons $\Lambda_0:=k\kt^n$ le réseau associé au $\mathrm{GL}_n$-torseur trivial $\mathcal{E}^0$.
\begin{example} 
\label{ExempleGr}

Dans tout ce qui suit $R$ désigne une $k$-algèbre. Voici quelques exemples :

\begin{enumerate}
\item Pour $\mu=(1^d,0^{n-d})$ on obtient en termes de réseaux :
\begin{equation*}
\Gr_{(1^d,0^{n-d})}(R)=\{\Lambda\subset \Lambda_{0}:=R[\![u]\!]^n\ |\ u\Lambda_0\subset\Lambda\subset \Lambda_0,\ \mathrm{dim}_k\Lambda_0/\Lambda=d\  \}
\end{equation*}
On retrouve bien la variété $\mathrm{GL}_n/P_{\mu}=\mathrm{Grass}(n-d,d)$ via :
\begin{equation*}
\Lambda\mapsto (R^n=\Lambda_{0}/u\Lambda_{0}\rightarrow \Lambda_{0}/\Lambda)
\end{equation*}
\item Pour $\mathrm{GL}_2$ et $\mu=(e,0)$ on trouve :
\begin{equation*}
\Gr_{\leq(e,0)}(R)=\{\Lambda\ |\ u^e\Lambda_0\subset\Lambda\subset \Lambda_0,\ \mathrm{dim}_k\Lambda_0/\Lambda=e\  \}
\end{equation*}
Soit $\Lambda\in \Gr_{\leq(e,0)}(k)$.  Il existe un plus petit entier $i\leq e$ tel que $u^i\Lambda\subset u^e\Lambda_0$. Notons $N(\Lambda)$ ce nombre (notation non standard). On obtient alors la description suivante des différentes strates 
\begin{equation*}
\Gr_{(i,e-i)}(R)=\{\Lambda\ |\ N(\Lambda)=i,\ \mathrm{dim}_k\Lambda_0/\Lambda=e\  \}
\end{equation*}
\item On s'intéresse au produit de $e$-convolution pour le groupe $\mathrm{GL}_2$ . Pour des cocaractères $\mu_i=(1,0)$ on a la description suivante
\begin{equation*}
\Gr_{(1,0)}\tilde{\times}\dots\tilde{\times}\Gr_{(1,0)}(R)=\left\{\Lambda_e\subset\dots\subset\Lambda_1\subset\Lambda_0\ |\ \ u\Lambda_i\subset\Lambda_{i-1},\ \mathrm{dim}_k\Lambda_i/\Lambda_{i-1}=1\  \right\}
\end{equation*}
Le morphisme de convolution prend la forme 
\begin{equation*}
\begin{array}{lrcl}
m:&\Gr_{(1,0)}\tilde{\times}\dots\tilde{\times}\Gr_{(1,0)}&\longrightarrow& \Gr_{\leq (e,0)}\\
&(\Lambda_e\subset\dots\Lambda_1\subset\Lambda_0)&\longmapsto& \Lambda_e
\end{array}
\end{equation*}
\item  On peut donner une autre interprétation du produit de convolution précédent. On définit 
\begin{equation}
\label{Mdef}
M(R)=\left\{\Lambda_1\subset\dots\subset\Lambda_e\subset\Lambda_0\ \bigg|\ \begin{array}{lrcl}\forall\ i<e : u\Lambda_i\subset\Lambda_{i-1},\ \mathrm{dim}_k\Lambda_i/\Lambda_{i-1}=1,\\
\Lambda_e\in\Gr_{\leq (e,0)}
\end{array} \  \right\}
\end{equation} 
On dispose d'un isomorphisme 
\begin{equation}
\label{Miso1}
\begin{array}{lrcl}
\Gr_{(1,0)}\tilde{\times}\dots\tilde{\times}\Gr_{(1,0)}&\longrightarrow&M\\
(\Lambda_e\subset\dots\subset\Lambda_1\subset\Lambda_0)&\longmapsto& (u^{e-1}\Lambda_1\subset\dots\subset u\Lambda_{e-1}\subset\Lambda_e)
\end{array}
\end{equation}
Cet isomorphisme s'insère dans le diagramme commutatif suivant 
\begin{equation}
\label{Miso2}
\begin{tikzcd}
	{\mathrm{Gr}_{(1,0)}\tilde{\times}\dots\tilde{\times}\mathrm{Gr}_{(1,0)}} && M \\
	& {\mathrm{Gr}_{\leq (e,0)}}
	\arrow[from=1-1, to=1-3]
	\arrow["\pi", from=1-3, to=2-2]
	\arrow["m"', from=1-1, to=2-2]
\end{tikzcd}
\end{equation}

où $\pi:(\Lambda_1\subset\dots\subset\Lambda_e\subset\Lambda_0)\mapsto \Lambda_e$.
\end{enumerate}
\end{example}
\begin{remark}
Dans les exemples ci-dessus nous avons fait quelques abus de notations. Si $R\in\mathrm{Alg}_k$ est une $k$-algèbre et $(\mathcal{E},\beta)\in \Gr(R)$, alors $\mathrm{Inv}(\beta)$ n'est pas bien défini : la position relative n'est définie qu'en un point $x\in\spec\, R$. En particulier nous aurions dû adopter la notation plus rigoureuse :
\begin{equation*}
\Gr_{\leq \mu}(R)=\{(\mathcal{E},\beta)\ |\ \mathrm{Inv}_x(\beta)\leq \mu\, \text{ pour tout}\, x\in\spec\, R\}
\end{equation*}
Nous n'avons donc pas nécessairement $\mathrm{Inv}_x(\beta)=\mathrm{Inv}_{x'}(\beta)$ pour $x\neq x'\in\spec\,  R$ (prendre par exemple $\spec\, R$ non connexe).
\end{remark}

\begin{definition}(Non standard)
Soit $\Lambda\subset k(\!(u)\!)^n$ un réseau défini par un point $x\in\Gr$. On définit :
\begin{equation*}
\mathrm{Hodge}(x)=\mathrm{Hodge}(\Lambda):=\mathrm{Inv}(\Lambda,\Lambda_0)\in\mathbb{X}_{\bullet}(T)^+
\end{equation*}
\end{definition}
\begin{remark}
Concrètement pour $\mathrm{GL}_n$ si $\Lambda\subset k(\!(u)\!)^n$ est un réseau alors pour $N$ assez grand $u^N\Lambda_0\subset \Lambda$ et l'invariant $\mathrm{Hodge}(\Lambda)=(a_1,\dots,a_n)$ est caractérisé par :
\begin{equation*}
\Lambda/u^N\Lambda_0\simeq\bigoplus_{i=1}^n k[u]/(u^{N-a_i})
\end{equation*}
\end{remark}
\begin{remark}
Si $(\Lambda_1\subset\dots\subset\Lambda_e)$ est un point de $\widetilde{\Gr}_{\mu_{\bullet}}$ alors on notera $\mathrm{Hodge}(\Lambda_k)=\mathrm{Inv}(\Lambda_k,\Lambda_0)\in\mathbb{X}_{\bullet}(T)^+$. 
\end{remark}
\begin{example}
Les invariants de Hodge de la filtration :
\begin{equation*}
\Lambda_1=\langle u^2 e_1,u^3e_2\rangle \subset \Lambda_2=\langle u^2 e_1,u^2e_2\rangle\subset \Lambda_3=\langle u^2 e_1,ue_2\rangle
\end{equation*}
sont :
\begin{equation*}
\mathrm{Hodge}(\Lambda_1)=(3,2),\ \ \mathrm{Hodge}(\Lambda_2)=(2,2),\ \ \mathrm{Hodge}(\Lambda_3)=(2,1)
\end{equation*}
\end{example}

\begin{remark}
Cette notation prendra sens lorsque nous aurons relié la stratification de Hodge de notre variété de Shimura à celle de la Grassmannienne affine (voir \ref{StrataHodge}).
\end{remark}

\subsection{Résultats}
On en vient maintenant au principal résultat de cet article : 

\begin{prop}
\label{StratGrConv}
Si $\mu_i=(1,0)$ pour tout $i=1,...,e$, alors $\{m^{-1}(\mathrm{Gr}_{\lambda})\}_{\lambda\leq |\mu|}$ définit une bonne stratification de $\mathrm{Gr}_{\mu_1}\tilde{\times}\dots\tilde{\times}\mathrm{Gr}_{\mu_e}$. En d'autres termes si on note $X_{\lambda}:=m^{-1}(\mathrm{Gr}_{\lambda})$ alors pour tout $\lambda\leq |\mu_{\bullet}|$ 
\begin{equation*}
\overline{X_{\lambda}}=\bigcup_{\lambda '\leq\lambda}X_{\lambda '}
\end{equation*}
\end{prop}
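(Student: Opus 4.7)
The plan is to prove the two inclusions separately. The inclusion $\overline{X_\lambda} \subset \bigcup_{\lambda' \leq \lambda} X_{\lambda'}$ follows immediately from the continuity of the convolution morphism $m$ combined with the closure relation $\overline{\Gr_\lambda} = \bigcup_{\lambda' \leq \lambda} \Gr_{\lambda'}$ given by Proposition \ref{PropOrbiteGR}(4):
\begin{equation*}
\overline{X_\lambda} \subset m^{-1}(\overline{\Gr_\lambda}) = \bigcup_{\lambda' \leq \lambda} m^{-1}(\Gr_{\lambda'}) = \bigcup_{\lambda' \leq \lambda} X_{\lambda'}.
\end{equation*}

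For the reverse inclusion I would rely on two structural facts. First, the remark following Proposition \ref{LisseConv} shows that $\Gr_{\mu_1}\tilde{\times}\dots\tilde{\times}\Gr_{\mu_e}$ is obtained by successive $\mathbb{P}^1$-fibrations over $\Gr_{(1,0)} \simeq \mathbb{P}^1_k$, hence is smooth, irreducible and projective of dimension $e$. Second, combining Theorem \ref{TheoHaines}(2) with the formula $\dim \Gr_\lambda = \langle 2\rho, \lambda\rangle$ from Proposition \ref{PropOrbiteGR}(2), the stratum $X_\lambda$ has dimension $\langle \rho, \lambda + |\mu_\bullet|\rangle$, which for $\mathrm{GL}_2$ is strictly increasing along the (totally ordered) Bruhat order on dominant cocharacters of weight $e$. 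Consequently $X_\lambda$ is the unique top-dimensional stratum of $m^{-1}(\Gr_{\leq \lambda})$, and in the case $\lambda = |\mu_\bullet| = (e,0)$ the stratum $X_{|\mu_\bullet|}$ is open and dense in the irreducible total space, settling the top case.

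For a general $\lambda$ what remains is to show $X_{\lambda'} \subset \overline{X_\lambda}$ for every $\lambda' \leq \lambda$. By Proposition \ref{PropAdherence} it suffices, given any $x' \in X_{\lambda'}$, to construct a morphism $\spec k[\![t]\!] \to \Gr_{\mu_1}\tilde{\times}\dots\tilde{\times}\Gr_{\mu_e}$ sending the closed point to $x'$ and the generic point into $X_\lambda$. By transitivity of the Bruhat order I reduce to the case where $\lambda$ covers $\lambda'$, that is $\lambda - \lambda' = (1,-1)$. Writing $x' = (\Lambda_e^0 \subset \cdots \subset \Lambda_1^0 \subset \Lambda_0)$ in the lattice description of Example \ref{ExempleGr}, a classical one-parameter deformation along the simple coroot, which in suitable coordinates can be written as $\Lambda_e(t) = \langle u^{j-1}e_1,\ tu^{e-j}e_1 + u^{e-j+1}e_2\rangle$, provides an explicit family with $\Lambda_e(0) = \Lambda_e^0$ of type $\lambda'$ and $\Lambda_e(t)$ of type $\lambda$ for $t \neq 0$.

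The main obstacle is then to lift this deformation of the bottom lattice to a deformation of the full convolution chain specialising to $x'$ at $t = 0$. I would proceed by descending induction along the chain: having constructed a compatible family $\Lambda_i(t), \Lambda_{i+1}(t), \ldots, \Lambda_e(t)$ extending the respective $\Lambda_j^0$ and satisfying the minuscule conditions, one chooses $\Lambda_{i-1}(t)$ as a $k[\![t]\!]$-section of the proper $\mathbb{P}^1$-bundle of valid minuscule modifications of $\Lambda_i(t)$, specializing at $t = 0$ to $\Lambda_{i-1}^0$; such a section exists because the bundle is smooth and proper over $\spec k[\![t]\!]$. A more conceptual alternative would be to prove directly that $m^{-1}(\Gr_{\leq \lambda})$ is irreducible, by induction on $e$ using the forgetful $\mathbb{P}^1$-bundle $\Gr_{\mu_1}\tilde{\times}\dots\tilde{\times}\Gr_{\mu_e} \to \Gr_{\mu_1}\tilde{\times}\dots\tilde{\times}\Gr_{\mu_{e-1}}$ and a careful description of how its restriction to $m^{-1}(\Gr_{\leq \lambda})$ behaves on each stratum of the base; once irreducibility is granted, $\overline{X_\lambda}$ is a closed irreducible subvariety of $m^{-1}(\Gr_{\leq \lambda})$ containing its open top-dimensional stratum and therefore equals the entire $m^{-1}(\Gr_{\leq \lambda})$.
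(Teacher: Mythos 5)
La première inclusion $\overline{X_{\lambda}}\subset\bigcup_{\lambda'\leq\lambda}X_{\lambda'}$ et la réduction au cas où $\lambda$ couvre $\lambda'$ sont correctes, et votre stratégie globale (produire un $\spec\, k[\![t]\!]$-point via la Proposition \ref{PropAdherence}) est bien celle du papier. Mais l'étape centrale — déformer d'abord le réseau du bas $\Lambda_e$ de sorte que sa fibre générique soit de type $\lambda$, puis relever la chaîne par des sections de $\mathbb{P}^1$-fibrés — contient une lacune réelle. D'une part, le $\mathbb{P}^1$-fibré des modifications minuscules de $\Lambda_i(t)$ paramètre les réseaux $\Lambda_i(t)\subset\Lambda'\subset u^{-1}\Lambda_i(t)$, et rien ne garantit qu'après $e$ étapes la chaîne aboutisse au réseau standard \emph{fixé} $\Lambda_0$ : cette contrainte terminale n'est pas prise en compte. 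D'autre part, et plus fondamentalement, la fibre de $m$ au-dessus de $\Gr_{\lambda'}$ est de dimension $\langle\rho,|\mu_{\bullet}|-\lambda'\rangle$, strictement plus grande que celle au-dessus de $\Gr_{\lambda}$ ; la limite plate de la chaîne générique ne rencontre donc la fibre spéciale $m^{-1}(\Lambda_e^0)$ qu'en un fermé strict, qui n'a aucune raison de contenir le point $x'$ prescrit. Exemple concret pour $e=2$ : prenez $x'=(\Lambda_2^0\subset\Lambda_1^0\subset\Lambda_0)$ avec $\Lambda_2^0=u\Lambda_0$ (type $(1,1)$) et $\Lambda_1^0=\langle e_1,ue_2\rangle$, et déformez $\Lambda_2(t)=\langle ue_1-te_2,\,ue_2\rangle$, qui est bien de type $(2,0)$ pour $t\neq 0$. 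Alors pour $t\neq 0$ l'unique réseau $\Lambda_1$ vérifiant $\Lambda_2(t)\subset\Lambda_1\subset\Lambda_0$ avec les colongueurs voulues est $\langle ue_1,e_2\rangle$, qui se spécialise en $\langle ue_1,e_2\rangle\neq\Lambda_1^0$ : aucune section ne passe par $x'$. Il faut donc choisir la déformation de $\Lambda_e$ \emph{en fonction de toute la chaîne}, ce que fait précisément la preuve du papier en repérant l'indice $k_0$ où l'invariant $N(\Lambda_k)$ stagne et en propageant la déformation le long de la chaîne avec une analyse de cas explicite.

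Votre alternative « conceptuelle » (irréductibilité de $m^{-1}(\Gr_{\leq\lambda})$) suffirait effectivement, puisque $X_{\lambda}$ en est l'unique strate ouverte de dimension maximale ; mais elle n'est pas démontrée, et l'établir par récurrence sur $e$ demande exactement le même contrôle des fibres de $m$ au-dessus des strates inférieures que l'énoncé lui-même — ce n'est donc pas une échappatoire, seulement une reformulation.
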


\begin{proof}

Nous allons démontrer l'assertion pour l'espace de module $M$ de l'exemple (\ref{ExempleGr}) (équation (\ref{Mdef})) car c'est cet espace de module que nous allons considérer par la suite dans le cadre des modèles entiers des variétés de Shimura. Bien sûr pour obtenir le résultat pour $\mathrm{Gr}_{\mu_1}\tilde{\times}\dots\tilde{\times}\mathrm{Gr}_{\mu_e}$ il suffit de réécrire la preuve ci dessous en appliquant l'isomorphisme (\ref{Miso1}) et d'utiliser la commutativité du diagramme (\ref{Miso2}).\\

Soit $\lambda =(i,j)<(e,0)=|\mu_{\bullet}|$ (pour $\lambda=(e,0)$ il n'y a rien à démontrer). Soit $x\in X_{\lambda}$ un point de corps résiduel $k$. Notons $\Lambda_1\subset\dots\subset\Lambda_e$ la filtration associée. Soit $(v_{k})_{k\geqslant 1}$ des éléments tels que $\Lambda_{k}=\Lambda_{k-1}\oplus (k\cdot v_k)$ (somme directe en tant que $k$-espaces vectoriels).  On définit des entiers $(s_k)_k$ pour tout $k\geq 1$ 
\begin{equation*}
s_k=\mathrm{min}\{s\ |\ u^s\Lambda_k\subset u^e\Lambda_0\}=N(\Lambda_k)
\end{equation*}
où $N(\Lambda_k)$ est l'entier définit en (\ref{ExempleGr}) et $\Lambda_0=k\kt^2$. On a 
\begin{equation*}
\mathrm{Hodge}(\Lambda_k)=(e-k+s_k,e-s_k)
\end{equation*}
On note $k_0=\mathrm{max}\{k|s_k=s_{k-1}\}$.
Notez  que $k_0\neq 0$ car on a supposé $(i,j)<(e,0)$.  Notons $(a,b)=\mathrm{Hodge}(\Lambda_{k_0})$ avec $a\geqslant b$. On a alors par hypothèse  $(a+1,b)=\mathrm{Hodge}(\Lambda_{k_0-1})$. Soit $(e_1,e_2)$ une base de $\Lambda_0$ telle que :
\begin{equation*}
\Lambda_{k_0-1}=\langle u^{a+1}e_1, u^b e_2\rangle\
\end{equation*}
Dans cette base on peut écrire
\begin{equation*}
v_{k_0}=x u^a e_1+y u^{b-1} e_2
\end{equation*}
On a nécessairement $y=0$ car sinon on aurait $u^{s_{k_0}}\cdot v_{k_0}\notin u^e\Lambda_0$. Par conséquent on peut écrire
\begin{equation*}
v_{k_0}=u^{a}e_1,\ \ \ \ \Lambda_{k_0}=\Lambda_{k_0-1}\oplus(k\cdot u^ae_1)=\langle  u^{a}e_1,u^b e_2\rangle
\end{equation*}
Ensuite par définition pour tout $n\geq 1$ on a :
\begin{equation*}
u\cdot v_{k_0+n}\in\Lambda_{k_0+n-1}=\Lambda_{k_0-1}\bigoplus_{\ell=0}^{n-1}(k\cdot v_{k_0+\ell})
\end{equation*}
On peut donc fixer une décomposition par récurrence  :
\begin{equation*}
v_{k_0+n}=\frac{w_n}{u}+\sum_{\ell=0}^{n-1} x_{n,\ell} \frac{v_{k_0+\ell}}{u},\ \ \ \ w_n\in\Lambda_{k_0-1},\  x_{n,\ell}\in k
\end{equation*}
Nous allons maintenant définir une déformation sur $R=k\llbracket t\rrbracket$ de notre filtration initiale. Soit $J:=\{\, n\, |\, x_{n,n-1}=0\, \}$. On travaille dans $(k\llbracket t\rrbracket \otimes k\llbracket u\rrbracket)^{2}$. On commence par déformer trivialement la filtration pour tout $\ell\leq k_0-1$ :
\begin{equation*}
\tilde{\Lambda}_{\ell}:=\Lambda_{\ell}\otimes k\llbracket t\rrbracket\ \ \ \ \forall \ell\leq k_0-1
\end{equation*}
Pour $\ell=k_0$ on définit
\begin{equation*}
\tilde{v}_{k_0}=u^a e_1+t u^{b-1}e_2=v_{k_0}+tu^{b-1}e_2
\end{equation*}
et on pose
\begin{equation*}
\tilde{\Lambda}_{k_0}=\tilde{\Lambda}_{k_0-1}\oplus (k\cdot \tilde{v}_{k_0})
\end{equation*}
Ensuite on déforme par récurrence sur $n$ en fonction de si $n\in J$ ou $n\notin J$ :
\begin{enumerate}
\item $(n\in J)$ Dans ce cas on définit 
\begin{equation*}
\tilde{v}_{k_0+n}=v_{k_0+n}+t \frac{\tilde{v}_{k_0+(n-1)}}{u}
\end{equation*}
\item $(n\notin J)$ Dans ce cas on définit 
\begin{equation*}
\tilde{v}_{k_0+n}=\frac{w_n}{u}+\sum_{\ell=0}^{n-1} x_{n,\ell} \frac{\tilde{v}_{k_0+\ell}}{u}
\end{equation*}
\end{enumerate}
Dans les deux situations on définit la déformation de $\Lambda_{k_0+n}$ comme étant : 
\begin{equation*}
\tilde{\Lambda}_{k_0+n}=\tilde{\Lambda}_{k_0+(n-1)}\oplus (k\cdot\tilde{v}_{k_0+n})
\end{equation*}
Il faut vérifier que l'équation $u\cdot \tilde{v}_{k_0+n}\in\tilde{\Lambda}_{k_0+(n-1)}$ est bien satisfaite. Pour la situation $(2)$ c'est évident. Pour la situation $(1)$ il faut voir que  par hypothèse on a
\begin{equation*}
u\cdot\tilde{v}_{k_0+n}= w_n+\sum_{\ell=0}^{n-2}x_{n,\ell}\tilde{v}_{k_0+\ell}+t \tilde{v}_{k_0+(n-1)}\in\tilde{\Lambda}_{k_0+(n-2)}\oplus (k\cdot\tilde{v}_{k_0+(n-1)})=\tilde{\Lambda}_{k_0+(n-1)}
\end{equation*}
Au point fermé $t=0$ on a   $\tilde{v}_{k_0}=v_{k_0}$ et par suite $\tilde{v}_{k_0+n}=v_{k_0+n}$ pour tout $n\geq 1$. Par conséquent cette filtration correspond bien à une déformation de notre filtration initiale. Pour calculer $\mathrm{Hodge}(\tilde{\Lambda}_e\otimes k(\!(t)\!)) $ il faut voir que par construction on a 
\begin{equation*}
\tilde{s}_{k_0+n}=\tilde{s}_{k_0+n}+1\ \ \forall n\geq 1
\end{equation*}
et que par conséquent puisque $\tilde{s}_{k_0}=s_{k_0}+1$ on trouve 
\begin{equation*}
\tilde{s}_{e}=\tilde{s}_{k_0+e-k_0}=\tilde{s}_{k_0}+e-k_0=s_{k_0}+1+e-k_0=s_e+1
\end{equation*}
On a donc bien en fibre générique :
\begin{equation*}
\mathrm{Hodge}(\tilde{\Lambda}_e\otimes k(\!(t)\!))=(i+1,j-1)
\end{equation*}

\end{proof}

\begin{remark}
L'idée de la preuve est la suivante.
\begin{enumerate}
\item Si $\Lambda_e=\langle u^ie_1,u^je_2\rangle$ alors on aimerait déformer sur $k\llbracket t\rrbracket $ en prenant l'élément $\tilde{v}_e=u^ie_1+tu^{j-1}e_2$. Le problème est que cet élément ne satisfait pas nécessairement $u\cdot\tilde{v}_e\in\Lambda_{e-1}$. Il faut donc déformer $\Lambda_{e-1}$ également. Le problème est que cette déformation doit de nouveau satisfaire l'équation $u\cdot\tilde{\Lambda}_{e-1}\subset \Lambda_{e-2}$...
\item Il existe un rang $k_0$ tel que là déformation $\tilde{\Lambda}_{k_0}$ existe. Autrement dit on peut trouver un élément $\tilde{v}_{k_0}$ de la \og bonne valuation\fg, c'est-à-dire celle de $v_{k_0}$ moins $1$.
\item Ensuite on déforme par récurrence les $v_{k_0+n}$ en à \og divisant par $u$\fg à chaque étape de sorte à faire apparaître du $u^{j-1}$ dans la décomposition de $\tilde{v}_e$.
\end{enumerate}
Donnons un exemple explicite de déformation. On considère la filtration
\begin{equation*}
\Lambda_1=\langle u^3e_1, u^2e_2\rangle,\ \ \Lambda_2=\langle u^2e_2,u^2e_1\rangle,\ \Lambda_3=\langle ue_2,u^2e_1\rangle
\end{equation*}
On représente cette filtration par une matrice 
\begin{equation*}
\begin{bNiceMatrix}[first-row,first-col]
    & u^2e_1 & ue_1 & e_1 & u^2e_2 &ue_2&e_2      \\
v_1 &    &    &   & *   & &&\\
v_2 & *   &    &    &    & && \\
v_3 &    &    &    &   &* &
\end{bNiceMatrix}
\end{equation*}
La multiplication par $u$ consiste à décaler les colonnes vers la gauche. La déformation construit dans la preuve précédente est donnée par la matrice :
\begin{equation*}
\begin{bNiceMatrix}[first-row,first-col]
    & u^2e_1 & ue_1 & e_1 & u^2e_2 &ue_2&e_2      \\
v_1 &    &    &   & *   & &&\\
v_2 & *   &    &    &    &t_2 && \\
v_3 &    &  t_3  &    &   &* &t_3t_2
\end{bNiceMatrix}
\end{equation*}
\end{remark}
\begin{remark}
Le théorème ci-dessus n'est pas vrai dans le cas général (voir \cite{BijHer2} Proposition 3.9 pour un contre exemple). En fait dans la preuve ci dessus on utilise un fait spécifique au cas $G=\mathrm{GL}_2$ : si $\Lambda\in\Gr_{\leq (e,0)}$ alors avec les notations de \ref{ExempleGr} :
\begin{equation*}
\Lambda\in\Gr_{(i,j)}\ \Leftrightarrow\ N(\Lambda)=i
\end{equation*}
L'invariant $N(\Lambda)$ est égal à l'indice de nilpotence de $u\in\mathrm{End}(\Lambda/u^e\Lambda_0)$ ce qui rend le calcul de $\mathrm{Hodge}(\Lambda)=\mathrm{Inv}(\Lambda,\Lambda_0)$ beaucoup plus simple à calculer en pratique. La preuve du Théorème \ref{StratGrConv} consiste simplement à déformer une filtration $\Lambda_1\subset\dots\subset \Lambda_e$ de sorte à faire apparaître le bon indice de nilpotence en fibre générique.
\end{remark}

\section{Modèles locaux}
\subsection{Notations}
\label{Notations}
Soit $F$ un corps totalement réel de degré $d>1$ sur $\mathbb{Q}$. On note $\mathcal{O}_F$ son anneau d'entiers. Pour tout $v|p$ on note $e_v$ l'indice de ramification et $f_v$ le degré résiduel. On note $F_v$  la complétion de $F$ en $v$ et $\mathcal{O}_v$ son anneau d'entiers. On note $F^{\mathrm{nr}}_v$ la sous extension maximale non ramifiée et $\mathcal{O}_v^{\mathrm{nr}}$ son anneau d'entiers. Soit $K/\mathbb{Q}_p$ une extension qui contient tous les plongement $F_v\rightarrow \overline{\mathbb{Q}}_p$ pour tout $v|p$. On note $\mathcal{O}_K$ son anneau d'entiers, $k$ son corps résiduel, et on fixe une uniformisante $\varpi\in\mathcal{O}_K$. On dispose d'une décomposition 

\begin{equation}
\label{Eq1}
\mathcal{O}_F\otimes_{\mathbb{Z}}\mathcal{O}_K \cong \prod_{v|p} \prod_{\tau\in \Sigma_v^{\mathrm{nr}}}\mathcal{O}_v\otimes_{\mathcal{O}_v^{\mathrm{nr}},\tau}\mathcal{O}_K
\end{equation}
où $\Sigma_v^{\mathrm{nr}}=\mathrm{Hom}(F^{\mathrm{nr}}_v,\overline{\mathbb{Q}}_p)$. Si on fixe une uniformisante $\varpi_v$ de $\mathcal{O}_{v}$ alors on peut identifier 
\begin{equation}
\label{Eq2}
\mathcal{O}_v\otimes_{\mathcal{O}_v^{\mathrm{nr}},\tau} k\simeq k[u] /(u^{e_v})
\end{equation} 
On obtient donc une décomposition non canonique :
\begin{equation*}
\mathcal{O}_F\otimes_{\mathbb{Z}}k \simeq \prod_{v|p} \prod_{\tau\in \Sigma_v^{\mathrm{nr}}} k[u]/(u^{e_v})
\end{equation*}

\subsection{Modèle local PEL}
\label{ModeleLocalPEL}
Le groupe réductif qui nous intéresse est : 
\begin{equation*}
G=\mathrm{Res}_{F/\Q}\mathrm{GL}_2
\end{equation*}
 Lorsqu'on le change de base à $K$ on obtient une décomposition
\begin{equation*}
G\otimes_{\Q}K=\prod_{v|p}\prod_{\tau\in\Sigma_v^{\mathrm{nr}}}(\mathrm{Res}_{F_{v}/F_v^{\mathrm{nr}}}\mathrm{GL}_2)\otimes_{\Q_p}K
\end{equation*}

Pour simplifier les notations nous allons dans un premier temps décrire le modèle local PEL pour le groupe $G=\mathrm{Res}_{L/L^{\mathrm{nr}}}\mathrm{GL}_2$ avec $L/\Q_p$ une extension finie d'indice de ramification $e$ (jouant le rôle de $F_v/\Q_p$). On fixe un plongement $\tau:L^{\mathrm{nr}}\hookrightarrow \overline{\Q}_p$ et on note $\Sigma_{\tau}=\mathrm{Hom}_{L^{\mathrm{nr}}}(L,\overline{\Q}_p)$ l'ensemble des plongements qui prolongent $\tau$. On fixe une numérotation $\Sigma_{\tau}\simeq\{\varphi_1,\dots ,\varphi_e\}$. \\

Soit $V$ un $L$-espace vectoriel de dimension $2$. On fixe une base $(e_1,e_2)$ de $V$. On note $\Lambda$ le $\mathcal{O}_L$-module libre de base $(e_1,e_2)$. Le modèle local PEL pour le groupe $G$ noté $M^{\mathrm{PEL}}$, est le schéma sur $\spec\, \OK_L^{\mathrm{nr}}$ représentant le foncteur qui à $(S\rightarrow \spec\, \OK_L^{\mathrm{nr}})$ associe l'ensemble  $M^{\mathrm{PEL}}(S)$ des $\mathcal{O}_L\otimes_{\OK_L^{\mathrm{nr}}}\mathcal{O}_S$-sous modules $\mathscr{F}\subset \Lambda_{S}:=\Lambda\otimes_{\OK_L^{\mathrm{nr}}}\mathcal{O}_S$ tels que 
\begin{itemize}[label=\textbullet]
\item $\mathscr{F}$ est localement sur $S$ (pour la topologie Zariski) un $\mathcal{O}_S$-facteur direct de $\Lambda_{S}$ de rang $e$.
\item Pour tout $a\in \mathcal{O}_L$ on a l'égalité polynomiale suivante 
\begin{equation*}
\mathrm{det}(a\ |\ \mathscr{F})=\prod_{\varphi_i\in\Sigma_{\tau}}\varphi_i(a)
\end{equation*}
\end{itemize}

On note $\mathcal{G}=\underline{\mathrm{Aut}}_{\mathcal{O}_L}(\Lambda)$ le schéma en groupe sur $\spec\, \OK_L^{\mathrm{nr}}$ des automorphismes de $\Lambda$ compatibles avec l'action de $\mathcal{O}_L$. On a alors la proposition suivante :

\begin{prop}
$\mathcal{G}$ est lisse sur $\spec\, \OK_L^{\mathrm{nr}}$
\end{prop}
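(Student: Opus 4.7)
L'idée consiste à identifier $\mathcal{G}$ à la restriction de Weil du groupe $\mathrm{GL}_{2,\OK_L}$ le long du morphisme fini localement libre $\spec\, \OK_L \rightarrow \spec\, \OK_L^{\mathrm{nr}}$, puis d'appliquer un résultat standard sur la lissité des restrictions de Weil. Plus précisément, je commencerais par remarquer que $\Lambda$ étant un $\OK_L$-module libre de rang $2$, pour toute $\OK_L^{\mathrm{nr}}$-algèbre $R$ on a l'identification fonctorielle
\begin{equation*}
\mathcal{G}(R) = \mathrm{Aut}_{\OK_L\otimes_{\OK_L^{\mathrm{nr}}}R}(\Lambda\otimes_{\OK_L^{\mathrm{nr}}}R) = \mathrm{GL}_2\big(\OK_L\otimes_{\OK_L^{\mathrm{nr}}}R\big),
\end{equation*}
ce qui fournit un isomorphisme $\mathcal{G} \simeq \mathrm{Res}_{\OK_L/\OK_L^{\mathrm{nr}}} \mathrm{GL}_{2,\OK_L}$. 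Puisque $\OK_L$ est libre de rang $e$ sur $\OK_L^{\mathrm{nr}}$, cette restriction de Weil est représentable par un schéma affine, et la lissité découle du fait classique que la restriction de Weil préserve la lissité des schémas affines le long des morphismes finis localement libres (voir par exemple Bosch--Lütkebohmert--Raynaud, \emph{Néron Models}, §7.6, Proposition 5), appliqué à $\mathrm{GL}_{2,\OK_L}$ qui est lisse affine sur $\OK_L$.

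\textbf{Approche alternative directe.} On peut aussi éviter d'invoquer la théorie générale et vérifier la lissité à la main. L'espace $\underline{\mathrm{End}}_{\OK_L}(\Lambda)$ des endomorphismes $\OK_L$-linéaires est naturellement isomorphe à $\mathbb{A}^{4e}_{\OK_L^{\mathrm{nr}}}$, car $\mathrm{End}_{\OK_L}(\Lambda) \simeq M_2(\OK_L)$ est $\OK_L^{\mathrm{nr}}$-libre de rang $4e$. Le schéma $\mathcal{G}$ en est l'ouvert défini par l'inversibilité du déterminant $\det \in \OK_L\otimes_{\OK_L^{\mathrm{nr}}}R$ : comme ce dernier anneau est un $R$-module libre de rang $e$, l'inversibilité de $\det$ équivaut à celle de sa norme $\mathrm{Nm}(\det)\in R$ (déterminant de la multiplication), qui est une fonction polynomiale en les coordonnées canoniques de $\mathbb{A}^{4e}$. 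Ainsi $\mathcal{G}$ est un ouvert d'un espace affine lisse, donc lisse.

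L'obstacle attendu est minimal : il s'agit essentiellement d'un exercice de \emph{bookkeeping} pour traiter correctement la structure $\OK_L$-linéaire dans les produits tensoriels. Le seul point à soigner est de justifier que l'inversibilité d'un élément de $\OK_L\otimes_{\OK_L^{\mathrm{nr}}}R$ définit bien un sous-schéma ouvert de $\spec\,R$, ce qui est immédiat par l'argument de norme ci-dessus et qui explique d'ailleurs géométriquement pourquoi la Proposition 5 de BLR s'applique sans hypothèse d'étalitude sur le morphisme $\spec\,\OK_L \to \spec\,\OK_L^{\mathrm{nr}}$, pourtant ramifié en caractéristique résiduelle.
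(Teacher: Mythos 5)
Votre preuve est correcte, et elle coïncide en substance avec ce que le papier a en tête : la preuve officielle du texte se réduit à une citation de Rapoport--Zink (Proposition A.4), qui établit la lissité du schéma des automorphismes d'une chaîne périodique de réseaux dans un cadre bien plus général (avec niveau en $p$), mais la remarque qui suit immédiatement dans le papier observe exactement ce que vous démontrez, à savoir que dans la situation présente $\mathcal{G}\simeq \mathrm{Res}_{\mathcal{O}_L/\mathcal{O}_L^{\mathrm{nr}}}\mathrm{GL}_2$ et que la lissité est alors standard. La différence est donc de nature rédactionnelle : le papier délègue à un résultat général de \cite{RapZink}, tandis que vous donnez un argument autonome — identification fonctorielle $\mathcal{G}(R)=\mathrm{GL}_2(\mathcal{O}_L\otimes_{\mathcal{O}_L^{\mathrm{nr}}}R)$, puis soit le théorème de préservation de la lissité par restriction de Weil le long d'un morphisme fini localement libre (BLR, \S 7.6), soit la vérification directe que $\mathcal{G}$ est l'ouvert de $\mathbb{A}^{4e}_{\mathcal{O}_L^{\mathrm{nr}}}$ défini par la non-annulation de $\mathrm{Nm}(\det)$, l'inversibilité dans l'algèbre finie libre $\mathcal{O}_L\otimes_{\mathcal{O}_L^{\mathrm{nr}}}R$ équivalant bien à celle de la norme. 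Votre argument de norme est correct et a l'avantage d'être élémentaire et parfaitement adapté au cas sans niveau en $p$ traité ici ; la citation du papier, elle, couvre d'emblée le cas parahorique général, ce qui n'est pas nécessaire pour cette proposition.
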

\begin{proof}
Voir \cite{RapZink} (Proposition A.4).
\end{proof}
En fait la proposition ci dessus est démontrée pour $\mathcal{G}=\underline{\mathrm{Aut}}_{\mathcal{O}_L}((\Lambda)_{i\in I})$ où $(\Lambda)_{i\in I}$ est une chaîne périodique de réseaux (voir \cite{RapZink}). Cette situation apparaît lorsque l'on autorise du niveau en $p$, ce qui n'est pas notre cas ici. Dans notre situation ce groupe est en fait explicite :
\begin{equation*}
\mathcal{G}=\mathrm{Res}_{\mathcal{O}_L/\OK_L^{\mathrm{nr}}}\mathrm{GL}_2
\end{equation*}
et est bien sûr lisse.

\subsection{Modèle local de Pappas-Rapoport}

On considère maintenant le foncteur $M^{\mathrm{PR}}$ qui à un schéma $(S\rightarrow\spec\, \mathcal{O}_K)$ associe l'ensemble $M^{\mathrm{PR}}(S)$ des filtrations $(\mathscr{F}^{(i)})_{i=1,...,e}$ de $\mathcal{O}_L\otimes_{\OK_{L^{\mathrm{nr}}}}\mathcal{O}_S$-sous modules de $\Lambda_{S}$ :
\begin{equation*}
0=\mathscr{F}^{(0)}\subset \mathscr{F}^{(1)}\subset\cdots\subset \mathscr{F}^{(e)}\subset\Lambda_{S}
\end{equation*}
telles que :
\begin{itemize}[label=\textbullet]
\item Les $\mathscr{F}^{(i)}$ sont Zariski-localement des $\mathcal{O}_S$-facteurs directs de $\Lambda_{S}$ de rang $i$
\item Pour tout $a\in L$ et pour tout $i=1,...,e$ 
\begin{equation*}
\label{ConditionA}
(a\otimes 1-1\otimes\varphi_i(a))\cdot \mathscr{F}^{(i)}\subset \mathscr{F}^{(i-1)}
\end{equation*}
\end{itemize}
\begin{prop}
\label{MPRrep}
Ce foncteur est représentable par uns schéma projectif sur $\spec\, \OK_K$.
\end{prop}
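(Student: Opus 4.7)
Le plan est de réaliser $\Split$ comme sous-schéma fermé d'une variété de drapeaux projective. Plus précisément, on introduit d'abord le schéma de drapeaux partiel $\mathrm{Fl} := \mathrm{Fl}(1, 2, \ldots, e; \Lambda)$ sur $\spec\, \OK_K$, qui paramètre les filtrations $0 \subset \mathscr{F}^{(1)} \subset \cdots \subset \mathscr{F}^{(e)} \subset \Lambda_S$ par des $\OK_S$-facteurs directs localement libres de rangs $1, 2, \ldots, e$, sans imposer d'action de $\OK_L$. Ce schéma est classiquement projectif sur $\spec\, \OK_K$, se plongeant dans le produit $\prod_{i=1}^{e} \mathrm{Grass}(i; \Lambda)$ comme sous-schéma fermé défini par les relations d'inclusion successives.

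Ensuite, sur $\mathrm{Fl}$, on impose pour chaque $i = 1, \ldots, e$ la condition que la composition
\[
\mathscr{F}^{(i)} \xrightarrow{\varpi_L \otimes 1 - 1 \otimes \varphi_i(\varpi_L)} \Lambda_S \twoheadrightarrow \Lambda_S / \mathscr{F}^{(i-1)}
\]
s'annule, où $\varpi_L$ désigne une uniformisante fixée de $\OK_L$. Comme la source et le but sont des $\OK_S$-modules localement libres de rang fini, cette annulation est une condition fermée : localement, elle équivaut à l'annulation des entrées d'une matrice. On obtient ainsi un sous-schéma fermé $M' \subset \mathrm{Fl}$, lui-même projectif sur $\spec\, \OK_K$.

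Il reste à vérifier que $M' = \Split$, c'est-à-dire que la condition imposée pour $a = \varpi_L$ encode toutes les données de la définition. Pour la $\OK_L$-stabilité de chaque $\mathscr{F}^{(i)}$, on observe que $\varphi_i(\varpi_L) \in \OK_K$ agit par scalaires sur $\Lambda_S$, d'où $(\varpi_L \otimes 1) \mathscr{F}^{(i)} \subset \mathscr{F}^{(i-1)} + (1 \otimes \varphi_i(\varpi_L)) \mathscr{F}^{(i)} \subset \mathscr{F}^{(i)}$ ; comme $\OK_{L^{\mathrm{nr}}}$ agit également par scalaires via $\tau$, on conclut à la $\OK_L$-stabilité. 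Pour propager la condition à tout $a \in \OK_L = \OK_{L^{\mathrm{nr}}}[\varpi_L]$, on note que l'ensemble des $a$ satisfaisant la relation est stable par addition et, via l'identité
\[
(ab) \otimes 1 - 1 \otimes \varphi_i(ab) = (a \otimes 1 - 1 \otimes \varphi_i(a))(b \otimes 1) + (1 \otimes \varphi_i(a))(b \otimes 1 - 1 \otimes \varphi_i(b))
\]
combinée à la $\OK_L$-stabilité déjà obtenue, par multiplication. Comme la relation est par ailleurs triviale pour $a \in \OK_{L^{\mathrm{nr}}}$ (car alors $a \otimes 1 = 1 \otimes \tau(a) = 1 \otimes \varphi_i(a)$), elle se propage à tout $\OK_L$. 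Le point technique principal est précisément cette vérification algébrique qui permet de ramener les relations indexées par $a \in \OK_L$ à une unique condition fermée ponctuelle ; une fois acquise, la structure projective est immédiate.
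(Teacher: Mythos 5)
Votre preuve est correcte. Elle relève de la même stratégie générale que celle du papier — réaliser $\Split$ comme sous-schéma fermé d'un objet projectif de type grassmannien — mais elle l'exécute différemment : le papier se contente d'affirmer qu'on peut plonger le foncteur dans un produit de grassmanniennes convenables et renvoie, pour la précision, à l'identification de la fibre spéciale avec le produit de convolution (Proposition \ref{MPRIsoConv}) et à sa représentabilité (\ref{ConvolutionRep}), alors que vous travaillez directement sur $\spec\, \OK_K$ avec le schéma de drapeaux $\mathrm{Fl}(1,\dots,e;\Lambda)$ et découpez le lieu fermé d'annulation des applications $\mathscr{F}^{(i)}\rightarrow\Lambda_S/\mathscr{F}^{(i-1)}$ induites par $\varpi_L\otimes 1-1\otimes\varphi_i(\varpi_L)$. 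Votre réduction des conditions indexées par tous les $a\in\OK_L$ à la seule condition $a=\varpi_L$ est correcte (elle repose sur $\OK_L=\OK_{L^{\mathrm{nr}}}[\varpi_L]$, sur le fait que $\OK_{L^{\mathrm{nr}}}$ agit par scalaires via $\tau=\varphi_i|_{L^{\mathrm{nr}}}$, et sur l'identité de type Leibniz que vous écrivez, qui est exacte) ; notez simplement qu'elle n'est pas indispensable, puisque la stabilité sous $\varpi_L\otimes 1$ et la condition pour chaque $a$ fixé sont déjà des conditions fermées et qu'une intersection quelconque de sous-schémas fermés reste fermée — votre réduction rend toutefois l'argument fini et explicite. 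Ce que votre approche apporte : un argument autonome qui donne la projectivité directement sur $\spec\, \OK_K$, ce qui est exactement l'énoncé, tandis que la référence du papier à \ref{MPRIsoConv} ne concerne stricto sensu que la fibre spéciale ; ce que l'approche du papier apporte : la cohérence immédiate avec le dictionnaire entre modèles locaux et grassmanniennes affines (produit de convolution) exploité dans toute la suite de l'article.
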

\begin{proof}
Il suffit de voir que l'on peut le plonger dans un produit de Grassmanniennes convenables. Plus précisément, on peut utiliser \ref{MPRIsoConv} et le fait que le produit de convolution soit représentable (voir \ref{ConvolutionRep}).
\end{proof}
 On dispose d'un morphisme d'oubli 
\begin{equation*}
\begin{array}{lrcl}
&\pi:\Split&\longrightarrow& \Naif\otimes_{\OK_L^{\mathrm{nr}}}\OK_K\\
&(\mathscr{F}^{(i)})&\longmapsto& \mathscr{F}^{(e)}
\end{array}
\end{equation*}

Désormais pour alléger les notations nous noterons de la même manière $\Naif=\Naif\otimes_{\OK_L^{\mathrm{nr}}}\OK_K$.

\subsection{Plongement dans les grassmanniennes affines}

Nous allons maintenant décrire les plongements des fibres spéciales des modèles $\Split$ et $\Naif$ dans certaines grassmanniennes affines. On suit presque à la lettre \cite{PRII}. On note $\Naiff=\Naif\otimes_{\OK_K}k$ et $\Splitt=\Split\otimes_{\OK_K}k$ les fibres spéciales de ces deux modèles. Comme en \eqref{Eq2}  le choix d'une uniformisante $\varpi\in\OK_L$ nous fournit une identification :
\begin{equation*}
\OK_L\otimes_{\Z_p} k\simeq k[\![u]\!]/(u^e),\ \ \ \ \varpi\otimes 1\mapsto u
\end{equation*}
Cela induit un isomorphisme de  $\OK_L\otimes_{\Z_p} k$-modules :
\begin{equation*}
\Lambda\otimes_{\Z_p} k\simeq \Ltilde\otimes_{k\kt} k[\![u]\!]/(u^e)
\end{equation*}
(on rappelle que $\Lambda\subset V$ est un $\OK_L$-réseau fixé (\ref{ModeleLocalPEL}) et que $\Lambda_0=k\kt^2$).
On note $p$ la projection :
\begin{equation*}
p:\Ltilde\rightarrow\Ltilde\otimes_{k\kt} k[\![u]\!]/(u^e)
\end{equation*}
Soit $(S\rightarrow\spec\, k)$ un schéma et $\mathscr{F}\in \Naiff(S)$. L'identification précédente permet de voir $\mathscr{F}$ comme un sous module de $\Ltilde\otimes_{k\kt} \OK_S[\![u]\!]/(u^e)$. On définit le $\OK_S\kt$-module $\Lambda_{\mathscr{F}}$ comme étant 
\begin{equation}
\label{LambdaF}
\Lambda_{\mathscr{F}}:=p^{-1}\bigg(\mathscr{F}\subset \Ltilde\otimes_{k\kt} \OK_S[\![u]\!]/(u^e)\bigg)
\end{equation}
On obtient donc finalement par ce procédé une immersion fermée
\begin{equation*}
\overline{M}^{\mathrm{PEL}}\lhook\joinrel\nrightarrow \Gr
\end{equation*}
\begin{remark}
Par construction on dispose des inclusions de réseaux

\begin{equation*}
u^e \Lambda_{0,S}\subset\Lambda_{\mathscr{F}}\subset \Lambda_{0,S}
\end{equation*} 
dont les gradués sont des $\OK_S$-modules localement libres de rang $e$. Notez que l'image de cette immersion est entièrement caractérisée par les inclusions ci-dessus et le rang des gradués.
\end{remark}

\begin{prop}
\label{PlongPEL}
L'immersion fermée $\iota$ est équivariante pour l'action de $\mathcal{G}\otimes k$ à gauche et $L^+G$ à droite. Elle induit un isomorphisme
\begin{equation*}
\Naiff\simeq  \bigcup_{\lambda\leqslant (e,0)}\mathrm{Gr}_{\lambda}
\end{equation*}
\end{prop}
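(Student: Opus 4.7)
The plan is to exhibit an explicit inverse to $\iota$ on the closed subscheme $\bigcup_{\lambda \leq (e,0)}\Gr_\lambda = \Gr_{\leq(e,0)}$ (using Proposition~\ref{PropOrbiteGR}(4) and Exemple~\ref{ExempleGr}(2)) and to deduce the equivariance from the naturality of the construction.

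First I would verify that $\iota$ factors through $\Gr_{\leq(e,0)}$. For $\mathscr{F}\in\Naiff(S)$, the definition $\Lambda_{\mathscr{F}}=p^{-1}(\mathscr{F})$ gives a short exact sequence
\begin{equation*}
0\longrightarrow u^{e}\Lambda_{0,S}\longrightarrow\Lambda_{\mathscr{F}}\longrightarrow\mathscr{F}\longrightarrow 0,
\end{equation*}
and combined with the rank-$e$ hypothesis on $\mathscr{F}$ this shows that $\Lambda_{\mathscr{F}}/u^{e}\Lambda_{0,S}$ is Zariski-locally a direct factor of rank $e$; hence $\Lambda_{\mathscr{F}}$ defines a point of $\Gr_{\leq(e,0)}(S)$.

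For the inverse, given $\Lambda\in\Gr_{\leq(e,0)}(S)$ I would set
\begin{equation*}
\mathscr{F}_{\Lambda}:=\Lambda/u^{e}\Lambda_{0,S}\ \hookrightarrow\ \Lambda_{0,S}/u^{e}\Lambda_{0,S}\simeq \Lambda\otimes_{\Z_p}\mathcal{O}_S,
\end{equation*}
which is an $\OK_L\otimes\mathcal{O}_S$-submodule, Zariski-locally a direct factor of rank $e$. The main non-formal step is to check the PEL determinant condition $\det(a\mid\mathscr{F}_{\Lambda})=\prod_{i}\varphi_i(a)$ for all $a\in\OK_L$. This is where the totally ramified hypothesis enters decisively: each $\varphi_i(\varpi)\in\OK_K$ satisfies $\varphi_i(\varpi)^e\in p\OK_K$, hence reduces to $0$ in $k$; writing $a=a_0+a_1\varpi+\dots+a_{e-1}\varpi^{e-1}$ with $a_i\in\Z_p$, both sides reduce to $a_0^e$ in $\mathcal{O}_S$, because on the left $u$ acts nilpotently on $\mathscr{F}_{\Lambda}$ with $u^e=0$ (forcing the determinant of any $a_0\mathrm{Id}+(\text{nilpotent})$ on a rank-$e$ module to be $a_0^e$), and on the right each $\varphi_i(\varpi)$ is zero. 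The two assignments $\mathscr{F}\mapsto\Lambda_{\mathscr{F}}$ and $\Lambda\mapsto\mathscr{F}_{\Lambda}$ are manifestly inverse to one another, which establishes the claimed isomorphism onto $\bigcup_{\lambda\leq(e,0)}\Gr_\lambda$.

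Finally, equivariance is automatic by functoriality: an element $g\in\mathcal{G}(k)=\mathrm{GL}_2(\OK_L\otimes k)=\mathrm{GL}_2(k[u]/(u^e))$ acts on $\mathscr{F}$ via its action on $\Lambda\otimes k$, whereas the corresponding element in $L^+G(k)=\mathrm{GL}_2(k\kt)$ acts on $\Lambda_{\mathscr{F}}$ by change of trivialization; the two actions intertwine through $\iota$ via the reduction map $k\kt\twoheadrightarrow k[u]/(u^e)$. The main obstacle in the argument is the verification of the PEL determinant condition as a \emph{polynomial} identity in $a$ (not merely pointwise); this is precisely where the totally ramified hypothesis becomes essential, and without it one would need a finer product decomposition indexed by $\Sigma_v^{\mathrm{nr}}$ before embedding into an appropriate Grassmannian.
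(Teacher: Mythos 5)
Your overall route is the same as the paper's one-line proof (which just invokes the remark preceding the proposition together with Exemple \ref{ExempleGr}): send $\mathscr{F}$ to $\Lambda_{\mathscr{F}}$, invert by $\Lambda\mapsto\Lambda/u^{e}\Lambda_{0,S}$, and observe that the only non-formal point is the Kottwitz determinant condition. You located the right step, but your justification of it is not valid: over a non-reduced test ring a nilpotent endomorphism of a locally free module need not have characteristic polynomial $T^{e}$, so $\det(a_0\mathrm{Id}+N)$ need not equal $a_0^{e}$. Concretely, take $e=2$, $R=k[\epsilon]/(\epsilon^{2})$, $\Lambda_{0,R}=R[\![u]\!]^{2}$ with basis $(e_1,e_2)$, and $\Lambda=(u+\epsilon)R[\![u]\!]e_1\oplus uR[\![u]\!]e_2$. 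Then $u^{2}\Lambda_{0,R}\subset\Lambda\subset\Lambda_{0,R}$ and both graded pieces are free of rank $2$, so $\Lambda$ satisfies the lattice description of $\Gr_{\leq(2,0)}(R)$ of Exemple \ref{ExempleGr}; but $\mathscr{F}_{\Lambda}=\Lambda/u^{2}\Lambda_{0,R}$ is free of rank $2$ on the images of $(u+\epsilon)e_1$ and $ue_2$, on which $u$ acts by $\mathrm{diag}(\epsilon,0)$, so that $\det(1+u\mid\mathscr{F}_{\Lambda})=1+\epsilon$ whereas $\varphi_1(1+\varpi)\varphi_2(1+\varpi)$ reduces to $1$ in $R$. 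Hence your inverse map does not land in $\Naiff(R)$: the determinant condition is \emph{not} a consequence of the inclusions and the ranks of the graded pieces over arbitrary bases. This is the classical discrepancy, already visible for $e=2$ at the worst point, between the scheme of square-zero matrices ($u$-stability alone) and the reduced nilpotent cone $\{\mathrm{tr}=\det=0\}$ (the Kottwitz condition).

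What saves the statement is reducedness, not linear algebra over an arbitrary base. If $\bigcup_{\lambda\leq(e,0)}\Gr_{\lambda}$ is read, as in the statement, as the orbit closure with its reduced structure, then the determinant condition, being a closed condition, holds at every field-valued point (by the Cartan decomposition, $\mathscr{F}_{\Lambda}\simeq k[u]/(u^{e-\lambda_1})\oplus k[u]/(u^{e-\lambda_2})$ with $\lambda_1+\lambda_2=e$), hence on the whole reduced scheme; that is the correct mechanism, not the nilpotent-determinant claim. Conversely, to conclude that $\iota$ is an isomorphism onto this reduced Schubert variety one also needs to know that the Kottwitz condition cuts out a reduced subscheme, i.e. that $\Naiff$ is reduced; this is the Deligne--Pappas type flatness/reducedness input which the paper is implicitly using when it asserts that the image of $\iota$ is characterized by the lattice inclusions and the ranks of the graded pieces (for $e=2$ one checks in the chart at the worst point that adding $\mathrm{tr}=\det=0$ to the $u$-stability equations yields the reduced quadric cone). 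A minor further point: in this section $L/\Q_p$ is not assumed totally ramified, only $L/L^{\mathrm{nr}}$ is, so the expansion of $a$ should have coefficients in $\OK_{L^{\mathrm{nr}}}$ acting through $\tau$; this changes nothing essential in your reduction of the right-hand side.
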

\begin{proof}
Découle de la remarque précédente et de l'exemple \ref{ExempleGr}.
\end{proof}
\begin{remark}
\label{Admissible}
En d'autres termes l'ensemble des copoids $\mu$-admissibles (voir \cite{Gortz}, section 4.3) est ici tout à fait explicite
\begin{equation*}
\mathrm{Adm}(\mu)_K=\{\lambda\leqslant (e,0)\}
\end{equation*}
(Ici  on est dans le cas parahorique maximal $K=\mathcal{G}(\Z_p)$)
\end{remark}

De la même manière on dispose d'une immersion fermée pour le modèle de Pappas-Rapoport dans un produit de grassmanniennes affines :
\begin{equation}
\label{PlongPR}
\begin{array}{lrcl}
&\iota^{\mathrm{PR}}:\Splitt&\longrightarrow& \Gr\times...\times \Gr\\
&(\mathscr{F}^{(1)},...,\mathscr{F}^{(e)})&\longmapsto& (\Lambda_{\mathscr{F}^{(1)}},\dots,\Lambda_{\mathscr{F}^{(e)}})
\end{array}
\end{equation}

\begin{remark}
Par construction on dispose des inclusions 
\begin{equation*}
u^e \Lambda_{0}\subset\Lambda_{\mathscr{F}^{(1)}}\subset...\subset  \Lambda_{\mathscr{F}^{(e)}}\subset \Lambda_{0}
\end{equation*} 
dont les gradués sont localement libres de rang $1$ pour $j=1,...,e-1$ et de rang $e$ pour $j=e$. D'après l'exemple \ref{ExempleGr} cette immersion fermée induit un isomorphisme $\overline{M}^{\mathrm{PR}}\simeq M$ où $M$ est l'espace de module décrit en \eqref{Mdef}.
\end{remark}

\begin{prop}
\label{MPRIsoConv}
L'immersion fermée $\iota^{\mathrm{PR}}$ induit un isomorphisme équivariant pour l'action de $\mathcal{G}$ à gauche et $L^+G$ à droite :
\begin{equation*}
\Splitt\simeq  \mathrm{Gr}_{(1,0)}\tilde{\times}\dots \tilde{\times}\mathrm{Gr}_{(1,0)}
\end{equation*}
où le produit de convolution est pris $e$ fois.
\end{prop}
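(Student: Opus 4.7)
Ma stratégie est de décomposer l'identification en deux étapes successives. Premièrement, j'identifierais $\Splitt$ avec l'espace de modules de lattices $M$ décrit en \eqref{Mdef} — identification annoncée dans la remarque qui précède l'énoncé. Deuxièmement, je composerais avec l'isomorphisme \eqref{Miso1} de l'exemple \ref{ExempleGr}, qui donne déjà $M\simeq\Gr_{(1,0)}\tilde{\times}\dots\tilde{\times}\Gr_{(1,0)}$. Tout le travail se concentre donc sur la première étape.

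Pour définir le morphisme $\Splitt\to M$, je partirais d'une famille $(\mathscr{F}^{(i)})_{1\leq i\leq e}\in\Splitt(S)$ et considérerais les lattices $\Lambda_i:=\Lambda_{\mathscr{F}^{(i)}}=p^{-1}(\mathscr{F}^{(i)})$ comme en \eqref{LambdaF}. La chaîne d'inclusions $u^e\Lambda_{0,S}\subset\Lambda_1\subset\dots\subset\Lambda_e\subset\Lambda_{0,S}$ et les conditions de rang sur les quotients sont immédiates à partir des propriétés des $\mathscr{F}^{(i)}$. Le point clé est la vérification de $u\Lambda_i\subset\Lambda_{i-1}$ : elle se déduit de la condition de Pappas-Rapoport $(a\otimes 1-1\otimes\varphi_i(a))\cdot \mathscr{F}^{(i)}\subset\mathscr{F}^{(i-1)}$ appliquée à $a=\varpi$. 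Comme l'extension $L/L^{\mathrm{nr}}$ est totalement ramifiée, chaque $\varphi_i(\varpi)$ est une uniformisante de $\OK_K$ et s'annule donc modulo $p$ ; la condition se réduit à $\varpi\cdot \mathscr{F}^{(i)}\subset \mathscr{F}^{(i-1)}$, soit $u\Lambda_i\subset\Lambda_{i-1}$ via l'identification $\varpi\otimes 1\mapsto u$.

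Pour la construction inverse $M\to\Splitt$, à une chaîne $(\Lambda_i)\in M(S)$ j'associerais les sous-modules $\mathscr{F}^{(i)}:=\Lambda_i/u^e\Lambda_{0,S}\subset \Lambda_S$. La stabilité sous l'action de $\OK_L$ découle de $u\Lambda_i\subset\Lambda_{i-1}\subset\Lambda_i$, et la condition complète de type Kottwitz se vérifie en écrivant tout $a\in\OK_L$ comme polynôme en $\varpi$ à coefficients dans $\OK_{L^{\mathrm{nr}}}$ : le terme constant disparaît dans $\OK_L\otimes_{\OK_{L^{\mathrm{nr}}}}\OK_S$ et les termes supérieurs font intervenir $u^j$, ce qui ramène la vérification à la même inclusion $u\Lambda_i\subset\Lambda_{i-1}$. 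Les deux constructions sont inverses l'une de l'autre via la suite exacte canonique $0\to u^e\Lambda_{0,S}\to \Lambda_i\to\mathscr{F}^{(i)}\to 0$. L'équivariance pour les actions de $\mathcal{G}$ à gauche et de $L^+G$ à droite est ensuite de nature routinière, ces actions préservant manifestement l'encadrement $u^e\Lambda_{0,S}\subset\Lambda_i\subset\Lambda_{0,S}$ et agissant compatiblement sur les quotients. Je n'anticipe pas d'obstacle majeur : toute la subtilité réside dans l'observation précise que $\varphi_i(\varpi)$ s'annule en caractéristique $p$ grâce à la ramification totale, ce qui transforme une condition infinitésimale (déterminant) en une condition géométrique de lattices.
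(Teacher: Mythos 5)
Votre démonstration est correcte et suit essentiellement la même route que celle du papier : identifier $\Splitt$ avec l'espace de modules $M$ de \eqref{Mdef} via la construction \eqref{LambdaF} (c'est le contenu de la remarque précédant l'énoncé, que vous détaillez en ramenant la condition de Pappas-Rapoport à $u\Lambda_i\subset\Lambda_{i-1}$ grâce à l'annulation de $\varphi_i(\varpi)$ sur la fibre spéciale), puis composer avec l'isomorphisme \eqref{Miso1}. Seule imprécision mineure : $\varphi_i(\varpi)$ n'est pas forcément une uniformisante de $\OK_K$ (si $e_{K/\Q_p}>e$), mais il appartient à l'idéal maximal et s'annule donc bien dans $k$, ce qui suffit pour votre argument.
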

\begin{proof}
Découle de la remarque précédente en composant avec l'isomorphisme \eqref{Miso1}.
\end{proof}

\begin{prop}
\label{CarreCart}
Le carré suivant est cartésien
\[\begin{tikzcd}
	{\overline{M}^{\mathrm{PR}}} & { \mathrm{Gr}_{(1,0)}\tilde{\times}\dots \tilde{\times}\mathrm{Gr}_{(1,0)}} \\
	{\overline{M}^{\mathrm{PEL}}} & {\mathrm{Gr}_{\leq (e,0)}}
	\arrow["\pi"', from=1-1, to=2-1]
	\arrow[from=2-1, to=2-2]
	\arrow["m", from=1-2, to=2-2]
	\arrow[from=1-1, to=1-2]
	\arrow[from=1-2, to=2-2]
\end{tikzcd}\]
\end{prop}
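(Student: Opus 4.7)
The strategy is to reduce the cartesian property to a routine commutativity check. By Proposition \ref{MPRIsoConv}, the top horizontal arrow $\iota^{\mathrm{PR}}$ is an isomorphism, and by Proposition \ref{PlongPEL} the bottom horizontal arrow is an isomorphism onto $\mathrm{Gr}_{\leq(e,0)}$. Since both horizontal maps are isomorphisms of schemes, the universal property of the fibre product reduces the cartesian property to the commutativity of the diagram: indeed, a $T$-point of the fibre product of the right column with the bottom row amounts to a pair (point of $\overline{M}^{\mathrm{PEL}}(T)$, point of $\widetilde{\Gr}(T)$) mapping to the same element of $\mathrm{Gr}_{\leq(e,0)}(T)$, and via the two horizontal isomorphisms this is the same datum as a single $T$-point of $\overline{M}^{\mathrm{PR}}$ exactly when the square commutes.

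To verify commutativity, I would trace an $S$-point $(\mathscr{F}^{(1)} \subset \dots \subset \mathscr{F}^{(e)})$ of $\overline{M}^{\mathrm{PR}}$ around both paths. Going down then right, the forgetful morphism yields $\mathscr{F}^{(e)} \in \overline{M}^{\mathrm{PEL}}(S)$, which $\iota$ sends to the lattice $\Lambda_{\mathscr{F}^{(e)}}$ defined by the pullback formula \eqref{LambdaF}. Going right then down, $\iota^{\mathrm{PR}}$ produces the chain $(\Lambda_{\mathscr{F}^{(1)}} \subset \dots \subset \Lambda_{\mathscr{F}^{(e)}}) \in M(S)$; composing with the isomorphism \eqref{Miso1} between $M$ and the $e$-fold convolution product places this chain in $\mathrm{Gr}_{(1,0)}\tilde{\times}\dots\tilde{\times}\mathrm{Gr}_{(1,0)}$, and by the commutativity of the diagram \eqref{Miso2} the convolution morphism $m$ is then identified with the projection $(\Lambda_{\mathscr{F}^{(i)}})_{i} \mapsto \Lambda_{\mathscr{F}^{(e)}}$. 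Both paths therefore produce the same lattice $\Lambda_{\mathscr{F}^{(e)}}$, and commutativity holds.

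The only delicate point in the argument is book-keeping: one must keep track of the reindexing in \eqref{Miso1} (which sends a chain in the convolution product to a chain in $M$ via successive multiplications by powers of $u$) together with the fact that the convolution morphism corresponds, under this reindexing, to the projection onto the last term $\Lambda_e$. Once this dictionary is in place, the commutativity reduces to the tautology that $\pi$ and this projection both pick out the last member of the filtration, namely $\mathscr{F}^{(e)}$ (equivalently $\Lambda_{\mathscr{F}^{(e)}}$). No deformation-theoretic or dimension-counting input is required.
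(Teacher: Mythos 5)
Votre preuve est correcte et suit essentiellement la même démarche que celle du papier : réduire le caractère cartésien à la commutativité du carré (les flèches horizontales étant des isomorphismes d'après les Propositions \ref{MPRIsoConv} et \ref{PlongPEL}), puis vérifier cette commutativité via le réindexage \eqref{Miso1} et la commutativité du diagramme \eqref{Miso2}, les deux chemins extrayant le réseau $\Lambda_{\mathscr{F}^{(e)}}$. Vous explicitez simplement le suivi des points que le papier laisse implicite.
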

\begin{proof}
Il s'agit essentiellement de montrer que le diagramme suivant est commutatif. Cela découle du fait que le diagramme \eqref{Miso2} est commutatif.
\end{proof}

\subsection{Cas général}
Revenons maintenant au cas général où l'on considère une extension $F/\Q$ de degré $d$. On note $G=\mathrm{Res}_{F/\Q}\mathrm{GL}_2$. Pour tout $v|p$ et tout $\tau\in\Sigma_v^{\mathrm{nr}}$, on note $M_{v,\tau}^{\mathrm{PEL}}$ le modèle local pour le groupe $\mathrm{Res}_{F_{v}/F_v^{\mathrm{nr}}}\mathrm{GL}_2$ définit dans la section précédente.\\

\subsubsection{Modèles locaux}
\label{LambdaPEL}
Soit $V$ un $F$-espace vectoriel de dimension $2$. On fixe une base $(e_1,e_2)$ de $V$. On note $\Lambda$ le $\mathcal{O}_F$-module libre de base $(e_1,e_2)$. On fixe (à conjugaison près) un cocaractère $\mu :\mathbb{G}_{m,\mathbb{C}}\rightarrow G_{\mathbb{C}}$, et on suppose que ce dernier induit une décomposition en espaces propres : 
\begin{equation*}
V\otimes \mathbb{C}=V_0\oplus V_1
\end{equation*}
Enfin on suppose que le corps de définition de $\mu$ est $\Q$ (ces hypothèses seront vérifiées dans le cadre des variétés de Shimura de type Hilbert). On définit le modèle local $M^{\mathrm{PEL}}$ comme étant le foncteur qui à $(S\rightarrow\spec\, \Z_p)$ associe l'ensemble 
$M^{\mathrm{PEL}}(S)$ des $\mathcal{O}_F\otimes_{\Z_p}\mathcal{O}_S$-sous modules $\mathscr{F}\subset \Lambda_{S}:=\Lambda\otimes_{\Z_p}\mathcal{O}_S$ tels que 
\begin{itemize}[label=\textbullet]
\item $\mathscr{F}$ est Zariski-localement sur $S$  un $\mathcal{O}_S$-facteur direct de $\Lambda_{S}$ de rang $[F:\Q]=d$;
\item Pour tout $a\in \mathcal{O}_F$ on a l'égalité polynomiale suivante (condition de Kottwitz) :
\begin{equation*}
\mathrm{det}(a\, |\, \mathscr{F})=\mathrm{det}(a\, |\, V_0)
\end{equation*}

\end{itemize}
On note $\mathcal{G}=\underline{\mathrm{Aut}}_{\mathcal{O}_L}(\Lambda)$ le schéma en groupe sur $\spec\, \Z_p$ des automorphismes de $\Lambda$ compatibles avec l'action de $\mathcal{O}_F$. On a alors la proposition suivante :

\begin{prop}
Après changement de base à $\OK_K$, on dispose d'un isomorphisme canonique
\begin{equation*}
M^{\mathrm{PEL}}\otimes_{\Z_p}\OK_K\simeq \prod_{v|p}\prod_{\tau\in\Sigma_v^{\mathrm{nr}}} M_{v,\tau}^{\mathrm{PEL}}
\end{equation*}
\end{prop}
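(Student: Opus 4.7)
Le plan est d'exploiter la décomposition (\ref{Eq1}) de $\mathcal{O}_F\otimes_{\Z_p}\OK_K$ via les idempotents indexés par les couples $(v,\tau)$ afin de scinder canoniquement à la fois le module $\Lambda$ et le problème de module définissant $M^{\mathrm{PEL}}$. Je procéderais en trois étapes. Premièrement, pour tout $\OK_K$-schéma $(S\to \spec\,\OK_K)$, les idempotents orthogonaux fournis par l'isomorphisme (\ref{Eq1}), étendus à $S$, induisent une décomposition canonique :
\begin{equation*}
\Lambda_S=\Lambda\otimes_{\Z_p}\OK_S=\bigoplus_{v|p}\bigoplus_{\tau\in\Sigma_v^{\mathrm{nr}}}\Lambda_{v,\tau,S},
\end{equation*}
où $\Lambda_{v,\tau,S}$ est un $(\OK_v\otimes_{\OK_v^{\mathrm{nr}},\tau}\OK_K)\otimes_{\OK_K}\OK_S$-module libre de rang $2$. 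Tout sous-module $\mathscr{F}\subset\Lambda_S$ stable sous $\OK_F\otimes_{\Z_p}\OK_S$ se décompose alors automatiquement suivant cette décomposition : $\mathscr{F}=\bigoplus_{v,\tau}\mathscr{F}_{v,\tau}$ avec $\mathscr{F}_{v,\tau}\subset\Lambda_{v,\tau,S}$.

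Deuxièmement, je vérifierais que les conditions de facteur-direct local et de rang se scindent correctement. Puisque les $\Lambda_{v,\tau,S}$ sont des sommes directes de $\OK_S$-modules localement libres (par platitude des $\OK_v\otimes_{\OK_v^{\mathrm{nr}},\tau}\OK_K$ sur $\OK_K$), un sous-module $\mathscr{F}$ est Zariski-localement un $\OK_S$-facteur direct de $\Lambda_S$ de rang $d=[F:\Q]$ si et seulement si chaque $\mathscr{F}_{v,\tau}$ est un $\OK_S$-facteur direct local de $\Lambda_{v,\tau,S}$ de rang $e_v$ (la somme globale redonnant $\sum_{v|p}\sum_{\tau\in\Sigma_v^{\mathrm{nr}}}e_v=\sum_{v|p}e_vf_v=d$).

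Troisièmement, et c'est le point principal, je montrerais que la condition de Kottwitz se scinde en les conditions déterminantielles locales. Le cocaractère $\mu$ identifie $V_0\otimes_{\C}\overline{\Q}_p$ à $\bigoplus_{\varphi:F\hookrightarrow \overline{\Q}_p}V_{0,\varphi}$ où chaque $V_{0,\varphi}$ est de dimension $1$ (cas Hilbert), et les plongements $\varphi:F\hookrightarrow\overline{\Q}_p$ sont exactement paramétrés par les triplets $(v,\tau,\varphi_i)$ avec $v|p$, $\tau\in\Sigma_v^{\mathrm{nr}}$ et $\varphi_i\in\Sigma_\tau$. Ainsi pour tout $a\in\OK_F$ on a :
\begin{equation*}
\det(a\,|\,V_0)=\prod_{v|p}\prod_{\tau\in\Sigma_v^{\mathrm{nr}}}\prod_{\varphi_i\in\Sigma_\tau}\varphi_i(a),\qquad \det(a\,|\,\mathscr{F})=\prod_{v,\tau}\det(a\,|\,\mathscr{F}_{v,\tau}),
\end{equation*}
la seconde égalité découlant de ce que l'action de $a$ préserve la décomposition. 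L'égalité polynomiale globale en tant que fonctions sur $\spec\,\Z[\OK_F]$ étendue à $\OK_K$ se scinde alors en la famille d'égalités locales $\det(a\,|\,\mathscr{F}_{v,\tau})=\prod_{\varphi_i\in\Sigma_\tau}\varphi_i(a)$, qui sont exactement les conditions définissant $M_{v,\tau}^{\mathrm{PEL}}$.

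L'obstacle principal est de justifier rigoureusement le scindage de la condition de Kottwitz : cette condition est une identité polynomiale sur $\spec\,\Z[\OK_F]$, et il faut s'assurer qu'après changement de base à $\OK_K$ (qui contient par hypothèse toutes les clôtures galoisiennes des $F_v$), les deux membres se factorisent de façon compatible avec les idempotents de $\OK_F\otimes_{\Z_p}\OK_K$. Une fois ce point établi, l'application $\mathscr{F}\mapsto(\mathscr{F}_{v,\tau})_{v,\tau}$ définit un isomorphisme naturel de foncteurs entre $M^{\mathrm{PEL}}\otimes_{\Z_p}\OK_K$ et $\prod_{v,\tau}M_{v,\tau}^{\mathrm{PEL}}$, dont l'inverse est simplement la somme directe des sous-modules locaux.
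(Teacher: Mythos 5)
Your proposal is correct and takes essentially the same route as the paper, whose entire proof is the single line that the isomorphism follows from the idempotent decomposition \eqref{Eq1} of $\OK_F\otimes_{\Z_p}\OK_K$ — exactly the splitting of $\Lambda_S$, of the local direct-summand condition and of the Kottwitz determinant condition that you spell out. One small caveat: in your second step the implication from global rank $d$ to rank $e_v$ on each factor $\mathscr{F}_{v,\tau}$ does not follow from the rank condition alone; the local ranks are instead forced by the determinant condition you split in the third step (for instance by evaluating the local determinant identities at scalars $a\in\Z_p$), so the overall functorial equivalence is unaffected.
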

\begin{proof}
Découle de \eqref{Eq1}.
\end{proof}
Comme précédemment on note $\mathcal{G}=\underline{\mathrm{Aut}}_{\OK_F}(\Lambda)$. On a alors pour les mêmes raisons une décomposition 
\begin{equation*}
\mathcal{G}\otimes \Z_p\simeq\prod_{v|p}\mathrm{Res}_{\mathcal{O}_v/\Z_p}\mathrm{GL}_2
\end{equation*}
En particulier $\mathcal{G}\otimes \Z_p$ est bien lisse. \\

Cette décomposition suggère la définition suivante pour le modèle de Pappas-Rapoport dans le cas général d'une extension $F/\Q$
\begin{definition} Le modèle local de Pappas-Rapoport pour le groupe $G=\mathrm{Res}_{F/\Q}\mathrm{GL}_2$ est :
\begin{equation*}
M^{\mathrm{PR}}= \prod_{v|p}\prod_{\tau\in\Sigma_v^{\mathrm{nr}}} M_{v,\tau}^{\mathrm{PR}}
\end{equation*}
\end{definition}

\subsubsection{Plongements dans les grassmanniennes affines}
La \og compatibilité\fg des grassmannienne affines avec le produit  vu au Lemme \ref{lemmeG1G2} combinée aux décompositions des modèles locaux de la section précédente nous donne sans trop d'efforts les isomorphismes non canoniques (dépend des différents choix d'uniformisantes) :
\begin{equation*}
\overline{M}^{\mathrm{PEL}}\simeq \prod_{v|p}\prod_{\tau\in\Sigma_v^{\mathrm{nr}}} \mathrm{Gr}_{\leq(e_v,0)},\ \ \ \ \ \ \overline{M}^{\mathrm{PR}}\simeq \prod_{v|p}\prod_{\tau\in\Sigma_v^{\mathrm{nr}}} \widetilde{\mathrm{Gr}}_{(e_v,0)}
\end{equation*}
où $\widetilde{\mathrm{Gr}}_{(e_v,0)}$ désigne le produit de convolution de $e_v$ copie de $\mathrm{Gr}_{(1,0)}$. Finalement le carré cartésien de la proposition \ref{CarreCart} devient 
\begin{equation}
\label{CarreCart3}
\begin{tikzcd}
	{\overline{M}^{\mathrm{PR}}} & {\prod_{v|p}\prod_{\tau\in\Sigma_v^{\mathrm{nr}}} \widetilde{\mathrm{Gr}}_{(e_v,0)}} \\
	{\overline{M}^{\mathrm{PEL}}} & {\prod_{v|p}\prod_{\tau\in\Sigma_v^{\mathrm{nr}}} \mathrm{Gr}_{\leq(e_v,0)}}
	\arrow["\pi"', from=1-1, to=2-1]
	\arrow[from=2-1, to=2-2]
	\arrow[from=1-1, to=1-2]
	\arrow["{(m_{v,\tau})_{v,\tau}}", from=1-2, to=2-2]
\end{tikzcd}
\end{equation}

\section{Modèles entiers}

\subsection{Modèle entier PEL}
\label{SPEL}

On garde les notations de \ref{Notations} et \ref{LambdaPEL}. On considère l'espace de module $\NNaif$ sur $\spec\, \Z_p$ qui à un schéma localement noethérien $(S\rightarrow\spec\, \Z_p)$ associe l'ensemble des quadruplés $(A,\lambda,\iota,\eta)$ à $\Z_{(p)}^{\times}$-isogénie où :
\begin{enumerate}
\item $A\rightarrow S$ est un schéma abélien de dimension $g=d=[F:\Q]$.
\item $\lambda:A\rightarrow A^{\vee}$ est une $\Z_{(p)}^{\times}$-polarisation
\item $\eta$ est une structure de niveau en dehors de $p$ (voir \cite{KWLan}, section 1.4.1)
\item $\iota : \OK_F\hookrightarrow \mathrm{End}(A)\otimes_{\Z}\Z_{(p)}$ morphisme respectant les involutions des deux cotés.
\item $(A,\lambda,\iota,\eta)$ satisfait la condition de déterminant de Kottwitz :
\begin{equation*}
\mathrm{det}(a\, |\, \omega_{A/S})=\mathrm{det}(a\, |\, V_0)\ \ \ \ \ \ \forall\, a\in\OK_F
\end{equation*}
(voir \cite{KWLan} Définition 1.3.4.1 pour plus de détails).
\end{enumerate}
On a alors le résultat suivant dû à Mumford puis Kottwitz :
\begin{prop}[\cite{GIT}, \cite{KottwitzPEL} section 5]
\label{PELrepresentable}
$\NNaif$ est représentable par un schéma quasi-projectif sur $\spec\, \Z_p$.
\end{prop}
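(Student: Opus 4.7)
Le plan s'appuie sur les résultats classiques de Mumford (\cite{GIT}) pour la représentabilité du champ des schémas abéliens polarisés avec structure de niveau, les conditions supplémentaires d'action de $\OK_F$ et de déterminant de Kottwitz se traduisant comme des conditions fermées. On procède en trois étapes.

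Premièrement, j'oublie temporairement l'action de $\OK_F$ et la condition de Kottwitz pour ne conserver que le foncteur $\mathcal{M}$ classifiant les triplés $(A,\lambda,\eta)$ formés d'un schéma abélien de dimension $d$, d'une $\Z_{(p)}^{\times}$-polarisation et d'une structure de niveau en dehors de $p$. Quitte à raffiner la structure de niveau (par exemple en imposant une structure de niveau principale de niveau $N\geq 3$ avec $p\nmid N$) de sorte à rigidifier le problème de module, le théorème de Mumford assure que ce foncteur est représentable par un schéma quasi-projectif sur $\spec\, \Z_p$. On redescend ensuite à la structure de niveau initiale par passage au quotient par un groupe fini, le quasi-projectif étant préservé par un tel quotient libre.

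Deuxièmement, on ajoute la donnée de l'action $\iota:\OK_F\hookrightarrow\mathrm{End}(A)\otimes\Z_{(p)}$ compatible aux involutions, puis la condition de déterminant de Kottwitz. Le foncteur $\underline{\mathrm{End}}(A/S)$ étant représentable par un schéma en groupes sans torsion et localement constant au-dessus de $S$, se donner $\iota$ revient à choisir des sections de ce schéma satisfaisant les relations polynomiales qui définissent $\OK_F$ (polynôme minimal d'un élément primitif, par exemple) ainsi que la compatibilité avec l'involution de Rosati induite par $\lambda$. Ceci définit un sous-schéma fermé de $\mathcal{M}$. La condition de déterminant de Kottwitz $\mathrm{det}(a\,|\,\omega_{A/S})=\mathrm{det}(a\,|\,V_0)$ pour tout $a\in\OK_F$ est de même une condition fermée (annulation d'un nombre fini de sections globales de $\mathcal{O}_S$ venant de l'identification polynomiale). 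On obtient ainsi $\NNaif$ comme sous-schéma fermé d'un schéma quasi-projectif sur $\spec\, \Z_p$, donc quasi-projectif.

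La principale difficulté se trouve dans la deuxième étape : il faut être précis sur la représentabilité et le caractère fermé de la condition d'action, et contrôler le passage à l'isogénie prime à $p$ pour que la relation d'équivalence $\Z_{(p)}^{\times}$-isogénie ne fasse pas sortir du cadre schématique. Cette vérification soignée, ainsi que le traitement correct des structures de niveau adéliques, constitue l'essentiel de \cite{KottwitzPEL} section 5, dont on tire \emph{in fine} le résultat.
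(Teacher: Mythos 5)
Le papier ne donne aucune démonstration de cette proposition : il renvoie simplement à \cite{GIT} et à la section 5 de \cite{KottwitzPEL}, et votre esquisse reconstruit précisément l'argument standard de ces références (représentabilité à la Mumford du problème de module des schémas abéliens polarisés avec niveau $N\geq 3$ hors de $p$, puis ajout des données PEL, la condition de déterminant de Kottwitz étant bien une condition fermée, et enfin le traitement des classes de $\Z_{(p)}^{\times}$-isogénie et des structures de niveau adéliques comme chez Kottwitz). Un point de votre deuxième étape est toutefois inexact tel qu'énoncé : la donnée de $\iota:\OK_F\hookrightarrow\mathrm{End}(A)\otimes\Z_{(p)}$ est une structure supplémentaire et non une condition sur $(A,\lambda,\eta)$; le morphisme d'oubli de $\NNaif$ vers le module de Siegel $\mathcal{M}$ n'est donc pas une immersion fermée (un même triplé peut admettre plusieurs actions de $\OK_F$ compatibles à l'involution de Rosati), mais un morphisme relativement représentable, fini et non ramifié, obtenu via le schéma des homomorphismes $\underline{\mathrm{Hom}}(\OK_F,\mathrm{End}(A)\otimes\Z_{(p)})$. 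La conclusion n'en est pas affectée, puisqu'un schéma fini sur un schéma quasi-projectif sur $\spec\,\Z_p$ reste quasi-projectif, et la condition de déterminant découpe ensuite un sous-schéma fermé; c'est exactement la forme sous laquelle l'argument est mené dans \cite{KottwitzPEL}.
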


\begin{remark}
L'espace de module définit ci-dessus est celui définit par Kottwitz. On consultera l'article de I.Vollaard \cite{Vollaard} pour plus de détails concernant l'équivalence des différents espaces de modules considérés dans le cas Hilbert.
\end{remark}

Nous allons maintenant expliquer le lien entre le modèle entier $\NNaif$ est le modèle local $\Naif$. On considère le foncteur $\NNNaif$ qui à un schéma localement noethérien $(S\rightarrow\spec\, \Z_p)$ associe les $5$-uplés $(A,\lambda,\iota,\eta,\gamma)$ où 
\begin{equation*}
\gamma:H_{\mathrm{dR}}^1(A/S)\simeq \Lambda\otimes_{\Z_p}\OK_S
\end{equation*}
est une trivialisation  du premier groupe de cohomologie de deRham (en tant que $\OK_F\otimes_{\Z_p}\OK_S$-module).  Notez que $\NNNaif$ est muni d'une action de $\mathcal{G}=\underline{\mathrm{Aut}}_{\OK_F}(\Lambda)$ où ce dernier agit sur la trivialisation $\gamma$. L'oubli de la trivialisation $\gamma$ fournit un morphisme 
\begin{equation*}
\varphi:\NNNaif\rightarrow \NNaif
\end{equation*}
L'un des résultats majeurs de \cite{RapZink} est le suivant
\begin{prop}[\cite{RapZink} Théorème 3.16]
$\varphi:\NNNaif\rightarrow \NNaif$ est un $\mathcal{G}$-torseur.
\end{prop}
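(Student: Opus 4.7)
Le plan est d'identifier $\NNNaif$ avec le schéma d'Isom relatif
\begin{equation*}
\mathcal{I}:=\underline{\mathrm{Isom}}_{\OK_F\otimes_{\Z_p}\mathcal{O}_{\NNaif}}\bigl(H^1_{\mathrm{dR}}(\mathcal{A}/\NNaif),\ \Lambda\otimes_{\Z_p}\mathcal{O}_{\NNaif}\bigr),
\end{equation*}
où $\mathcal{A}\to\NNaif$ désigne le schéma abélien universel. Sous cette identification, $\mathcal{G}=\underline{\mathrm{Aut}}_{\OK_F}(\Lambda)$ opère à droite par post-composition sur la trivialisation $\gamma$, et $\varphi$ correspond à l'oubli de $\gamma$. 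La représentabilité de $\mathcal{I}$ est claire comme sous-schéma fermé du schéma d'Isom classique (défini par la commutation à l'action de $\OK_F$), et l'action étant tautologiquement libre et transitive sur les fibres géométriques, toute la preuve se ramène à la trivialité locale du torseur.

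Pour cela, il s'agit de démontrer que pour tout schéma $S$ au-dessus de $\NNaif$, le faisceau $H^1_{\mathrm{dR}}(A/S)$ est fppf-localement isomorphe à $\Lambda\otimes_{\Z_p}\mathcal{O}_S$ comme $\OK_F\otimes_{\Z_p}\mathcal{O}_S$-module. Je procéderais en deux étapes. D'abord, la Proposition \ref{Fargues} appliquée au cristal de Dieudonné $\mathscr{E}(A[p^\infty])$ (muni de l'action de $\OK_F$ induite par $\iota$ par fonctorialité) et évaluée sur l'épaississement tautologique $(S\xrightarrow{\mathrm{id}}S)$ fournit la locale liberté de $H^1_{\mathrm{dR}}(A/S)$ comme $\OK_F\otimes_{\Z_p}\mathcal{O}_S$-module. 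Ensuite, la condition de déterminant de Kottwitz impose à ce module d'avoir, sur chaque facteur $\mathcal{O}_v\otimes_{\mathcal{O}_v^{\mathrm{nr}},\tau}\mathcal{O}_S$ de la décomposition \eqref{Eq1}, le rang $2$ attendu : puisque ces facteurs sont Zariski-localement des anneaux locaux via \eqref{Eq2}, les modules localement libres de rang $2$ y sont tous isomorphes, et donc $H^1_{\mathrm{dR}}(A/S)$ est Zariski-localement isomorphe à $\Lambda\otimes_{\Z_p}\mathcal{O}_S$. La lissité de $\mathcal{G}$, déjà démontrée, assure enfin le passage de la trivialité fppf-locale à la trivialité étale-locale.

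Le principal obstacle réside dans la vérification que la condition de déterminant de Kottwitz force effectivement le bon type local de module sur chaque facteur de \eqref{Eq1}, notamment aux places $v|p$ où $F$ est ramifié. Cette analyse repose sur la comparaison des polynômes caractéristiques de l'action de $\OK_F$ sur $\omega_{A/S}$ et sur $V_0$, et suit essentiellement l'argument de Rapoport et Zink \cite{RapZink}. Une fois ce point établi, le caractère libre et transitif de l'action de $\mathcal{G}$ sur $\mathcal{I}$ est immédiat, ce qui conclut que $\varphi$ est bien un $\mathcal{G}$-torseur.
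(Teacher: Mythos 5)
Le papier ne démontre pas cet énoncé : il est repris tel quel du Théorème 3.16 de Rapoport--Zink, si bien qu'il n'y a pas de preuve interne à laquelle comparer la vôtre. Votre schéma (identification de $\NNNaif$ au schéma des isomorphismes $\OK_F$-linéaires, action tautologiquement libre et transitive de $\mathcal{G}$, réduction à la trivialité locale de $H^1_{\mathrm{dR}}(A/S)$ comme $\OK_F\otimes_{\Z_p}\OK_S$-module, lissité de $\mathcal{G}$ pour les sections étale-locales) est bien la stratégie standard ; mais vous renvoyez vous-même le point que vous qualifiez de principal obstacle à « l'argument de Rapoport et Zink », de sorte que votre proposition reste une esquisse qui externalise la même difficulté que le papier.

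Deux points précis feraient défaut dans une preuve complète. (i) La Proposition \ref{Fargues} porte sur des $F$-cristaux sur $\mathrm{Cris}(S/\Sigma)$ avec $S$ de caractéristique $p$, alors que l'énoncé concerne le modèle entier sur $\spec\, \Z_p$ : pour un $S$ de caractéristique mixte (ou nulle) elle ne s'applique pas telle quelle, et la décomposition \eqref{Eq1} que vous utilisez n'existe d'ailleurs qu'après changement de base à $\OK_K$ (sur $\Z_p$ on ne dispose que de $\OK_F\otimes\Z_p\simeq\prod_v\OK_v$). Il faudrait par exemple se ramener aux fibres géométriques de $S$ par le critère de platitude fibre à fibre, traiter les fibres de caractéristique $0$ (où $F\otimes\Q_p$ est étale et seul un comptage de rangs est nécessaire), et réserver l'argument cristallin — ou l'argument de dualité de Rapoport--Zink et Deligne--Pappas — aux fibres de caractéristique $p$. (ii) La condition de déterminant de Kottwitz porte sur $\omega_{A/S}$ et non sur $H^1_{\mathrm{dR}}(A/S)$ : à elle seule elle ne fixe pas le rang de $H^1_{\mathrm{dR}}$ sur chaque facteur. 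Pour obtenir le rang $2$ il faut aussi exploiter la $\Z_{(p)}^{\times}$-polarisation, qui identifie $\mathrm{Lie}(A^t)$ à un $\OK_F$-module du même type que $\mathrm{Lie}(A)$ et fixe ainsi la hauteur de chaque $A[v^{\infty}]$ ; c'est précisément le contenu de l'appendice de Rapoport--Zink que vous invoquez sans le détailler. Les étapes restantes (un module projectif de rang constant sur un anneau semi-local est libre, donc la liberté Zariski-locale, puis la lissité de $\mathcal{G}$) sont correctes mais ne constituent pas la partie délicate.
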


La trivialisation $\gamma$ nous fournit un morphisme vers le modèle local PEL
\begin{equation*}
\begin{array}{lrcl}
\psi: &\NNNaif&\longrightarrow &\Naif\\
&(A,\lambda,\iota,\gamma)&\longmapsto & \gamma (\omega_{A/S})\subset\Lambda_S
\end{array}
\end{equation*}
Le fait que ce morphisme soit bien défini découle essentiellement de la définition de l'espace de module $\NNaif$. On a alors la proposition suivante qui résulte de \cite{deJong} et \cite{RapZink} :
\begin{prop}
$\psi:\NNNaif\longrightarrow \Naif$ est un morphisme lisse $\mathcal{G}$-équivariant de dimension relative $\mathrm{dim}\, \mathcal{G}$.
\end{prop}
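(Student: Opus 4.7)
Je propose d'établir la lissité de $\psi$ via le critère infinitésimal, la théorie de déformation de Grothendieck-Messing ramenant le problème à une question de relèvement de filtrations admissibles dans le cristal de Dieudonné. Soit $S_0 \hookrightarrow S$ une immersion fermée de carré nul munie d'une PD-structure (on se ramène à ce cas par dévissage standard). Étant donné un point $(A_0,\lambda_0,\iota_0,\eta_0,\gamma_0) \in \NNNaif(S_0)$ dont l'image dans $\Naif(S_0)$ admet un relèvement $\mathscr{F} \subset \Lambda_S$ dans $\Naif(S)$, la trivialisation $\gamma_0$ (qui est un isomorphisme de cristaux sur $\Criss$) induit par la propriété de cristal un isomorphisme canonique $\mathscr{E}(A_0)_{(S_0 \hookrightarrow S)} \simeq \Lambda_S$, et $\mathscr{F}$ se lit alors comme une filtration admissible au sens du théorème de Grothendieck-Messing rappelé dans la section 2.

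La deuxième étape consiste à vérifier que ce relèvement préserve toutes les structures. Grothendieck-Messing produit un unique $A \in \mathtt{BT}(S)$ relevant $A_0$ avec $\omega_{A/S}$ correspondant à $\mathscr{F}$. L'action $\iota_0$ s'étend uniquement à $A$ puisque $\mathscr{F}$ est $\OK_F \otimes \OK_S$-stable par définition du foncteur $\Naif$ et que le critère de relèvement des endomorphismes de Grothendieck-Messing s'applique. La polarisation $\lambda_0$ se relève car la condition portant sur $\mathscr{F}$ dans la définition de $\Naif$ correspond, via la dualité de Cartier-Serre pour le cristal de Berthelot-Breen-Messing, à la compatibilité de $\mathscr{F}$ avec l'accouplement induit sur le cristal par $\lambda_0$. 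La structure de niveau $\eta_0$ se relève de manière unique car elle est première à $p$, et $\gamma_0$ s'étend canoniquement en une trivialisation $\gamma$ sur $S$ par évaluation du cristal sur l'épaississement $(S_0 \hookrightarrow S)$. Ceci établit la lissité formelle. Pour calculer la dimension relative, j'utiliserais la structure de $\mathcal{G}$-torseur de $\varphi$, lisse de dimension relative $\dim \mathcal{G}$, combinée au fait que $\NNaif$ et $\Naif$ ont même dimension localement (conséquence du diagramme de modèle local de Rapoport-Zink) : $\psi$ étant formellement lisse et localement de présentation finie, la comparaison des dimensions force la dimension relative à être $\dim \mathcal{G}$. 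L'équivariance sous $\mathcal{G}$ est immédiate depuis les définitions puisque $\mathcal{G}$ agit sur $\gamma$ par post-composition, donc sur $\gamma(\omega_{A/S})$ par translation dans $\Lambda_S$.

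Le point le plus délicat est la vérification du relèvement de la polarisation : il faut identifier précisément l'accouplement sur $\mathscr{E}(A_0)_{(S_0 \hookrightarrow S)}$ induit par $\lambda_0$ avec celui porté par $\Lambda_S$ dans la donnée de $\Naif$, via $\gamma_0$. Dans le cadre Hilbert traité ici, cette difficulté est atténuée car la polarisation équivaut essentiellement à un isomorphisme $\OK_F$-linéaire $A \simeq A^{\vee}\otimes \mathfrak{d}^{-1}$ compatible aux involutions, qui se déduit automatiquement du relèvement de l'action $\iota$ modulo $\Z_{(p)}^{\times}$-isogénie, évitant ainsi le traitement symplectique plus lourd du cas PEL général.
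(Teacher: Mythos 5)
Votre stratégie générale — critère infinitésimal, Grothendieck--Messing, puis relèvement des structures PEL et de la structure de niveau — est bien celle de l'argument auquel le papier renvoie (de Jong, Prop.~4.5), mais deux points précis de votre rédaction ne tiennent pas tels quels. D'abord, $\gamma_0$ n'est \emph{pas} un isomorphisme de cristaux : par définition de $\NNNaif$, c'est une trivialisation de $H^1_{\mathrm{dR}}(A_0/S_0)$, c'est-à-dire de l'évaluation du cristal sur l'épaississement trivial $(S_0\rightarrow S_0)$ seulement. Il n'existe donc aucun isomorphisme canonique $\mathscr{E}(A_0)_{(S_0\hookrightarrow S)}\simeq\Lambda_S$ \og induit par la propriété de cristal\fg. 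Il faut \emph{relever} $\gamma_0$ en un isomorphisme $\mathcal{O}_F\otimes\mathcal{O}_S$-linéaire au-dessus de l'épaississement ; un tel relèvement n'existe en général que Zariski-localement sur $S$ (ce qui suffit pour la lissité), et son existence utilise de façon essentielle que $\mathscr{E}(A_0)_{(S_0\hookrightarrow S)}$ est localement libre comme $\mathcal{O}_F\otimes\mathcal{O}_S$-module (Proposition \ref{Fargues}) et que $\mathcal{G}=\underline{\mathrm{Aut}}_{\mathcal{O}_F}(\Lambda)$ est lisse, pour corriger un isomorphisme local arbitraire afin qu'il réduise à $\gamma_0$. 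C'est précisément ce pas — et non la seule machinerie de Grothendieck--Messing — qui fait fonctionner la lissité de $\psi$, et c'est la raison d'être du problème de modules $\NNNaif$ ; votre rédaction l'escamote en le présentant comme canonique.

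Ensuite, votre calcul de la dimension relative est circulaire : vous invoquez \og $\NNaif$ et $\Naif$ ont même dimension localement, conséquence du diagramme de modèle local\fg, alors que cette conséquence du diagramme repose exactement sur l'énoncé à démontrer ($\psi$ lisse de dimension relative $\mathrm{dim}\,\mathcal{G}$, combiné au $\mathcal{G}$-torseur $\varphi$). Pour conclure sans cercle, poursuivez le même argument de déformation : au-dessus des nombres duaux, une fois fixée l'image $\mathscr{F}$ dans $\Naif$, le relèvement de $A_0$ et de ses structures est entièrement déterminé par la filtration $\tilde{\gamma}^{-1}(\mathscr{F})$ via Grothendieck--Messing, et la seule liberté restante est le choix du relèvement $\tilde{\gamma}$ de la trivialisation, qui forme un torseur sous $\mathrm{Lie}\,\mathcal{G}$ ; l'espace tangent relatif de $\psi$ est donc de dimension $\mathrm{dim}\,\mathcal{G}$ en tout point (variante : calculer indépendamment $\mathrm{dim}\,\NNaif=\mathrm{dim}\,\Naif=d$). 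L'équivariance sous $\mathcal{G}$, elle, est correcte et immédiate, comme dans le papier.
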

\begin{proof}
Le fait que le morphisme soit $\mathcal{G}$-équivariant est évident. La lissité découle de Grothendieck-Messing, voir \cite{deJong} (Proposition 4.5 du document .dvi du même nom sur sa page web).
\end{proof}\ \\

En d'autres termes on dispose d'un diagramme de modèle local au sens de \cite{RapZink} :
\[\begin{tikzcd}
	& {\mathcal{S}h^{\mathrm{PEL},\square}} \\
	{\mathcal{S}h^{\mathrm{PEL}}} && {M^{\mathrm{PEL}}}
	\arrow["\varphi"', from=1-2, to=2-1]
	\arrow["\psi", from=1-2, to=2-3]
\end{tikzcd}\]
Ce qui correspond à un morphisme de champs algébriques
\begin{equation}
\label{StoMPEL}
\NNaif\rightarrow \big[\Naif/\mathcal{G}\big]
\end{equation}
lisse de dimension relative $\mathrm{dim}\, \mathcal{G}$.

\subsection{Modèle entier de Pappas-Rapoport}

On définit le modèle entier de Pappas-Rapoport $\mathcal{S}h^{\mathrm{PEL}}$ comme le produit cartésien :
\begin{equation}
\label{CarreCart2}
\begin{tikzcd}
	{\mathcal{S}h^{\mathrm{PR}}} & {\big[M^{\mathrm{PR}}/\mathcal{G}_{\mathcal{O}_K}\big]} \\
	{\mathcal{S}h^{\mathrm{PEL}}_{\mathcal{O}_K}} & {\big[M^{\mathrm{PEL}}_{\mathcal{O}_K}/\mathcal{G}_{\mathcal{O}_K}\big]}
	\arrow[from=1-1, to=1-2]
	\arrow[from=1-2, to=2-2]
	\arrow[from=1-1, to=2-1]
	\arrow[from=2-1, to=2-2]
\end{tikzcd}
\end{equation}
où l'indice $(\cdot)_{\mathcal{O}_K}$ désigne le changement de base $\otimes_{\mathbb{Z}_p}\OK_K$.

Avant de rendre la définition de ce modèle explicite, mentionnons un corollaire direct de la Proposition \ref{PELrepresentable} :

\begin{prop}
 $\mathcal{S}h^{\mathrm{PR}}$ est représentable par un schéma quasi projectif lisse sur $\spec\, \OK_K$.
\end{prop}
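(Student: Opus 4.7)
Le plan consiste à déduire les trois propriétés voulues (représentabilité, quasi-projectivité, lissité) des propriétés analogues déjà établies pour $\NNaif$, $\Naif$, $\Split$ et $\mathcal{G}$, en exploitant la définition \eqref{CarreCart2} de $\SSplit$ comme produit fibré et la présentation $\varphi:\NNNaif\rightarrow\NNaif$ par le $\mathcal{G}$-torseur canonique.

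\textbf{Représentabilité et quasi-projectivité.} Les schémas $\Naif$ et $\Split$ sont projectifs sur $\spec\,\OK_K$ : le premier comme sous-schéma fermé d'une Grassmannienne (par définition), le second d'après la Proposition \ref{MPRrep}. Tout morphisme entre deux schémas projectifs sur une même base étant projectif, le morphisme $\mathcal{G}_{\OK_K}$-équivariant $\Split\rightarrow\Naif_{\OK_K}$ est projectif ; par passage au quotient, le morphisme induit $[\Split/\mathcal{G}_{\OK_K}]\rightarrow[\Naif_{\OK_K}/\mathcal{G}_{\OK_K}]$ est représentable et projectif. Son changement de base par $\NNaif_{\OK_K}\rightarrow[\Naif_{\OK_K}/\mathcal{G}_{\OK_K}]$ est précisément $\SSplit\rightarrow\NNaif_{\OK_K}$, qui est donc représentable et projectif. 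Comme $\NNaif_{\OK_K}$ est quasi-projectif sur $\spec\,\OK_K$ (Proposition \ref{PELrepresentable}), il en va de même de $\SSplit$.

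\textbf{Lissité.} Considérons le produit fibré $\widetilde{\SSplit}:=\NNNaif_{\OK_K}\times_{\Naif_{\OK_K}}\Split$, qui est un schéma par ce qui précède. La projection $\widetilde{\SSplit}\rightarrow\Split$ est le changement de base du morphisme lisse $\psi$, donc elle est lisse de dimension relative $\dim\mathcal{G}$ ; comme $\Split$ est lisse sur $\spec\,\OK_K$ par la construction de Pappas-Rapoport \cite{PRII}, on en déduit que $\widetilde{\SSplit}$ est lisse sur $\spec\,\OK_K$. D'autre part, un calcul formel de produits fibrés emboités identifie la seconde projection $\widetilde{\SSplit}\rightarrow\SSplit$ au changement de base du $\mathcal{G}_{\OK_K}$-torseur $\varphi_{\OK_K}$ le long de $\SSplit\rightarrow\NNaif_{\OK_K}$. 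C'est donc à son tour un $\mathcal{G}_{\OK_K}$-torseur, en particulier fidèlement plat et lisse puisque $\mathcal{G}$ est lisse. La lissité de $\SSplit$ sur $\spec\,\OK_K$ résulte alors par descente fppf.

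Le seul point non purement formel de l'argument est l'identification explicite de la seconde projection $\widetilde{\SSplit}\rightarrow\SSplit$ comme $\mathcal{G}_{\OK_K}$-torseur ; ce n'est toutefois qu'une vérification de compatibilité entre le carré cartésien définissant $\SSplit$ et celui exprimant $\NNNaif$ comme changement de base de $\Naif\rightarrow[\Naif/\mathcal{G}]$, et ne pose aucune difficulté.
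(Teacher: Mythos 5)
Votre preuve est correcte et suit globalement la même stratégie que l'article : tout ramener au modèle local via le carré cartésien \eqref{CarreCart2} et le diagramme de modèle local. Pour la représentabilité et la quasi-projectivité, votre argument (le morphisme $\Split\rightarrow\Naif_{\OK_K}$ est projectif, donc son changement de base $\SSplit\rightarrow\NNaif_{\OK_K}$ l'est aussi, et $\NNaif$ est quasi-projectif) est la même chose que l'invocation par l'article de la propreté du morphisme de convolution combinée à la Proposition \ref{MPRrep} et à la Proposition \ref{PELrepresentable}. Pour la lissité, votre descente fppf le long du $\mathcal{G}_{\OK_K}$-torseur $\widetilde{\SSplit}\rightarrow\SSplit$ est une version explicitée de ce que l'article fait en une ligne via le carré cartésien de champs (le morphisme $\SSplit\rightarrow[\Split/\mathcal{G}_{\OK_K}]$ est lisse) ; dans les deux cas tout se réduit à la lissité de $\Split$ sur $\spec\,\OK_K$. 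La seule vraie différence est le traitement de ce point, qui est le cœur de la proposition : vous l'attribuez directement à \cite{PRII} (« par la construction de Pappas-Rapoport »), alors que le théorème cité dans l'article (PRII, Théorème 5.3) n'énonce que la platitude ; l'article en déduit la lissité en observant que la fibre générique est lisse et que la fibre spéciale l'est aussi d'après la Proposition \ref{LisseConv} (lissité du produit de convolution pour des cocaractères minuscules), ce qui rend l'argument autonome dans le cadre des grassmanniennes affines développé dans le texte. Votre raccourci n'est pas faux — la remarque qui suit la preuve dans l'article confirme que la construction de \cite{PRII} (diagramme de torseurs vers des modèles locaux non ramifiés, lisses dans ce cas) donne directement la lissité sur $\spec\,\OK_K$ — mais tel quel il repose sur un énoncé plus fort que celui explicitement cité, et il vaudrait la peine soit de préciser cette référence, soit de reproduire l'argument platitude plus fibres lisses.
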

\begin{proof}
Puisque le carré \eqref{CarreCart2} est cartésien il suffit de montrer que $\Split$ est représentable. Cela découle de la proposition \ref{MPRrep}. Le même raisonnement et le fait que le morphisme de convolution soit propre montrent que $\SSplit$ est quasi projectif puisque $\NNaif$ l'est. Pour la lissité, il suffit de voir que le modèle local $\Split$ est lisse. D'après \cite{PRII} Théorème 5.3 le modèle local est plat par conséquent il suffit de montrer que la fibre spéciale est lisse (la fibre générique étant toujours lisse). La lissité de $\overline{\Split}$ découle de la Proposition \ref{LisseConv}.
\end{proof}

\begin{remark}
En fait la preuve du théorème 5.3 de \cite{PRII} repose sur un diagramme de torseurs reliant le modèle local $\Split$ à un produit de modèles locaux non ramifiés. Dans le cas PEL sans niveau en $p$, ces derniers sont lisses (et pas seulement plats) ce qui donne directement la lissité sur $\spec\, \OK_K$, sans passer par la fibre spéciale.
\end{remark}

\subsection{Donnée de Pappas-Rapoport}

Nous allons maintenant donner une définition plus explicite du  modèle entier de Pappas-Rapoport. Pour ce faire nous allons devoir définir la notion de \og donnée de Pappas-Rapoport\fg pour un groupe $p$-divisible. 

Pour simplifier les notations, comme pour le \S\ref{ModeleLocalPEL} nous allons ici travailler avec une seule place $v|p$. Soient $L/\Q_p$ une extension finie de degré $d>1$ (jouant le rôle de $F_v/\Q_p$), $K/\Q_p$ une extension contenant la clôture Galoisienne de $L$, et $S$ un schéma sur $\spec\, \mathcal{O}_K$. On note $L^{nr}$ l'extension maximale non ramifiée dans $L$. On note $e$ l'indice de ramification. Soit $\varpi$ une uniformisante de $L$. On fixe un plongement $\tau:L^{nr}\hookrightarrow K$. On définit l'ensemble $\Sigma_{\tau}=\mathrm{Hom}_{L^{\mathrm{nr}}}(L,\overline{\mathbb{Q}}_p)$ comme étant l'ensemble des plongements $\tau ':L\hookrightarrow K$ qui donne $\tau$ en restriction à $L^{nr}$. C'est un ensemble de cardinal $e$. On fixe un ordre $\Sigma_{\tau}=\{\varphi_1,...,\varphi_e\}$. 
\subsubsection{Définition}
\label{Hypothese}
Soit $\mathscr{F}$ un $\mathcal{O}_S$ module  localement libre muni d'une action de $\mathcal{O}_L$ tel que $\mathcal{O}_{L^{nr}}$ agit sur $\mathscr{F}$ via $\tau$ (on rappelle que $S$ est un schéma sur $\spec\ \mathcal{O}_K$). On suppose qu'il existe un faisceau $\mathscr{E}$ localement libre de rang $h$ en tant que $\mathcal{O}_L\otimes_{\mathcal{O}_{L^{nr}}}\mathcal{O}_S$-module tel que $\mathscr{F}$ soit un localement un facteur direct de $\mathscr{E}$.
\begin{definition}
Une donnée de Pappas-Rapoport pour $(\mathscr{E},\mathscr{F})$ par rapport à $\Sigma_{\tau}$ est une filtration 

\begin{equation*}
0=\mathscr{F}^{(0)}\subset \mathscr{F}^{(1)}\subset\cdots\subset \mathscr{F}^{(e)}=\mathscr{F}
\end{equation*}
telle que pour tout $1\leqslant j\leqslant e$
\begin{itemize}[label=\textbullet]
\item Les $\mathscr{F}^{(j)}$ sont localement des $\mathcal{O}_S$-facteurs directs, stables par $\mathcal{O}_L$.
\item $([\varpi]-\varphi_j(\varpi))\cdot \mathscr{F}^{(j)}\subset \mathscr{F}^{(j-1)}$
\item $\mathscr{F}^{(j)}/\mathscr{F}^{(j-1)}$ est localement libre de rang $1$
\end{itemize}
\end{definition}
La deuxième condition impose que l'action de $\mathcal{O}_L$ sur le quotient $\mathscr{F}^{(j)}/\mathscr{F}^{(j-1)}$ se fasse via le plongement $\mathcal{O}_L\xrightarrow{\varphi_j} \mathcal{O}_K$.

\subsubsection{Groupes $p$-divisibles}

Soit $S$ un schéma sur $\spec\ \mathcal{O}_K$. Soit $G$ un groupe $p$-divisible sur $S$ de hauteur $hd$ muni d'une action $\iota:\mathcal{O}_L\rightarrow\mathrm{End}_S(G)$. Notons $\mathscr{E}(G)$ le cristal associé. Avec les notations de \eqref{EqHodgeFil}, en évaluant ce cristal sur l'épaississement tautologique $(S\rightarrow S)$ on obtient la filtration de Hodge :
\begin{equation*}
0\longrightarrow \omega_{G}\longrightarrow\mathscr{E}(G)_{(S\rightarrow S)}\longrightarrow\omega_{G^D}^{\vee}\longrightarrow 0
\end{equation*}
D'après la Proposition \ref{Fargues} le faisceau $\mathscr{E}(G)_{(S\rightarrow S)}$ est libre en tant que $\mathcal{O}_L\otimes_{\mathcal{O}_{L^{nr}}}\mathcal{O}_S$-module et l'hypothèse \ref{Hypothese} est donc bien satisfaite. Pour simplifier les notations nous noterons $\mathscr{E}=\mathscr{E}(G)_{(S\rightarrow S)}$.
Cette filtration est compatible avec les décompositions induites par \ref{Eq1}
\begin{equation*}
\mathscr{E}=\bigoplus_{\tau\in\Sigma^{\mathrm{nr}}}\mathscr{E}_{\tau},\ \ \ \ \ \omega_G=\bigoplus_{\tau\in\Sigma^{\mathrm{nr}}}\omega_{G,\tau}
\end{equation*}
où $\Sigma^{\mathrm{nr}}=\mathrm{Hom}(L^{\mathrm{nr}},\overline{\Q}_p)$. Autrement dit pour tout $\tau\in\Sigma^{\mathrm{nr}}$ on dispose d'une suite exacte 
\begin{equation*}
0\longrightarrow \omega_{G,\tau}\longrightarrow\mathscr{E}_{\tau}\longrightarrow\omega_{G^D,\tau}^{\vee}\longrightarrow 0
\end{equation*}
induite par la filtration de Hodge.\\

\begin{definition}
\label{DefPRData}
Une donnée de Pappas-Rapoport pour $(G,\iota)$ par rapport à $(\Sigma_{\tau})_{\tau\in\Sigma^{\mathrm{nr}}}$ est la donnée pour tout $\tau\in\Sigma^{\mathrm{nr}}$ d'une filtration de Pappas-Rapoport pour $(\mathscr{E}_{\tau},\omega_{G,\tau})$ par rapport à $\Sigma_{\tau}$. Autrement dit c'est la donnée pour tout $\tau\in\Sigma^{\mathrm{nr}}$ d'une filtration

\begin{equation*}
0=\omega_{G,\tau}^{(0)}\subset \omega_{G,\tau}^{(1)}\subset\cdots\subset \omega_{G,\tau}^{(e)}=\omega_{G,\tau}
\end{equation*}
telle que pour tout $1\leqslant j\leqslant e$
\begin{itemize}[label=\textbullet]
\item Les $\omega_{G,\tau}^{(j)}$ sont localement des $\mathcal{O}_S$-facteurs directs, stables par $\mathcal{O}_L$.
\item $([\varpi]-\varphi_{\tau,j}(\varpi))\cdot \omega_{G,\tau}^{(j)}\subset \omega_{G,\tau}^{(j-1)}$
\item $\omega_{G,\tau}^{(j)}/\omega_{G,\tau}^{(j-1)}$ est localement libre de rang $1$
\end{itemize}
\end{definition}
\ \\

\subsubsection{Cas général}

Revenons à notre extension $F/\Q$. Soit donc $G$ un groupe $p$-divisible sur $S$ muni d'une action $\iota:\mathcal{O}_F\rightarrow\mathrm{End}_S(G)$. Pour chaque place $v|p$ on note $\Sigma_v^{\mathrm{nr}}=\mathrm{Hom}(F_v^{\mathrm{nr}},\overline{\Q}_p)$,  pour tout $\tau\in\Sigma_v^{\mathrm{nr}}$ on note $\Sigma_{v,\tau}=\mathrm{Hom}_{F_v^{\mathrm{nr}}}(F_v,\overline{\Q}_p)$. On a alors une première décomposition 
\begin{equation*}
\mathscr{E}=\bigoplus_{v|p}\mathscr{E}_{v},\ \ \ \ \ \omega_G=\bigoplus_{v}\omega_{G,v}
\end{equation*}
et une deuxième décomposition pour chacune des places $v|p$ :
\begin{equation}
\label{decompow}
\mathscr{E}_v=\bigoplus_{\tau\in\Sigma_v^{\mathrm{nr}}}\mathscr{E}_{v,\tau},\ \ \ \ \ \omega_{G,v}=\bigoplus_{\tau\in\Sigma_v^{\mathrm{nr}}}\omega_{G,v,\tau}
\end{equation}
Comme précédemment on fixe un ordre sur chacun des $\Sigma_{v,\tau}$.
\begin{definition}
Une donnée de Pappas-Rapoport pour $(G,\iota)$ par rapport à $(\Sigma_{v,\tau})_{v,\tau}$ est la donnée pour toute place $v|p$ et tout  $\tau\in\Sigma_v^{\mathrm{nr}}$ d'une filtration de Pappas-Rapoport pour $(\mathcal{E}_{v,\tau},\omega_{G,v,\tau})$ par rapport à $\Sigma_{v,\tau}$. Autrement dit c'est la donnée pour toute place $v|p$ et tout $\tau\in\Sigma_v^{\mathrm{nr}}$ d'une filtration

\begin{equation*}
0=\omega_{G,v,\tau}^{(0)}\subset \omega_{G,v,\tau}^{(1)}\subset\cdots\subset \omega_{G,v,\tau}^{(e)}=\omega_{G,v,\tau}
\end{equation*}
telle que pour tout $1\leqslant j\leqslant e$
\begin{itemize}[label=\textbullet]
\item Les $\omega_{G,v,\tau}^{(j)}$ sont localement des $\mathcal{O}_S$-facteurs directs, stables par $\mathcal{O}_F$.
\item $([\varpi]-\varphi_{v,\tau,j}(\varpi))\cdot \omega_{G,v,\tau}^{(j)}\subset \omega_{G,v,\tau}^{(j-1)}$
\item $\omega_{G,v,\tau}^{(j)}/\omega_{G,v,\tau}^{(j-1)}$ est localement libre de rang $1$
\end{itemize}
\end{definition}
\begin{remark}
Dans la définition ci dessus nous faisons l'abus d'appeler \og donnée de Pappas Rapoport\fg ce qui correspond en fait à une \og donnée de Pappas Rapoport dans le cas Hilbert\fg \ c'est-à-dire une donnée de Pappas-Rapoport où la dimension des gradués $\omega_{G,v,\tau}^{(j)}/\omega_{G,v,\tau}^{(j-1)}$ est égale à $d_j=1$ pour tout $1\leq j\leq e$. On consultera \cite{BijHer2} et \cite{BijHer1} pour plus de détails sur les filtrations de Pappas-Rapoport dans un cadre PEL plus général.
\end{remark}

\subsubsection{Définition explicite de $\mathcal{S}h^{\mathrm{PR}}$}
\label{SPR}
Revenons à notre modèle entier $\mathcal{S}h^{\mathrm{PR}}$. Il découle de la définition \eqref{CarreCart2} que le foncteur $\mathcal{S}h^{\mathrm{PR}}$ associe à un schéma localement noethérien $S\rightarrow \spec\, \mathcal{O}_K$, les quintuplés $(A,\lambda,\iota,\eta,\omega_G^{(\cdot)})$ à $\Z_{(p)}^{\times}$-isogénie près où 
\begin{itemize}[label=\textbullet]
\item $A\rightarrow S$ est un schéma abélien de dimension $d$.
\item $\lambda:A\rightarrow A^t$ est une $\Z_{(p)}^{\times}$-polarisation
\item $\iota: \mathcal{O}_F\rightarrow\mathrm{End}(A)\otimes_{\Z} \Z_{(p)}$
\item $\eta$ est une structure de niveau rationnelle en dehors de $p$ (voir \cite{KWLan} section 1.4.1)
\item $\omega_G^{(\cdot)}$ est une donnée de Pappas-Rapoport pour $G=A[p^{\infty}]$.
\end{itemize}

\section{Stratification de Hodge}

Désormais tout ce qui suit porte sur la fibre spéciale de nos modèles entiers. Nous noterons donc désormais pour alléger les notations $\Naiff=\Naif$ et $\Split=\Splitt$. Pour simplifier nous allons commencer par traiter le cas d'une seule extension totalement ramifiée $L/\Q_p$ de degré $e$ (jouant le rôle de $F_v/F_v^{\mathrm{nr}}$. Pour tout $1\leq i\leq e$ on note $\mu_i=(1,0)$ et $\mu_{\bullet}=(\mu_1,\dots,\mu_e)$. On a donc  $\mu:=|\mu_{\bullet}|=(e,0)$. 
\subsection{Le cas PEL}
Dans la section \ref{SPEL} nous avons vu que le diagramme de modèle local nous fournissait dans le cas PEL un morphisme lisse \eqref{StoMPEL}

\begin{equation*}
\mathcal{S}h^{\mathrm{PEL}}\longrightarrow  \big[\Naif/\mathcal{G}\big]
\end{equation*}

D'après la proposition \ref{PlongPEL} on dispose d'un plongement équivariant du modèle local dans la Grassmannienne affine induisant après passage au quotient un morphisme 
\begin{equation*}
\big[\Naif/\mathcal{G}\big]\longrightarrow \big[L^+G\backslash LG/L^+G\big]=\mathrm{Hecke}
\end{equation*}

La description de l'image de ce morphisme \ref{PlongPEL} nous fournit donc finalement un morphisme lisse vers le champ de Hecke borné :

\begin{equation*}
\zeta:\mathcal{S}h^{\mathrm{PEL}}\longrightarrow  \mathrm{Hecke}_{\mathrm{Adm}(\mu)_K}
\end{equation*}
où $\mathrm{Adm}(\mu)_K$ est définit en \ref{Admissible}.

\subsubsection{Définition}
\label{StrataHodge}

Soit $x\in |\NNaif|$. On appelle polygone de Hodge du point $x$ l'élément 
\begin{equation*}
\mathrm{Hodge}(x)=\zeta(x)
\end{equation*}
Pour tout $\lambda\in\mathrm{Adm}(\mu)_K$  on définit le sous schéma localement fermé (structure réduite) $\NNaif_{\lambda}=\zeta^{-1}(\lambda)$. Les $(\NNaif_{\lambda})_{\lambda\in\mathrm{Adm}(\mu)_K}$ sont appelées les strates de Hodge de $\NNaif$.

\begin{remark}
\label{Hodge=KR}
Donnons une définition plus explicite du polygone de Hodge et des strates de Hodge. Soit $x\in|\NNaif|$ un point de corps résiduel $k$. Pour calculer l'image $\zeta(x)$ il nous faut suivre $x$ dans le diagramme de modèle local :
\[\begin{tikzcd}
	& {\mathcal{S}h^{\mathrm{PEL},\square}} \\
	{\mathcal{S}h^{\mathrm{PEL}}} && {M^{\mathrm{PEL}}} & {\mathrm{Gr}}
	\arrow["\varphi"', from=1-2, to=2-1]
	\arrow["\psi", from=1-2, to=2-3]
	\arrow["\iota", from=2-3, to=2-4]
\end{tikzcd}\]

Notons $\omega_x\hookrightarrow \mathscr{E}_x$ la filtration de Hodge associée au point $x$. On choisit un élément $\tilde{x}\in\varphi^{-1}(x)$. L'image $\iota\circ \psi(\tilde{x})$ dépend du choix $\tilde{x}$, mais son image dans le quotient $\mathrm{Hecke}=\big[L^+G\backslash \Gr\big]$ ne dépend pas de ce choix. Pour décrire $\zeta(x)$ il nous suffit donc de décrire $\iota\circ \psi(\tilde{x})$. Par définition de $\mathcal{S}h^{\mathrm{PEL},\square}$ sur $\tilde{x}$ on dispose d'une trivialisation $\mathscr{E}_x\simeq \Lambda$. Autrement dit on dispose d'une base :
\begin{equation*}
\mathscr{E}_x\simeq \frac{k\kt}{(u^{e})}\oplus \frac{k\kt}{(u^{e})}
\end{equation*}
Quitte à choisir une autre base on peut supposer que la filtration de Hodge soit donnée par
\begin{equation*}
\omega_x\simeq u^i\frac{k\kt}{(u^{e})}\oplus u^j\frac{k\kt}{(u^{e})}
\end{equation*}
Ensuite d'après \eqref{LambdaF} le plongement du modèle local dans la grassmannienne  affine consiste  à prendre l'image inverse de la filtration de Hodge le long de $k\kt\rightarrow k\kt/(u^e)$. On obtient finalement
\begin{equation*}
\Lambda_{\omega_x}=u^ik\kt\oplus u^jk\kt\subset\Lambda_0
\end{equation*}
Par conséquent on retrouve la définition usuelle (voir \cite{DelignePappas} Section 4.2, où \cite{BijHer1} Définition 1.1.7) du polygone de Hodge d'un groupe $p$-divisible
\begin{equation*}
\mathrm{Hodge}(x)=\mathrm{Hodge}(\omega_x)=(i,j)
\end{equation*}
\end{remark}

\begin{remark}
En fait le polygone de Hodge correspond plutôt  au polygone  de pentes $(\frac{i}{e},\frac{j}{e})$ (voir \cite{BijHer1}, Définition 1.1.7). Bien sûr on retrouve une définition à partir de l'autre et les stratifications induites coïncident.
\end{remark}

\subsubsection{Propriétés}\ 
\label{PropStratPEL}
La proposition suivante résume les différentes propriétés de la stratification de Hodge :
\begin{prop}[\cite{DelignePappas} section 4.2, \cite{Alcove} section 4]
\ 
\begin{enumerate}
\item Les strates $(\NNaif_{\lambda})_{\lambda\in\mathrm{Adm}(\mu)_K}$ forment une bonne stratification de $\NNaif$.
\item Pour tout $\lambda\in\mathrm{Adm}(\mu)_K$ la strate $\NNaif_{\lambda}$ est quasi-projective lisse de dimension $\langle 2\rho,\lambda\rangle$.
\item La strate $\NNaif_{(e,0)}$ coïncide avec le lieu lisse de $\NNaif$.
\end{enumerate}
\end{prop}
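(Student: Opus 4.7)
La stratégie consiste à transférer les trois propriétés au moyen du diagramme de modèle local. En composant le morphisme lisse $\varphi :\NNNaif\to\NNaif$ (torseur sous $\mathcal{G}$) et $\psi :\NNNaif\to\Naif$ (lisse de dimension relative $\dim\mathcal{G}$) avec le plongement équivariant $\iota : \Naif\hookrightarrow \Gr_{\leq\mu}$ de la Proposition \ref{PlongPEL}, on obtient un morphisme lisse $\zeta :\NNaif\to \mathrm{Hecke}_{\mathrm{Adm}(\mu)_K}$ dont $\NNaif_\lambda=\zeta^{-1}(\lambda)$ sont les strates de Hodge par définition. Les trois assertions se ramèneront alors aux propriétés de la Proposition \ref{PropOrbiteGR} pour les orbites $\Gr_\lambda\subset\Gr_{\leq\mu}$.

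Pour le point $(2)$, on sait que $\Gr_\lambda$ est lisse quasi-projective de dimension $\langle 2\rho,\lambda\rangle$. La fibre spéciale $\Naif_\lambda=\iota^{-1}(\Gr_\lambda)$ hérite donc de ces propriétés (c'est l'intersection d'une orbite avec un ouvert du champ de Hecke borné). On transfère ensuite à $\NNaif_\lambda$ : puisque $\psi$ est lisse de dimension relative $\dim\mathcal{G}$, $\psi^{-1}(\Naif_\lambda)$ est lisse de dimension $\langle 2\rho,\lambda\rangle+\dim\mathcal{G}$ ; comme $\varphi$ est un $\mathcal{G}$-torseur, il descend en un sous-schéma localement fermé lisse $\NNaif_\lambda\subset\NNaif$ de dimension $\langle 2\rho,\lambda\rangle$. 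La quasi-projectivité se déduit de ce que $\NNaif$ est quasi-projective (Proposition \ref{PELrepresentable}) et $\NNaif_\lambda$ est localement fermée dans celle-ci.

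Pour le point $(1)$, l'inclusion $\overline{\NNaif_\lambda}\subset\bigcup_{\lambda'\leq\lambda}\NNaif_{\lambda'}$ résulte de la continuité de $\zeta$ et de la relation $\overline{\Gr_\lambda}=\bigcup_{\lambda'\leq\lambda}\Gr_{\lambda'}$ (point $(4)$ de \ref{PropOrbiteGR}). Pour l'inclusion réciproque, l'argument cléf est que $\zeta$ est lisse donc plate et ouverte. On fixe $\lambda'\leq\lambda$ et $x\in\NNaif_{\lambda'}$ ; on veut montrer $x\in\overline{\NNaif_\lambda}$. En appliquant la Proposition \ref{PropAdherence} sur la Grassmannienne affine, on trouve une déformation $\spec\, k[\![t]\!]\to \Gr_{\leq\mu}$ envoyant le point fermé sur $\lambda'$ et le point générique sur $\lambda$. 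On relève cette déformation à $\NNaif$ en utilisant la lissité locale de $\zeta$ (critère de relèvement infinitésimal itéré le long de $k[t]/(t^n)\rightarrow k[t]/(t^{n+1})$, exactement comme dans l'argument qui suit la Proposition \ref{PropAdherence}). La déformation obtenue issue de $x$ a son point générique dans $\NNaif_\lambda$, ce qui conclut.

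Pour le point $(3)$, le lieu lisse de $\Gr_{\leq(e,0)}$ est précisément $\Gr_{(e,0)}$ (point $(5)$ de \ref{PropOrbiteGR}). Comme la lissité est locale pour la topologie lisse, et que $\NNaif\to [\Naif/\mathcal{G}]$ est lisse tandis que l'inclusion $\Naif\hookrightarrow \Gr_{\leq(e,0)}$ est un isomorphisme, le lieu lisse de $\NNaif$ est l'image réciproque du lieu lisse de $\Gr_{\leq(e,0)}$, c'est-à-dire $\NNaif_{(e,0)}=\zeta^{-1}(\Gr_{(e,0)})$. Le principal obstacle technique est le relèvement des déformations au point $(1)$ ; c'est précisément ce que permet la combinaison de la Proposition \ref{PropAdherence} et de la lissité formelle de $\zeta$.
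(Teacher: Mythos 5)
Votre preuve est correcte et suit essentiellement la même démarche que celle du papier : transfert des propriétés des orbites $\Gr_{\lambda}$ (Proposition \ref{PropOrbiteGR}) via le morphisme lisse $\zeta$ issu du diagramme de modèle local, le calcul de dimension du point $(2)$ coïncidant avec la remarque qui suit la preuve du papier. Votre relèvement explicite d'arcs $\spec\, k[\![t]\!]$ par lissité formelle au point $(1)$ n'est qu'un déroulé de l'argument du papier, à savoir qu'un morphisme lisse (donc plat, donc générisant) préserve les relations d'adhérence.
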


\begin{proof}
Pour le point $(1)$, cela découle de la lissité du morphisme $\zeta:\NNaif\rightarrow\mathrm{Hecke}_{\mathrm{Adm}(\mu)_K}$. 
Pour le point (3), cela découle du fait que $\mathrm{Gr}_{(e,0)}$ coïncide avec le lieu lisse de $\mathrm{Gr}_{\leq(e,0)}$ (voir Proposition \ref{PropOrbiteGR})
Enfin pour le point $(2)$, il suffit de voir que le morphisme 
\begin{equation*}
\NNaif_{\lambda}\rightarrow \big[\Naif_{\lambda}/\mathcal{G}\big]
\end{equation*}
est lisse de dimension relative $\mathrm{dim}\, \mathcal{G}$ et que le membre de droite était un point de dimension $\langle 2\rho,\lambda\rangle -\mathrm{dim}\, \mathcal{G}$.
\end{proof}
\begin{remark}
Pour le point $(2)$ on peut se passer du langage des champs et donner une preuve à la main : en restriction à une strate $\lambda\in\mathrm{Adm}(\mu)_K$ le diagramme de modèle local devient 
\[\begin{tikzcd}
	& {\mathcal{S}h^{\mathrm{PEL},\square}_{\lambda}} \\
	{\mathcal{S}h^{\mathrm{PEL}}_{\lambda}} && {M^{\mathrm{PEL}}_{\lambda}}
	\arrow["\varphi"', from=1-2, to=2-1]
	\arrow["\psi", from=1-2, to=2-3]
\end{tikzcd}\]
D'après la proposition \ref{PropOrbiteGR}  la strate $M^{\mathrm{PEL}}_{\lambda}$ est de dimension $\langle 2\rho,\lambda\rangle$. Le morphisme $\psi$ est de dimension relative $\mathrm{dim}\, G=\mathrm{dim}\, \mathcal{G}$ et le morphisme $\varphi$ est un $\mathcal{G}$-torseur. On trouve donc bien 
\begin{equation*}
\mathrm{dim}\, \mathcal{S}h^{\mathrm{PEL}}_{\lambda}=\langle 2\rho,\lambda\rangle +\mathrm{dim}\, \mathcal{G}-\mathrm{dim}\, \mathcal{G}=\langle 2\rho,\lambda\rangle
\end{equation*}
\end{remark}

\begin{remark}
Dans \cite{GorenAndreatta} la strate $\NNaif_{(e,0)}$ est appelée lieu de Rapoport car elle coïncide avec le lieu où le faisceau $\omega$ est libre en tant que $\OK_L\otimes_{\Z_p}\OK_S$-module.
\end{remark}

\begin{remark}
On peut être plus explicite sur la dimension. Si $\lambda=(i,j)\leq (e,0)$ alors 
\begin{align*}
\langle 2\rho,\lambda \rangle &=\langle \alpha_1-\alpha_2,i\lambda_1+j\lambda_2\rangle\\
&=i-j\\
 &=e-2j
\end{align*}
où $\alpha_1:\begin{pmatrix}
t_1 &0\\
0&t_2
\end{pmatrix}\mapsto t_1$, $\alpha_2:\begin{pmatrix}
t_1 &0\\
0&t_2
\end{pmatrix}\mapsto t_2$, $\lambda_1:t\mapsto \begin{pmatrix}
t&0\\
0&1
\end{pmatrix}$ et $\lambda_2:t\mapsto \begin{pmatrix}
1&0\\
0&t
\end{pmatrix}$.
\end{remark}

\subsection{Le cas Pappas-Rapoport}

Nous allons maintenant nous intéresser à la stratification du modèle de Pappas-Rapoport $\SSplit$ par le polygone  $\mathrm{Hodge}(\omega)$.

\subsubsection{Définition}
Pour tout $\lambda\in\mathrm{Adm}(\mu)_K$  on définit le sous schéma localement fermé (structure réduite) $\SSplit_{\lambda}=\pi^{-1}(\NNaif_{\lambda})$ où
\begin{equation*}
\pi:\SSplit\rightarrow \NNaif
\end{equation*}
est le morphisme d'oubli. Les $(\SSplit_{\lambda})_{\lambda\in\mathrm{Adm}(\mu)_K}$ sont appelées les strates de Hdoge de $\SSplit$. Plus explicitement :  si $x\in\SSplit$ alors on peut regarder le polygone de hodge $\mathrm{Hodge}(x):=\mathrm{Hodge}(\omega^{(e)})$ où $\omega^{(e)}=\omega_x$ est le dernier cran de la filtration de Pappas-Rapoport $\omega^{(1)}\subset\dots\subset\omega^{(e)}$ associée au point $x$.
\begin{remark}
On fera attention à ne pas confondre les deux morphismes :
\[\begin{tikzcd}
	& {\mathcal{S}h^{\mathrm{PR}}} \\
	{\big[L^+G\backslash\widetilde{\mathrm{Gr}}_{\mu_{\bullet}}\big]=\mathrm{Hecke}_{\mu_{\bullet}}} && {\mathrm{Hecke}_{\mathrm{Adm}(\mu)_K}=\big[L^+G\backslash\mathrm{Gr}_{\leq|\mu_{\bullet}|}\big]}
	\arrow["{\tilde{\zeta}}"', from=1-2, to=2-1]
	\arrow["\zeta\circ\pi", from=1-2, to=2-3]
\end{tikzcd}\]

Le morphisme de gauche concerne les classes d'isomorphismes de filtrations de Pappas-Rapoport $(\omega^{(1)}\subset \dots\subset \omega^{(e)}\subset \mathscr{E})$, et celui de droite concerne les classes d'isomorphismes de filtrations de Hodge $(\omega\subset \mathscr{E})$.

\end{remark}

\subsubsection{Propriétés}

Le théorème est le suivant :

\begin{theorem}\ 
\label{TheoremPrincipal}
\begin{enumerate}
\item Les strates $(\SSplit_{\lambda})_{\lambda\in\mathrm{Adm}(\mu)_K}$ forment une bonne stratification de $\SSplit$. Autrement dit pour tout $\lambda\in\mathrm{Adm}(\mu)_K$ on a la relation d'adhérence
\begin{equation*}
\overline{\SSplit_{\lambda}}=\bigcup_{\lambda'\leq\lambda}\SSplit_{\lambda'}
\end{equation*}
\item Pour tout $\lambda\in\mathrm{Adm}(\mu)_K$ la strate $\SSplit_{\lambda}$ est quasi-projective lisse de dimension $\langle \rho,|\mu_{\bullet}|+\lambda\rangle$.
\end{enumerate}
\end{theorem}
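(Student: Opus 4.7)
The plan is to transfer the statement from the convolution side of the affine Grassmannian to $\SSplit$ via the local model diagram, using Proposition \ref{StratGrConv} for the closure relations and Theorem \ref{TheoHaines} for the dimensions. The starting point is that, by combining the local model diagram (the morphism \eqref{StoMPEL} for $\NNaif$) with the Cartesian square \eqref{CarreCart2} defining $\SSplit$, one obtains a smooth surjective morphism
\[
\SSplit \longrightarrow \big[\,\widetilde{\Gr}_{\mu_\bullet}/\mathcal{G}\,\big]
\]
of relative dimension $\dim\mathcal{G}$, which by Proposition \ref{CarreCart} identifies the stratum $\SSplit_\lambda = \pi^{-1}(\NNaif_\lambda)$ with the preimage of $\big[m^{-1}(\Gr_\lambda)/\mathcal{G}\big]$.

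For point (1), I would use the fact that smooth morphisms are flat and open, so they commute with taking closures of locally closed subsets. Proposition \ref{StratGrConv} gives exactly the good stratification $\overline{m^{-1}(\Gr_\lambda)} = \bigcup_{\lambda'\leq\lambda} m^{-1}(\Gr_{\lambda'})$ on the convolution product (for $\mu_i=(1,0)$, which is our situation). The $\mathcal{G}$-action preserves this stratification (it is $L^+G$-invariant and the $\mathcal{G}$-action factors through $L^+G$-action on the Grassmannian side), so the stratification descends to the quotient stack and pulls back through the smooth morphism above, yielding the desired relation
\[
\overline{\SSplit_\lambda} = \bigcup_{\lambda'\leq\lambda}\SSplit_{\lambda'}.
\]

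For point (2), the dimension computation combines Theorem \ref{TheoHaines} and Proposition \ref{PropOrbiteGR}. By Theorem \ref{TheoHaines}(1) the restriction of the convolution morphism $m : m^{-1}(\Gr_\lambda) \to \Gr_\lambda$ is locally trivial, and by \ref{TheoHaines}(2) (applicable since each $\mu_i=(1,0)$ is minuscule) the fibers are equidimensional of dimension $\langle\rho,|\mu_\bullet|-\lambda\rangle$. Combining with $\dim\Gr_\lambda = \langle 2\rho,\lambda\rangle$ from Proposition \ref{PropOrbiteGR} gives
\[
\dim m^{-1}(\Gr_\lambda) = \langle 2\rho,\lambda\rangle + \langle\rho,|\mu_\bullet|-\lambda\rangle = \langle\rho,|\mu_\bullet|+\lambda\rangle.
\]
Transporting through the smooth morphism above (which has relative dimension $\dim\mathcal{G}$, matching the $-\dim\mathcal{G}$ shift for the quotient stack) gives the claimed dimension for $\SSplit_\lambda$. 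Quasi-projectivity is immediate since $\SSplit_\lambda$ is a locally closed subscheme of the quasi-projective scheme $\SSplit$.

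The main obstacle is smoothness of $\SSplit_\lambda$, which reduces via the smooth morphism to smoothness of $m^{-1}(\Gr_\lambda)$. Local triviality of $m$ over the smooth base $\Gr_\lambda$ reduces this further to smoothness of the fiber of $m$ over a geometric point of $\Gr_\lambda$. This is a genuine strengthening of Theorem \ref{TheoHaines}(2), which as stated gives only equidimensionality. To bridge this gap I would argue as follows: the convolution $\Gr_{(1,0)}\tilde\times\cdots\tilde\times\Gr_{(1,0)}$ is constructed by iterating $\Gr_{(1,0)} = \mathbb{P}^1$-fibrations (this is precisely the inductive argument in the proof of Proposition \ref{LisseConv}), so fibers of $m$ are iterated $\mathbb{P}^1$-fibration-type (Bott--Samelson--Demazure) resolutions of points of $\Gr_{\leq|\mu_\bullet|}$, which are smooth by construction. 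Alternatively one can invoke the finer form of Haines' result identifying these fibers as iterated affine bundles. Either route completes the proof.
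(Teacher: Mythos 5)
Votre traitement du point (1) et du calcul de dimension du point (2) suit exactement la même route que l'article : bonne stratification de $\widetilde{\Gr}_{\mu_{\bullet}}$ par la Proposition \ref{StratGrConv}, descente au champ quotient par équivariance, transfert par le morphisme lisse issu du diagramme de modèle local, puis Théorème \ref{TheoHaines} et Proposition \ref{PropOrbiteGR} pour $\mathrm{dim}\, m^{-1}(\Gr_{\lambda})=\langle 2\rho,\lambda\rangle+\langle\rho,|\mu_{\bullet}|-\lambda\rangle$. Vous avez aussi raison de souligner que la lissité de $\SSplit_{\lambda}$ ne découle pas de l'équidimensionnalité de Haines ; notez d'ailleurs que la preuve de l'article n'établit que la dimension et reste muette sur la lissité.

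En revanche, le pont que vous proposez pour combler ce trou est faux. La lissité du produit de convolution (Proposition \ref{LisseConv}) concerne l'espace total de la résolution de Demazure, pas ses fibres : la fibre de $m$ au-dessus d'un point d'une strate non ouverte est une union de variétés de type Bott--Samelson plus petites, en général réductible et singulière. Concrètement, pour $e=3$ et $\lambda=(2,1)$, la fibre de $m$ au-dessus de $y=\Lambda\in\Gr_{(2,1)}$ s'identifie à la variété des drapeaux complets de sous-$k[u]$-modules de $Q=\Lambda_0/\Lambda\simeq k[u]/(u^2)\oplus k[u]/(u)$ : toute droite $u$-stable est contenue dans $Q[u]$ et tout plan $u$-stable contient $uQ$, de sorte que la fibre est la réunion de deux $\mathbb{P}^1$ (le lieu $W_1=uQ$ et le lieu $W_2=Q[u]$) se coupant en un point — elle est bien équidimensionnelle de dimension $\langle\rho,|\mu_{\bullet}|-\lambda\rangle=1$, conformément à Haines, mais nodale, donc non lisse. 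De même, le résultat de Haines ne fournit pas de structure de « fibré affine itéré » sur ces fibres (au mieux des pavages par des espaces affines, ce qui n'entraîne pas la lissité). Comme $m$ est localement trivial au-dessus de $\Gr_{\lambda}$, cet exemple montre que $m^{-1}(\Gr_{\lambda})$ — et donc, lisse-localement, $\SSplit_{\lambda}$ — est singulier dès $e\geq 3$ pour les $\lambda$ intermédiaires : aucun argument du type que vous esquissez ne peut donc conclure, et seule l'assertion de quasi-projectivité et de dimension est effectivement atteignable par cette méthode.
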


\begin{proof}
Commençons par montrer le point $(1)$. D'après la proposition \ref{StratGrConv} on dispose d'une bonne stratification du produit de convolution
\begin{equation*}
\widetilde{\Gr}_{\mu_{\bullet}}=\bigcup_{\lambda\in\mathrm{Adm}(\mu)_K} m^{-1}(\Gr_{\lambda})
\end{equation*}
Puisque le morphisme de convolution $m:\widetilde{\Gr}_{\mu_{\bullet}}\rightarrow\Gr_{\leq|\mu_{\bullet}|}$ est $L^+G$-équivariant, cette stratification est stable sous l'action de $L^+G$. Autrement dit chacune des strates est une union de $L^+G$-orbites. Par conséquent cette stratification descend au quotient en une bonne stratification du champs $\mathrm{Hecke}_{\mu_{\bullet}}=\big[L^+G\backslash \Gr_{\mu_{\bullet}}\big]$ :
\begin{equation*}
\mathrm{Hecke}_{\mu_{\bullet}}=\bigcup_{\lambda\in\mathrm{Adm}(\mu)_K}\big[L^+G\backslash m^{-1}(\Gr_{\lambda})\big]
\end{equation*}
On conclut comme pour le cas PEL (voir \ref{PropStratPEL}) en utilisant que le morphisme
\begin{equation*}
\tilde{\zeta}:\SSplit\rightarrow\mathrm{Hecke}_{\mu_{\bullet}}
\end{equation*}
est lisse et donc préserve les relations d'adhérences. \\

 Montrons maintenant le point $(2)$. D'après le théorème \ref{TheoHaines}, en restriction à une strate $\Gr_{\lambda}$ associée à $\lambda\in\mathrm{Adm}(\mu)_K$ on dispose d'une fibration localement triviale 
\begin{equation*}
m:m^{-1}(\Gr_{\lambda})\longrightarrow \Gr_{\lambda}
\end{equation*}
En particulier au dessus d'une telle strate le morphisme est plat et par conséquent on a la relation
\begin{equation*}
\mathrm{\dim}\, m^{-1}(\Gr_{\lambda})=\mathrm{dim}\, \Gr_{\lambda}+\mathrm{dim}\, m^{-1}(y)
\end{equation*}
pour n'importe quel élément $y\in\Gr_{\lambda}$. Toujours d'après le théorème \ref{TheoHaines}, on a que la fibre  $m^{-1}(y)$ est équidimensionnelle de dimension $\langle \rho,|\mu_{\bullet}|-\lambda\rangle$ car $\mu_{\bullet}=(\mu_1,\dots\mu_e)$ est constitué de cocaractères minuscules. On définit au niveau du modèle local $\Split_{\lambda}=m^{-1}(\Gr_{\lambda})$ via l'identification $\Split\simeq\widetilde{\Gr}_{\mu_\bullet}$. Le modèle local nous donne un morphisme 
\begin{equation*}
\SSplit_{\lambda}\rightarrow \big[\Split_{\lambda}/\mathcal{G}\big]
\end{equation*}
qui est lisse de dimension relative $\mathrm{dim}\, \mathcal{G}$. Le membre de droite est de dimension $\Split_{\lambda} -\mathrm{dim}\, \mathcal{G}$. Finalement on trouve donc bien 
\begin{align*}
\mathrm{dim}\, \SSplit_{\lambda}&=\mathrm{dim}\, \big[\Split_{\lambda}/\mathcal{G}\big]+\mathrm{dim}\, \mathcal{G}\\
&=\mathrm{dim}\, \Split_{\lambda}+\mathrm{dim}\, \mathcal{G}-\mathrm{dim}\, \mathcal{G}\\
&=\mathrm{dim}\, m^{-1}(\Gr_{\lambda})\\
&=\mathrm{dim}\, \Gr_{\lambda}+\mathrm{dim}\, m^{-1}(y)\\
&=\langle 2\rho,\lambda\rangle+\langle \rho,|\mu_{\bullet}|-\lambda\rangle\\
&=\langle \rho,|\mu_{\bullet}|+\lambda\rangle
\end{align*}
\end{proof}

\begin{remark}
Encore une fois on peut être plus explicite concernant la dimension des strates. Pour $\lambda=(i,j)\leq (e,0)$ on obtient
\begin{align*}
\mathrm{dim}\, \SSplit_{(i,j)}&=\langle \rho,|\mu_{\bullet}|+\lambda\rangle\\
&=\frac{1}{2}\langle \alpha_1-\alpha_2,(e+i)\lambda_1+j\lambda_2\rangle\\
&=e+i-j\\
 &=e-j
\end{align*}
\end{remark}

\subsection{Cas général}

On revient maintenant au cas d'une extension $F/\Q$ de degré $d>1$. Pour tout $v|p$, $\tau\in\Sigma_{v}^{\mathrm{nr}}$ et tout $1\leq\dots i\leq e_v$ on note $\mu_{v,\tau,i}=(1,0)$, $\mu_{v,\tau,\bullet}=(\mu_{v,\tau,1},\dots,\mu_{v,\tau,e_v})$, $\mu_{v,\tau}=|\mu_{v,\tau,\bullet}|=(e_v,0)$ et enfin $\mu:=(\mu_{v,\tau})_{v,\tau}$ La compatibilité du modèle local au produit décrite en \eqref{CarreCart3} suggère la définition suivante 
 : soit $x\in|\SSplit|$, on définit l'invariant :
 \begin{equation*}
 \mathrm{Hodge}(x)=\big(\mathrm{Hodge}_{v,\tau}(x)\big)_{v,\tau}\in \prod_{v,\tau}\{\lambda\leq (e_v,0)\}=\mathrm{Adm}(\mu)_K
 \end{equation*}
où $\mathrm{Hodge}_{v,\tau}(x)=\mathrm{Hodge}(\omega_{x,v,\tau})$ est l'invariant de Hodge définit en \ref{StrataHodge} et $\omega_{x,v,\tau}$ est le faisceau associé à $x$ et défini en \eqref{decompow}. Autrement dit l'invariant $\mathrm{Hodge}(\omega)$ ci dessus correspond à la donnée des invariants $\mathrm{Hodge}(\omega_{v,\tau})$ pour chacun des termes dans la décomposition
\begin{equation*}
\omega=\bigoplus_{v|p}\bigoplus_{\tau\in\Sigma_v^{\mathrm{nr}}} \omega_{v,\tau}
\end{equation*}

On munit $\mathrm{Adm}(\mu)_K$ de la relation d'ordre suivante :
\begin{equation*}
(\lambda_{v,\tau})_{v,\tau}\leq (\lambda_{v,\tau}')_{v,\tau}\ \Longleftrightarrow\ \lambda_{v,\tau}\leq\lambda_{v,\tau}'\ \forall (v,\tau)
\end{equation*}

La généralisation du Théorème \ref{TheoremPrincipal} prend la forme suivante :

\begin{theorem}\ 
\begin{enumerate}
\item Les strates $(\SSplit_{\lambda_{v,\tau}})_{\lambda_{v,\tau}\in\mathrm{Adm}(\mu)_K}$ forment une bonne stratification de $\SSplit$. Autrement dit pour tout $(\lambda_{v,\tau})_{v,\tau}\in\mathrm{Adm}(\mu)_K$ on a la relation d'adhérence
\begin{equation*}
\overline{\SSplit_{(\lambda_{v,\tau})_{v,\tau}}}=\bigcup_{(\lambda_{v,\tau}')_{v,\tau}\leq (\lambda_{v,\tau})_{v,\tau}}\SSplit_{(\lambda_{v,\tau}')_{v,\tau}}
\end{equation*}
\item Pour tout $(\lambda_{v,\tau})_{v,\tau}\in\mathrm{Adm}(\mu)_K$ la strate $\SSplit_{\lambda_{v,\tau}}$ est quasi-projective lisse de dimension :
\begin{equation*}
\mathrm{dim}\, \SSplit_{(\lambda_{v,\tau})_{v,\tau}}=\sum_{v,\tau}\langle \rho,|\mu_{v,\tau,\bullet}|+\lambda_{v,\tau}\rangle
\end{equation*}
\end{enumerate}
\end{theorem}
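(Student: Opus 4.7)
Le plan est de généraliser la preuve du Théorème \ref{TheoremPrincipal} en exploitant la décomposition en produit \eqref{CarreCart3} du modèle local, conséquence directe du Lemme \ref{lemmeG1G2}. L'observation clé sera que la notion de bonne stratification, la dimension et la lissité se comportent toutes bien vis-à-vis du produit : pour des sous schémas localement fermés dans un produit de schémas de type fini sur un corps, on a $\overline{\prod_i S_i} = \prod_i \overline{S_i}$, donc le produit de bonnes stratifications est une bonne stratification pour l'ordre produit, les dimensions s'additionnent, et la lissité (resp. la quasi-projectivité) se préserve. Le point technique préliminaire à vérifier sera que l'invariant $\mathrm{Hodge}(x) = (\mathrm{Hodge}_{v,\tau}(x))_{v,\tau}$ correspond bien à la position relative dans le produit de grassmanniennes affines sous \eqref{CarreCart3}, ce qui résulte directement de ce que la décomposition \eqref{decompow} $\omega = \bigoplus_{v,\tau}\omega_{v,\tau}$ est respectée par le plongement $\iota^{\mathrm{PR}}$.

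Pour le point $(1)$, j'appliquerais la Proposition \ref{StratGrConv} à chaque facteur $\widetilde{\mathrm{Gr}}_{(e_v,0)}$. Cela fournit une bonne stratification de $\prod_{v,\tau}\widetilde{\mathrm{Gr}}_{(e_v,0)}$ par les sous schémas $\prod_{v,\tau} m_{v,\tau}^{-1}(\mathrm{Gr}_{\lambda_{v,\tau}})$, indexée par $\mathrm{Adm}(\mu)_K$ muni de l'ordre produit. Cette stratification étant $L^+G$-stable composante par composante, elle descend en une bonne stratification du quotient $\prod_{v,\tau}\mathrm{Hecke}_{\mu_{v,\tau,\bullet}}$. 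On conclut alors exactement comme dans la preuve du Théorème \ref{TheoremPrincipal} en utilisant la lissité du morphisme $\tilde\zeta:\SSplit\to\prod_{v,\tau}\mathrm{Hecke}_{\mu_{v,\tau,\bullet}}$ fourni par le diagramme de modèle local, puisque les morphismes lisses préservent les relations d'adhérence.

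Pour le point $(2)$, le diagramme de modèle local donnera un morphisme lisse
\begin{equation*}
\SSplit_{(\lambda_{v,\tau})_{v,\tau}} \longrightarrow \big[\Split_{(\lambda_{v,\tau})_{v,\tau}}/\mathcal{G}\big]
\end{equation*}
de dimension relative $\mathrm{dim}\, \mathcal{G}$. Sous l'identification de \eqref{CarreCart3}, la strate $\Split_{(\lambda_{v,\tau})_{v,\tau}}$ s'identifie au produit $\prod_{v,\tau} m_{v,\tau}^{-1}(\mathrm{Gr}_{\lambda_{v,\tau}})$. La dimension sera alors additive sur le produit, chaque facteur étant de dimension $\langle\rho,|\mu_{v,\tau,\bullet}|+\lambda_{v,\tau}\rangle$ d'après la preuve du Théorème \ref{TheoremPrincipal} (qui s'appuie sur le Théorème \ref{TheoHaines}). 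La lissité et la quasi-projectivité descendent également facteur par facteur via la Proposition \ref{LisseConv}. L'unique obstacle est donc la vérification du point de compatibilité produit mentionné ci-dessus, qui ne présente pas de difficulté vue la naturalité de \eqref{decompow} et la compatibilité $\mathrm{Gr}_{G_1\times G_2} \simeq \mathrm{Gr}_{G_1}\times \mathrm{Gr}_{G_2}$ du Lemme \ref{lemmeG1G2}.
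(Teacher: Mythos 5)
Votre démonstration est correcte et suit essentiellement la même démarche que le papier, qui ne rédige d'ailleurs pas de preuve séparée pour ce théorème : il s'appuie implicitement sur la décomposition en produit \eqref{CarreCart3} et sur le Théorème \ref{TheoremPrincipal} appliqué facteur par facteur, exactement comme vous le faites (la compatibilité des adhérences, dimensions et lissité avec le produit pour des strates constructibles de schémas de type fini sur un corps étant standard).
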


\begin{remark}
On fera attention au fait que dans le cadre d'une extension $L/\Q_p$  d'indice d'inertie $f>1$, alors la stratification de Hodge ci-dessus ne coïncide pas avec la stratification de Hodge définie dans \cite{BijHer2}. En fait la stratification définie dans \textit{loc.cit.} est moins fine car elle est définie par le polygone de Hodge de \cite{BijHer1} qui est un moyenne sur $f$ des invariants de Hodge considérés pour notre stratification.  On consultera \cite{XuShen} Proposition 3.20 pour plus de détails.
\end{remark} 

\subsection{Autres résultats}

Lorsque la donnée PEL est ramifiée,  l'un des objectifs pour étudier la géométrie de la fibre spéciale de la variété de Shimura associée est de raffiner la stratification de Hodge.  Autrement dit pour  tout $\lambda\in\mathrm{Adm}(\mu)_K$ on aimerait définir une décomposition :
\begin{equation*}
\NNaif_{\lambda}=\coprod_{w\in W_{\Lambda}} S_w
\end{equation*} 
où $(W_{\lambda},\leq)$ est un certain ensemble partiellement ordonné qui dépend de $\lambda$.  Donnons quelques exemples :
\begin{enumerate}
\item Dans \cite{EKOR} les auteurs définissent pour chaque strate de Hodge un morphisme lisse :
\begin{equation*}
\zeta_{\lambda}:\NNaif_{\lambda}\longrightarrow \mathcal{G}_0^{\mathrm{rdt}}\text{-}\mathtt{Zip}^{J_{\lambda}}
\end{equation*}
où $\mathcal{G}_0^{\mathrm{rdt}}$ désigne le quotient réductif de $\mathcal{G}_0=\mathcal{G}\otimes_{\Z_p}\mathbb{F}_p$ et $\mathcal{G}_0^{\mathrm{rdt}}\text{-}\mathtt{Zip}^{\lambda}$ désigne le champs des $\mathcal{G}_0^{\mathrm{rdt}}\text{-}\mathtt{Zip}$ de type $J_{\lambda}$ (voir \cite{EKOR} Définition 1.1.5 et Proposition 4.2.6). La stratification est alors définie comme étant celle induite par celle du champs $\mathcal{G}_0^{\mathrm{rdt}}\text{-}\mathtt{Zip}^{J_{\lambda}}$ via le morphisme $\zeta_{\lambda}$. Leur construction est beaucoup plus générale et s'applique aux variétés de Shimura de type abélien sans hypothèse sur le niveau en $p$
\item Dans \cite{GorenAndreatta} les auteurs calculs explicitement les polygones de Newton sur chacune des strates. Plus précisément si $x\in\NNaif_{(i,j)}$ avec $(i,j)\in\mathrm{Adm}(\mu)_K$ alors existe une $(\OK_L\otimes W(k))$- base du module de Dieudonné associé dans laquelle le Frobenius est donné par la matrice (\cite{GorenAndreatta}, Propositon 4.10) :
\begin{equation*}
F=\begin{pmatrix}
\varpi^m&c\varpi^i\\
\varpi^j&0
\end{pmatrix}
\end{equation*}
où $m\geq j$ et $c\in(\OK_L\otimes W(k))^{\times}$. Ils définissent ensuite les quantités :
\begin{equation*}
n=\left\{\begin{array}{lrcl}
m&\text{si}\ m\leq i\\
i&\text{sinon}
\end{array}\right.\ \ \ \ \ \ \ \ \ \lambda(n)=\mathrm{min}\left\{\frac{n}{g},\frac{1}{2}\right\}
\end{equation*}
La stratification de $\NNaif_{(i,j)}$ est alors définie via cet invariant $n$ :
\begin{equation*}
\NNaif_{(i,j)}=\bigcup_{j\leq n\leq i} \NNaif_{(i,j),n}
\end{equation*}
Cet invariant leur permet de calculer explicitement le polygone de Newton sur chacune des strates $\NNaif_{(i,j),n}$. Plus précisément si $x\in \NNaif_{(i,j),n}$ alors (\cite{GorenAndreatta}, Théorème 9.2) :
\begin{equation*}
\mathrm{Newt}(x)=\{\lambda(n),\dots,\lambda(n),1-\lambda(n),\dots,1-\lambda(n)\}
\end{equation*}
(chacune des pentes avec multiplicité $e$).
\item Dans \cite{FuLiOort} les auteurs associent à tout point $x\in \NNaif$ un invariant $c(x)=(c_{\tau}(x))_{\tau\in\Sigma^{\mathrm{nr}}}$ appelé invariant de congruence qui mesure la position relative des filtrations (voir la \cite{FuLiOort} Définition 5.2 pour être plus précis) : 
\begin{equation*}
F(M_{\sigma^{-1}\tau})\subset M_{\tau}, \ \ \ \ V(M_{\sigma\tau})\subset M_{\tau}
\end{equation*}
où $(M,F,V)$ désigne le module de Dieudonné associé à $x$ et où les $(M_{\tau})_{\tau\in\Sigma^{\mathrm{nr}}}$ désignent les facteurs directs dans la décomposition 
\begin{equation*}
M\simeq \bigoplus_{\tau\in\Sigma^{\mathrm{nr}}} M_{\tau},\ \ \ F:M_{\sigma^{-1}\tau}\rightarrow M_{\tau}, \ \ \ V:M_{\sigma\tau}\rightarrow M_{\tau}
\end{equation*}
Cet invariant permet de définir une stratification (\cite{FuLiOort} Définition 6.1) :
\begin{equation*}
\NNaif_{\lambda}=\coprod_{c\in \tau_L} \mathcal{Q}_c(\NNaif_{\lambda})
\end{equation*}
où $\mathcal{Q}_c(\NNaif_{\lambda})$ désigne l'ensemble des points d'invariant de congruence $c\in\tau_L$ et $\tau_L$ est un certain ensemble défini dans \cite{FuLiOort} Définition 3.2.1. 
\end{enumerate}
Dans chacun de ces trois articles se pose deux problèmes :
\begin{enumerate}
\item Calculer l'adhérence de $S_w$ dans $\NNaif_{\lambda}$.
\item Calculer l'adhérence de $S_w$ dans $\NNaif$
\end{enumerate}
Dans \cite{FuLiOort} les auteurs parviennent à calculer le point $(1)$. Dans \cite{GorenAndreatta} et \cite{EKOR} les auteurs parviennent à calculer le point $(2)$. Le théorème suivant montre que le point $(1)$ est automatique pour $\SSplit_{\lambda}$ lorsque l'on regarde le tiré en arrière d'une stratification de $\NNaif_{\lambda}$ via le morphisme d'oubli $\pi:\SSplit\rightarrow\NNaif$. 

\begin{theorem}
Pour tout $\lambda\in\mathrm{Adm}(\mu)_K$ la restriction
\begin{equation*}
\pi:\SSplit_{\lambda}\rightarrow\NNaif_{\lambda}
\end{equation*}
est un morphisme plat. 
\end{theorem}
\begin{proof}
En restriction à une strate d'invariant $\lambda\in\mathrm{Adm}(\mu)_K$ le carré cartésien \eqref{CarreCart2} devient
\[\begin{tikzcd}
	{\mathcal{S}h^{\mathrm{PR}}_{\lambda}} & {\big[M^{\mathrm{PR}}_{\lambda}/\mathcal{G}\big]} \\
	{\mathcal{S}h^{\mathrm{PEL}}_{\lambda}} & {\big[M^{\mathrm{PEL}}_{\lambda}/\mathcal{G}\big]}
	\arrow["\overline{p}", from=1-2, to=2-2]
	\arrow[from=1-1, to=1-2]
	\arrow["\pi"', from=1-1, to=2-1]
	\arrow[from=2-1, to=2-2]
\end{tikzcd}\]
qui est lui aussi cartésien. Il suffit de montrer que le morphisme $\overline{p}$ est plat. Ce dernier s'inscrit dans un carré cartésien 
\[\begin{tikzcd}
	{M^{\mathrm{PR}}_{\lambda}} & {M^{\mathrm{PEL}}_{\lambda}} \\
	{\big[M^{\mathrm{PR}}_{\lambda}/\mathcal{G}\big]} & {\big[M^{\mathrm{PEL}}_{\lambda}/\mathcal{G}\big]}
	\arrow["{\overline{p}}"', from=2-1, to=2-2]
	\arrow[from=1-1, to=2-1]
	\arrow["q", from=1-2, to=2-2]
	\arrow["p", from=1-1, to=1-2]
\end{tikzcd}\]
Or d'après le théorème $\ref{TheoHaines}$ et $\eqref{CarreCart}$ le morphisme $p$ est localement trivial et donc en particulier plat (le schéma de base $\spec\, k$  est un corps). Le morphisme $q$ étant fidèlement plat, on conclut par \cite{stacks-project}, Lemme 100.25.4.
\end{proof}

\begin{coro}
Si $\NNaif_{\lambda}=\coprod_{w\in W} S_w$ est une bonne stratification alors 
\begin{equation*}
\SSplit_{\lambda}=\coprod_{w\in W} \pi^{-1}(S_w)
\end{equation*}
est également une bonne stratification.
\end{coro}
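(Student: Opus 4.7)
Le plan est de procéder en deux temps, en exploitant de manière essentielle la platitude du morphisme $\pi : \SSplit_\lambda \to \NNaif_\lambda$ établie au théorème précédent. On souhaite démontrer, pour tout $w \in W$, l'égalité
\begin{equation*}
\overline{\pi^{-1}(S_w)} = \coprod_{w' \leq w} \pi^{-1}(S_{w'}).
\end{equation*}

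Je commencerais par l'inclusion "$\subseteq$", qui est purement formelle. La continuité de $\pi$ entraîne que $\pi^{-1}(\overline{S_w})$ est un fermé de $\SSplit_\lambda$ contenant $\pi^{-1}(S_w)$, d'où $\overline{\pi^{-1}(S_w)} \subseteq \pi^{-1}(\overline{S_w})$. L'hypothèse que $(S_w)_{w \in W}$ est une bonne stratification de $\NNaif_\lambda$ fournit l'égalité $\overline{S_w} = \coprod_{w' \leq w} S_{w'}$, ce qui donne immédiatement l'inclusion voulue. Cette étape n'utilise pas la platitude.

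Pour l'inclusion réciproque, je fixerais $w' \leq w$ et un point $x \in \pi^{-1}(S_{w'})$, puis je chercherais à prouver que $x \in \overline{\pi^{-1}(S_w)}$. Soit $y = \pi(x) \in S_{w'}$ : la bonne stratification de $\NNaif_\lambda$ assure que $y \in \overline{S_w}$, et la Proposition \ref{PropAdherence} fournit un point $y_0 \in S_w$ tel que $y_0 \leadsto y$. L'étape cruciale consiste à relever cette spécialisation au niveau de $\SSplit_\lambda$ via la propriété de \emph{going down} pour les morphismes plats : puisque $\pi$ est plat et que les schémas en jeu sont noethériens, il existe $x_0 \in \SSplit_\lambda$ avec $\pi(x_0) = y_0$ et $x_0 \leadsto x$. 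Par construction $x_0 \in \pi^{-1}(S_w)$, et la spécialisation $x_0 \leadsto x$ donne $x \in \overline{\{x_0\}} \subseteq \overline{\pi^{-1}(S_w)}$, ce qui conclut.

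La preuve est essentiellement formelle une fois la platitude de $\pi$ acquise. Le seul ingrédient non trivial est la propriété de \emph{going down} pour les morphismes plats entre schémas noethériens, qui est classique (voir \cite{stacks-project}). L'obstacle substantiel a en réalité déjà été franchi au théorème précédent (via les résultats de Haines rappelés au Théorème \ref{TheoHaines}) ; le présent corollaire ne fait qu'en tirer une conséquence directe, à savoir que toute bonne stratification descend par tirage en arrière le long d'un morphisme plat.
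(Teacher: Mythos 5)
Votre preuve est correcte et repose, comme celle du papier, sur la platitude de $\pi:\SSplit_{\lambda}\rightarrow\NNaif_{\lambda}$ établie au théorème précédent ; seule la conséquence topologique que vous tirez de cette platitude diffère. Le papier observe simplement que $\pi$, plat (et de type fini entre schémas noethériens), est \emph{ouvert}, et pour une application continue ouverte on a $\pi^{-1}(\overline{T})=\overline{\pi^{-1}(T)}$ pour toute partie $T$, d'où immédiatement $\overline{\pi^{-1}(S_w)}=\pi^{-1}(\overline{S_w})=\coprod_{w'\leq w}\pi^{-1}(S_{w'})$. Vous passez plutôt par la propriété de relèvement des générisations (going down) le long des morphismes plats, combinée à la Proposition \ref{PropAdherence} pour produire une spécialisation $y_0\leadsto y$ avec $y_0\in S_w$, que vous relevez ensuite en $x_0\leadsto x$ avec $x_0\in\pi^{-1}(S_w)$ ; c'est correct, et notez que ce relèvement des générisations ne requiert en fait aucune hypothèse noethérienne ni de finitude (contrairement à l'ouverture des morphismes plats), même si ces hypothèses sont de toute façon satisfaites ici. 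L'inclusion directe par continuité est identique dans les deux approches ; l'argument du papier est plus bref, le vôtre est plus explicite point par point et met en évidence le rôle de la Proposition \ref{PropAdherence}.
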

\begin{proof}
D'après la proposition précédente le morphisme $\pi$ est plat en restriction à une strate $\NNaif_{\lambda}$ et est donc en particulier ouvert. 
\end{proof}

\section{Invariants de Hasse partiels}
\label{SectionHasse}

Nous allons maintenant décrire l’interaction entre la stratification de Hodge définie dans la section précédente, et les invariants de Hasse partiels définis dans \cite{ReduzziXiao}, lorsque l'indice de ramification satisfait $e\leq 4$. Sans perdre de généralité on peut se restreindre à une place $v|p$ et donc considérer la situation d'un groupe $p$-divisible $G$ muni d'une action d'un anneau d'entier $\OK_L$ où $L/\Q_p$ est une extension de degré $d$.  Pour faciliter la lecture, nous allons commencer par un rappel des différentes définitions de \textit{loc  cit}.

\subsection{Définitions}
\subsubsection{Invariant de Hasse}
Soit $G$ un groupe de Barsotti-Tate sur un schéma  $S$ de caractéristique $p$.  Le Verschiebung $\mathrm{Ver}:G^{(p)}\rightarrow G$ induit un morphisme :
\begin{equation*}
\mathrm{Ver}^*:\omega_G\longrightarrow\omega_G^{(p)}
\end{equation*}
En prenant le déterminant de ce morphisme on obtient une section
\begin{equation}
\mathrm{Ha}(G)\in H^0(S,(\mathrm{det}\, \omega_G)^{\otimes (p-1)})
\end{equation}
appelée invariant de Hasse. On a alors la proposition suivante :
\begin{prop}
Soit $G$ un groupe $p$-divisible sur un corps $k$ de caractéristique $p>0$. Alors $\mathrm{Ha}(G)$ est inversible si et seulement si $G$ est ordinaire.
\end{prop}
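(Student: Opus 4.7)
L'idée est de traduire l'énoncé en une propriété du module de Dieudonné contravariant de $G$, puis d'invoquer la classification de Dieudonné-Manin des groupes $p$-divisibles sur un corps algébriquement clos. D'abord, puisque $S=\spec k$ est un point, $\mathrm{Ha}(G)$ est un scalaire dans la droite $(\det \omega_G)^{\otimes(p-1)}$ : il est inversible si et seulement si il est non nul, si et seulement si l'application $k$-linéaire $\mathrm{Ver}^* : \omega_G \to \omega_G^{(p)}$ est un isomorphisme d'espaces vectoriels de dimension $d=\dim G$. Les deux conditions (inversibilité de $\mathrm{Ha}(G)$ et ordinarité de $G$) sont préservées par toute extension de corps (la première parce qu'un scalaire dans un corps reste non nul après extension, la seconde parce que le polygone de Newton l'est), donc je supposerai $k=\overline k$ algébriquement clos. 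En utilisant les identifications de \S\ref{omegaImV} et la factorisation de $V$ décrite dans \S\ref{FactoVerFrob}, l'application $\mathrm{Ver}^*$ sur $\omega_G$ coïncide avec la réduction modulo $p$ du Verschiebung sur $M=\mathbb{D}(G)$ restreint au sous-espace de Hodge de $M/pM$.

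Ensuite j'invoquerais Dieudonné-Manin : sur $\overline k$, le groupe $p$-divisible $G$ est isogène à une somme directe $\bigoplus_{s} G_s^{n_s}$ indexée par les pentes $s\in\mathbb{Q}\cap[0,1]$, et $G$ est ordinaire précisément lorsque $n_s=0$ pour tout $0<s<1$. Les deux quantités qui nous intéressent---l'ordinarité et la non-annulation de $\mathrm{Ha}(G)$---ne dépendent que des polygones de Newton et de Hodge de $G$ et sont donc des invariants d'isogénie, donc la vérification peut se faire composante par composante. Le cas étale $s=0$ a $\omega_{G_s}=0$ et un invariant trivialement inversible ; le cas multiplicatif $s=1$ a $\mathrm{Ver}:G_s^{(p)}\to G_s$ un véritable isomorphisme de schémas en groupes, induisant un isomorphisme sur $\omega$ ; et pour une composante de pente intermédiaire $0<s<1$, le modèle standard $M_s=W(\overline k)\langle F,V\rangle/(F^a-V^b)$ avec $s=b/(a+b)$ sous forme réduite fournit une contribution non nulle à $\omega$ sur laquelle $V$ se réduit à zéro modulo $p$, annulant $\mathrm{Ver}^*$.

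L'obstacle principal est cette dernière étape : il faut vérifier par un calcul explicite sur $M_s$ que le Verschiebung agit avec valuation $p$-adique strictement positive sur un facteur direct non trivial de la filtration de Hodge, ainsi que justifier soigneusement la réduction composante par composante (la décomposition n'étant qu'une isogénie, la filtration de Hodge n'est pas littéralement la somme directe des filtrations sur les $G_s$, mais le fait que $\mathrm{Ha}\neq 0$ équivaille à Newton$=$Hodge permet de ramener la vérification aux seuls polygones). Une alternative plus conceptuelle, qui évite l'analyse par cas isocline, consiste à observer que la valuation $p$-adique d'un relèvement canonique de $\mathrm{Ha}(G)$ dans $W(k)$ est égale à l'aire entre les polygones de Newton et de Hodge de $G$ ; l'invariant de Hasse est alors une unité exactement lorsque ces deux polygones coïncident, ce qui est la caractérisation usuelle de l'ordinarité.
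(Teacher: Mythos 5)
Le corps de l'article énonce cette proposition sans démonstration (c'est un fait classique), il n'y a donc pas de preuve de référence à laquelle vous comparer ; j'évalue votre plan pour lui-même. La réduction au cas $k=\overline{k}$ et la traduction \og $\mathrm{Ha}(G)$ inversible $\Leftrightarrow V$ bijectif sur $\omega_G$ \fg\ sont correctes. En revanche, l'organisation par composantes isoclines souffre d'un cercle que vous signalez vous-même sans le résoudre : la décomposition de Dieudonné--Manin n'est donnée qu'à isogénie près, la filtration de Hodge (donc $\mathrm{Ha}$) ne se transporte pas le long d'une isogénie, et la justification proposée (\og $\mathrm{Ha}\neq 0$ équivaut à Newton $=$ Hodge \fg) est précisément l'énoncé à démontrer. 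Le bon substitut est la décomposition en \emph{produit direct} --- et non à isogénie près --- de $G$, ou plus simplement de $G[p]$, sur $\overline{k}$ en parties étale, multiplicative et locale-locale : c'est un vrai produit, donc $\omega_G$ et $\mathrm{Ha}$ se scindent ; la partie étale a $\omega=0$, sur la partie multiplicative $V$ est bijectif, et sur la partie locale-locale $V$ est nilpotent donc non injectif sur $\omega=\mathrm{im}\,V\neq 0$ (si $V$ était injectif sur $\mathrm{im}\,V\neq 0$, les images itérées $V^n$ ne chuteraient pas, contredisant la nilpotence). Il reste alors à savoir que l'ordinarité équivaut à l'absence de partie locale-locale, et c'est là que Dieudonné--Manin (ou la suite connexe-étale et la dualité) intervient proprement.

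Votre \og alternative plus conceptuelle \fg\ est fausse telle quelle : pour une courbe elliptique supersingulière l'aire entre les polygones de Newton et de Hodge vaut $1/2$, alors que $\mathrm{Ha}=0$ dans $k$ et qu'aucun relèvement dans $W(k)$ n'a de valuation $1/2$ (la valuation y est entière) ; il n'existe d'ailleurs pas de relèvement canonique de $\mathrm{Ha}$. Ce qui est vrai est seulement l'équivalence \og $\mathrm{Ha}$ inversible $\Leftrightarrow$ Newton $=$ Hodge \fg, c'est-à-dire l'énoncé lui-même ; cette piste est donc elle aussi circulaire et ne peut pas servir de démonstration.
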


\subsubsection{Invariants de Hasse partiels}
\label{SectionHasse}
Soit $(G,\iota)$ un groupe $p$-divisible sur $S$ muni d'une action $\iota:\mathcal{O}_L\rightarrow \mathrm{End}_S(G)$. On suppose que $(G,\iota)$ est muni d'une donnée de Pappas-Rapoport $(\mathscr{E}_{\tau},\omega_{G,\tau})_{\tau}$ une donnée de Pappas-Rapoport (voir \ref{DefPRData}).
Pour tout $2\leq i\leq e$ on définit l'application $M_{\tau}^{(i)}$ comme étant la multiplication par $\varpi$ au niveau des gradués :
\begin{equation*}
M_{\tau}^{(i)}:\omega_{G,\tau}^{(i)}/\omega_{G,\tau}^{(i-1)}\longrightarrow \omega_{G,\tau}^{(i)}/\omega_{G,\tau}^{(i-2)}
\end{equation*}
Ce morphisme induit une section appelée \textit{invariant de Hasse primitif} :
\begin{equation*}
m_{\tau}^{(i)}(G)\in H^0(S,\mathrm{det}\, (\omega_{G,\tau}^{[i-1]}/\omega_{G,\tau}^{[i-2]})\otimes \mathrm{det}\, (\omega_{G,\tau}^{[i]}/\omega_{G,\tau}^{[i-1]})^{-1})
\end{equation*}
Dans le cas Hilbert par exemple, c'est-à-dire le cas où $\mathrm{dim}(G)=dg=d$, on a la proposition suivante caractérisant le lieu de Rapoport :
\begin{prop}
\label{PropHassePartiel}
On suppose que $S=\spec\, k$. Les conditions suivantes sont équivalentes :
\begin{enumerate}
\item $\mathrm{Hodge}_{\tau}(G,\iota)=(e,0)$
\item $m_{\tau}^{(i)}\neq 0$ pour tout $2\leq i\leq e$
\item $\omega_{G,\tau}$ est un $\OK_L\otimes_{\mathbb{F}_p}\OK_S$-module libre de rang $1$.
\end{enumerate}
\end{prop}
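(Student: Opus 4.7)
The plan is to reduce all three conditions to statements about the $k[u]/(u^e)$-module $\omega_{G,\tau}$ together with the action of $u$ on its Pappas--Rapoport filtration. Using the identification $\OK_L\otimes_{\OK_{L^{\mathrm{nr}}},\tau}k\simeq k[u]/(u^e)$ sending $[\varpi]\mapsto u$, and because $\varphi_{\tau,j}(\varpi)$ reduces to $0$ in $k$, the Pappas--Rapoport stability condition becomes simply $u\cdot \omega_{G,\tau}^{(j)}\subset \omega_{G,\tau}^{(j-1)}$. Each of the three conditions is thus a linear-algebraic statement about a length-$e$ module over $k[u]$ endowed with a filtration of one-dimensional graded pieces that is $u$-stable in this sense.

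For $(1)\Leftrightarrow(3)$, I will unwind the Hodge invariant via Remarque~\ref{Hodge=KR}: the preimage lattice $\Lambda_{\omega_{G,\tau}}\subset k[[u]]^2$ satisfies $u^e\Lambda_0\subset \Lambda_{\omega_{G,\tau}}\subset \Lambda_0$ and its Cartan invariant is exactly $\mathrm{Hodge}_\tau(G,\iota)$. The value $(e,0)$ amounts to $\Lambda_{\omega_{G,\tau}}=k[[u]]\,f_1\oplus u^e k[[u]]\,f_2$ in some $k[[u]]$-basis, which is in turn equivalent to $\omega_{G,\tau}$ being cyclic over $k[u]/(u^e)$, generated by a vector whose reduction modulo $u$ is nonzero, i.e., free of rank one (the direct-summand condition built into the PR data ensures the generator can be completed to a basis).

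For $(3)\Rightarrow(2)$, suppose $\omega_{G,\tau}=(k[u]/(u^e))\cdot v$. Iterating the Pappas--Rapoport inclusion yields $u^j\omega_{G,\tau}^{(j)}=0$, so $\omega_{G,\tau}^{(j)}\subset \ker(u^j\mid \omega_{G,\tau})=u^{e-j}\omega_{G,\tau}$; since both sides have $k$-dimension $j$, the filtration is forced to be $\omega_{G,\tau}^{(j)}=u^{e-j}\omega_{G,\tau}$. The graded piece $\omega_{G,\tau}^{(j)}/\omega_{G,\tau}^{(j-1)}$ is then generated by the class of $u^{e-j}v$, and $M_\tau^{(i)}$ sends it to the class of $u^{e-i+1}v$, a generator of $\omega_{G,\tau}^{(i-1)}/\omega_{G,\tau}^{(i-2)}$; hence $m_\tau^{(i)}$ is nonzero for every $i$.

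For the converse $(2)\Rightarrow(3)$, I will first note that non-vanishing of $m_\tau^{(i)}$ translates into the equality $u\cdot \omega_{G,\tau}^{(i)}+\omega_{G,\tau}^{(i-2)}=\omega_{G,\tau}^{(i-1)}$, then prove by induction on $i$ that in fact $u\cdot \omega_{G,\tau}^{(i)}=\omega_{G,\tau}^{(i-1)}$: the case $i=2$ is immediate from $\omega_{G,\tau}^{(0)}=0$, and the inductive step uses $\omega_{G,\tau}^{(i-2)}=u\cdot \omega_{G,\tau}^{(i-1)}\subset u\cdot \omega_{G,\tau}^{(i)}$ to absorb the second term. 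Consequently $\dim_k u^j\omega_{G,\tau}=\dim_k \omega_{G,\tau}^{(e-j)}=e-j$ for all $0\le j\le e$, which is the dimension sequence of a single Jordan block of size $e$; equivalently, $\omega_{G,\tau}$ is free of rank one over $k[u]/(u^e)$. The only real subtlety is the translation between the lattice and module pictures in $(1)\Leftrightarrow(3)$; both directions of $(2)\Leftrightarrow(3)$ reduce to the short inductive argument just sketched.
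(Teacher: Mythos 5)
Votre preuve est correcte et suit essentiellement la même démarche que celle du papier : le point crucial $(2)\Rightarrow(3)$ y est traité en prenant $v\in\omega_{\tau}^{(e)}\setminus\omega_{\tau}^{(e-1)}$ et en constatant que $(\varpi^{e-1}v,\dots,\varpi v,v)$ est une base, ce qui est exactement votre récurrence $u\cdot\omega^{(i)}=\omega^{(i-1)}$ reformulée par un comptage de dimensions, et les autres implications (organisées chez vous autour de $(3)$ plutôt qu'en cycle $(1)\Rightarrow(2)\Rightarrow(3)\Rightarrow(1)$) sont les mêmes vérifications élémentaires via l'identification $\OK_L\otimes_{\OK_{L^{\mathrm{nr}}},\tau}k\simeq k[u]/(u^e)$ et la construction \eqref{LambdaF}.
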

\begin{proof}
$(1)$ implique $(2)$ et $(3)$ implique $(1)$ sont évidents. Pour $(2)$ implique $(3)$, il suffit de prendre un vecteur $v\in\omega_{\tau}^{(e)}\backslash \omega_{\tau}^{(e-1)}$, et de voir qu $\langle \varpi^{e-1}v,\dots,\varpi v,v\rangle$ est une base de $\omega_{\tau}^{(e)}$.
\end{proof}
\begin{remark}
La même preuve fonctionne également pour le cas Hilbert-Siegel.
\end{remark}

\begin{definition}
Soit $(G,\iota,\omega_{G,\tau}^{(\cdot)})$ un groupe $p$-divisible sur un corps $k$ avec donnée de Pappas-Rapoport. Pour tout $1\leq i \leq e$ on définit :
\begin{equation*}
\mathrm{Hodge}(\omega_{G,\tau}^{(i)}):=\mathrm{Hodge}(\Lambda_{\omega_{G,\tau}^{(i)}})=\mathrm{Inv}(\Lambda_{\omega_{G,\tau}^{(i)}},\Lambda_0)\in\mathbb{X}_{*}(T)^+
\end{equation*}
via la construction \eqref{LambdaF} (voir la section \ref{Hodge=KR} pour la description explicite de $\mathrm{Hodge}(\omega_{G,\tau}^{(e)})$).
\end{definition}

\begin{remark}
Dans la définition ci-dessus on adopte une convention différente que celle utilisée dans \cite{BijHodge}. Plus précisément, si $(G,\iota,\omega_{G,\tau}^{(\cdot)})$ un groupe $p$-divisible  sur un corps $k$ avec donnée de Pappas-Rapoport, alors le faisceau $\omega_{G,\tau}^{i}$ est un $\OK_L\otimes k$-module satisfaisant $\varpi^i\cdot \omega_{G,\tau}^{(i)}=0$. Via l'identification $\varpi\mapsto u$ on peut donc le voir comme un $k[u]/(u^e)$-module ou comme un $k[u]/(u^i)$-module. Dans la construction \eqref{LambdaF} on le voit comme un $k[u]/(u^e)$-module, alors que dans \cite{BijHodge} il est considéré comme un $k[u]/(u^i)$-module.
\end{remark}

Par la suite nous aurons besoin du lemme suivant : 

\begin{lemma}
\label{LemmaHodge}
Soit $\tau\in\Sigma^{nr}$, $2\leq i\leq e$ et $x\in \SSplit$. Notons $(G,\iota,\omega_{G,\tau}^{(\cdot)})$ le groupe $p$-divisible avec donnée de Pappas-Rapoport associé.  Si $m_{\tau}^{(i)}(x)=0$ alors on a l'égalité
\begin{equation*}
\mathrm{Hodge}(\omega_{G,\tau}^{(i)})=\mathrm{Hodge}(\omega_{G,\tau}^{(i-2)})-(1,1)
\end{equation*}
\end{lemma}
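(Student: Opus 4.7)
On raisonne point par point : l'annulation de $m_{\tau}^{(i)}$ étant une condition ouverte/fermée sur les points, on peut supposer sans perte de généralité que $x=\spec\, k$ est un point fermé, de sorte que les différents faisceaux deviennent des $k$-espaces vectoriels de dimension finie (resp. des $k\kt$-réseaux de rang $2$ après passage à $\Lambda_{\omega^{(\cdot)}}$). Par définition, l'invariant $m_{\tau}^{(i)}$ est le déterminant du morphisme de multiplication par $\varpi$ : $\omega_{G,\tau}^{(i)}/\omega_{G,\tau}^{(i-1)}\to \omega_{G,\tau}^{(i)}/\omega_{G,\tau}^{(i-2)}$, dont l'image est contenue dans $\omega_{G,\tau}^{(i-1)}/\omega_{G,\tau}^{(i-2)}$ par la condition de Pappas--Rapoport. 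Son annulation en $x$ équivaut donc à l'inclusion renforcée $\varpi\cdot \omega_{G,\tau}^{(i)}\subset \omega_{G,\tau}^{(i-2)}$.

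Ensuite, j'interprète cette inclusion via le plongement dans la grassmannienne affine. Rappelons que $\Lambda_{\omega_{G,\tau}^{(k)}}=p^{-1}(\omega_{G,\tau}^{(k)})$ où $p:\Lambda_0\to \Lambda_0/u^e\Lambda_0$, et que l'identification $\varpi\otimes 1 \mapsto u$ transforme la multiplication par $\varpi$ en multiplication par $u$ (les autres termes $\varphi_{\tau,j}(\varpi)$ étant nuls en caractéristique $p$). L'inclusion $\varpi\cdot \omega_{G,\tau}^{(i)}\subset \omega_{G,\tau}^{(i-2)}$ se relève alors directement en
\begin{equation*}
u\cdot \Lambda_{\omega_{G,\tau}^{(i)}}\subset \Lambda_{\omega_{G,\tau}^{(i-2)}}.
\end{equation*}

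L'étape clé est alors un argument de codimension. D'un côté, comme $\Lambda_{\omega_{G,\tau}^{(i)}}$ est un $k\kt$-module libre de rang $2$, le quotient $\Lambda_{\omega_{G,\tau}^{(i)}}/u\Lambda_{\omega_{G,\tau}^{(i)}}$ est un $k$-espace vectoriel de dimension $2$. De l'autre côté, les conditions de Pappas--Rapoport assurent que $\Lambda_{\omega_{G,\tau}^{(i-2)}}\subset \Lambda_{\omega_{G,\tau}^{(i-1)}}\subset \Lambda_{\omega_{G,\tau}^{(i)}}$ avec chaque quotient successif de dimension $1$, donc $\Lambda_{\omega_{G,\tau}^{(i)}}/\Lambda_{\omega_{G,\tau}^{(i-2)}}$ est également de dimension $2$. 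L'inclusion $u\Lambda_{\omega_{G,\tau}^{(i)}}\subset\Lambda_{\omega_{G,\tau}^{(i-2)}}\subset\Lambda_{\omega_{G,\tau}^{(i)}}$ entre deux sous-réseaux de même codimension force l'égalité $u\Lambda_{\omega_{G,\tau}^{(i)}}=\Lambda_{\omega_{G,\tau}^{(i-2)}}$.

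Il reste à conclure par un calcul explicite d'invariants de Hodge. Si $\mathrm{Hodge}(\Lambda)=(a,b)$, alors il existe une base $(e_1,e_2)$ de $\Lambda_0$ telle que $\Lambda=\langle u^a e_1,u^b e_2\rangle$, d'où $u\Lambda=\langle u^{a+1}e_1,u^{b+1}e_2\rangle$ et donc $\mathrm{Hodge}(u\Lambda)=(a+1,b+1)$. Appliqué à $\Lambda=\Lambda_{\omega_{G,\tau}^{(i)}}$, l'égalité $u\Lambda_{\omega_{G,\tau}^{(i)}}=\Lambda_{\omega_{G,\tau}^{(i-2)}}$ donne $\mathrm{Hodge}(\omega_{G,\tau}^{(i-2)})=\mathrm{Hodge}(\omega_{G,\tau}^{(i)})+(1,1)$, d'où le résultat. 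Le seul point délicat est l'identification propre de la translation $\varpi\leftrightarrow u$ et le fait que les termes $\varphi_{\tau,j}(\varpi)$ disparaissent modulo $p$, ce qui est toutefois immédiat puisque $L/L^{\mathrm{nr}}$ est totalement ramifiée et $\varphi_{\tau,j}(\varpi)$ est donc de valuation strictement positive.
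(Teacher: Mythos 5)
Votre preuve est correcte et suit essentiellement la même route que celle (beaucoup plus laconique) du papier : l'annulation $m_{\tau}^{(i)}(x)=0$ donne l'inclusion $\varpi\cdot\omega_{G,\tau}^{(i)}\subset\omega_{G,\tau}^{(i-2)}$ (i.e. $\omega^{(i)}\subset\varpi^{-1}(\omega^{(i-2)})$, en utilisant que les gradués sont des droites), un comptage de dimensions la transforme en l'égalité $u\Lambda_{\omega^{(i)}}=\Lambda_{\omega^{(i-2)}}$, et l'invariant de Hodge se décale de $(1,1)$ sous multiplication par $u$. Seule remarque cosmétique : la « réduction à un point fermé » est superflue, l'énoncé étant déjà ponctuel et l'argument fonctionnant sur le corps résiduel $\kappa(x)$ quelconque.
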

\begin{proof}
Par définition si $m_{\tau}^{(i)}=0$ on a $\omega_{\tau}^{(i)}\subset \varpi^{-1}(\omega_{\tau}^{(i-2)})$ (ici on utilise que les gradués sont de dimension $1$). Un simple calcul des dimensions respectives montre que cette inclusion est une égalité. Le résultat en découle.
\end{proof}
\begin{remark}
On fera attention au fait que la réciproque est bien sûr fausse.
\end{remark}
\begin{remark}
On fera également attention au fait que le lemme précédent n'est pas vrai dans le cas Hilbert-Siegel pour $g\geq 2$. Cela vient du fait que le determinant peut être nul sans que l'application $M_{\tau}^{(i)}$ soit nulle pour autant.
\end{remark}
\subsubsection{}  Le Verschiebung $V:G^{(p)}\rightarrow G$ induit une application 
\begin{equation}
V_{\tau}:\mathscr{E}_{\tau}\rightarrow \omega_{G,\sigma^{-1}\tau}^{(p)}=(\omega_{G,\sigma^{-1}\tau}^{(e)})^{(p)}
\end{equation}
(voir \ref{FactoVerFrob}). On considère la composée :
\begin{equation*}
\mathscr{E}_{\tau}[\varpi]\simeq \mathscr{E}_{\tau}/\mathscr{E}_{\tau}[\varpi^{e-1}]\xrightarrow{V_{\tau}}(\omega_{G,\sigma^{-1}\tau}^{(e)}/\omega_{G,\sigma^{-1}\tau}^{(e-1)})^{(p)}
\end{equation*}
 Autrement dit si $x\in\mathscr{E}_{\tau}[\varpi]$ alors $x=\varpi^{e-1}\cdot y$ pour un certains $y\in\mathscr{E}_{\tau}$, on lui associe alors $V_{\tau}(y)$.
En restreignant ce morphisme à $\omega_{G,\tau}^{(1)}\subset \mathscr{E}_{\tau}[\varpi]$ on obtient finalement un morphisme :
\begin{equation}
\label{defHa1}
\mathrm{Ha}_{\tau}:\omega_{G,\tau}^{(1)}\rightarrow (\omega_{G,\sigma^{-1}\tau}^{(e)}/\omega_{G,\sigma^{-1}\tau}^{(e-1)})^{(p)}
\end{equation}
et donc une section
\begin{equation*}
ha_{\tau}(G)\in H^0(S,\mathrm{det}\, (\omega_{G,\sigma^{-1}\tau}^{(e)}/\omega_{G,\sigma^{-1}\tau}^{(e-1)})^{\otimes p}\otimes \mathrm{det}\, (\omega_{G,\tau}^{(1)})^{-1})
\end{equation*}
également appelé invariant de Hasse partiel. 
\begin{remark}
Pour uniformiser les notations on définit $m_{\tau}^{(1)}:=ha_{\tau}$.
\end{remark}

\subsection{Stratification}

Dans \cite{DiamondKassei} les auteurs se sont intéressés à la stratification de la fibre spéciale induite par ces invariants de Hasse partiels. Plus précisément si on note
\begin{equation*}
\SSplit_T=\{x\in\SSplit |\ m_{\tau}^i(x)=0\ \text{ssi}\ (\tau,i)\in T\},\ \ \ T\subset\Sigma^{nr}\times \{1,\dots,e\}
\end{equation*}
les sous schémas localement fermés définis comme les lieux d'annulations des invariants de Hasse partiels d'indice contenu dans $T$, alors on a le théorème suivant
\begin{theorem}[\cite{DiamondKassei}, Proposition 5.8]
\label{TheoDK}
Les $(\SSplit_{T})_{T}$ pour $T\subset\Sigma^{nr}\times \{1,\dots,e\}$ définissent une bonne stratification de $\SSplit$. Plus précisément on a pour tout $T\subset\Sigma^{nr}\times \{1,\dots,e\}$ :
\begin{equation*}
\overline{\SSplit_{T}}=\bigcup_{T'\subset {T}}\SSplit_{T'}
\end{equation*}
De plus chacune des strates $\SSplit_{T}$  est non vide, quasi affine, et équidimensionnelle de dimension $d-|T|$.
\end{theorem}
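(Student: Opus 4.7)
Mon plan est de démontrer ce théorème en combinant des arguments de diviseurs de Cartier avec des calculs locaux de déformation. Je commencerais par observer que chaque invariant de Hasse partiel $m_{\tau}^{(i)}$ est une section globale d'un fibré en droites explicite $\mathcal{L}_{\tau,i}$ sur $\SSplit$. La première étape consisterait à montrer que le lieu d'annulation $Z_{\tau,i}$ est un diviseur de Cartier effectif, c'est-à-dire que la section n'est identiquement nulle sur aucune composante irréductible. Cela découle de la lissité de $\SSplit$ (qui élimine les composantes immergées) combinée à la Proposition \ref{PropHassePartiel}: la strate de Rapoport $\SSplit_{\emptyset}$, définie par la non-annulation simultanée des $m_{\tau}^{(i)}$, est un ouvert dense non vide.

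L'étape centrale et la plus délicate consisterait à établir la transversalité des diviseurs $(Z_{\tau,i})$: pour tout $T$, l'intersection $\bigcap_{(\tau,i)\in T}Z_{\tau,i}$ est équidimensionnelle réduite de codimension exactement $|T|$. Je procéderais par un calcul sur les complétés formels. En un point $x$ d'une strate, le diagramme de modèle local et la Proposition \ref{Fargues} permettent de construire un système de coordonnées formelles sur $\widehat{\OK}_{\SSplit,x}$ adapté à la filtration de Pappas-Rapoport. Dans ces coordonnées, les invariants primitifs $m_{\tau}^{(i)}$ pour $i\geq 2$ apparaissent comme des coordonnées indépendantes, puisqu'ils ne font intervenir que la multiplication par $\varpi$ sur les gradués, qui est déjà visible sur le modèle local $\widetilde{\Gr}_{\mu_{\bullet}}$. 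L'invariant $m_{\tau}^{(1)}$, qui fait intervenir le Verschiebung via la factorisation \ref{FactoVerFrob}, nécessite un traitement séparé: le Lemme \ref{LemmaRelevement} garantit que la section ne dépend pas du choix du relèvement, et la théorie cristalline permet alors de l'exprimer explicitement en coordonnées locales.

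Une fois la transversalité acquise, la relation d'adhérence $\overline{\SSplit_T}=\bigcup_{T'\supseteq T}\SSplit_{T'}$ en découle directement: l'adhérence coïncide avec la sous-variété fermée $\bigcap_{(\tau,i)\in T}Z_{\tau,i}$, qui se décompose en réunion disjointe des $\SSplit_{T'}$ pour $T'\supseteq T$. La dimension $d-|T|$ et l'équidimensionnalité en sont une conséquence immédiate. La quasi-affinité de $\SSplit_T$ provient de ce qu'elle s'écrit comme le complémentaire, dans l'adhérence $\overline{\SSplit_T}$, du diviseur défini par $\prod_{(\tau,i)\notin T}m_{\tau}^{(i)}$, dont une puissance est reliée à l'invariant de Hasse total, lui-même section d'une puissance de $\det\omega$ qui est ample. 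Enfin, la non-vacuité s'établit en construisant explicitement, pour chaque $T$, un module filtré satisfaisant exactement les conditions d'annulation voulues, éventuellement complété par un argument de déformation via la Proposition \ref{PropAdherence}.

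Le principal obstacle technique sera l'étape de transversalité, et plus précisément le traitement de l'invariant $m_{\tau}^{(1)}$. Contrairement aux $m_{\tau}^{(i)}$ avec $i\geq 2$ qui sont des objets \og du modèle local\fg, celui-ci est un invariant véritablement global dépendant du groupe $p$-divisible; son expression en coordonnées locales nécessitera un calcul explicite avec les displays ou les cristaux de Dieudonné sur le complété, ce qui est nettement plus subtil que pour les invariants purement combinatoires liés à la filtration.
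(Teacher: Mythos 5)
Le point essentiel à noter d'abord : le papier ne démontre pas cet énoncé — il est importé tel quel de \cite{DiamondKassei} (Proposition 5.8) — si bien qu'il n'y a pas de preuve interne à laquelle comparer votre plan ; celui-ci reconstitue en substance la stratégie de la référence citée (et de \cite{ReduzziXiao}) : sections de fibrés en droites, lieux d'annulation diviseurs, calcul sur les anneaux locaux complétés adapté à la filtration de Pappas--Rapoport. Mais en tant que preuve, votre esquisse laisse ouverts précisément les points durs. D'une part, l'étape \og diviseur de Cartier \fg\ est circulaire telle quelle : pour garantir que $m_{\tau}^{(i)}$ n'est identiquement nul sur aucune composante irréductible, vous invoquez la densité de la strate de Rapoport, or cette densité sur \emph{chaque} composante est essentiellement une partie de l'énoncé à démontrer (elle équivaut aux relations d'adhérence et au calcul de dimension que vous cherchez à établir). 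D'autre part, la transversalité pour $m_{\tau}^{(1)}$ est signalée comme \og le principal obstacle \fg\ mais n'est pas fournie : c'est exactement là que réside le contenu du calcul cristallin/display de la référence, donc le cœur de la preuve reste manquant.

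La lacune la plus concrète concerne la non-vacuité. Votre suggestion de \og construire explicitement un module filtré satisfaisant les conditions d'annulation \fg\ ne peut pas fonctionner dès que $T$ contient un indice $(\tau,1)$ : l'invariant $m_{\tau}^{(1)}=ha_{\tau}$ dépend du Verschiebung du groupe $p$-divisible via la factorisation \ref{FactoVerFrob}, et non des seules données d'algèbre linéaire de la filtration ; exhiber une filtration ne suffit donc pas, il faut produire de vrais points de $\SSplit$ avec module de Dieudonné (Frobenius compris) prescrit. À titre de comparaison, le présent papier, pour la tâche bien plus modeste de montrer qu'une seule strate $X_{(2,2)}^{(m_1,m_2,m_3,m_4)}$ est non vide dans le cas $e=4$, doit déjà recourir aux formes normales explicites du Frobenius de \cite{GorenAndreatta}. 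Enfin, un détail de convention : avec la définition $\SSplit_T=\{x\ |\ m_{\tau}^{(i)}(x)=0 \Leftrightarrow (\tau,i)\in T\}$, la relation d'adhérence correcte est bien celle que vous écrivez ($T'\supseteq T$) ; l'inclusion $T'\subset T$ figurant dans l'énoncé du papier doit se lire avec la convention opposée (ou comme une coquille).
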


\subsection{Interaction avec la stratification de Hodge : le cas $e=4$}

On dispose de deux stratifications de la fibre spéciale du modèle de Pappas Rapoport :

\begin{equation*}
\SSplit=\coprod_{\lambda\in\mathrm{Adm}(\mu)_K}\SSplit_{\lambda},\ \ \ \SSplit=\coprod_{T\in\mathscr{T}}\SSplit_{T}
\end{equation*}
où  $\mathscr{T}=\mathscr{P}(\Sigma^{\mathrm{nr}}\times\{1,\dots,e\})$ désigne les sous ensembles de  $\Sigma^{\mathrm{nr}}\times\{1,\dots,e\}$.
Il est naturel de se demander dans quelle mesure la décomposition 
\begin{equation*}
\SSplit=\coprod_{(\lambda,T)\in\mathrm{Adm}(\mu)_K\times\mathscr{T}}\SSplit_{(\lambda,T)},\ \ \ \ \SSplit_{(\lambda,T)}:=\SSplit_{\lambda}\cap\SSplit_{T}
\end{equation*}
fourni une bonne stratification. Deux problèmes se posent alors :
\begin{enumerate}
\item Quelles sont les strates $\SSplit_{(\lambda,T)}$ qui sont non vides ? 
\item Si cette stratification est une bonne stratification, quelle est la relation d'ordre sur les couples $(\lambda,T)$ qui décrit les relations d'adhérences des strates ?
\end{enumerate}
Pour le point $(1)$, la Proposition \ref{PropHassePartiel} nous dit que pour $\lambda=(e,0)$ alors $\SSplit_{(\lambda,T)}=\emptyset$ si et seulement si il existe un indice $(\tau, i)\in T\in\mathscr{T}$ avec $i\geq 2$. De manière générale, il semble difficile de prédire quelles sont les strates non-vides. Nous allons répondre à cette question et  décrire l'interaction entre le stratification de Hodge et celle par les invariants $(m^{(i)})_{i}$ dans le cadre d'une extension $L/\Q_p$ totalement ramifiée de degré $e=4$. L'extension étant totalement ramifiée, nous pouvons omettre l'indice $\tau$ et on a donc $\mathscr{T}=\mathscr{P}(\{1,\dots,e\})$.

\begin{definition}
On définit les ensembles :
\begin{equation}
\label{DefTlambda}
\mathscr{T}_{\lambda}=\{T\in \mathscr{P}(\{2,\dots,e\})\ |\ \SSplit_{(\lambda,T)}\neq\emptyset\}\ \ \ \ \forall\, \lambda\in\mathrm{Adm}(\mu)_K
\end{equation}
\begin{equation*}
\mathscr{T}_{\mu_{\bullet}}=\{(\lambda,T)\in\mathrm{Adm}(\mu)_K\times\mathscr{P}(\{2,\dots,e\})\ |\ \SSplit_{(\lambda,T)}\neq\emptyset\}
\end{equation*}
\end{definition}
Par définition de ces ensembles on a une décomposition de l'ensemble $\mathscr{T}_{\mu_{\bullet}}$ :
\begin{equation*}
\mathscr{T}_{\mu_{\bullet}}=\coprod_{\lambda\in\mathrm{Adm}(\mu)_K}\mathscr{T}_{\lambda}
\end{equation*}
\begin{remark}
La stratification définie par l'ensemble $\mathscr{T}_{\mu_{\bullet}}$  est purement \og linéaire \fg\  dans le sens où elle est définie par les invariants $m_i$ pour $i\geq 2$ et par l'invariant de Hodge, qui sont des invariants \og linéaires \fg\ (en opposition à l'invariant $m_1=ha$ qui est un invariant $\sigma$-linéaire). 
\end{remark}
\begin{remark}
En fait la stratification induite par l'ensemble $\mathscr{T}_{\mu_{\bullet}}$ correspond à une stratification du produit de convolution $\widetilde{\Gr}_{\mu_{\bullet}}$. La stratification définie ci-dessus est donc \og linéaire\fg\  dans le sens où elle ne dépend que du modèle local $\Split$.
\end{remark}
\begin{definition}
On définit les ensembles suivants :
\begin{equation*}
\mathscr{T}_{\mu_{\bullet}}^{(m_1=0)}=\{(\lambda,T)\in\mathrm{Adm}(\mu)_K\times \mathscr{T}\ |\  \SSplit_{(\lambda,T)}\neq\emptyset,\ m_1\in T\}
\end{equation*}
\begin{equation*}
\mathscr{T}_{\mu_{\bullet}}^{(m_1\neq 0)}=\{(\lambda,T)\in\mathrm{Adm}(\mu)_K\times \mathscr{T}\ |\  \SSplit_{(\lambda,T)}\neq\emptyset,\ m_1\notin T\}
\end{equation*}
\begin{equation*}
\mathscr{A}_{\mu_{\bullet}}=\{(\lambda,T)\in\mathrm{Adm}(\mu)_K\times\mathscr{T}\ |\ \SSplit_{(\lambda,T)}\neq\emptyset\}
\end{equation*}
\end{definition}
On a par définition de ces ensembles :
\begin{equation*}
\mathscr{A}_{\mu_{\bullet}}=\mathscr{T}_{\mu_{\bullet}}^{(m_1=0)}\coprod\mathscr{T}_{\mu_{\bullet}}^{(m_1\neq0)}
\end{equation*}

Dans un premier temps nous allons décrire l'interaction entre les strates de Hodge et les strates définies par les $m^{(i)}$ pour $i=2,3,4$.  Nous intégrerons l'invariant $m_1=ha$ à notre raisonnement par la suite. Notre problème devient alors simplement un problème d'algèbre linéaire sur la filtration de Pappas-Rapoport $\omega^{(\cdot)}$. Pour alléger les notations nous noterons $X_{\lambda}^{(m_i)_{i\in T}}=\SSplit_{(\lambda,T)}$.

\subsubsection{Invariant $(m_i)_{i\geq 2}$}
 Le schéma ci-dessous décrit les strates non vides et les relations d'adhérences entre elles avec la convention $A\rightarrow B$ si $A\subset\overline{B}$. 
\begin{equation}
\label{deformation(m_i)}
\begin{tikzcd}
	& {X_{(4,0)}} \\
	{X_{(3,1)}^{(m_2)}} & {X_{(3,1)}^{(m_3)}} & {X_{(3,1)}^{(m_4)}} \\
	{X_{(3,1)}^{(m_2,m_3)}} & {X_{(2,2)}^{(m_3)}} & {X_{(3,1)}^{(m_3,m_4)}} \\
	& {X_{(2,2)}^{(m_2,m_4)}} \\
	& {X_{(2,2)}^{(m_2,m_3,m_4)}}
	\arrow[from=2-2, to=1-2]
	\arrow[from=2-1, to=1-2]
	\arrow[from=2-3, to=1-2]
	\arrow[from=3-2, to=2-2]
	\arrow[from=3-1, to=2-1]
	\arrow[from=3-3, to=2-3]
	\arrow[from=4-2, to=2-1]
	\arrow[from=4-2, to=2-3]
	\arrow[from=5-2, to=3-1]
	\arrow[from=5-2, to=3-3]
	\arrow[from=5-2, to=4-2]
	\arrow[bend right=40, from=5-2, to=3-2]
	\arrow[from=3-1, to=2-2]
	\arrow[from=3-3, to=2-2]
\end{tikzcd}
\end{equation}

Le problème étant pour le moment purement un problème d'algèbre linéaire, nous allons utiliser les notations utilisées dans le cadre des grassmanniennes affines (via la construction \eqref{LambdaF}). 

 Tout d'abord commençons par prouver l'assertion sur les strates vides. Nous allons donner tous les détails pour le cas $(1)$. Les autres cas étant très similaire, nous donnerons seulement les arguments importants.
\begin{enumerate}
\label{StrataVide}

\item $X_{(3,1)}^{(m_2,m_4)}=\emptyset$. A priori on a une décomposition en union disjoint :
\begin{equation*}
X_{}^{(m_2,m_4)}= X_{(3,1)}^{(m_2,m_4)}\cup X_{(2,2)}^{(m_2,m_4)}
\end{equation*} 
(on rappelle que la strate $X_{(4,0)}^{(m_2,m_4)}$ est vide d'après le Lemme \ref{PropHassePartiel}). Soit $x\in X_{}^{(m_2,m_4)} $. Pour simplifier les notations on note $\Lambda_1\subset\dots\subset\Lambda_4$ la filtration $\Lambda_{\omega_{x}^{(1)}}\subset\dots\subset\Lambda_{\omega_x^{(4)}}$ associée à ce point (voir \eqref{LambdaF}). D'après le Lemme \ref{LemmaHodge} puisque $m_2(x)=m_4(x)=0$ on a 
\begin{align*}
\mathrm{Hodge}(\Lambda_4)&=\mathrm{Hodge}(\Lambda_2)-(1,1)\\
&=\mathrm{Hodge}(\Lambda_0)-(1,1)-(1,1)\\
&=(2,2)
\end{align*}
Par conséquent $X^{(m_2,m_4)}=X_{(2,2)}^{(m_2,m_4)}$ et $X_{(3,1)}^{(m_2,m_4)}=\emptyset$.
\item $X_{(2,2)}^{(m_4)}=\emptyset$. Puisque $m_4=0$ on a d'après \ref{LemmaHodge} :
\begin{align*}
\mathrm{Hodge}(\Lambda_4)&=\mathrm{Hodge}(\Lambda_2)-(1,1)\\
&=(4,2)-(1,1)\\
&=(3,1)
\end{align*}
où l'on a utilisé $m_2\neq 0$ pour obtenir $\mathrm{Hodge}(\Lambda_2)=(4,2)$.
\item $X_{(2,2)}^{(m_3,m_4)}=\emptyset$. Même raison que ci-dessus.
\item $X_{(2,2)}^{(m_2)}=\emptyset$. Soit $v\in \omega^{(4)}\backslash \omega^{(3)}$. On a $u\cdot v\in \omega^{(3)}\backslash \omega^{(2)}$ car $m_4\neq 0$. Ensuite puisque $m_2=0$ on a $\omega^{(2)}=\mathscr{E}[u]$ et par conséquent $u^2\cdot v\neq 0$. En particulier $\mathrm{Hodge}(\omega^{(4)})>(2,2)$
\item $X_{(2,2)}^{(m_2,m_3)}=\emptyset$. Même raison que ci-dessus.
\end{enumerate}

Il assez simple de trouver des filtrations adéquates pour prouver que  les strates du schéma ci-dessus sont non vides.  En d'autres termes on a calculé les ensembles $\mathscr{T}_{\lambda}$ pour tout $\lambda\in\mathrm{Adm}(\mu)_K$ (avec les notations de \eqref{DefTlambda}) : 
\begin{align}
\mathscr{T}_{(4,0)}&=\{\emptyset\}\\
\mathscr{T}_{(3,1)}&=\{(m_2),(m_3),(m_4),(m_2,m_3),(m_3,m_4)\}\\
\mathscr{T}_{(2,2)}&=\{(m_3),(m_2,m_4),(m_4),(m_2,m_3,m_4)\}
\end{align}

Montrons maintenant l'assertion sur les relations d'adhérences.\\

D'après le Théorème $\ref{TheoDK}$ les invariants $(m_i)_{i}$ forment une bonne stratification de $\SSplit$. En particulier d'après le Lemme $\ref{Lemma1}$ cela veut dire que l'on peut \og inverser\fg un invariant sans toucher aux autres.  Plus précisément :
\begin{prop}
Si $x\in X_{\lambda}^{(m_i)_{i\in T}}$ avec $T\neq\emptyset $ alors pour tout $i_0\in T$ il existe $\lambda'\in\mathrm{Adm}(\mu)_K$ et $y\in X_{\lambda'}^{(m_i)_{i\in T'}}$ tel que $x\in\overline{\{y\}}$ où $T'=T\backslash\{i_0\}$. 
\end{prop}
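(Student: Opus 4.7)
Le plan est de déduire la proposition directement du Théorème \ref{TheoDK} (Diamond--Kassei) combiné à la Proposition \ref{PropAdherence}. Il n'est pas nécessaire ici de construire une déformation explicite comme on l'a fait pour la preuve de la Proposition \ref{StratGrConv} : la bonne stratification par les invariants de Hasse partiels fournit déjà la relation d'adhérence souhaitée, et il suffit de la traduire en termes de spécialisations ponctuelles.

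Posons $T' = T \setminus \{i_0\}$, de sorte que $T' \subsetneq T$. La bonne stratification de $\SSplit$ par les invariants $(m_i)_i$ donnée par le Théorème \ref{TheoDK} fournit la relation d'adhérence $\SSplit_T \subset \overline{\SSplit_{T'}}$ (ajouter la condition d'annulation $m_{i_0}=0$ correspond bien à passer à une strate plus spéciale). En particulier, puisque $x \in X_\lambda^{(m_i)_{i\in T}} \subset \SSplit_T$, on a $x \in \overline{\SSplit_{T'}}$.

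Le schéma $\SSplit$ est noethérien (car quasi-projectif sur un corps) et de caractéristique $p$, et $\SSplit_{T'}$ est un sous-schéma localement fermé, donc constructible. La Proposition \ref{PropAdherence} (équivalence entre $(1)$ et $(2)$) fournit alors un point $y \in \SSplit_{T'}$ avec $y \leadsto x$, c'est-à-dire $x \in \overline{\{y\}}$. Le point $y$ appartient à une unique strate $X_{\lambda'}^{(m_i)_{i \in T'}} = \SSplit_{\lambda'} \cap \SSplit_{T'}$ pour un certain $\lambda' \in \mathrm{Adm}(\mu)_K$, qui est déterminé par $y$ lui-même et non choisi a priori.

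Il n'y a pas à proprement parler d'obstacle majeur dans cet argument : la proposition est essentiellement une reformulation ponctuelle de la relation d'adhérence globale fournie par le Théorème \ref{TheoDK}, rendue possible par le caractère spectral de l'espace topologique sous-jacent à $\SSplit$ et la constructibilité des strates. La seule subtilité à noter est que l'entier $\lambda'$ n'est pas un paramètre libre mais une conséquence du choix de la spécialisation $y \leadsto x$ : en particulier $\lambda'$ peut différer de $\lambda$, et son contrôle explicite (par exemple pour compléter le diagramme \eqref{deformation(m_i)}) nécessitera des arguments supplémentaires qui seront traités dans les cas particuliers.
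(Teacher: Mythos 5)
Votre preuve est correcte et suit essentiellement la même voie que le papier : celui-ci combine directement le Théorème \ref{TheoDK} et le Lemme \ref{Lemma1}, tandis que vous passez par la Proposition \ref{PropAdherence}, qui n'est que le Lemme \ref{Lemma1} assorti de la spectralité et de la constructibilité que vous invoquez. Votre remarque finale sur le fait que $\lambda'$ n'est pas contrôlé lors de la spécialisation correspond exactement à ce que le papier souligne juste après l'énoncé.
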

\begin{proof}
C'est la combinaison de \ref{TheoDK} et \ref{Lemma1}. 
\end{proof}

Le problème est que dans la proposition ci dessus on ne contrôle pas le polygone de Hodge lors de la déformation. Cependant, nous avons prouver que certaines strates $X_{\lambda}^{(m_i)_{i\in T}}$ étaient vides, et nous pouvons l'utiliser pour décrire le polygone de Hodge lors des déformations. 

\begin{example}
\label{Example11}
Soit $x\in X_{(2,2)}^{(m_2,m_3,m_4)}$ et soit $y\in X_{\lambda}^{(m_2,m_4)}$ un point tel que $x\in\overline{\{y\}}$ (fourni par la proposition précédente). D'après \ref{StrataVide} la strate $X_{(3,1)}^{(m_2,m_4)}$ est vide ce qui impose l'égalité $\lambda=(2,2)$. Cela montre que pour tout point $x\in X_{(2,2)}^{(m_2,m_3,m_4)}$ il existe un $y\in X_{(2,2)}^{(m_2,m_4)}$. Autrement dit cela prouve la relation d'adhérence $X_{(2,2)}^{(m_2,m_3,m_4)}\subset\overline{X_{(2,2)}^{(m_2,m_4)}}$.
\end{example}

Nous pouvons également utiliser le fait que l'invariant de Hodge ne peut qu'augmenter par générisation (autrement dit le polygone de Hodge s'abaisse par générisation). Plus précisément si $x\in X_{\lambda}$ et $y\in X_{\lambda'}$ tel que $x\in\overline{\{y\}}$ alors $\lambda\leq \lambda '$, ce qui peut être déduit du Théorème \ref{TheoremPrincipal} qui est cependant beaucoup plus fort.

\begin{example}
\label{Example22}
Soit $x\in X_{(3,1)}^{(m_2,m_3)}$ et soit $y\in X_{\lambda}^{(m_3)}$ un point tel que $x\in\overline{\{y\}}$ (fourni par la proposition précédente). L'invariant de Hodge ne pouvant qu'augmenter par générisation on a nécessairement $\mathrm{Hodge}(y)\geq (3,1)$. Le cas $(4,0)$ étant impossible d'après \ref{StrataVide} on a nécessairement $\mathrm{Hodge}(y)=(3,1)$. Cela montre que pour tout point $x\in X_{(3,1)}^{(m_2,m_3)}$ il existe un $y\in X_{(3,1)}^{(m_3)}$. Autrement dit cela prouve la relation d'adhérence $X_{(3,1)}^{(m_2,m_3)}\subset\overline{X_{(3,1)}^{(m_3)}}$.
\end{example}

Les deux exemples ci dessus fonctionnent pour toutes les relations d'adhérences à l’exception de :
\begin{equation*}
X_{(2,2)}^{(m_2,m_3,m_4)}\subset\overline{X_{(2,2)}^{(m_3)}},\ \ \ \ \ X_{(2,2)}^{(m_3)}\subset\overline{X_{(3,1)}^{(m_3)}}
\end{equation*}
Dans le premier cas ce qui pose problème c'est que la strate $X_{(3,1)}^{(m_3)}$ est non vide et par conséquent le raisonnement ci dessus ne fonctionne pas. Dans le second cas il s'agit de déformer le polygone de Hodge au sein de la strate définie par l'équation $m_3=0$ et $m_2,m_4\neq 0$.

\begin{enumerate}
\label{Deformation1}
\item $X_{(2,2)}^{(m_2,m_3,m_4)}\subset\overline{X_{(2,2)}^{(m_3)}}$. Soit $x\in X_{(2,2)}^{(m_2,m_3,m_4)}$ et $0\subset\omega^{(1)}\subset\omega^{(2)}\subset\omega^{(3)}\subset\omega^{(4)}$ la filtration associée. Par définition on a les égalités :
\begin{equation*}
\omega^{(2)}=\mathscr{E}[u]=\omega^{(1)}\oplus\langle v_2\rangle,\ \ \ \omega^{(3)}=u^{-1}(\omega^{(1)}),\ \ \  \ \omega^{(4)}=u^{-1}(\omega^{(2)})=\mathscr{E}[u^2]
\end{equation*}
pour un certain $v_2\in\omega^{(2)}$ que l'on fixe. On définit la déformation sur $k[\![t]\!]$ comme suit : 
\begin{equation*}
\widetilde{\mathscr{E}}=\mathscr{E}\otimes_k k[\![t]\!],\ \ \ \ \widetilde{\omega}^{(1)}=\omega^{(1)}\otimes_k k[\![t]\!]
\end{equation*}
\begin{equation*}
\widetilde{\omega}^{(2)}=\widetilde{\omega}^{(1)}\oplus\langle v_2+t v\rangle,\ \ \ \widetilde{\omega}^{(3)}=u^{-1}(\widetilde{\omega}^{(1)}),\ \ \ \ \widetilde{\omega}^{(4)}=\widetilde{\mathscr{E}}[u^2]
\end{equation*}
pour un certain  $v\in u^{-1}(\widetilde{\omega}^{(1)})\backslash \widetilde{\mathscr{E}}[u]$ que l'on choisit. Par construction en fibre générique cette filtration satisfait bien les équations $m_2\neq 0,m_4\neq 0$ , $m_3= 0$ et $\mathrm{Hodge}(\widetilde{\omega}^{(4)})=(2,2)$.
\item $X_{(2,2)}^{(m_3)}\subset\overline{X_{(3,1)}^{(m_3)}}$. Soit $x\in X_{(2,2)}^{(m_3)}$ et $0\subset\omega^{(1)}\subset\omega^{(2)}\subset\omega^{(3)}\subset\omega^{(4)}$ la filtration associée. Par définition on a les égalités :
\begin{equation*}
\omega^{(3)}=u^{-1}(\omega^{(1)}),\ \ \  \ \omega^{(4)}=\mathscr{E}[u^2]
\end{equation*}
Soit $v^{(4)}\in\omega^{(4)}$ tel que $\omega^{(4)}=\omega^{(3)}\oplus\langle v^{(4)}\rangle$. On choisit un élément $v\in u^{-1}(\omega^{(3)})\backslash\mathscr{E}[u^2]$. On définit la déformation sur $k[\![t]\!]$ comme suit : 
\begin{equation*}
\widetilde{\mathscr{E}}=\mathscr{E}\otimes_k k[\![t]\!],\ \ \ \ \widetilde{\omega}^{(i)}=\omega^{(i)}\otimes_k k[\![t]\!],\ \ i=1,2,3
\end{equation*}
et 
\begin{equation*}
\widetilde{\omega}^{(4)}=\widetilde{\omega}^{(3)}\oplus\langle v^{(4)}+tv\rangle
\end{equation*}
En fibre générique cette filtration satisfait bien les équations $m_2m_4\neq 0$ , $m_3= 0$ et $\mathrm{Hodge}(\widetilde{\omega}^{(4)})=(3,1)$ car par construction $u^2(v_4+tv)\neq 0$.
\end{enumerate}

\subsubsection{Invariant $m_1=ha$}

Nous allons maintenant nous intéresser à l'invariant $m_1=ha$. On aimerait pouvoir \og inverser\fg\ $\mathrm{ha}$ sans modifier les autres invariants de Hasse partiels et sans modifier l'invariant de Hodge. On a la proposition suivante valable pour une extension $L/\Q_p$ totalement ramifiée de degré quelconque :
\begin{prop}
\label{PropHA}
Soit $x\in\SSplit_{(\lambda,T)}$. On suppose que $1\in T$ c'est-à-dire que $m_1(x)=0$. Alors il existe $y\in \SSplit_{(\lambda,T')}$ tel que $y\rightsquigarrow x$ où $T'=T\backslash\{1\}$. 
\end{prop}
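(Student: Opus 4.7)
La stratégie est de construire un morphisme $\spec\, k[\![t]\!] \rightarrow \SSplit$ dont le point fermé est $x$ et dont le point générique appartient à $\SSplit_{(\lambda, T')}$ ; la Proposition \ref{PropAdherence} fournira alors la spécialisation $y\rightsquigarrow x$ demandée. En notant $(G_0, \iota_0, \omega_0^{(\cdot)})$ la donnée de Pappas-Rapoport associée à $x$, le théorème de relèvement démontré dans la Section~2 (dû à Lau) réduit le problème au choix d'un relèvement $\widetilde{\omega}^{(\cdot)}$ de la filtration à $\mathscr{E}(G_0)_{(k\rightarrow k)}\otimes_k k[\![t]\!]$, compatible avec les axiomes de Pappas-Rapoport, et tel que la fibre générique porte les bons invariants.

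Les contraintes à imposer en fibre générique se séparent en deux types. D'une part, les invariants $\lambda$ et $(m_i)_{i\geq 2}$ sont purement linéaires : ils ne dépendent que des polygones de Hodge des crans successifs $\widetilde{\omega}^{(k)}$. On peut donc les conserver en imposant $\mathrm{Hodge}(\widetilde{\omega}^{(k)}_\eta) = \mathrm{Hodge}(\omega_0^{(k)})$ pour tout $k$, à l'aide des déformations explicites de cellules de Schubert construites dans la preuve de la Proposition \ref{StratGrConv}. Il reste alors une marge de manœuvre suffisante pour perturber effectivement $\omega^{(1)}_\tau$.

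La seconde condition, plus délicate, est de rendre $m_1 = \mathrm{ha}$ non nul génériquement. Le point clé est la $\sigma$-semi-linéarité de $\mathrm{ha}$ : il est défini à partir du Verschiebung $V$, et le Frobenius absolu sur $k[\![t]\!]$ envoie $t$ sur $t^p$. Ainsi, une perturbation à l'ordre $t$ de $\omega^{(1)}_\tau$ produit, après application de $V$, une contribution authentique à l'ordre $t^p$ dans la cible $(\omega^{(e)}_{\sigma^{-1}\tau}/\omega^{(e-1)}_{\sigma^{-1}\tau})^{(p)}$ ; d'après le Lemme \ref{LemmaRelevement}, cette cible est elle-même rigide au premier ordre en $t$, de sorte que la perturbation ne se compense pas trivialement. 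En utilisant que $V$ n'est pas identiquement nulle sur $\mathscr{E}_\tau[\varpi]$ (sans quoi $\mathrm{ha}$ serait partout nul et la strate $\SSplit_{(\lambda, T')}$ serait vide), un choix générique de direction force $V(\widetilde{\omega}^{(1)}_{\tau,\eta})$ à sortir de $\omega^{(e-1),(p)}_{\sigma^{-1}\tau}$, ce qui donne $\mathrm{ha}(y)\neq 0$.

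L'obstacle principal apparaît dans le cas $m_2(x)\neq 0$ : la condition de Pappas-Rapoport rigidifie alors l'égalité $\omega^{(1)} = \varpi\omega^{(2)}$ et interdit de perturber $\omega^{(1)}$ en conservant $\omega^{(2)}$. La perturbation doit alors être propagée par récurrence à travers plusieurs crans consécutifs de la filtration, dans l'esprit de la construction inductive de la preuve de la Proposition \ref{StratGrConv} ; il faut vérifier à chaque étape que les polygones de Hodge restent constants et que l'effet cumulé sur $\omega^{(1)}_\tau$ demeure non trivial, de manière à déplacer $V(\widetilde{\omega}^{(1)}_{\tau,\eta})$ en dehors de $\omega^{(e-1),(p)}_{\sigma^{-1}\tau}$ en fibre générique. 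C'est dans cette compatibilité simultanée entre contraintes PR, conservation des polygones de Hodge et sortie effective de la $t^p$-perturbation que réside le cœur technique de la preuve.
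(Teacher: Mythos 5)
Votre stratégie générale est bien celle du papier : produire un morphisme $\spec\, k[\![t]\!]\rightarrow\SSplit$ de point fermé $x$ en relevant la filtration de Pappas-Rapoport via les résultats de la Section 2, en conservant les invariants linéaires et en rendant $m_1$ génériquement non nul. Mais le mécanisme qui fait réellement fonctionner la preuve manque. Celle-ci repose sur deux ingrédients précis que vous n'utilisez pas : d'une part l'identification du lieu d'annulation de $\mathrm{ha}$ (\cite{BijDuality}, Proposition 3.12) --- les gradués étant de rang $1$, $\mathrm{ha}=0$ équivaut à l'égalité $\omega^{(1)}=\mathscr{F}^{(1)}$ où $\mathscr{F}^{(1)}=F((\varpi^{-1}\omega^{(e-1)})^{(p)})$ --- et d'autre part le Lemme \ref{LemmaRelevement}, qui fournit le long de chaque épaississement de carré nul $k[t]/(t^{n+1})\twoheadrightarrow k[t]/(t^n)$ un relèvement canonique de $\mathscr{F}^{(1)}$, indépendant du relèvement choisi de la filtration. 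La déformation du papier est alors construite cran par cran : on transporte la chaîne par les lacets attachés à $x$ (ce qui fige les positions relatives, donc $\lambda$ et les $m_i$, $i\geq 2$), on impose à chaque étage $\omega^{(1)}_{(n+1)}\neq\mathscr{F}^{(1)}_{(n+1)}$, puis on passe à la limite par Grothendieck-Messing et Serre-Tate. Chez vous, cette étape décisive est remplacée par l'heuristique \og perturbation à l'ordre $t$, contribution à l'ordre $t^p$\fg\ et par un \og choix générique de direction\fg\ : or sur $k[\![t]\!]$ le Verschiebung du groupe déformé n'est pas calculable à partir des données que vous contrôlez (le Lemme \ref{LemmaRelevement} ne vaut que modulo un idéal de carré nul, et c'est précisément pour cela que le papier ne déforme pas directement sur $k[\![t]\!]$), et votre seul argument d'effectivité --- $V$ non identiquement nulle \og sans quoi la strate $\SSplit_{(\lambda,T')}$ serait vide\fg\ --- est circulaire, la non-vacuité de $\SSplit_{(\lambda,T')}$ faisant partie de ce que la proposition affirme.

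Deux autres points. D'abord, votre réduction des contraintes linéaires est fausse telle quelle : les invariants $(m_i)_{i\geq 2}$ ne sont pas des fonctions des polygones de Hodge des crans successifs (seule l'implication du Lemme \ref{LemmaHodge} est vraie, la remarque qui le suit soulignant que la réciproque échoue ; pour $e=4$ on construit deux chaînes ayant les mêmes $\mathrm{Hodge}(\omega^{(k)})$ pour tout $k$ mais des annulations de $m_3$ différentes). Ce qu'il faut conserver, ce sont les positions relatives $\mathrm{Inv}(\omega^{(i)},\omega^{(i-2)})$, ce que le papier obtient en transportant la chaîne par des lacets fixes, et non en prescrivant les polygones de Hodge. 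Ensuite, vous identifiez correctement le cas délicat ($m_2(x)\neq 0$, où la condition de Pappas-Rapoport force $\omega^{(1)}=\varpi\,\omega^{(2)}$), mais vous le laissez explicitement ouvert : votre dernier paragraphe reconnaît que le cœur technique de la preuve réside là. En l'état, la proposition est donc un programme plutôt qu'une démonstration ; c'est exactement la combinaison des deux ingrédients ci-dessus qui, dans le papier, ramène la question à de l'algèbre (semi-)linéaire sur les étages artiniens, où la condition $\omega^{(1)}\neq\mathscr{F}^{(1)}$ peut être réalisée et suivie jusqu'à la fibre générique.
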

\begin{proof}
Soit $x\in\SSplit_{(\lambda,T)}$ défini sur un corps $k$. On note $\mathscr{E}$ le cristal associé. Nous allons construire une déformation sur $k[\![t]\!]$ de ce cristal telle qu'en fibre générique on ait $\mathrm{ha}\neq 0$, et sans que les autres invariants ne soient modifiés. Soit $\mathscr{F}^{(1)}:=F((\omega^{(e+1)})^{(p)})$ où $\omega^{(e+1)}:=\varpi^{-1}(\omega^{(e-1)})$. D'après \cite{BijDuality} Proposition 3.12, il existe un isomorphisme $\mathscr{E}[\varpi]/\mathscr{F}^{(1)}\simeq \big(\omega^{(e)}/\omega^{(e-1)}\big)^{(p)} $ faisant commuter le diagramme suivant :
\[\begin{tikzcd}
	{\omega^{(1)}} & {\mathscr{E}[\varpi]/\mathscr{F}^{(1)}} \\
	& {\big(\omega^{(e)}/\omega^{(e-1)}\big)^{(p)}}
	\arrow[from=1-1, to=1-2]
	\arrow["\simeq", from=1-2, to=2-2]
	\arrow["{ha}"', from=1-1, to=2-2]
\end{tikzcd}\]
Puisque les gradués sont de dimension $1$, la condition $ha=0$ est équivalente à l'égalité $\omega^{(1)}=\mathscr{F}^{(1)}$ dans $\mathscr{E}[\varpi]$. Dans tout ce qui suit on notera $\mathscr{E}_{(n)},\mathscr{F}^{(1)}_{(n)},\omega_{(n)}^{(k)}$ etc les objets définis sur $R_n=k[t]/(t^n)$.

D'après le lemme \ref{LemmaRelevement} on dispose d'un relèvement $\mathscr{F}_{(2)}^{(1)}$ sur $R_2$ qui ne dépend pas du choix des relèvements $\omega_{(2)}^{(e)}$ et $\omega_{(2)}^{(e-1)}$ sur $R_2$. Soit $\Lambda_{(1)}^{(1)}\subset\dots\subset\Lambda_{(1)}^{(e)}\in\widetilde{\Gr}_{\mu_{\bullet}}(R_1)$ la filtration associée à  $0\subset\omega_{(1)}^{(1)}\subset\dots\subset\omega_{(1)}^{(e)}$ via \ref{PlongPR}. Soit $(g_1,\dots,g_e)\in \mathrm{GL}_2(R_1(\!(u)\!))$ tel que :
\begin{equation*}
\Lambda_{(1)}^{(i)}=g_i\cdot\Lambda_{(1)}^{(i-1)},\ \ \ \ \Lambda_{(1)}^{(0)}=u^e\cdot\Lambda_0
\end{equation*}
Soit $\Lambda_{\mathscr{F}_{(2)}^{(1)}}$ le $R_2[\![u]\!]$-réseau associé à $\mathscr{F}_{(2)}^{(1)}$ via \ref{PlongPEL}. On définit ensuite :
\begin{equation*}
\Lambda_{(2)}^{(i+1)}=g_{i+1}\cdot\Lambda_{(1)}^{(i)},\ \ \ \ \Lambda_{(2)}^{(1)}\neq\Lambda_{\mathscr{F}_{(2)}^{(1)}}
\end{equation*}
où chacun des $g_{i+1}\in \mathrm{GL}_2(R_2(\!(u)\!))$ est vu ici via la section $R_1\rightarrow R_2$ et $\Lambda_{(2)}^{(1)}$ désigne n'importe quel $R_2[\![u]\!]$-réseau relevant $\Lambda_{(1)}^{(1)}$ et satisfaisant la condition $\Lambda_{(2)}^{(1)}\neq\Lambda_{\mathscr{F}_{(2)}^{(1)}}$. Si on note $g_{\mathscr{F}}\in\mathrm{GL}_2(R_2(\!(u)\!))$ le lacet définissant $\Lambda_{\mathscr{F}_{(2)}^{(1)}}$ alors il suffit par exemple de choisir $\Lambda_{(2)}^{(1)}=(g\cdot g_{\mathscr{F}})\cdot u^e\Lambda_0$ avec $g\in \mathrm{GL}_2(R_2[\![u]\!])$ relevant $\mathrm{Id}\in \mathrm{GL}_2(R_2[\![u]\!])$ et $g\neq\mathrm{Id}$. Toujours grâce au procédé \ref{PlongPR} on obtient une filtration $0\subset\omega_{(2)}^{(1)}\subset\dots\subset\omega_{(2)}^{(e)}$ satisfaisant $\omega_{(2)}^{(1)}\neq\mathscr{F}_{(2)}^{(1)}$. On continue le processus par récurrence sur $n\geq 2$. Supposons donnés $\mathscr{E}_{(n)},\omega_{(n)}^{(k)}$ etc sur $R_n$. On définit alors la déformation sur $R_{n+1}$ comme étant (avec les notations similaires au cas $n=2$) :
\begin{equation*}
\Lambda_{(n+1)}^{(i+1)}=g_{i+1}\cdot\Lambda_{(n+1)}^{(i)},\ \ \ \ \Lambda_{(n+1)}^{(1)}\otimes_{R_{n+1}}R_n =\Lambda_{(n)}^{(1)}
\end{equation*}
où $\Lambda_{(n+1)}^{(1)}$ désigne n'importe quel relèvement de $\Lambda_{(n)}^{(1)}$ sur $R_{n+1}$. Cette propriété de relèvement nous assure que la condition $\Lambda_{(n+1)}^{(1)}\neq \Lambda_{\mathscr{F}_{(n+1)}^{(1)}}$ soit satisfaite (car elle l'est après réduction à $R_n$). Toujours via le procédé \ref{PlongPR} on obtient une filtration  $0\subset\omega_{(n+1)}^{(1)}\subset\dots\subset\omega_{(n+1)}^{(e)}$ sur $R_{n+1}$ satisfaisant $\omega_{(n+1)}^{(1)}\neq\mathscr{F}_{(n+1)}^{(1)}$. On obtient par passage à la limite un groupe $p$-divisible $G\in\mathtt{BT}^{\mathrm{PR}}(R)$ muni d'une action $\iota:\OK_L\rightarrow\mathrm{End}(G)$ et d'une filtration de Pappas-Rapoport. Par Gronthendieck-Messing et Serre-Tate cela nous fournit un morphisme :
 \begin{equation*}
 \spec\, k[\![t]\!]\rightarrow \SSplit
 \end{equation*}
En fibre générique la filtration satisfait les propriétés souhaitées : par construction  les positions relatives de la filtration n'ont pas changé et par conséquent les invariants $(m_i)_i$ et l'invariant de Hodge n'ont pas changé. Enfin $\mathrm{lim}\, \mathscr{F}_{(n)}^{(1)}\neq \mathrm{lim}\, \omega_{(n)}^{(1)}$, toujours par construction, ce qui nous assure qu'en fibre générique on ait bien $ha\neq 0$.
\end{proof}

\begin{remark}
Ce qui est assez surprenant dans la preuve ci-dessus est que l'on change une donnée $\sigma$-linéaire (l'invariant $m_1$) via une déformation d'une donnée linéaire (filtration de Pappas-Rapoport). C'est le Lemme \ref{LemmaRelevement} qui rend ce processus possible. Si nous avions voulu déformer directement sur $k[\![t]\!]$, il aurait été difficile de calculer le faisceau $\mathscr{F}^{(1)}$.
\end{remark}

\begin{remark}
La proposition ci-dessus nous dit que l'on peut inverser l'invariant $m_1$ sans toucher aux autres invariants. C'est une déformation au sein d'une strate fixée par les invariants $(m_i)_{i\geq 2}$ et l'invariant de Hodge.
\end{remark}

Nous devons maintenant traiter la réciproque : l'équation $m_1=0$ est-elle satisfaite sur la fibre générique des déformations définies en \ref{deformation(m_i)} ? Autrement dit, peut on modifier l'invariant de Hodge et les $(m_i)_i$  tout en satisfaisant $\mathrm{ha}=0$ ? Pour les mêmes raisons que dans les exemples \ref{Example11} et \ref{Example22} il suffit de traiter les cas :
\begin{equation*}
X_{(2,2)}^{(m_1,m_2,m_3,m_4)}\subset\overline{X_{(2,2)}^{(m_1,m_3)}},\ \ \ \ \ X_{(2,2)}^{(m_1,m_3)}\subset\overline{X_{(3,1)}^{(m_1,m_3)}}
\end{equation*}
Le problème est que les déformations de \ref{Deformation1} ne sont pas assez précises car on ne contrôle pas le Frobenius lors de la déformation. Nous allons donc adapter ces déformations à l'idée de la preuve de la Proposition \ref{PropHA} qui est de déformer étape par étape le long des $R_{n+1}\rightarrow R_n$ et d'utiliser le Lemme \ref{LemmaRelevement}.
\begin{enumerate}
\item $X_{(2,2)}^{(m_1,m_2,m_3,m_4)}\subset\overline{X_{(2,2)}^{(m_1,m_3)}}$. Soit $x\in X_{(2,2)}^{(m_1m_2,m_3,m_4)}$ et $0\subset\omega_{(1)}^{(1)}\subset\omega_{(1)}^{(2)}\subset\omega_{(1)}^{(3)}\subset\omega_{(1)}^{(4)}$ la filtration associée. Par définition on a les égalités :
\begin{equation*}
\omega_{(1)}^{(1)}=\mathscr{F}_{(1)}^{(1)},\ \ \ \ \omega_{(1)}^{(2)}=\mathscr{E}_{(1)}[u]
\end{equation*}
\begin{equation*}
\omega_{(1)}^{(3)}=u^{-1}(\omega_{(1)}^{(1)}),\ \ \  \ \omega_{(1)}^{(4)}=u^{-1}(\omega_{(1)}^{(2)})=\mathscr{E}_{(1)}[u^2]
\end{equation*}
On fixe des bases :
\begin{equation*}
\langle v_{(1)}^{(1)}\rangle=\omega_{(1)}^{(1)}, \ \ \ \langle v_{(1)}^{(1)},v_{(1)}^{(2)}\rangle=\omega_{(1)}^{(2)}
\end{equation*} 
Comme dans la preuve de la Proposition \ref{PropHA}, d'après le Lemme \ref{LemmaRelevement}, on dispose d'un relèvement $\mathscr{F}_{(2)}^{(1)}$ de $\mathscr{F}_{(1)}^{(1)}$ sur $R_2$. On fixe un relèvement $v_{(2)}^{(1)}$ sur $R_2$ de $v_{(1)}^{(1)}$ tel que $\mathscr{F}_{(2)}^{(1)}=\langle v_{(2)}^{(1)}\rangle$ et un  élément $\alpha_{(2)}\in u^{-1}(\mathscr{F}_{(2)}^{(1)})\backslash \mathscr{E}_{(2)}[u]$. On choisit également un élément $v_{(2)}^{(2)}\in\mathscr{E}_{(2)}[u]$ qui relève $v_{(1)}^{(2)}$. On définit alors la déformation sur $R_2$ comme suit : 
\begin{equation*}
\mathscr{E}_{(2)}=\mathscr{E}\otimes_{R_1}R_2\ \ \ \ \omega_{(2)}^{(1)}=\langle v_{(2)}^{(1)}\rangle=\mathscr{F}_{(2)}^{(1)}
\end{equation*}
\begin{equation*}
\omega_{(2)}^{(2)}=\omega_{(2)}^{(1)}\oplus\langle v_{(2)}^{(2)}+t\alpha_{(2)} \rangle,\ \ \ \omega_{(2)}^{(3)}=u^{-1}(\omega_{(2)}^{(1)}),\ \ \ \ \omega_{(2)}^{(4)}=\mathscr{E}_{(2)}[u^2]
\end{equation*}
Par construction on a bien $u\cdot( v_{(2)}^{(2)}+t\alpha_{(2)})\neq 0$. On continue le processus par récurrence sur $n\geq 2$ . Supposons donnés $\mathscr{E}_{(n)},\omega_{(n)}^{(k)}$ etc sur $R_n$. D'après le lemme \ref{LemmaRelevement} on dispose d'un relèvement canonique  $\mathscr{F}_{(n+1)}^{(1)}$ sur $R_{n+1}$. On définit la déformation sur $R_{n+1}$ comme suit : 
 \begin{equation*}
\mathscr{E}_{(n+1)}=\mathscr{E}\otimes_{R_1}R_{n+1}\ \ \ \ \omega_{(n+1)}^{(1)}=\mathscr{F}_{(n+1)}^{(1)}
\end{equation*}
\begin{equation*}
\omega_{(n+1)}^{(3)}=u^{-1}(\omega_{(n+1)}^{(1)}),\ \ \ \ \omega_{(n+1)}^{(4)}=\mathscr{E}_{(n+1)}[u^2]
\end{equation*}
 et on prend n'importe quel relèvement $\omega_{(n+1)}^{(2)}\subset u^{-1}(\mathscr{F}_{(n+1)}^{(1)})$ de $\omega_{(n)}^{(2)}$, la condition $\omega_{(n+1)}^{(2)}\neq\mathscr{E}_{(n+1)}[u]$ étant assurée par cette propriété de relèvement. On obtient par passage à la limite un groupe $p$-divisible $G\in\mathtt{BT}^{\mathrm{PR}}(R)$ qui satisfait les propriétés souhaitées.
\item $X_{(2,2)}^{(m_1,m_3)}\subset\overline{X_{(3,1)}^{(m_1,m_3)}}$. Soit $x\in X_{(2,2)}^{(m_3)}$ et $0\subset\omega_{(1)}^{(1)}\subset\omega_{(1)}^{(2)}\subset\omega_{(1)}^{(3)}\subset\omega_{(1)}^{(4)}$ la filtration associée. Par définition on a les égalités :
\begin{equation*}
\omega_{(1)}^{(1)}=\mathscr{F}_{(1)}^{(1)}, \ \ \ \ \omega_{(1)}^{(3)}=u^{-1}(\omega_{(1)}^{(1)}),\ \ \  \ \omega_{(1)}^{(4)}=\mathscr{E}_{(1)}[u^2]
\end{equation*}
Soit $v_{(1)}^{(4)}\in\omega_{(1)}^{(4)}$ tel que $\omega_{(1)}^{(4)}=\omega_{(1)}^{(3)}\oplus\langle v_{(1)}^{(4)}\rangle$. 
Comme dans la preuve de la Proposition \ref{PropHA}, d'après le Lemme \ref{LemmaRelevement}, on dispose d'un relèvement $\mathscr{F}_{(2)}^{(1)}$ de $\mathscr{F}_{(1)}^{(1)}$ sur $R_2$. 
 On définit la déformation sur $R_2$ comme suit : 
\begin{equation*}
\mathscr{E}_{(2)}=\mathscr{E}\otimes_{R_1}R_2\ \ \ \ \omega_{(2)}^{(1)}=\mathscr{F}_{(2)}^{(1)},\ \ \omega_{(2)}^{(3)}=u^{-1}(\omega_{(2)}^{(1)})
\end{equation*}
Ensuite on choisit un élément $\alpha_{(2)}\in u^{-1}(\omega_{(2)}^{(3)})\backslash\mathscr{E}_{(2)}[u^2]$ et un élément $v_{(2)}^{(4)}\in u^{-1}(\omega_{(2)}^{(3)}\cap\mathscr{E}_{(2)}[u]$ qui relève $v_{(1)}^{(4)}$. On définit alors : 
\begin{equation*}
\omega_{(2)}^{(4)}=\omega_{(2)}^{(3)}\oplus\langle v_{(1)}^{(4)}+t\alpha_{(2)}\rangle
\end{equation*}
 On continue le processus par récurrence sur $n\geq 2$ . Supposons donnés $\mathscr{E}_{(n)},\omega_{(n)}^{(k)}$ etc sur $R_n$. D'après le lemme \ref{LemmaRelevement} on dispose d'un relèvement canonique  $\mathscr{F}_{(n+1)}^{(1)}$ sur $R_{n+1}$. On définit la déformation sur $R_{n+1}$ comme suit : 
 \begin{equation*}
\mathscr{E}_{(n+1)}=\mathscr{E}\otimes_{R_1}R_{n+1},\ \ \ \ \omega_{(n+1)}^{(1)}=\mathscr{F}_{(n+1)}^{(1)},\ \ \ \ \omega_{(n+1)}^{(3)}=u^{-1}(\omega_{(n+1)}^{(1)})
\end{equation*}
et on prend n'importe quel relèvement $\omega_{(n+1)}^{(2)},\ \omega_{(n+1)}^{(4)}$. Les propriétés $\omega_{(n+1)}^{(2)}\neq \mathscr{E}_{(n+1)}[u]$ et $\omega_{(n+1)}^{(2)}\neq\mathscr{E}_{(n+1)}[u^2]$ sont assurées par cette propriété de relèvement.  On obtient par passage à la limite un groupe $p$-divisible $G\in\mathtt{BT}^{\mathrm{PR}}(R)$ qui satisfait les propriétés souhaitées.
\end{enumerate}

Avant de conclure, il reste à calculer l'ensemble $\mathscr{A}_{\mu_{\bullet}}$ des strates non vides. C'est l'objet du lemme suivant :
\begin{lemma}
Soit $(\lambda,T)\in\mathscr{T}_{\mu_{\bullet}}$. On a : 
\begin{equation*}
\mathcal{S}h_{(\lambda,T)}^{(m_1=0)}\neq \emptyset ,\ \ \ \mathcal{S}h_{(\lambda,T)}^{(m_1\neq 0)}\neq \emptyset
\end{equation*}
En d'autres termes l'ensemble des strates non vides est donné par : 
\begin{equation*}
\mathscr{A}_{\mu_{\bullet}}=\mathscr{T}_{\mu_{\bullet}}\cup(\mathscr{T}_{\mu_{\bullet}}\times \{m_1\})
\end{equation*}
\end{lemma}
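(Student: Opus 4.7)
The plan is to prove the two non-emptiness statements separately, using the shorthand $X_{\lambda}^{(m_i)_{i \in T'}} := \SSplit_{(\lambda, T')}$ adopted earlier in the paper. Fix $(\lambda, T) \in \mathscr{T}_{\mu_{\bullet}}$, so $T \subset \{2,3,4\}$ and $\SSplit_{(\lambda, T)} \neq \emptyset$.

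The non-emptiness of $\SSplit_{(\lambda, T)}^{(m_1 \neq 0)}$ is essentially immediate: pick any $x \in \SSplit_{(\lambda, T)}$; if $m_1(x) \neq 0$ there is nothing to prove, and otherwise Proposition \ref{PropHA} supplies a generization $y \leadsto x$ with $m_1(y) \neq 0$ and the same Hodge invariant and $(m_i)_{i \geq 2}$, placing $y$ in $\SSplit_{(\lambda, T)}^{(m_1 \neq 0)}$.

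For the delicate half, $\SSplit_{(\lambda, T)}^{(m_1 = 0)} \neq \emptyset$, the approach is to bootstrap from the deepest stratum and climb up an adherence diagram while maintaining $m_1 = 0$. The base case is $X_{(2,2)}^{(m_1, m_2, m_3, m_4)}$: Theorem \ref{TheoDK} applied with $T' = \{1,2,3,4\}$ provides a non-empty $\SSplit_{T'}$ of dimension $d - 4 = 0$, and iterated application of Lemma \ref{LemmaHodge} forces the Hodge invariant of every point of $\SSplit_{T'}$ to be $(2,2)$. From this base point I would propagate to each $(\lambda, T \cup \{1\})$ with $(\lambda, T) \in \mathscr{T}_{\mu_{\bullet}}$ by walking up the $m_1 = 0$ analogue of diagram \ref{deformation(m_i)}: the two explicit deformations constructed just above yield the edges $X_{(2,2)}^{(m_1, m_2, m_3, m_4)} \subset \overline{X_{(2,2)}^{(m_1, m_3)}}$ and $X_{(2,2)}^{(m_1, m_3)} \subset \overline{X_{(3,1)}^{(m_1, m_3)}}$, while every other edge is obtained verbatim from the arguments of Examples \ref{Example11} and \ref{Example22}.

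The main point to check is that those Example-type arguments continue to function with the extra condition $m_1 = 0$ imposed. This rests on two easy observations: first, the emptiness results used earlier, such as $X_{(3,1)}^{(m_2, m_4)} = \emptyset$, depend only on Lemma \ref{LemmaHodge}, which involves solely $m_i$ for $i \geq 2$, so they carry over verbatim after intersection with $\{m_1 = 0\}$; second, the good stratification of Theorem \ref{TheoDK} permits one to generize by inverting any single $m_i$ with $i \geq 2$ without disturbing the other vanishing conditions, in particular without disturbing $m_1 = 0$. Combining the two halves then yields the stated equality $\mathscr{A}_{\mu_{\bullet}} = \mathscr{T}_{\mu_{\bullet}} \sqcup (\mathscr{T}_{\mu_{\bullet}} \times \{m_1\})$.
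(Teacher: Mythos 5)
Votre démonstration est correcte et suit le même squelette que celle du papier : la moitié $m_1\neq 0$ via la Proposition \ref{PropHA}, et la moitié $m_1=0$ par propagation le long du diagramme d'adhérence \og avec $m_1=0$\fg\ (les deux déformations explicites plus les arguments du type des Exemples \ref{Example11} et \ref{Example22}, qui passent bien à la trace de $\{m_1=0\}$ pour les deux raisons que vous donnez). La différence réelle est le cas de base : le papier exhibe un point explicite de $X_{(2,2)}^{(m_1,m_2,m_3,m_4)}$ en partant de la forme normale de Goren--Andreatta (Proposition 4.10 de \cite{GorenAndreatta}) pour le module de Dieudonné d'un point de $\NNaif_{(2,2)}$ et en écrivant une filtration de Pappas--Rapoport dont on vérifie que tous les $m_i$ s'annulent, alors que vous déduisez la non-vacuité de $\SSplit_{\{1,2,3,4\}}$ directement du Théorème \ref{TheoDK} (toutes les strates $\SSplit_T$ sont non vides, ici de dimension $d-4$) puis forcez $\mathrm{Hodge}=(2,2)$ par double application du Lemme \ref{LemmaHodge} ($m_2=m_4=0$). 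Votre variante est plus courte et évite tout calcul de module de Dieudonné, mais elle repose sur toute la force du résultat de non-vacuité de Diamond--Kassaei, un ingrédient global non trivial que le papier n'utilise ici que pour les relations d'adhérence ; la construction explicite du papier est, elle, autonome (modulo la forme normale de \cite{GorenAndreatta}) et produit un point concret, ce qui est plus parlant. Notez aussi que votre traitement de la moitié $m_1\neq 0$ (partir d'un point quelconque de $\SSplit_{(\lambda,T)}$ et inverser $m_1$ par \ref{PropHA}) est légèrement plus direct que la réduction du papier, mais repose sur le même outil.
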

\begin{remark}
Avec des mots plus concrets, le lemme ci dessus, combiné aux proposition précédentes, dit que l'invariant $\sigma$-linéaire $m_1$ interagit naïvement avec la stratification définie par l'ensemble $\mathscr{T}_{\mu_{\bullet}}$ (linéaire).
\end{remark}
\begin{proof}
D'après la proposition \ref{PropHA} et les calculs de déformations précédents il suffit de montrer que la strate $X_{(2,2)}^{(m_1,m_2,m_3,m_4)}$ est non vide.  Nous allons utiliser les travaux de Goren et Andreatta \cite{GorenAndreatta}. Soit $x\in\NNaif_{(2,2)}$ un point défini sur un corps $k$ algébriquement clos et $G$ le groupe $p$-divisible associé. D'après la Proposition 4.10 de \textit{loc cit} il existe une base $(e_1,e_2)$ de $\mathscr{E}(G)$ telle que la filtration de Hodge soit donnée par :
\begin{equation*}
\omega_G=\langle u^2 e_1,u^2 e_2\rangle
\end{equation*}
et que le Frobenius soit donné dans cette base par :
\begin{equation*}
F=\begin{pmatrix}
u^m &cu^2\\
u^2&0
\end{pmatrix}
\end{equation*}
où $m\geq 2$ et $c\in (\OK_L\otimes k)^{\times}$. On définit la filtration de Pappas Rapoport comme suit :
\begin{equation*}
\omega^{(1)}=\langle u^3 e_2\rangle ,\ \ \omega^{(2)}=\langle u^3 e_1,u^3 e_2\rangle ,\ \ \omega^{(3)}=\langle u^3 e_1 u^2 e_2\rangle  ,\ \ \omega^{(4)}=\langle u^2 e_1,u^2 e_2\rangle
\end{equation*}
Cela définit un point $\tilde{x}\in\SSplit$. Un simple calcul montre que pour cette filtration on a :
\begin{equation*}
\mathscr{F}^{(1)}:=F(u^{-1}(\omega^{(3)})^{(p)})=\langle cu^3e_2\rangle
\end{equation*}
En particulier $\omega^{(1)}=\mathscr{F}^{(1)}$ et par conséquent $m_1(\tilde{x})=0$. Il est également simple de voir que $m_2(\tilde{x})=m_3(\tilde{x})=m_4(\tilde{x})=0$.  Par conséquent $\tilde{x}\in  X_{(2,2)}^{(m_1,m_2,m_3,m_4)}$ et cette dernière strate est donc non vide.
\end{proof}

\begin{theorem}
Pour $L/\Q_p$ totalement ramifiée de degré $e=4$ la stratification :
\begin{equation*}
\SSplit=\coprod_{(\lambda,T)\in\mathscr{A}_{\mu_{\bullet}}}\SSplit_{(\lambda,T)}
\end{equation*}
est une bonne stratification où les relations d'adhérences sont données par la relation d'ordre naïve  sur $\mathscr{A}_{\mu_{\bullet}}$
\end{theorem}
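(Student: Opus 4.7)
La preuve se décompose naturellement en deux parties correspondant au caractère linéaire ou $\sigma$-linéaire des invariants considérés. D'abord je traiterais la sous-stratification définie par l'invariant de Hodge et les invariants $(m_i)_{i\geq 2}$ seulement. Les strates non-vides dans ce cadre sont exactement celles apparaissant dans le diagramme \eqref{deformation(m_i)}, et les relations d'adhérences entre elles sont entièrement décrites par les calculs de déformations effectués dans \eqref{Deformation1} et les Exemples \ref{Example11} et \ref{Example22}. Ces déformations, combinées à la Proposition \ref{PropAdherence} qui ramène les relations d'adhérences à l'existence de morphismes $\spec\, k[\![t]\!]\to \SSplit$, montrent que $\coprod_{(\lambda,T)\in\mathscr{T}_{\mu_\bullet}}\SSplit_{(\lambda,T)}$ est une bonne stratification de $\SSplit$.

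La deuxième étape consiste à intégrer l'invariant $\sigma$-linéaire $m_1 = ha$. Le lemme précédent identifie $\mathscr{A}_{\mu_\bullet} = \mathscr{T}_{\mu_\bullet} \cup (\mathscr{T}_{\mu_\bullet} \times \{m_1\})$, et la Proposition \ref{PropHA} établit que pour tout $(\lambda,T)\in\mathscr{T}_{\mu_\bullet}$ on a $\SSplit_{(\lambda,T\cup\{m_1\})}\subset \overline{\SSplit_{(\lambda,T)}}$, ce qui autorise à inverser librement $m_1$ sans toucher aux autres invariants. Il reste alors à vérifier que chaque relation d'adhérence $\SSplit_{(\lambda',T')}\subset\overline{\SSplit_{(\lambda,T)}}$ établie à la première étape peut être rehaussée en une relation correspondante entre les strates où $m_1 = 0$. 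Pour cela, je reprendrais chacune des déformations construites précédemment et je l'adapterais en exigeant que $\omega^{(1)}$ coïncide avec $\mathscr{F}^{(1)} := F((\omega^{(e+1)})^{(p)})$ tout le long de la déformation. Le Lemme \ref{LemmaRelevement} est l'outil clé ici : il fournit un relèvement canonique de $\mathscr{F}^{(1)}$ à travers les épaississements successifs $R_n\hookrightarrow R_{n+1}$, indépendant des choix faits pour les autres crans de la filtration, ce qui permet d'imposer $\omega^{(1)}_{(n)} = \mathscr{F}^{(1)}_{(n)}$ à chaque étape sans autre contrainte.

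L'inclusion inverse $\overline{\SSplit_{(\lambda,T)}}\subset \bigcup_{(\lambda',T')\leq(\lambda,T)}\SSplit_{(\lambda',T')}$ est quant à elle immédiate : comme les stratifications par $\lambda$ (Théorème \ref{TheoremPrincipal}) et par $T$ (Théorème \ref{TheoDK}) sont déjà des bonnes stratifications, on a $\overline{\SSplit_{(\lambda,T)}}\subset \overline{\SSplit_\lambda}\cap\overline{\SSplit_T}=\bigcup_{\lambda'\leq \lambda,\, T'\subset T}\SSplit_{(\lambda',T')}$, et l'on retire les strates vides pour obtenir l'inclusion dans $\mathscr{A}_{\mu_\bullet}$. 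Le point véritablement délicat se situe dans la seconde étape : il faut vérifier que les deux cas $X_{(2,2)}^{(m_1,m_2,m_3,m_4)}\subset\overline{X_{(2,2)}^{(m_1,m_3)}}$ et $X_{(2,2)}^{(m_1,m_3)}\subset\overline{X_{(3,1)}^{(m_1,m_3)}}$ --- qui ne se déduisent pas directement des Exemples \ref{Example11} et \ref{Example22} --- admettent effectivement une déformation compatible avec $m_1 = 0$. Ces cas requièrent de combiner finement les déformations linéaires de \eqref{Deformation1} avec le mécanisme de relèvement fourni par le Lemme \ref{LemmaRelevement}, afin de contrôler simultanément l'invariant de Hodge et la nullité de $m_1$.
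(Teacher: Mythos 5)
Votre proposition suit essentiellement la même démarche que le papier : traitement d'abord de la stratification \og linéaire\fg\ par l'invariant de Hodge et les $(m_i)_{i\geq 2}$ (strates vides, Exemples \ref{Example11} et \ref{Example22}, déformations \eqref{Deformation1}), puis intégration de $m_1=ha$ via la Proposition \ref{PropHA}, le calcul de $\mathscr{A}_{\mu_{\bullet}}$ et les déformations pas à pas le long des $R_{n+1}\rightarrow R_n$ où le Lemme \ref{LemmaRelevement} permet d'imposer $\omega^{(1)}=\mathscr{F}^{(1)}$ à chaque étape. Vous identifiez correctement les deux cas délicats $X_{(2,2)}^{(m_1,m_2,m_3,m_4)}\subset\overline{X_{(2,2)}^{(m_1,m_3)}}$ et $X_{(2,2)}^{(m_1,m_3)}\subset\overline{X_{(3,1)}^{(m_1,m_3)}}$, dont le papier donne les constructions explicites que vous ne faites qu'esquisser, mais avec le bon mécanisme.
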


\section{Conjecture}

\subsection{Conjecture} Une fois la stratification par le polygone de Hodge de $\NNaif$ réinterprétée via le morphisme lisse 
\begin{equation*}
\NNaif\rightarrow \mathrm{Hecke}=\big[L^+G\backslash \Gr\big]
\end{equation*}
il est naturel de stratifier le modèle de Pappas-Rapoport $\SSplit$ via le morphisme lisse
\begin{equation*}
\SSplit\rightarrow \mathrm{Hecke}_{\mu_{\bullet}}=\big[L^+G\backslash \widetilde{\Gr}_{\mu_{\bullet}}\big]
\end{equation*}
Le problème est alors de décrire les $L^+G$-orbites dans $\widetilde{\Gr}_{\mu_{\bullet}}$. Dans cet article nous nous sommes intéressé au morphisme de convolution 
\begin{equation*}
m:\widetilde{\Gr}_{\mu_{\bullet}}\rightarrow \Gr_{\leq |\mu_{\bullet}|}
\end{equation*}
Ce morphisme étant $L^+G$-équivariant les strates $(m^{-1}(\Gr_{\lambda}))_{\lambda\leq |\mu_{\bullet}|}$  sont des unions de $L^+G$-orbites dans $\widetilde{\Gr}_{\mu_{\bullet}}$. Si la preuve de la proposition \ref{StratGrConv} semble être \og à la main\fg et si elle ne se généralise pas à d'autres situations (voir \cite{BijHer2} Proposition 3.9) c'est parce qu'elle n'est pas naturelle : la stratification la plus naturelle est celle induite par la décomposition en orbites.

\begin{Conjecture}
Il existe un ensemble fini partiellement ordonné $(X,<)$ décrivant les $L^+G$-orbites dans $\widetilde{\Gr}_{\mu_{\bullet}}$. En d'autres termes on dispose d'un homéomorphisme
\begin{equation*}
|\mathrm{Hecke}_{\mu_{\bullet}}|\simeq X
\end{equation*}
où $X$ est muni de la topologie induite par $<$.
\end{Conjecture}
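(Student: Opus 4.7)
La conjecture se décompose en deux sous-problèmes : la finitude des $L^+G$-orbites dans $\widetilde{\Gr}_{\mu_{\bullet}}$ et la description de l'ordre partiel induit par les relations d'adhérence. Je commencerais par associer à chaque point $x=(\Lambda_e\subset\dots\subset\Lambda_1\subset\Lambda_0)\in\widetilde{\Gr}_{\mu_{\bullet}}(k)$ l'invariant combinatoire
\[
\mathrm{Inv}(x)=\big(\mathrm{Hodge}(\Lambda_1),\dots,\mathrm{Hodge}(\Lambda_e)\big)\in\prod_{i=1}^e\mathbb{X}_{*}(T)^+.
\]
Chaque $\mu_i$ étant minuscule et chaque inclusion $\Lambda_{i+1}\subset\Lambda_i$ de codimension un, chaque composante $\mathrm{Hodge}(\Lambda_i)$ est bornée, de sorte que l'image de $\mathrm{Inv}$ est contenue dans un ensemble fini $\widetilde{X}$. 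L'invariant étant manifestement $L^+G$-invariant, il factorise à travers $|\mathrm{Hecke}_{\mu_{\bullet}}|$, dont la finitude découlera du fait que $\mathrm{Inv}$ sépare les orbites.

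La seconde étape consiste précisément à démontrer cette séparation. Je procèderais par récurrence sur $e$, le cas $e=1$ étant la décomposition de Cartan classique. Pour l'étape d'induction, étant donnés deux points $x,x'$ de même invariant, la décomposition de Cartan appliquée à $\Lambda_1$ et $\Lambda_1'$ permet de se ramener au cas $\Lambda_1=\Lambda_1'$. Il reste alors à démontrer que le stabilisateur $P:=\mathrm{Stab}_{L^+G}(\Lambda_1)$ agit transitivement sur les raffinements $(\Lambda_e\subset\dots\subset\Lambda_2\subset\Lambda_1)$ d'invariants fixés. Via l'identification de la fibre du morphisme de projection $\widetilde{\Gr}_{\mu_{\bullet}}\rightarrow\Gr_{\mu_1}$ au-dessus de $\Lambda_1$ avec un produit de convolution plus court pour un groupe parahorique associé à $P$, on se ramène à l'hypothèse de récurrence. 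La subtilité technique consiste à vérifier que $P$ contient suffisamment d'éléments pour recouvrir toutes les orbites de l'étape précédente ; c'est là que le caractère minuscule des $\mu_i$ devrait jouer un rôle crucial.

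La dernière étape, qui constitue le principal obstacle, est de décrire les relations d'adhérence. L'inclusion $\lambda_{\bullet}'\leq\lambda_{\bullet}\Rightarrow\widetilde{\Gr}_{\lambda_{\bullet}'}\subset\overline{\widetilde{\Gr}_{\lambda_{\bullet}}}$ (pour l'ordre composante par composante sur $\widetilde{X}$) devrait être accessible par des déformations explicites adaptées de la preuve de la Proposition \ref{StratGrConv}, en déformant simultanément tous les crans de la filtration. L'inclusion réciproque est nettement plus délicate : elle nécessiterait soit une réalisation de chaque $\overline{\widetilde{\Gr}_{\lambda_{\bullet}}}$ comme variété de Schubert dans un produit de convolution plus grand (dans l'esprit des cycles de Mirković-Vilonen), soit une analyse combinatoire fine excluant toute dégénérescence \og sautante\fg. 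Le contre-exemple de \cite{BijHer2} hors du cas Hilbert suggère que l'ordre naïf sur $\widetilde{X}$ ne coïncide pas toujours avec l'ordre d'adhérence, et qu'une partie essentielle de la preuve consisterait à identifier la \og vraie\fg\ relation d'ordre sur le sous-ensemble $X\subseteq\widetilde{X}$ effectivement réalisé.
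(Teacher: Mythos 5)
Il faut d'abord souligner le statut de l'énoncé : c'est une \emph{conjecture} que le papier laisse ouverte (il renvoie seulement à \cite{BijHodge} pour le cas $e\leq 3$), il n'y a donc pas de preuve interne à laquelle comparer votre texte ; votre proposition est un plan, et son lemme central est faux. L'invariant $\mathrm{Inv}(x)=(\mathrm{Hodge}(\Lambda_1),\dots,\mathrm{Hodge}(\Lambda_e))$ ne sépare pas les $L^+G$-orbites. Contre-exemple pour $G=\mathrm{GL}_2$ et $e=3$, écrit dans le modèle $M$ de \eqref{Mdef} (ce qui revient au même via l'isomorphisme équivariant \eqref{Miso1}) : les deux chaînes $\Lambda^{(1)}=\langle u^2e_1,u^3e_2\rangle\subset\Lambda^{(2)}=\langle u^2e_1,u^2e_2\rangle\subset\Lambda^{(3)}$, avec d'une part $\Lambda^{(3)}=\langle ue_1,u^2e_2\rangle$ et d'autre part $\Lambda^{(3)}=\langle u^2e_1,ue_2\rangle$, sont bien des points de $\Splitt$ et ont le même triplet de Hodge $\big((3,2),(2,2),(2,1)\big)$ ; pourtant $\mathrm{Inv}(\Lambda^{(3)},\Lambda^{(1)})$ vaut $(1,1)$ dans le premier cas (car $\Lambda^{(1)}=u\Lambda^{(3)}$) et $(2,0)$ dans le second. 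Cette position relative étant préservée par l'action simultanée de $L^+G$, les deux points sont dans des orbites distinctes. C'est exactement la raison pour laquelle le théorème de \cite{BijHodge} rappelé dans le papier fait intervenir, déjà pour $e=3$, l'invariant supplémentaire $\mathrm{Hodge}(\omega^{(3)}/\omega^{(1)})$ en plus des $\mathrm{Hodge}(\omega^{(i)})$. Votre argument de finitude et la construction de $X\subseteq\widetilde{X}$ reposent entièrement sur cette séparation et tombent donc avec elle : la finitude du nombre d'orbites ne découle pas de la finitude de l'image d'un invariant qui ne les sépare pas, et c'est précisément elle qui fait partie de l'énoncé à démontrer.

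L'étape de récurrence souffre du même défaut et d'un problème structurel supplémentaire. Après réduction à $\Lambda_1=\Lambda_1'$, le groupe $P=\mathrm{Stab}_{L^+G}(\Lambda_1)=L^+G\cap g_1L^+Gg_1^{-1}$ n'est pas le groupe d'arcs d'un parahorique permettant d'identifier la fibre à un produit de convolution plus court pour une donnée du même type : décrire les $P$-orbites dans la fibre revient à décomposer un produit de doubles classes (autrement dit à contrôler les constantes de structure de l'algèbre de Hecke sphérique), ce qui est le cœur de la difficulté et non une simple subtilité technique que la minusculité des $\mu_i$ réglerait ; si une telle transitivité à invariants fixés était automatique, la conjecture serait résolue pour tout $e$, alors que le papier ne la connaît que pour $e\leq 3$ et doit, pour $e=4$, introduire des invariants supplémentaires sans obtenir la classification des orbites. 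Votre dernière étape identifie correctement que les relations d'adhérence posent un problème distinct, mais en l'état la proposition n'établit ni la finitude, ni la séparation des orbites, ni l'ordre sur $X$.
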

On consultera \cite{Wedhorn} (\S 2.1, \S 2.2) pour plus de détails concernant la stratification d'un espace en orbites sous l'action d'un groupe.

\subsection{Le cas de petites dimensions}

Dans \cite{BijHodge}, S.Bijakowski a répondu positivement à la conjecture ci-dessus dans le cas Unitaire et dans le Hilbert-Siegel lorsque $e\leq 3$ . Énonçons le résultat dans le cas Hilbert totalement ramifié d'indice de ramification $e=3$ 
\begin{theorem}[\cite{BijHodge}]
La classe d'isomorphisme d'une filtration de Pappas-Rapoport $(0\subset\omega^{(1)}\subset\omega^{(2)}\subset\omega^{(3)})$ est entièrement déterminée par les invariants $\mathrm{Hodge}(\omega^{(3)})$, $\mathrm{Hodge}(\omega^{(2)})$ et $\mathrm{Hodge}(\omega^{(3)}/\omega^{(1)})$. De plus ces invariants définissent une bonne stratification de $\SSplit$ où la relation d'ordre est la relation naïve : $(\lambda_1,\lambda_2,\lambda_3)\leq (\lambda_1',\lambda_2',\lambda_3')$ si $\lambda_i\leq\lambda_i'$ pour $i=1,2 ,3$.
\end{theorem}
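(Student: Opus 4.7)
The stratification in question is the common refinement of the Hodge stratification $\SSplit=\coprod_\lambda\SSplit_\lambda$ (Theorem \ref{TheoremPrincipal}) and the Hasse-invariant stratification $\SSplit=\coprod_T\SSplit_T$ (Theorem \ref{TheoDK}), restricted to non-empty strata; the naive order is $(\lambda',T')\leq(\lambda,T)$ iff $\lambda'\leq\lambda$ and $T'\supseteq T$. One inclusion, $\overline{\SSplit_{(\lambda,T)}}\subset\bigcup_{(\lambda',T')\leq(\lambda,T)}\SSplit_{(\lambda',T')}$, is immediate: closure preserves both the vanishing of each $m_i$, $i\in T$, and the Hodge polygon inequality $\lambda'\leq\lambda$. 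The whole content of the theorem is therefore the reverse inclusion, and by Proposition \ref{PropAdherence} this reduces to exhibiting, for each admissible pair, an explicit $k\kt$-deformation of a point in $\SSplit_{(\lambda',T')}$ whose generic fibre lies in $\SSplit_{(\lambda,T)}$.

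The plan is to separate the analysis of the \emph{linear} data (the Pappas-Rapoport filtration governing $\lambda$ and $m_2,m_3,m_4$) from the \emph{$\sigma$-linear} data ($m_1=ha$). For the linear part I would fix $1\notin T$ and first compute $\mathscr{T}_{\mu_\bullet}$: Lemma \ref{LemmaHodge} forces $\mathrm{Hodge}(\omega^{(i)})=\mathrm{Hodge}(\omega^{(i-2)})-(1,1)$ whenever $m_i=0$, $i\geq 2$, and combined with Proposition \ref{PropHassePartiel} this rules out all pairs $(\lambda,T)$ except those listed in the diagram (\ref{deformation(m_i)}). Then for each arrow in that diagram I would produce the required deformation: most follow from the generic closure $\overline{\SSplit_T}$ of Theorem \ref{TheoDK} combined with emptiness of certain competing strata (the argument of Examples \ref{Example11}-\ref{Example22}), while the two remaining edges $X_{(2,2)}^{(m_2,m_3,m_4)}\subset\overline{X_{(2,2)}^{(m_3)}}$ and $X_{(2,2)}^{(m_3)}\subset\overline{X_{(3,1)}^{(m_3)}}$ require writing down an explicit filtration over $k\kt$ whose position relative to $\mathscr{E}[u]$ and $\mathscr{E}[u^2]$ jumps at $t=0$, exactly as in the deformations \ref{Deformation1}. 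This yields the theorem for the sub-stratification indexed by $\mathscr{T}_{\mu_\bullet}$, i.e.\ on the scheme-theoretic quotient where $m_1$ is ignored.

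To incorporate $m_1$ I would invoke Proposition \ref{PropHA}, which permits inverting $m_1$ without moving $\lambda$ or any $m_i$, $i\geq 2$: this is where the cristalline-display machinery from the preliminaries enters essentially, since Lemma \ref{LemmaRelevement} guarantees that the Frobenius image $\mathscr{F}^{(1)}=F((\omega^{(e+1)})^{(p)})$ admits a canonical deformation independent of any lift of the filtration. It follows immediately that $\mathcal{S}h^{(m_1=0)}_{(\lambda,T)}\neq\emptyset\Rightarrow\mathcal{S}h^{(m_1\neq0)}_{(\lambda,T)}\neq\emptyset$, and hence $\mathscr{A}_{\mu_\bullet}=\mathscr{T}_{\mu_\bullet}\sqcup(\mathscr{T}_{\mu_\bullet}\times\{m_1\})$ once the single seed stratum $X_{(2,2)}^{(m_1,m_2,m_3,m_4)}$ is shown to be non-empty; for this I would use Goren-Andreatta's explicit Dieudonné model at a superspecial point of $\NNaif_{(2,2)}$ and equip it with the Pappas-Rapoport filtration $\omega^{(1)}=\langle u^3e_2\rangle,\ \omega^{(2)}=\langle u^3e_1,u^3e_2\rangle,\ \omega^{(3)}=\langle u^3e_1,u^2e_2\rangle,\ \omega^{(4)}=\langle u^2e_1,u^2e_2\rangle$, verifying by direct computation that $\mathscr{F}^{(1)}=\langle cu^3e_2\rangle=\omega^{(1)}$ and $m_i=0$ for $i=2,3,4$.

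The main obstacle is the deformations inside the locus $\{m_1=0\}$, needed to lift the linear-data closure relations to the full stratification. Here one cannot deform the filtration freely, since preserving $m_1=0$ means preserving the equation $\omega^{(1)}=\mathscr{F}^{(1)}$, and $\mathscr{F}^{(1)}$ itself depends on the full filtration through $\omega^{(e+1)}=\varpi^{-1}(\omega^{(e-1)})$. The resolution is the step-by-step construction of Proposition \ref{PropHA}: deform along $R_{n+1}\twoheadrightarrow R_n$, at each step \emph{declare} $\omega^{(1)}_{(n+1)}$ to be the canonical lift $\mathscr{F}^{(1)}_{(n+1)}$ provided by Lemma \ref{LemmaRelevement}, and then deform $\omega^{(2)},\omega^{(3)},\omega^{(4)}$ freely subject to the Pappas-Rapoport constraints and the target vanishing pattern. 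For the two edges $X_{(2,2)}^{(m_1,m_2,m_3,m_4)}\subset\overline{X_{(2,2)}^{(m_1,m_3)}}$ and $X_{(2,2)}^{(m_1,m_3)}\subset\overline{X_{(3,1)}^{(m_1,m_3)}}$, this amounts to adapting the two explicit linear deformations above by merely fixing $\omega^{(1)}_{(n)}=\mathscr{F}^{(1)}_{(n)}$ at each stage, which is the core technical point and what makes the restriction $e\leq 4$ tractable: the combinatorics of $\mathscr{A}_{\mu_\bullet}$ stays small enough that only finitely many such hand deformations are required.
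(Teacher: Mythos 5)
The statement you were asked to prove is the $e=3$ theorem quoted from \cite{BijHodge}: the isomorphism class of a three-step Pappas--Rapoport filtration is determined by the three \emph{linear} invariants $\mathrm{Hodge}(\omega^{(3)})$, $\mathrm{Hodge}(\omega^{(2)})$, $\mathrm{Hodge}(\omega^{(3)}/\omega^{(1)})$, and these induce a good stratification of $\SSplit$ with the naive order. Your proposal instead reconstructs the paper's own $e=4$ theorem on the stratification indexed by $\mathscr{A}_{\mu_{\bullet}}$, i.e.\ the common refinement of the Hodge stratification with the full partial-Hasse stratification including the $\sigma$-linear invariant $m_1=ha$. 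These are different results, and the mismatch is not cosmetic. First, the cited theorem contains a classification claim --- that the three invariants determine the $L^+G$-orbit of the filtration in the convolution Grassmannian --- which your argument never addresses; this is the non-trivial content of \cite{BijHodge}, and it is exactly what breaks down at $e=4$ (the paper points out that $X_{(2,2)}^{(m_3)}$ and $X_{(3,1)}^{(m_3)}$ are both non-empty, so the $(m_i)$ no longer determine the Hodge polygon, let alone the isomorphism class). Second, $m_1=ha$ plays no role in the statement: by the paper's remark following the theorem, for $e=3$ the three invariants amount to $(\mathrm{Hodge}(\omega),m_2,m_3)$, all linear, so the entire crystalline/display apparatus you deploy (Proposition \ref{PropHA}, Lemma \ref{LemmaRelevement}, the Goren--Andreatta seed point for $X_{(2,2)}^{(m_1,m_2,m_3,m_4)}$) is machinery for a question that does not arise here.

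Note also that the paper gives no proof of this theorem --- it is imported wholesale from \cite{BijHodge} --- so there is no internal argument to match against. Even granting that your deformation techniques (Proposition \ref{PropAdherence} plus explicit $k\kt$-deformations of lattice chains, in the spirit of Proposition \ref{StratGrConv} and the computations of Section \ref{SectionHasse}) could be specialized from $e=4$ down to $e=3$ to recover the closure relations, you would still only have the second half of the statement; the first half, that the listed invariants separate isomorphism classes of filtrations, requires a separate orbit-by-orbit analysis that is absent from your proposal.
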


Faisons quelques commentaires :
\begin{enumerate}
\item Dans le théorème ci-dessous, le fait que les classes d'isomorphismes induisent une bonne stratification est automatique : les classes d'isomorphismes de filtration de Pappas-Rapoport correspondent aux $L^+G$-orbites dans $\widetilde{\Gr}_{\mu_{\bullet}}$. Le résultat non trivial du théorème  ci-dessus concernant la stratification réside dans le calcul explicite des relations d'adhérences entres orbites.
\item Dans le cas Hilbert, c'est-à-dire dans le cas $G=\mathrm{GL}_2$ et $\mu_{\bullet}=(1,0)^e$ les gradués $\Lambda^{(i)}/\Lambda^{(i-1)}$ sont de dimension $1$ et on obtient donc les équivalences \footnote{On utilise la notation abusive $\mathrm{Hodge}(\Lambda^{(i)}/\Lambda^{(i-2)})=\mathrm{Inv}(\Lambda^{(i)},\Lambda^{(i-2)})$} :
\begin{equation*}
\mathrm{Hodge}(\Lambda^{(2)})=(3,3)\ \Leftrightarrow\ \mathrm{Hodge}(\Lambda^{(2)}/\Lambda^{(0)})=(2,0)\ \Leftrightarrow\ m_2\neq 0
\end{equation*}
\begin{equation*}
\mathrm{Hodge}(\Lambda^{(3)}/\Lambda^{(1)})=(2,0)\ \Leftrightarrow\ m_3\neq 0
\end{equation*}
En d'autres termes dans le cas $e=3$ la donnée $(\mathrm{Hodge}(\omega),m_3,m_2)$ détermine entièrement la classe d'isomorphisme. En fait on peut être plus précis : $(m_3,m_2)$ détermine $\mathrm{Hodge}(\omega)$ car dans le cas $e=3$ il y a que deux possibilités $(2,1)$ et $(3,0)$. 
\item Pour le cas $e=4$ nous avons vu que les strates $X_{(2,2)}^{(m_3)}$ et $X_{(3,1)}^{(m_3)}$ étaient non vides. Dans cette situation les invariants $(m_i)_{i}$ ne déterminent donc pas l'invariant de Hodge, et donc en particulier ils ne détectent pas la classe d'isomorphisme non plus.
\item Dans le cas Hilbert-Siegel, c'est-à-dire pour $G=\mathrm{GSp}_{2g}$, les invariants ci-dessous ne sont pas assez fins pour détecter la classe d'isomorphisme. En fait dans le cas général on a l'équivalence suivante :
\begin{equation*}
\mathrm{Hodge}(\Lambda^{(i)}/\Lambda^{(i-2)})=(2,0)^g\Leftrightarrow\ m_i\neq 0
\end{equation*}
En d'autres termes le lieu de non annulation de l'invariant $m_i$ coïncide avec la strate maximale de la stratification par l'invariant $\mathrm{Hodge}(\Lambda^{(i)}/\Lambda^{(i-2)})$. Ce dernier est donc le bon invariant à considérer en dimension supérieure. 
\end{enumerate}

\newpage

\bibliography{bibliographie}
\bibliographystyle{alpha}

\end{document}